\documentclass[11pt]{article}
\usepackage[a4paper,margin=2.5cm,top=0.8in]{geometry}

\usepackage{graphicx} \usepackage{amsmath,amsthm,amssymb,amsfonts,amscd}
\usepackage{hyperref,cleveref}
\usepackage{xcolor}
\usepackage{enumitem}
\usepackage{tikz}
\usepackage{yhmath} \usetikzlibrary{calc}

\newtheorem{theorem}{Theorem}[section]
\newtheorem{lemma}[theorem]{Lemma}

\newtheorem{claim}{Claim}[theorem]
\newtheorem{corollary}[theorem]{Corollary}
\newtheorem{proposition}[theorem]{Proposition}

\newtheorem*{claim*}{Claim}

\theoremstyle{definition}
\newtheorem{definition}[theorem]{Definition}
\newtheorem{remark}[theorem]{Remark}
\newtheorem{example}[theorem]{Example}
\newtheorem{question}[theorem]{Question}
\newtheorem{convention}[theorem]{Convention}

\newcommand{\symdiff}{\triangle}

\newcommand{\pf}{\mathrm{pf}}

\newcommand{\cB}{\mathcal{B}}

\newcommand{\cQ}{\mathcal{Q}}

\newcommand{\bfA}{\mathbf{A}}
\newcommand{\bfC}{\mathbf{C}}

\newcommand{\bfI}{\mathbf{I}}

\newcommand{\bfM}{\mathbf{M}}

\newcommand{\bfv}{\mathbf{v}}

\newcommand{\bB}{\mathbb{B}}
\newcommand{\bC}{\mathbb{C}}
\newcommand{\bF}{\mathbb{F}}
\newcommand{\bG}{\mathbb{G}}
\newcommand{\bH}{\mathbb{H}}
\newcommand{\bR}{\mathbb{R}}
\newcommand{\bZ}{\mathbb{Z}}

\usepackage{authblk}
\title{Pseudo-orientable ribbon graphs: \\ Matrix--Quasi-tree Theorem and log-concavity}

\author[1]{Changxin Ding\thanks{\href{mailto:dcx.math@outlook.com}{dcx.math@outlook.com}. Changxin Ding was supported by the AMS-Simons Travel Grant.}}
\author[1]{Donggyu Kim\thanks{\href{mailto:donggyu@gatech.edu}{donggyu.math@gmail.com}. Donggyu Kim was supported by the AMS-Simons Travel Grant.}}
\affil[1]{School of Mathematics, Georgia Institute of Technology, USA}

\date{}   

\begin{document}

\vspace{-2em}

\maketitle

\vspace{-3em}

\begin{abstract}

One of the most important classes of even $\Delta$-matroids arises from orientable ribbon graphs, which play a role analogous to that of graphic matroids in matroid theory. Motivated by a natural correspondence between strong $\Delta$-matroids and even $\Delta$-matroids due to Geelen and Murota, we characterize the class of strong $\Delta$-matroids that correspond to orientable ribbon-graphic $\Delta$-matroids. These are precisely the $\Delta$-matroids associated with what we call \emph{pseudo-orientable} ribbon graphs. Moreover, we present a geometric construction that transforms a pseudo-orientable ribbon graph into an orientable ribbon graph, thereby realizing this correspondence.
    
As consequences, we obtain the Matrix--Quasi-tree Theorem, the Hurwitz stability of quasi-tree generating polynomials, and the log-concavity of the sequence counting quasi-trees of size $2i-1$ or $2i$ for pseudo-orientable ribbon graphs.
To establish this log-concavity, we generalize Stanley's log-concavity theorem for regular matroids to regular $\Delta$-matroids.
Finally, we exhibit an infinite family of non-pseudo-orientable ribbon graphs that fail to satisfy the Matrix--Quasi-tree theorem and Hurwitz stability.
\end{abstract}

\section{Introduction}

A \emph{ribbon graph} is a graph with additional topological data, which can be viewed as a graph cellularly embedded in a (possibly, non-orientable) closed surface. When the surface is a plane, the ribbon graph is a plane graph. A \emph{quasi-tree} is the edge set of a spanning ribbon subgraph with exactly one boundary component; for a connected plane graph, the quasi-trees are exactly the spanning trees.
Orientable ribbon graphs sit at a particularly tractable intersection of topological graph theory and linear algebra: the quasi-trees of an orientable ribbon graph form an even $\Delta$-matroid that admits a principally unimodular (PU) skew-symmetric matrix representation. 
It implies several interesting properties of orientable ribbon graphs, such as the Matrix--Quasi-tree Theorem, the Hurwitz stability of quasi-tree generating polynomials, and the canonical Jacobian group action on quasi-trees~\cite{MMN2025b,BDK2026}.
In contrast, general ribbon graphs $\bG$ (possibly, non-orientable) are less understood largely because the associated (strong) $\Delta$-matroids $D(\bG)$ might not admit well-behaved matrix representations.

Our main contribution is the definition and study of a subclass of ribbon graphs, called \emph{pseudo-orientable} ribbon graphs, which contains all orientable ribbon graphs. This class enjoys several properties analogous to those of orientable ribbon graphs:
\begin{itemize}
\item A Matrix--Quasi-tree Theorem holds (Theorem~\ref{thm-intro: matrix--quasi-tree for pseudo-orientable}).
\item The class is closed under taking minors (Proposition~\ref{prop: partial duality preserves pseudo-orientability}).
\item The quasi-tree generating polynomial is Hurwitz stable (Theorem~\ref{thm-intro: stability}).
\item An appropriate sequence derived from the numbers of quasi-trees of different sizes is ultra-log-concave (Theorem~\ref{thm-intro: log-concavity}), which is a new result even for orientable ribbon graphs.
\end{itemize}

Our motivation for introducing pseudo-orientability is twofold. First, although even $\Delta$-matroids are a subclass of strong $\Delta$-matroids, there is a natural bijection (Corollary~\ref{cor: BD bijection}):
\begin{align*}
\{\text{strong $\Delta$-matroids on $E$}\}
&\to
\Big\{
\hspace{-1.5mm}
\begin{array}{c}
\text{even $\Delta$-matroids on } E\cup \{\widehat{e}\} \\[-2pt]
\text{with even-sized bases}
\end{array}
\hspace{-1.5mm}
\Big\} \\
D \hspace{2.0cm} &\mapsto \hspace{2.5cm} \widehat{D}.
\end{align*}
Since the element $\widehat{e}$ may be regarded as an auxiliary element added to the ground set, we call $\widehat{D}$ the \emph{lift} of $D$. To the best of our knowledge, this result first appeared in \cite{Murota2021}, where Murota proved a more general statement and attributed it to Geelen. Recently, this lift was rediscovered by Calvert, Dermenjian, Fink, and Smith in \cite{CDFS2025} where the authors study Coxeter matroids of various types; see also~\cite{GKL2025}. In the literature, $\Delta$-matroids and even $\Delta$-matroids are often viewed as type B and type D, respectively; see \cite{BGW2003}. From the perspective of \cite{CDFS2025}, Coxeter matroids of type B are precisely strong $\Delta$-matroids.

Second, we observe that for representable $\Delta$-matroids, the lift has been studied implicitly in terms of matrices in \cite{vGM2019,CCLV2025}. At the matrix level, the lift can be interpreted as passing from the matrix $\bfA + \bfv\bfv^T$ to the  matrix $\begingroup
\setlength{\arraycolsep}{2pt}
\renewcommand{\arraystretch}{0.85}
\begin{pmatrix}
    \bfA & \bfv \\
    -\bfv^T & 0 \\
\end{pmatrix}
\endgroup$, where $\bfA$ is skew-symmetric and $\bfv$ is a column vector; for more details, see \S\ref{sec: strong delta-matroids}.

Our goal is to understand this lift at the level of ribbon graphs. Note that any ribbon graph $\bG$ gives a strong $\Delta$-matroid $D(\bG)$ (Cor.~\ref{cor: ribbon is strong}), and $D(\bG)$ is even if and only if $\bG$ is orientable. To this end, we characterize the ribbon graphs $\bG$ for which the lift of the $\Delta$-matroid $D(\bG)$ arises from an orientable ribbon graph. Our first main result, Theorem~\ref{thm-intro: pseudo-orientable and orientable}, shows that pseudo-orientability (Definitions~\ref{def: pseudo-orientable bouquet} and~\ref{def: pseudo-orientable ribbon graph}) provides the correct characterization.

\begin{theorem}\label{thm-intro: pseudo-orientable and orientable}
    A ribbon graph $\bG$ admits an orientable ribbon graph $\bH$ with $D(\bH)$ isomorphic to the lift of $D(\bG)$ if and only if $\bG$ is pseudo-orientable up to the notion of $2$-isomorphism for ribbon graphs given in~\cite{MO2021}.
    Moreover, when $\bG$ is pseudo-orientable, one can construct such an $\bH$ explicitly.
\end{theorem}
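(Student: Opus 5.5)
We would prove the two directions separately, reducing throughout to bouquets (one-vertex ribbon graphs). Partial duality with respect to a quasi-tree turns any connected ribbon graph into a bouquet. It acts on $D(\bG)$ by twisting, and on the lift by twisting with the same set (which avoids $\widehat e$). Also, pseudo-orientability is preserved by partial duality (Proposition~\ref{prop: partial duality preserves pseudo-orientability}). So it suffices to treat a bouquet $\bB$, together with the $2$-isomorphism clause handling disconnected or $1$-sum situations. For a bouquet, $D(\bB)$ is represented over $\mathrm{GF}(2)$ by the intersection matrix $\bfM = \bfA + \mathrm{diag}(\bfv)$, where $\bfv$ is the indicator of the non-orientable (twisted) loops. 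The lift of $D(\bB)$ is then represented by the bordered skew-symmetric matrix $\begin{pmatrix}\bfA & \bfv\\ -\bfv^T & 0\end{pmatrix}$ from \S\ref{sec: strong delta-matroids}. Here $\bfv\bfv^T$ and $\mathrm{diag}(\bfv)$ must be compared carefully; we expect Definition~\ref{def: pseudo-orientable bouquet} to be exactly the condition making the off-diagonal part of $\bfv\bfv^T$ compatible with the interlacement data.

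\emph{Sufficiency (construction).} Given a pseudo-orientable bouquet $\bB$, we build an orientable bouquet $\bH$ on $E\cup\{\widehat e\}$. We first untwist every twisted loop, so each becomes an orientable loop. We then insert a new orientable loop $\widehat e$ whose two ends are placed in the cyclic signed rotation so that $\widehat e$ interlaces precisely the formerly twisted loops. Geometrically, this is a crosscap "absorbed" into a handle. The interlacement of the remaining pairs must then be adjusted by the correction $\bfv\bfv^T$: two twisted loops get their interlacement flipped. Pseudo-orientability should guarantee that this adjusted chord diagram is realizable, since an arbitrary interlacement graph is not a circle graph. We would then check that the interlacement matrix of $\bH$ equals the bordered matrix above. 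Since even $\Delta$-matroids of bouquets are determined by their interlacement matrices over $\mathrm{GF}(2)$, this gives $D(\bH)\cong\widehat{D(\bB)}$. Undoing the partial duality yields $\bH$ for general $\bG$.

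\emph{Necessity.} Suppose $D(\bH)$ is isomorphic to the lift. We apply a partial dual to $\bH$ by a quasi-tree that avoids $\widehat e$ and corresponds to a quasi-tree of $\bG$; one exists because the lift's bases not containing $\widehat e$ are exactly the even bases of $D(\bG)$, after a suitable twist. This makes both $\bH$ and $\bG$ bouquets. Their interlacement matrices then agree, after deleting the row and column of $\widehat e$ and correcting by $\bfv\bfv^T$, where $\bfv$ is read off the $\widehat e$-column. Reading this backwards shows $\bG$'s chord diagram satisfies the pseudo-orientable condition. The only freedom is that bouquets with the same $\Delta$-matroid need not be isomorphic; this is exactly where the result of~\cite{MO2021} enters, because ribbon graphs with isomorphic $\Delta$-matroids are $2$-isomorphic.

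\emph{Main obstacle.} The hardest step is the realizability in the sufficiency direction: showing the modified interlacement graph is again a circle graph, i.e.\ writing down the actual chord word of $\bH$. I would do this by an explicit surgery on the signed rotation word: cut it at the twisted letters, reverse the appropriate segments, and insert $\widehat e$ at the two cut points. I would then verify the interlacement changes segment by segment, using the structure that Definition~\ref{def: pseudo-orientable bouquet} imposes on the positions of twisted loops.
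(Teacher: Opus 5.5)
\textbf{Sufficiency.} This direction follows the paper. The adjustment reverses the arc $S_2$ of a certificate and attaches $\widehat e$ at the two points of $S_1\cap S_2$, so the cuts are at the ends of the certificate arcs, not at the twisted letters. This one move untwists exactly the loops with one end in each arc. It toggles the interlacement of every pair of such loops and preserves all other interlacements. This gives $\bfM_2(\widehat{\bB})=\widehat{\bfM_2(\bB)}$, and hence $D(\widehat{\bB})=\widehat{D(\bB)}$ by Proposition~\ref{prop: sym to skew-sym}. Two small corrections:
\begin{itemize}
\item The upper-left block of the bordered matrix is $\bfA+\bfv\bfv^T$ with its diagonal cleared, not $\bfA$.
\item For a quasi-tree $X$ of odd size, the lift of $D*X$ is $\widehat{D}*(X\cup\{\widehat e\})$, not $\widehat D*X$. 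So you cannot always keep $\widehat e$ out of the twist.
\end{itemize}

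\textbf{Necessity: the gap.} You assert that, once $\bG$ and $\bH$ are bouquets, reading the corrected interlacement data backwards shows that the chord diagram of $\bG$ itself has a certificate. This is false. $D(\bG)$, equivalently $\bfM_2(\bG)$, does not determine the chord diagram, while pseudo-orientability is a property of the diagram. The bouquets $\bB_1,\bB_2$ of Figure~\ref{fig: pseudo-orientable and non-pseudo-orientable bouquets} have the same interlacing matrix. So $\widehat{D(\bB_2)}=D(\widehat{\bB_1})$ is realised by an orientable bouquet, yet $\bB_2$ has no certificate. The most you can conclude is that $\bG$ is $2$-isomorphic to \emph{some} pseudo-orientable ribbon graph. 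To apply~\cite{MO2021} you must exhibit one with the same $\Delta$-matroid, and your plan never does.

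\textbf{The missing idea.} Build that ribbon graph from $\bH$ rather than from $\bG$.
\begin{itemize}
\item Relabel so the isomorphism is the identity, and reduce to $\bH$ a disjoint union of bouquets. Use partial duals on both sides, with $\widehat{D*X}=\widehat{D}*\alpha(X)$, and Proposition~\ref{prop: partial duality preserves pseudo-orientability} to transport back.
\item The ends of $\widehat e$ cut its vertex boundary into two arcs. Reverse one of them and delete $\widehat e$, i.e.\ set $\bG':=(\bH^{\tau(\widehat e)})^{\widehat e}\setminus\widehat e$.
\item These two arcs are automatically a certificate for $\bG'$, and $\widehat{\bG'}=\bH$ (Lemma~\ref{lem: adjustment2}).
\item Hence $\widehat{D(\bG')}=D(\bH)=\widehat{D(\bG)}$. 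Injectivity of the lift (Corollary~\ref{cor: BD bijection}) gives $D(\bG')=D(\bG)$.
\item Only now does Theorem~\ref{thm: 2-isomorphism} show that $\bG$ is $2$-isomorphic to the pseudo-orientable $\bG'$.
\end{itemize}
In this direction no circle-graph realisability question arises at all: the diagram of $\bG'$ comes directly from surgery on that of $\bH$.
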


For the construction of $\bH$ in the theorem, we define a geometric operation $\widehat{\bG}$ on a pseudo-orientable ribbon graph $\bG$, called an \emph{adjustment} of $\bG$, such that $D(\widehat{\bG})$ is the lift of $D(\bG)$. In this sense, the adjustment $\widehat{\bG}$ realizes the lift of strong $\Delta$-matroids.

\begin{figure}[h!]
    \centering
    \begin{tikzpicture}
\begin{scope}[scale=0.95]

\fill[draw=white, line width=1pt, fill=blue, even odd rule]
                (0,0.6) circle (0.5-0.12)  
                (0,0.6) circle (0.5+0.12);
            \node at (110:1.33) {$2$};

\fill[draw=white, line width=1pt, fill=blue, even odd rule]
                (0,-0.6) circle (0.5-0.12)  
                (0,-0.6) circle (0.5+0.12);
            \node at (-110:1.33) {$3$};

\fill[draw=white, line width=2.9pt, fill=red, even odd rule]
                (0.8,0.12) circle (1)  
                (0.8,-0.12) circle (1);
            \fill[fill=red, even odd rule]
                (0.8,0.12) circle (1)  
                (0.8,-0.12) circle (1);
            \draw[line width=6pt, red] (0.8,1) arc (90:270:1);
            \node at (-10:1.52) {$1$};

            \draw[line width=2pt, draw=black, line width=2pt, fill=white] (0,0) circle (0.6);
            \draw[dotted, line width=2pt] (-0.6,0) -- (0.6,0);

            \node at (-1.0,0) {$\bG$};
        \end{scope}
\begin{scope}[xshift=6.4cm, scale=0.95]
\fill[draw=white, line width=1pt, fill=blue, even odd rule]
                (0,0.6) circle (0.5-0.12)  
                (0,0.6) circle (0.5+0.12);
            \node at (90:1.37) {$2$};

\fill[draw=white, line width=1pt, fill=blue, even odd rule]
                (0,-0.6) circle (0.5-0.12)  
                (0,-0.6) circle (0.5+0.12);
            \node at (-110:1.33) {$3$};

\fill[draw=white, line width=1.5pt, fill=blue, even odd rule]
                (0.8,0) circle (1-0.12)  
                (0.8,0) circle (1+0.12);
            \node at (-10:1.52) {$1$};

\fill[draw=white, line width=1.5pt, fill=blue, even odd rule]
                (0,0.8) circle (1-0.12)  
                (0,0.8) circle (1+0.12);
            \node at (57:1.85) {$4$};
            
            \draw[line width=2pt, draw=black, line width=2pt, fill=white] (0,0) circle (0.6);
            \draw[dotted, line width=2pt] (-0.6,0) -- (0.6,0);

            \node at (-1.25,0) {$\widehat{\bG}$};
        \end{scope}
\begin{scope}[yshift=-2.60cm]
\node at (0,0) {
                $
                \bfA + \bfv \bfv^T =
                \begin{pmatrix}
                    1 & 1 & 1 \\
                    -1 & 0 & 0 \\
                    -1 & 0 & 0 \\
                \end{pmatrix}
                $
            };
        \end{scope}
\begin{scope}[yshift=-2.60cm, xshift=6.4cm]
\node at (0,0) {
                $
                \begingroup
                \setlength{\arraycolsep}{2pt}
                \renewcommand{\arraystretch}{0.85}
                \begin{pmatrix}
                    \bfA & \bfv \\
                    -\bfv^T & 0 \\
                \end{pmatrix}
                \endgroup
                =
                \left(
                \begin{array}{ccc|c}
                    0 & 1 & 1 & 1 \\
                    -1 & 0 & 0 & 0 \\
                    -1 & 0 & 0 & 0 \\ \hline
                    -1 & 0 & 0 & 0 \\
                \end{array}
                \right)
                $
            };
        \end{scope}
\begin{scope}[yshift=-4.0cm]
            \node at (0,0) {
                $
                D = ([3],\{\emptyset, 1, 12, 13\})
                $
            };
        \end{scope}
\begin{scope}[yshift=-4.0cm, xshift=6.4cm]
            \node at (0,0) {
                $
                \widehat{D} = ([4],\{\emptyset, 14, 12, 13\})
                $
            };
        \end{scope}
    \end{tikzpicture}
    \caption{A pseudo-orientable ribbon graph $\bG$ and its adjustment $\widehat{\bG}$, together with the corresponding matrices and $\Delta$-matroids.
        $\widehat{\bG}$ is obtained by flipping the upper half-circle of the vertex of $\bG$ and adding a new orientable loop, denoted by $4$, which connects the two boundary points of the intersection of the half-circles.
        The matrix $\bfA$ encodes the interlacements along the loops in $\widehat{\bG} \setminus 4$, while the vector $\bfv$ records the non-orientable loops in the original $\bG$.
        The associated $\Delta$-matroids $D$ and $\widehat{D}$ encode the quasi-trees of $\bG$ and $\widehat{\bG}$, respectively.
}
    \label{fig: intro}
\end{figure}

Figure~\ref{fig: intro} illustrates how a pseudo-orientable ribbon graph $\bG$ is adjusted to the corresponding orientable $\widehat{\bG}$, together with the associated interlacing matrices and $\Delta$-matroids.

Every pseudo-orientable ribbon graph $\bG$ can be represented by the matrix $\bfA + \bfv\bfv^T$. 
Following this approach, we obtain the Matrix--Quasi-tree Theorem. 

\begin{theorem}\label{thm-intro: matrix--quasi-tree for pseudo-orientable}
    Let $\bG$ be a pseudo-orientable ribbon graph and $Q$ be a quasi-tree.
    There is an integral PU matrix $\bfM$ such that 
    \[
        \det(\bfM[Q \symdiff X]) = 
        \begin{cases}
            1 & \text{if $X$ is a quasi-tree of $\bG$,}\\
            0 & \text{otherwise}.
        \end{cases}
        \tag{$*$}
        \label{eq: matrix--quasi-tree for pseudo-orientable}
    \]
    In particular, $\det(\bfI + \bfM)$ equals the number of quasi-trees of $\bG$.
\end{theorem}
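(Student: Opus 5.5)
Our plan is to reduce to the orientable case through the adjustment $\widehat{\bG}$ and then pass back down using pivoting. We first twist $\bG$ by $Q$: the partial dual $\bG^{Q}$ is again pseudo-orientable (Proposition~\ref{prop: partial duality preserves pseudo-orientability}), and its quasi-trees are exactly the sets $Q\symdiff X$ with $X$ a quasi-tree of $\bG$. Moreover $\emptyset$ is a quasi-tree of $\bG^Q$. So it suffices to find an integral PU matrix $\bfM$ with $\det(\bfM[Y])=1$ when $Y$ is a quasi-tree of $\bG^Q$ and $0$ otherwise. In this normalized situation $\bG^Q$ has one vertex, so it is a bouquet, and we may apply Definition~\ref{def: pseudo-orientable bouquet} directly.

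For a pseudo-orientable bouquet, the definition yields a skew-symmetric integral matrix $\bfA$ and a $0/1$ vector $\bfv$, as in Figure~\ref{fig: intro}. The matrix $\bfA$ records the interlacement of the loops after the half-circle flip, and $\bfv$ marks the non-orientable loops. We set $\bfM=\bfA+\bfv\bfv^T$ and compute principal minors via the bordered matrix
\[
\bfB=\begin{pmatrix}\bfA&\bfv\\-\bfv^T&0\end{pmatrix}.
\]
The Schur complement gives $\det(\bfM[Y])=\det(\bfA[Y])+\det(\bfB[Y\cup\{\widehat e\}])$, using $\det\begin{pmatrix}\bfA[Y]&\bfv_Y\\-\bfv_Y^T&0\end{pmatrix}=\bfv_Y^T\operatorname{adj}(\bfA[Y])\bfv_Y$ together with the matrix determinant lemma. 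By parity, skew-symmetric minors of odd order vanish. Hence exactly one of the two terms survives, namely $\det\bfB[Y]$ if $|Y|$ is even and $\det\bfB[Y+\widehat e]$ if $|Y|$ is odd.

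The adjustment $\widehat{\bG}$ is an orientable bouquet, and $\bfB$ is its interlacement matrix. By the known orientable Matrix--Quasi-tree Theorem~\cite{MMN2025b,BDK2026}, $\det\bfB[Z]$ is the square of a Pfaffian in $\{0,\pm1\}$. It equals $1$ exactly when $Z$ is a quasi-tree of $\widehat{\bG}$, and $\bfB$ is PU. Since $D(\widehat{\bG})$ is the lift of $D(\bG^Q)$ (the adjustment theorem), $Z=Y$ or $Y+\widehat e$ is a quasi-tree of $\widehat{\bG}$ iff $Y$ is a quasi-tree of $\bG^Q$. This gives $(*)$.

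It remains to check that $\bfM$ is PU, meaning every principal pivot is unimodular; equivalently, every nonzero principal minor of every pivot lies in $\{\pm1\}$. Principal pivoting of $\bfM$ on a set $Y$ with $\det\bfM[Y]=1$ corresponds to partial duality $\bG^{Q}\mapsto\bG^{Q\symdiff Y}$, which stays pseudo-orientable. The identity above then applies to the pivoted matrix as well, provided the pivot again has the form $\bfA'+\bfv'\bfv'^T$. We expect this compatibility to be the main obstacle: pivoting should commute with the lift, so that the pivot of $\bfA+\bfv\bfv^T$ equals $\bfA'+\bfv'\bfv'^T$ where $\bfB'$ is the pivot of $\bfB$ on $Y$ or $Y+\widehat e$. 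We would verify it by a block computation that tracks the Schur complement through the pivot. An alternative is to cite the matrix-level lift correspondence of \S\ref{sec: strong delta-matroids} (\cite{vGM2019,CCLV2025}). Finally, taking $Y$ over all subsets gives $\det(\bfI+\bfM)=\sum_Y\det\bfM[Y]$, which is the number of quasi-trees.
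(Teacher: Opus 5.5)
Your argument for $(*)$ is the paper's argument. You pass to the bouquet $\bG^Q$ and take the signed interlacing matrix $\mathbf{B}=\bfM_\pm(\widehat{\bG^Q})$, splitting off the row and column of $\widehat{e}$ as $\bfv$. You set $\bfM=\bfA+\bfv\bfv^T$ and use $\det(\bfM[Y])=\det(\mathbf{B}[\alpha(Y)])$ (Lemma~\ref{lem: determinants of type B and D}). You then finish with Theorem~\ref{thm: matrix--quasi-tree theorem for orientable ribbon graphs} and $D(\widehat{\bB})=\widehat{D(\bB)}$ (Lemma~\ref{lem: adjustment of bouquets}). The paper's adjusted interlacing matrix $\bfM(\bB,S_1,S_2)$ is just an entrywise description of your $\bfA+\bfv\bfv^T$.

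The problem is your last paragraph. The ``main obstacle'' there is not real, and by leaving it open you leave the PU claim formally unproved. In this paper PU means that every principal minor lies in $\{0,\pm1\}$, and this follows at once from $(*)$. As $X$ runs over all subsets of $E(\bG)$, so does $Q\symdiff X$, so every principal minor of $\bfM$ is $0$ or $1$.

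Even under your pivot-closed reading of PU, nothing about the lift is needed. If $\det(\bfM[Y])\neq 0$, then $\det(\bfM[Y])=1$, so the pivot $\bfM*Y$ is integral. Tucker's identity $\det((\bfM*Y)[Z])=\det(\bfM[Y\symdiff Z])/\det(\bfM[Y])$, as used in Lemma~\ref{lem: detectability is minor-closed}, then puts all principal minors of $\bfM*Y$ in $\{0,1\}$. Whether $\bfM*Y$ again has the form $\bfA'+\bfv'(\bfv')^T$ is irrelevant, so drop the block computation.

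There are two smaller citation and reduction issues.

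\emph{Certificate for $\bG^Q$.} Proposition~\ref{prop: partial duality preserves pseudo-orientability} only tells you that $\bG^Q$ is pseudo-orientable in the sense of Definition~\ref{def: pseudo-orientable ribbon graph}, i.e., some partial dual of it is a bouquet with a certificate. To apply Definition~\ref{def: pseudo-orientable bouquet} to the bouquet $\bG^Q$ itself, for the given $Q$, you need Lemma~\ref{lem: partial duality preserves pseudo-orientability}; this is exactly what the paper's proof uses. Alternatively, build the matrix for a partial dual that has a certificate and pivot to $\bG^Q$ via Tucker's identity.

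\emph{Connectedness.} $\bG^Q$ is a bouquet only when $\bG$ is connected. In general, take a block-diagonal matrix. Use $\bfM_\pm$ for the bouquets coming from the orientable components and your $\bfA+\bfv\bfv^T$ for the pseudo-orientable one.
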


The Matrix--Quasi-tree Theorem for \emph{orientable} ribbon graphs was implicit in several works and stated explicitly by Merino, Moffatt, and Noble~\cite{MMN2025b}; also see references therein.
Deng, Jin, and Yan~\cite{DJY2024} showed the Matrix--Quasi-tree Theorem for bouquets with exactly one non-orientable loop. Our result generalizes both of their results because the class of pseudo-orientable ribbon graphs contains orientable ones and bouquets with exactly one non-orientable loop.

Moreover, we find
an infinite family of non-pseudo-orientable ribbon graphs that do not admit any matrix representation satisfying the property~\eqref{eq: matrix--quasi-tree for pseudo-orientable}.

The quasi-tree generating polynomials of orientable ribbon graphs are known to be \emph{Hurwitz stable} \cite{MMN2025b}. We generalize this result to pseudo-orientable ribbon graphs. For the infinite family of non-pseudo-orientable ribbon graphs mentioned above, the corresponding quasi-tree generating polynomials are not Hurwitz stable.
\begin{theorem}\label{thm-intro: stability}
    The quasi-tree generating polynomial 
    \[p_{\bG} := \sum_{Q:\text{quasi-trees}} \prod_{e\in Q} x_e\]
    of a pseudo-orientable ribbon graph $\bG$ is Hurwitz stable.
\end{theorem}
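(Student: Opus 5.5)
We will deduce Hurwitz stability from the lift established in Theorem~\ref{thm-intro: pseudo-orientable and orientable}, combined with the known stability for orientable ribbon graphs~\cite{MMN2025b}. Let $\bG$ be pseudo-orientable with ground set $E$, and let $\widehat{\bG}$ be its adjustment. Then $\widehat{\bG}$ is orientable on $E\cup\{\widehat{e}\}$ and $D(\widehat{\bG})=\widehat{D(\bG)}$. The quasi-tree polynomial $p_{\widehat{\bG}}(x,y)$, with $y=x_{\widehat e}$, is therefore Hurwitz stable by \cite{MMN2025b}.

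The first step is to express $p_{\bG}$ through $p_{\widehat{\bG}}$. We use the lift bijection (Corollary~\ref{cor: BD bijection}): the bases of $\widehat D$ are $B$ if $|B|$ is even and $B\cup\{\widehat e\}$ if $|B|$ is odd, where $B$ ranges over the bases of $D(\bG)$. This is consistent with Figure~\ref{fig: intro}. Hence
\[
p_{\widehat{\bG}}(x,y)=p^{\mathrm{even}}_{\bG}(x)+y\,p^{\mathrm{odd}}_{\bG}(x),
\]
where $p^{\mathrm{even}}_{\bG}$ and $p^{\mathrm{odd}}_{\bG}$ collect the quasi-trees of even and odd size, respectively. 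Setting $y=1$ gives $p_{\widehat{\bG}}(x,1)=p_{\bG}(x)$.

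The second step is to check that this specialization preserves Hurwitz stability. Recall that a polynomial is Hurwitz stable if it does not vanish when all variables lie in the open right half-plane $\{\mathrm{Re}\,z>0\}$. The point $y=1$ lies in that open half-plane. So if $x$ has all coordinates with positive real part, then $(x,1)$ lies in the open domain, and $p_{\widehat{\bG}}(x,1)\neq 0$. This uses only that $1$ is an interior point. If \cite{MMN2025b} only supplies stability after a translation, we would instead use Hurwitz's theorem on limits; here no limit is needed. It remains to note that $p_{\bG}\not\equiv 0$, since every ribbon graph has a quasi-tree.

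The main point needing care is the first step: confirming the exact form of the lift, namely that odd bases receive $\widehat e$ rather than some other parity convention. If the convention is reversed, we would apply the same argument to a twisted dual, or observe that $y\,p^{\mathrm{even}}_{\bG}+p^{\mathrm{odd}}_{\bG}$ at $y=1$ still equals $p_{\bG}$, so the proof goes through either way.

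An alternative route, which avoids \cite{MMN2025b}, uses Theorem~\ref{thm-intro: matrix--quasi-tree for pseudo-orientable} directly. We would write $p_{\bG}$, up to the monomial shift coming from $Q$, as $\det(\mathrm{diag}(x)+\bfM)$ with $\bfM$ PU. We would then argue stability from the Hermitian part $\bfM+\bfM^{*}$ being positive semidefinite after a suitable sign change. This is less clean, because $\bfA+\bfv\bfv^T$ is not skew-symmetric. For that reason we prefer the lift argument above.
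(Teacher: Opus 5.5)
Your argument is correct, but it takes a different route from the paper's.

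\textbf{Your route.} You work at the level of ribbon graphs. You pass to the orientable adjustment $\widehat{\bG}$, use $D(\widehat{\bG})=\widehat{D(\bG)}$ and the known stability for orientable ribbon graphs, and then specialize $x_{\widehat{e}}=1$. That last step is immediate because $1$ lies in the open right half-plane.

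\textbf{The paper's route.} It first reduces to a pseudo-orientable bouquet using partial duality and the inversion operation. It then writes $p_{\bG}=\sum_Q\det(\bfM(\bG,S_1,S_2)[Q])z^Q$ with $\bfM(\bG,S_1,S_2)=\bfA+\bfv\bfv^T$, via the Matrix--Quasi-tree Theorem. Finally it proves directly, in Lemma~\ref{lem: skew-symmetric rank-1 perturbation Hurwitz stable}, that such principal-minor polynomials are Hurwitz stable. The tool is the Borcea--Br\"and\'en determinant criterion applied to the term $z_0\bfv\bfv^T$, then setting $z_0=\mathbf{i}$. This is exactly the ``alternative route'' you set aside, and it is not less clean. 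The Hermitian part of $\bfA+\bfv\bfv^T$ is $\bfv\bfv^T\succeq 0$, so no sign change is needed.

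\textbf{What your approach buys.} It is shorter: no reduction to bouquets and no new matrix lemma. It is also robust, because setting $y=1$ erases the difference between $p^{\mathrm{even}}_{\bG}+y\,p^{\mathrm{odd}}_{\bG}$ and $y\,p^{\mathrm{even}}_{\bG}+p^{\mathrm{odd}}_{\bG}$. That matters: for a disconnected $\bG$ whose orientable components have quasi-trees of odd total size, the component-wise adjustment actually gives $D(\widehat{\bG})=\widehat{D(\bG)}*\{\widehat{e}\}$ rather than $\widehat{D(\bG)}$. Your specialization is the same device the paper uses in Proposition~\ref{prop: non-pseudo Hurwitz} (setting $x_8=1$). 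It also shows, more generally, that a strong $\Delta$-matroid inherits Hurwitz stability from its lift.

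\textbf{What the paper's approach buys.} It is self-contained and gives an explicit determinantal formula for $p_{\bG}$. It does not take the orientable case as a black box; instead it recovers that case as $\bfv=0$ and reuses the lemma for regular $\Delta$-matroids.

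\textbf{Relation between them.} The two arguments are the same idea at different levels. By Lemma~\ref{lem: determinants of type B and D}, the principal-minor polynomial of $\bfA+\bfv\bfv^T$ is the specialization at $z_{n+1}=1$ of the principal-minor polynomial of the skew-symmetric matrix $\bfA_{\bfv}$.
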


Hurwitz stability is closely related to \emph{log-concavity}. We show that, for a pseudo-orientable ribbon graph, the sequence counting quasi-trees of size $2i-1$ or $2i$ is ultra-log-concave. This result is new even for orientable ribbon graphs. 

\begin{theorem}\label{thm-intro: log-concavity}
    Let $\bG$ be a pseudo-orientable ribbon graph and $Q \subseteq E(\bG)$.
    Let 
    \[
        q^Q_{i} := \text{the number of quasi-trees $X$ with $|Q\symdiff X| = 2i-1$ or $2i$}.
    \]
    Then the sequence $(q^Q_i)_{i\ge 0}$ is ultra-log-concave with no internal zeros.
    In particular, the sequence $(q_i := q^\emptyset_i)_{i\ge 0}$ counting quasi-trees of size $2i-1$ or $2i$ is ultra-log-concave with no internal zeros.
\end{theorem}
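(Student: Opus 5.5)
The plan is to pass to the lift and reduce to an even $\Delta$-matroid statement, where a Stanley-type log-concavity theorem for regular $\Delta$-matroids can be applied. By Theorem~\ref{thm-intro: pseudo-orientable and orientable}, a pseudo-orientable $\bG$ has an adjustment $\widehat{\bG}$ which is orientable and satisfies $D(\widehat{\bG})=\widehat{D(\bG)}$. The bases of this lift are described by $B\mapsto B$ if $|B|$ is even and $B\mapsto B\cup\{\widehat e\}$ if $|B|$ is odd. Replacing $Q$ by $Q$ or $Q\cup\{\widehat e\}$, chosen so that the parity of $|Q|$ is even, we get $|Q\symdiff X|\in\{2i-1,2i\}$ exactly when the corresponding lifted quasi-tree $\widehat X$ satisfies $|\widehat Q\symdiff \widehat X|=2i$. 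This needs a short parity check in two cases, according to the parity of $|Q|$ and of $|X|$. Hence $q^Q_i$ is the number of bases $\widehat X$ of the even $\Delta$-matroid $\widehat D$ with $|\widehat Q\symdiff\widehat X|=2i$. Twisting by $\widehat Q$ (partial duality), it becomes the number of feasible sets of size $2i$ of the twisted even $\Delta$-matroid $\widehat D\symdiff\widehat Q$.

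Next, $\widehat D$ is representable by a PU skew-symmetric matrix, since orientable ribbon-graphic $\Delta$-matroids are regular. The twisted $\Delta$-matroid is again regular, because regularity is preserved under twisting (principal pivoting). So the statement reduces to the following generalization of Stanley's theorem: for a regular even $\Delta$-matroid $D$ on $n$ elements, the sequence $d_i=\#\{F\text{ feasible}: |F|=2i\}$ is ultra-log-concave with no internal zeros. The ultra-log-concavity should be with respect to $m=\lfloor n/2\rfloor$ or the appropriate rank, matching the paper's convention.

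To prove the reduced statement, take a PU skew-symmetric representation after twisting so that $\emptyset$ is feasible. Then $D$ is represented by a skew-symmetric $\bfA$ with $F$ feasible iff $\det \bfA[F]\neq 0$, i.e.\ $\det\bfA[F]=1$ by principal unimodularity. Therefore
\[\textstyle\sum_i d_i t^{2i}=\sum_F \det \bfA[F]\,t^{|F|}=\det(t\bfI+\bfA)\cdot\text{(normalization)},\]
and more precisely $\det(t\bfI+\bfA)=\sum_F \det\bfA[F]\,t^{n-|F|}$. The real skew-symmetric $\bfA$ has purely imaginary eigenvalues $\pm i\lambda_j$ together with zeros. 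Hence $\det(t\bfI+\bfA)=t^{n-2r}\prod_{j=1}^r(t^2+\lambda_j^2)$, so with $s=t^2$ the polynomial $\sum_i d_i s^{r-i}$ (up to reversal) has only real nonpositive roots. By Newton's inequalities its coefficient sequence is ultra-log-concave with respect to $r$ with no internal zeros, which gives the claim for $q^Q_i$ after the reduction.

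The main obstacle is the lift bookkeeping, not the algebra: checking that the "$2i-1$ or $2i$" grouping matches exactly the even sizes in the lift for an arbitrary $Q$, including $Q$ that is not a quasi-tree. If needed, I would twist first by a quasi-tree so that $\emptyset$ is feasible, and I would verify that the ultra-log-concavity index ($r$ versus $n$) is consistent with the paper's definition. The second subtlety is ensuring the twisted matrix remains real skew-symmetric PU. This follows from the principal pivot transform, which is available from the earlier sections on regular $\Delta$-matroids.
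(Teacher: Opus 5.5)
\textbf{Gap: the real-rootedness step only works when $\emptyset$ is a basis, i.e.\ when $Q$ is a quasi-tree.}

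Your reduction is sound and matches the paper's route. The paper applies Corollary~\ref{cor: log-concavity of quasi-trees of pseudo-orientable ribbon graphs} to $\bG^Q$, which is equivalent to your twisting of $\widehat{D(\bG)}$ by $\alpha(Q)$. The parity check is correct: $|Q\symdiff X|\in\{2i-1,2i\}$ if and only if $|\alpha(Q)\symdiff\alpha(X)|=2i$. Twisting also preserves regularity.

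The gap is in your proof of the reduced statement. You write $D=D(\bfA)$ ``after twisting so that $\emptyset$ is feasible.'' But $\emptyset$ is a basis of $D=\widehat{D(\bG)}*\alpha(Q)$ only when $Q$ is a quasi-tree of $\bG$. For arbitrary $Q$ you only have $D=D(\bfA)*Y$ for some basis $Y$ and a PU skew-symmetric $\bfA$. This already fails for the ``in particular'' case $Q=\emptyset$ whenever some component of $\bG$ has more than one vertex. Twisting by $Y$ replaces the statistic $|B|$ by $|B\symdiff Y|$, so $\det(t\bfI+\bfA)$ counts the wrong thing. Your fallback ``twist first by a quasi-tree'' has the same defect. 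The polynomial you actually need is
\[
\sum_{B\in\cB(D)} t^{|B|}=\sum_{F}\det(\bfA[F])\,t^{|F\symdiff Y|}=t^{|E\setminus Y|}\det(\bfA+\mathbf{D}_t),
\]
where $\mathbf{D}_t$ is diagonal with entries $t$ on $Y$ and $t^{-1}$ off $Y$. This is not a characteristic polynomial, so the eigenvalue factorization $t^{n-2r}\prod_j(t^2+\lambda_j^2)$ is not available.

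\textbf{The missing idea is Hurwitz stability, which survives twisting.} Suppose $\mathrm{Re}\,t>0$; then also $\mathrm{Re}\,t^{-1}>0$. For real skew-symmetric $\bfA$, the quantity $x^*\bfA x$ is purely imaginary. Hence for $x\neq 0$,
\[
\mathrm{Re}\,x^*(\bfA+\mathbf{D}_t)x=\sum_i \mathrm{Re}\big((\mathbf{D}_t)_{ii}\big)\,|x_i|^2>0,
\]
so $\bfA+\mathbf{D}_t$ is nonsingular. Therefore $p(t)=\sum_{B}t^{|B|}$ has no zeros in the open right half-plane. Since all bases are even, $p(t)=p(-t)$, so every zero is purely imaginary. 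Thus $p(t)=h(t^2)$ with $h$ real-rooted with nonpositive roots, and Newton's inequalities give ultra-log-concavity with no internal zeros, relative to the support $[\ell,u]$ as in the paper's definition.

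This is exactly what the paper does, in multivariate form. Lemma~\ref{lem: skew-symmetric rank-1 perturbation Hurwitz stable}, combined with the inversion and diagonalization operations of Proposition~\ref{prop: operations preserving Hurwitz stability}, yields Lemma~\ref{lem: Hurwitz stability of regular even delta-matroids} and Theorem~\ref{thm: Stanley for regular delta-matroids}. Your characteristic-polynomial argument is a valid shortcut only in the special case $Q\in\cQ(\bG)$, which amounts to counting quasi-trees of a bouquet by size.
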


To prove this theorem, we first establish a log-concavity result for regular (even) $\Delta$-matroids (Theorem~\ref{thm: Stanley for regular delta-matroids}), which generalizes Stanley's log-concavity theorem for regular matroids \cite{Stanley1981}. The proof uses Hurwitz stability and follows the method in \cite{Yan2023}. We then pull back the data from orientable ribbon graphs to pseudo-orientable ribbon graphs.
It is easy to see that, for almost all orientable ribbon graphs, the sequence counting quasi-trees of size 
$i$ contains internal zeros. Hence, merging the numbers of quasi-trees of sizes $2i-1$ or $2i$ is a natural choice in Theorem~\ref{thm-intro: log-concavity}.
Ultra-log-concavity does not extend to all ribbon graphs, and in \S\ref{sec: failure of ultra-log-concavity}, we leave open the question of whether log-concavity, rather than ultra-log-concavity, holds for all ribbon graphs.

The paper is organized as follows.
In \S\ref{sec: preliminaries}, we review basic definitions and properties of ribbon graphs and $\Delta$-matroids. We also discuss the natural lift of strong $\Delta$-matroids $D$ to even $\Delta$-matroids $\widehat{D}$ and the analogous notion for matrices.
In \S\ref{sec: pseudo-orientable ribbon graphs}, we define pseudo-orientability and the adjustment operation.
We then prove Theorem~\ref{thm-intro: pseudo-orientable and orientable} and Proposition~\ref{prop: partial duality preserves pseudo-orientability}.  
In \S\ref{sec: stability and log-concavity}, we prove the results on Hurwitz stability and log-concavity (Theorems~\ref{thm-intro: stability} and~\ref{thm-intro: log-concavity}).
In \S\ref{sec: non-pseudo-orientable ribbon graphs}, we present an infinite family of non-pseudo-orientable ribbon graphs that cannot be represented by any matrix with the property \eqref{eq: matrix--quasi-tree for pseudo-orientable} and whose quasi-tree generating polynomials are not Hurwitz stable.

\section{Preliminaries}\label{sec: preliminaries}

We will review two preliminary topics: \S\ref{sec: ribbon graphs} ribbon graphs and  \S\ref{sec: delta-matroids} $\Delta$-matroids.

\subsection{Ribbon graphs}\label{sec: ribbon graphs}

Ribbon graphs can be understood intuitively as an $\epsilon$-thickening of a graph cellularly embedded in a closed (possibly non-orientable) surface, for sufficiently small $\epsilon > 0$. They are also called \emph{fat graphs} or \emph{embedded graphs}, and 
can be described combinatorially via \emph{signed rotation systems}.
Ribbon graphs play a fundamental role in the study of knot theory and the theory of vertex-minors of graphs; see~\cite{CMNR2019a,EM2013,Moffatt2019} and the references therein.

We use a geometric definition from \cite{Chmutov2009}. 

\begin{definition}
A \emph{ribbon graph} $\bG$ is a surface (possibly non-orientable) with boundary, represented as
the union of two sets of closed topological discs called vertices $V(\bG)$ and edges $E(\bG)$, satisfying the
following conditions:

    \begin{itemize}
        \item these vertices and edges intersect by disjoint line segments;
        \item each such line segment lies on the boundary of precisely one vertex and precisely one edge;
        \item every edge contains exactly two such line segments, which are called the \emph{ends} of the edge.
    \end{itemize}
\end{definition}

Two ribbon graphs $\bG$ and $\bH$ are \emph{isomorphic} if there are bijections $\iota_V: V(\bG) \to V(\bH)$ and $\iota_E: E(\bG) \to E(\bH)$ along with homeomorphisms $v \to \iota_V(v)$ and $e \to \iota_E(e)$ for $v\in V(\bG)$ and $e\in E(\bG)$ that extend to a homeomorphism $\bG \to \bH$.

\begin{convention}
    Throughout the paper, we work with ``edge-labeled'' ribbon graphs.
    Formally, a ribbon graph on a finite set $E$ is a ribbon graph $\bG$ with $E(\bG) = E$.
    In particular, vertices are regarded as unlabeled, and we identify two ribbon graphs on $E$ if there is a ribbon graph isomorphism that induces the identity on $E$. See Proposition~\ref{prop: partial duality} for an example where this convention is applied. 
\end{convention}

A \emph{ribbon subgraph} of a ribbon graph $\bG$ is a ribbon graph $\bH$ with $V(\bH) \subseteq V(\bG)$ and $E(\bH) \subseteq E(\bG)$.
When the vertex sets coincide, we say $\bH$ is \emph{spanning}.
A \emph{quasi-tree} of a connected ribbon graph $\bG$ is a spanning ribbon subgraph with exactly one boundary component.\footnote{In some literature, a quasi-tree is not required to be spanning.}
For a (not necessarily connected) ribbon graph $\bG$, a \emph{quasi-tree} is defined as a union of quasi-trees of the connected components of $\bG$.
We also call an edge subset $F$ of $E(\bG)$ a \emph{quasi-tree} if the corresponding spanning ribbon subgraph $(V(\bG), F)$ is a quasi-tree.
We denote by $\cQ(\bG)$ the set of quasi-trees, as subsets of $E(\bG)$, of $\bG$.

\begin{definition}
    A \emph{bouquet} is a ribbon graph with a single vertex.
\end{definition}

We often call an edge of a bouquet a \emph{loop}.
Two loops \emph{interlace} if their ends appear alternatively on the boundary of the unique vertex $v$ of the bouquet.
A loop $e$ is said to be \emph{orientable} if the surface $v \cup e$ is orientable; equivalently, it is homeomorphic to an annulus.
Otherwise, $e$ is said to be \emph{non-orientable} in which case $v \cup e$ is homeomorphic to a M\"obius band.
For example, in Figure~\ref{fig: signed chord diagram}, the given bouquet has two orientable loops (colored blue) that do not interlace, and one non-orientable loop (colored red) that interlaces with the orientable loops.

For convenience, we will often identify a bouquet with a signed chord diagram:

\begin{definition}
    A \emph{signed chord diagram} is a circle with a finite number of chords, each of which is assigned $0$ or $1$. 
\end{definition}

Given a bouquet $\bB$, taking the boundary of the vertex gives a circle, and each edge corresponds to a chord connecting the two endpoints on the circle. We assign $0$ to each orientable loop and $1$ to each non-orientable loop.
This gives a signed chord diagram corresponding to $\bB$. See Figure~\ref{fig: signed chord diagram} for an example.

\begin{figure}[h!]
    \centering
    \begin{tikzpicture}[scale=1]
        \begin{scope}

\fill[draw=white, line width=1pt, fill=blue, even odd rule]
                (0,0.6) circle (0.5-0.12)  
                (0,0.6) circle (0.5+0.12);
            \node at (110:1.33) {$2$};

\fill[draw=white, line width=1pt, fill=blue, even odd rule]
                (0,-0.6) circle (0.5-0.12)  
                (0,-0.6) circle (0.5+0.12);
            \node at (-110:1.33) {$3$};

\fill[draw=white, line width=2.9pt, fill=red, even odd rule]
                (0.8,0.12) circle (1)  
                (0.8,-0.12) circle (1);
            \fill[fill=red, even odd rule]
                (0.8,0.12) circle (1)  
                (0.8,-0.12) circle (1);
            \draw[line width=6pt, red] (0.8,1) arc (90:270:1);
            \node at (-10:1.52) {$1$};

            \draw[line width=2pt, draw=black, line width=2pt, fill=white] (0,0) circle (0.6);

\end{scope}
        \begin{scope}[xshift=6cm]
            \draw[line width=3pt, blue] (40:2) -- (140:2);
            \draw[line width=3pt, blue] (-40:2) -- (-140:2);
            \draw[line width=3pt, red] (90:2) -- (-90:2);

            \node at (130:1.3) {$2$};
            \node at (-130:1.3) {$3$};
            \node at (0:0.2) {$1$};

            \draw[line width=2.5pt] (0,0) circle (2);

\end{scope}
    \end{tikzpicture}
    \caption{A bouquet (left) with three loops and the corresponding signed chord diagram (right). The edges and chords colored blue indicate orientable loops (assigned $0$), and the ones colored red indicate non-orientable loops (assigned $1$).}
    \label{fig: signed chord diagram}
\end{figure}
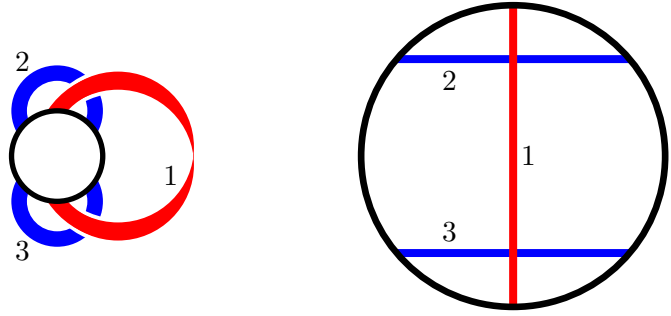

Chmutov~\cite{Chmutov2009} introduced partial duality for ribbon graphs as a generalization of duality for plane graphs and, more broadly, duality for ribbon graphs.
The partial dualities preserve the number of quasi-trees (cf. Prop.~\ref{prop: partial dual and quasi-tree}). 

\begin{definition}[Partial duality]
    Let $\bG$ be a ribbon graph and $X \subseteq E(\bG)$.
    The \emph{partial dual} of $\bG$ at $X$ is a ribbon graph $\bG^X$ defined as follows:
    \begin{enumerate}
        \item Take $E(\bG^X) = E(\bG)$.
        \item Attach a disk along each boundary of the spanning ribbon subgraph $(V(\bG), X)$, such that the interiors of such disks are pairwise disjoint and also disjoint from the edge disks. 
        \item Take the disks in Step~2 as the vertices of $\bG^X$.
    \end{enumerate}
    For convenience, we denote $\bG^{e_1e_2\dots e_k} = \bG^X$ if $X=\{e_1,e_2,\dots,e_k\}$.
\end{definition}

When $X = E(\bG)$, the partial duality coincides with the geometric duality on cellularly embedded graphs because the boundaries of $\bG$ correspond to its faces.

Partial dualities have several properties. 

\begin{proposition}[\cite{Chmutov2009}]
    $\bG^X\setminus Y = (\bG\setminus Y)^X$ for any disjoint $X,Y\subseteq E(\bG)$.
\end{proposition}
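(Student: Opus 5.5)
We must show that $\bG^X\setminus Y=(\bG\setminus Y)^X$ whenever $X,Y\subseteq E(\bG)$ are disjoint. The plan is to compare both sides inside one ambient surface. Since edges of $Y$ are not in $X$, they play no role in building the new vertices of a partial dual at $X$.

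First, analyse the right-hand side. The spanning ribbon subgraph $(V(\bG\setminus Y),X)$ of $\bG\setminus Y$ is literally the same surface as the spanning ribbon subgraph $(V(\bG),X)$ of $\bG$. Indeed, deleting $Y$ leaves the vertex set unchanged, and $X\cap Y=\emptyset$ means every edge of $X$ survives. Hence both have the same boundary components $C_1,\dots,C_k$. The vertices of $(\bG\setminus Y)^X$ are therefore discs $D_1,\dots,D_k$ glued along $C_1,\dots,C_k$, and its edges are the discs of $E(\bG)\setminus Y$.

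Next, analyse the left-hand side. The vertices of $\bG^X$ are discs $D_1',\dots,D_k'$ glued along the same $C_1,\dots,C_k$, and its edges are all discs of $E(\bG)$. Deleting $Y$ from $\bG^X$ removes the edge discs of $Y$ and keeps all vertices. So $\bG^X\setminus Y$ consists of the discs $D_i'$ together with the edge discs of $E(\bG)\setminus Y$.

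Then compare the two. The discs $D_i$ and $D_i'$ are attached along identical curves, so the identity on the edge discs and on $(V(\bG),X)$ extends, disc by disc, to homeomorphisms $D_i'\to D_i$ that are the identity on $C_i$.

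The main point to check is that the ends of each remaining edge $e\notin X\cup Y$ sit on the corresponding vertex in the same places. In both constructions an end of $e$ is a segment on $\partial v$ for some $v\in V(\bG)$. That segment is not covered by any edge of $X$, so it lies on some $C_i$. It therefore lies on $\partial D_i$ and on $\partial D_i'$ at the same location, which shows the homeomorphism respects ends. Edges of $X$ are handled the same way: their two long sides lie on the curves $C_i$.

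Together these homeomorphisms assemble into an isomorphism of ribbon graphs that is the identity on edge labels. By the edge-labeled convention, this gives $\bG^X\setminus Y=(\bG\setminus Y)^X$.
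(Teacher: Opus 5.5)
Your proof is correct. Because $X\cap Y=\emptyset$, the spanning ribbon subgraph $(V(\bG),X)$ is the same surface in $\bG$ and in $\bG\setminus Y$. So both constructions glue the new vertex discs along the same boundary curves $C_1,\dots,C_k$, and every surviving edge is attached along the same arcs of these curves (edges in $X$ by their sides, edges outside $X\cup Y$ by their original ends); extending the identity on each $C_i$ across the discs then gives the required edge-label-preserving isomorphism.

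The paper does not prove this statement itself; it only cites Chmutov. Your direct unwinding of the definition is the standard way this fact is justified.
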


\begin{proposition}[\cite{Chmutov2009}] \label{prop: partial duality}
    $(\bG^{X})^Y = \bG^{X\symdiff Y}$ for any $X,Y\subseteq E(\bG)$. In particular, $(\bG^X)^X = \bG$.
\end{proposition}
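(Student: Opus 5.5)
The plan is to work directly from the geometric definition of partial duality and to compare, edge by edge and vertex by vertex, the two ribbon graphs $(\bG^X)^Y$ and $\bG^{X\symdiff Y}$ on the common edge set $E(\bG)$. Under the Convention above, it suffices to exhibit a homeomorphism of surfaces with boundary that is the identity on every edge disc and carries vertex discs to vertex discs.

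\textbf{Step 1: a common model.} Let $\Sigma$ be the closed surface obtained from $\bG$ by capping every boundary component with a disc. Every ribbon graph on $E(\bG)$ obtained from $\bG$ by partial duality embeds in $\Sigma$ with the same edge discs, and its vertex discs together with its face discs tile the complement of the edges. For each edge $e$ I would record its two \emph{vertex arcs} (the ends of $e$) and its two \emph{face arcs} (the other two sides of the rectangle $e$). The defining step of $\bG^X$ makes the following change: for $e\in X$, the vertex arcs and face arcs of $e$ swap roles, while for $e\notin X$ nothing changes. Indeed, the boundary of $(V(\bG),X)$ runs along the face arcs of the edges in $X$ and along the vertex arcs of the edges not in $X$. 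The new vertex discs are exactly the regions bounded by these curves.

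\textbf{Step 2: show the swap description determines the ribbon graph.} This is the main obstacle. I would argue that a ribbon graph on $E$ inside $\Sigma$ is determined, up to edge-preserving isomorphism, by specifying which pair of opposite sides of each edge rectangle are its ends. Given that choice, the vertices are the closures of the components of $\Sigma$ minus the edges that touch the chosen ends. The one thing to verify is that the curves formed by the chosen arcs and the remaining vertex boundaries really bound discs in $\Sigma$. This holds because the construction of $\bG^X$ explicitly attaches a disc along each such curve, and the complementary regions are the old vertex discs minus the material now absorbed, which remain discs. Here I would lean on the standard local picture from Chmutov's paper: near each edge the picture is a rectangle whose two opposite sides are its ends.

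\textbf{Step 3: conclude.} By Steps 1--2, applying partial duality at $X$ toggles the end-pair of each $e\in X$. Applying it again at $Y$ toggles the end-pair of each $e\in Y$. So $e$ ends up toggled exactly when $e\in X\symdiff Y$. Toggling at $X\symdiff Y$ directly produces the same choice of end-pairs, so by Step 2 the ribbon graphs $(\bG^X)^Y$ and $\bG^{X\symdiff Y}$ agree up to an isomorphism that is the identity on $E$.

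The special case $(\bG^X)^X=\bG^\emptyset=\bG$ follows immediately, since partial duality at $\emptyset$ attaches discs along the boundaries of $(V(\bG),\emptyset)$ and so just recovers the original vertices.
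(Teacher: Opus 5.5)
\textbf{The gap is in Steps 1--2.} Partial duals of $\bG$ do \emph{not} in general embed cellularly in the same closed surface $\Sigma$, because partial duality can change the Euler genus.

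For example, let $\bG$ be a bouquet with two interlaced orientable loops $e,f$. It has one vertex, two edges and one boundary component, so $\Sigma$ is the torus. Now $(V(\bG),\{e\})$ is an annulus, so $\bG^{\{e\}}$ has two vertices. Since $f$ interlaces $e$, both $e$ and $f$ join these two vertices. Hence $\bG^{\{e\}}$ is the plane $2$-cycle, whose closed surface is the sphere ($2-2+2=2$). The two boundary curves of $(V(\bG),\{e\})$ are parallel essential curves on the torus and bound no discs in $\Sigma$.

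So the ``one thing to verify'' in Step 2 is false. Your justification for it conflates two different things: gluing a new, abstract disc along a curve (which is what the definition of $\bG^X$ does) and finding a disc bounded by that curve inside $\Sigma$. Likewise, ``the closures of the components of $\Sigma$ minus the edges'' are unions of old vertex and face discs glued along vertex-boundary arcs, not the vertices of any ribbon graph. Step 3 needs $\bG^X$ to be described by the same invariant data as $\bG$ with only the end-pairs toggled. With $\Sigma$ as that data, this fails, so the argument does not go through as written.

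\textbf{How to repair it.} Your core idea, that partial duality toggles which pair of opposite sides of each rectangle are ends, is right; what must change is the ambient object.

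Replace $\Sigma$ by the space $R$ consisting of the edge rectangles together with the \emph{free arcs} of $\bG$. These are the segments of vertex boundaries between consecutive edge ends, plus the boundary circles of isolated vertices.

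First, check that partial duality preserves $R$. Each boundary component of $(V(\bG),X)$ is made of the long sides of edges in $X$, the ends of edges not in $X$, and free arcs of $\bG$. So $\bG^X$ contains the same $R$, with the same free arcs, and only the end-pairs of edges in $X$ are switched.

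Second, check that $R$ plus a choice of end-pairs determines the ribbon graph. Fix one pair of opposite sides on each rectangle. Every corner lies on exactly one chosen side and exactly one free arc. Thus the chosen sides and the free arcs form two perfect matchings on the corners, and their union is a disjoint union of circles. The ribbon graph is recovered by gluing an abstract disc along each circle. By the Alexander trick this is unique up to a homeomorphism that is the identity on $R$, hence on every edge.

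With $R$ in place of $\Sigma$, your Step 3 goes through verbatim, including $(\bG^X)^X=\bG$.
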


\begin{proposition}[\cite{Chmutov2009}]  \label{prop: partial duality preserves orientability}
    The number of connected components and (non-)orientability are invariant under partial duality.
\end{proposition}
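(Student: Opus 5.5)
The plan is to prove both assertions topologically, by comparing $\bG$ and $\bG^X$ as surfaces. By the definition of partial duality, $\bG^X$ is built from the same edge discs as $\bG$. Its vertex discs are glued along the boundary components of the spanning ribbon subgraph $(V(\bG),X)$.

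For orientability, consider the closed surface obtained by capping off every boundary component of the ribbon subgraph $\bG_X:=(V(\bG),X)$ with a disc. This surface can be described in two ways: as the vertices and $X$-edges of $\bG$ with caps attached, or as the vertices of $\bG^X$ together with the cores of the $X$-edges. Then $\bG$ and $\bG^X$ differ by exchanging the roles of "old vertices $\cup$ $X$-edges" and "new vertices". Concretely, I would show that both $\bG$ and $\bG^X$ deformation retract, or are obtained by the same gluing, from a common surface $S$. Here $S$ is $\bG_X$ with the edges $E\setminus X$ attached, plus discs, and the difference consists only of discs and edge bands, which do not change orientability.

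The cleaner route is to use the standard equivalent description via arrow presentations or signed rotation systems. A ribbon graph is orientable iff its signs can be switched so that every edge is untwisted. Partial duality at a single edge $e$ acts locally: it replaces the arrows of $e$ by those obtained from tracing the boundary of $v\cup e$. One checks directly that a consistent orientation of $\bG$ induces one of $\bG^e$. By Proposition~\ref{prop: partial duality}, applying this one edge at a time handles general $X$. Since $(\bG^e)^e=\bG$, the converse follows by symmetry.

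For the number of connected components, observe that two edges or vertices of $\bG$ in the same component stay connected in $\bG^X$. Each new vertex is a disc glued along a boundary cycle of $\bG_X$, and that cycle runs through the old vertices and $X$-edges of one component of $\bG_X$. Every edge of $\bG$ is incident, in $\bG^X$, to new vertices whose boundaries meet the old vertices that edge touched. Components are therefore unions of the same edge sets, and the components correspond. Isolated vertices need separate care: an isolated vertex of $\bG$ is a component of $\bG_X$ with one boundary and becomes an isolated vertex of $\bG^X$. Again, involutivity gives the inverse correspondence.

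The main obstacle is making the local single-edge orientability computation rigorous without a full formalism. I would use Chmutov's arrow-presentation description of $\bG^e$ and check the orientable and twisted cases for $e$: a non-loop, an orientable loop, and a non-orientable loop.
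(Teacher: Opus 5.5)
The paper gives no proof of this statement; it is simply quoted from Chmutov. Your main route is the standard argument and is sound. It reduces to a single edge via $(\bG^X)^Y=\bG^{X\symdiff Y}$, checks the local move on a signed rotation system in which every edge is untwisted, and gets the converse from $(\bG^e)^e=\bG$. The check you postpone is routine:
\begin{itemize}
\item an untwisted non-loop $e=uv$ with rotations $(e,A)$ at $u$ and $(e,B)$ at $v$ becomes an untwisted loop at one vertex with rotation $(e,A,e,B)$;
\item an untwisted loop with rotation $(e,A,e,B)$ splits its vertex into two vertices with rotations $(e,A)$ and $(e,B)$, joined by the untwisted edge $e$;
\item no other sign changes, and non-orientable loops never occur in this direction.
\end{itemize}

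The component count via boundary cycles of $\bG_X$ plus involutivity is also fine, with one correction. The facts you list show that each component of $\bG^X$ lies inside a single component of $\bG$. That is the opposite of the inclusion you announce; the announced one would additionally need that distinct boundary cycles of one component of $\bG_X$ are linked through $X$-edges in $\bG^X$, which you do not argue. What you actually proved suffices. Write $c(\cdot)$ for the number of components. Every component of $\bG$ yields at least one new vertex, so $c(\bG^X)\ge c(\bG)$, and involutivity gives the reverse inequality.

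You should discard the first ``common surface'' sketch, which is false as written:
\begin{itemize}
\item $\bG$ and $\bG^X$ are not homotopy equivalent in general, so they cannot both be deformation retracts of one space. For example, a bouquet with one orientable loop is an annulus, while its partial dual at that loop is a disc.
\item Attaching edge bands can certainly destroy orientability.
\item Let $\Sigma$ be $\bG_X$ with its boundary components capped off. Then $\Sigma$ with the bands of $E\setminus X$ attached is a $2$-complex, not a surface: along each attaching arc of an edge in $E\setminus X$, the old vertex, the new vertex disc and the band all meet.
\end{itemize}

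The correct global version goes as follows. $\bG$ is this complex with the open new discs removed, and $\bG^X$ is the same complex with the open old vertex discs removed. Given an orientation $o$ of $\bG$, extend $o|_{\bG_X}$ to an orientation $O$ of $\Sigma$. Then orient $\bG^X$ by $-O$ on the new vertices and the $X$-edges, and by $o$ on the edges of $E\setminus X$. The reversal is forced: a new disc and the old vertex it abuts induce opposite orientations on their common arcs, so without it the bands of $E\setminus X$ would be glued incompatibly.
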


By definition, $X$ is a quasi-tree of $\bG$ if and only if $\bG^X$ is a bouquet. More generally, we have the following result.

\begin{proposition}[\cite{CMNR2019a}] \label{prop: partial dual and quasi-tree}
    Let $\bG$ be a ribbon graph and $X\subseteq E(\bG)$.
    Then $Q\subseteq E(\bG)$ is a quasi-tree of $\bG$ if and only if $Q\symdiff X$ is a quasi-tree of $\bG^X$.
\end{proposition}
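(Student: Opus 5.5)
We prove Proposition~\ref{prop: partial dual and quasi-tree} by reducing it to Proposition~\ref{prop: partial duality} together with the observation recorded just before the statement: a set $F$ is a quasi-tree of a ribbon graph $\bH$ if and only if $\bH^F$ is a bouquet. More precisely, we use the componentwise version, since quasi-trees of disconnected ribbon graphs are defined component by component. In that version, $F$ is a quasi-tree of $\bH$ if and only if each connected component of $\bH^F$ has exactly one vertex. To see this, note that the vertices of $\bH^F$ are the disks glued along the boundary components of $(V(\bH),F)$. By Proposition~\ref{prop: partial duality preserves orientability}, $\bH^F$ and $\bH$ have the same number of components. Partial duality also respects the component decomposition, because each boundary component of $(V(\bH),F)$ lies within a single component of $\bH$. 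Hence $(V(\bH),F)$ has exactly one boundary component per component of $\bH$ precisely when $\bH^F$ has exactly one vertex per component.

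With this in hand, the argument is a short chain. By Proposition~\ref{prop: partial duality},
\[
(\bG^X)^{Q\symdiff X} = \bG^{X\symdiff (Q\symdiff X)} = \bG^{Q}.
\]
Applying the characterization to $\bH=\bG^X$ and $F=Q\symdiff X$, the set $Q\symdiff X$ is a quasi-tree of $\bG^X$ if and only if $(\bG^X)^{Q\symdiff X}=\bG^Q$ has one vertex per component. Applying it again to $\bH=\bG$ and $F=Q$, this holds if and only if $Q$ is a quasi-tree of $\bG$. Here we use that $E(\bG^X)=E(\bG)$, so all the symmetric differences are taken inside the same ground set.

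The only step needing care is the characterization ``quasi-tree $\iff$ partial dual has one vertex per component'' in the disconnected case. This amounts to checking that partial duality acts independently on components, which follows from Step~2 of the definition: each boundary curve lies in a single component. I expect this step to be routine. The rest is formal manipulation using Proposition~\ref{prop: partial duality}.
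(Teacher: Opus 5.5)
Your proof is correct: combining $(\bG^X)^{Q\symdiff X}=\bG^{Q}$ (Proposition~\ref{prop: partial duality}) with the componentwise characterization ``$F$ is a quasi-tree of $\bH$ iff every component of $\bH^F$ has exactly one vertex'' gives both directions at once, and your treatment of the disconnected case is sound. The paper gives no proof of its own; it only cites \cite{CMNR2019a} after remarking that $X$ is a quasi-tree iff $\bG^X$ is a bouquet, and your argument is exactly the natural completion of that remark.
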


For a bouquet $\bB$, every non-orientable loop $e$ is a quasi-tree and every pair of orientable loops $f,f'$ that interlace forms a quasi-tree, which implies that $\bB^e$ and $\bB^{ff'}$ are also bouquets.
We call such operations on bouquets the \emph{elementary partial duality}.
See Figures~\ref{fig: elementary partial duality - nonorientable} and \ref{fig: elementary partial duality - orientable} for examples.

\begin{figure}[h!]
    \centering
    \begin{tikzpicture}[scale=1]
        \begin{scope}
            \draw[line width=3pt, red] (50:2) -- (-50:2);
            \draw[line width=3pt, blue] (90:2) -- (-130:2);

\draw[line width=3pt, blue] (20:2) to[bend left=40] (110:2);

            \draw[line width=3pt, blue] (-165:2) to[bend left=40] (-110:2);
\draw[line width=3pt, red] (-10:2) to[bend right=50] (-35:2);

            \draw[line width=3pt, red] (0:2) -- (180:2);

            \draw[line width=2.5pt] (0,0) circle (2);
            
            \draw[line width=1pt, dashed, ->] (5:2.15) arc (5:175:2.15);
            \draw[line width=1pt, dashed, <-] (-5:2.15) arc (-5:-175:2.15);
            \node at (17:2.5) {$S_1$};
            \node at (-20:2.5) {$S_2$};
            \node at (0:2.25) {$e$};

            \node at (0,-2.6) {$\bB$};
        \end{scope} 
        \begin{scope}[xshift=6cm]
            \draw[line width=3pt, blue] (50:2) -- (-130:2);
            \draw[line width=3pt, red] (90:2) -- (-50:2);

\draw[line width=3pt, blue] (20:2) to[bend left=40] (110:2);

            \draw[line width=3pt, blue] (-15:2) to[bend right=40] (-70:2);
\draw[line width=3pt, red] (-170:2) to[bend left=50] (-145:2);

            \draw[line width=3pt, red] (0:2) -- (180:2);

            \draw[line width=1pt, dashed, ->] (5:2.15) arc (5:175:2.15);
            \draw[line width=1pt, dashed, ->] (-5:2.15) arc (-5:-175:2.15);
            \node at (17:2.5) {$S_1$};
            \node at (-20:2.5) {$S_2$};
            \node at (0:2.25) {$e$};

            \draw[line width=2.5pt] (0,0) circle (2);
            
            \node at (0,-2.6) {$\bB^e$};
        \end{scope}
    \end{tikzpicture}
    \caption{
        The partial dual of a bouquet $\bB$ at a non-orientable loop $e$.
        The blue chords represent orientable loops, and the red chords represent non-orientable loops.
    }
    \label{fig: elementary partial duality - nonorientable}
\end{figure}
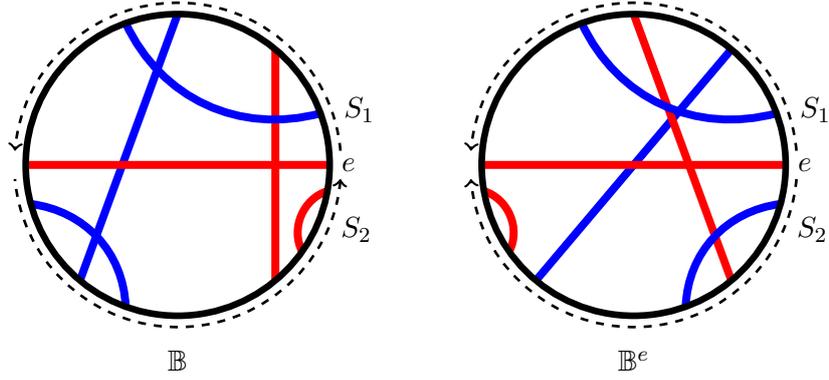

\begin{figure}[h!]
    \centering
    \begin{tikzpicture}[scale=1]
        \begin{scope}
            \draw[line width=3pt, blue] (0:2) -- (180:2);
            \draw[line width=3pt, blue] (90:2) -- (-90:2);

            \draw[line width=1pt, dashed, ->] (5:2.15) arc (5:85:2.15);
            \draw[line width=1pt, dashed, ->] (95:2.15) arc (95:175:2.15);
            \draw[line width=1pt, dashed, ->] (185:2.15) arc (185:265:2.15);
            \draw[line width=1pt, dashed, ->] (275:2.15) arc (275:355:2.15);

            \node at (45:2.5) {$S_1$};
            \node at (135:2.5) {$S_2$};
            \node at (225:2.5) {$S_3$};
            \node at (315:2.5) {$S_4$};

\draw[line width=3pt, blue] (20:2) to[bend left=40] (110:2);

\draw[line width=3pt, red] (165:2) to[bend right=60] (130:2);

\draw[line width=3pt, blue] (-165:2) to[bend left=40] (-110:2);

\draw[line width=3pt, red] (-140:2) to[bend left=30] (-40:2);

            \node at (0:2.24) {$f$};
            \node at (89:2.27) {$f'$};

            \draw[line width=2.5pt] (0,0) circle (2);

            \node at (0,-2.6) {$\bB$};
        \end{scope} 
        \begin{scope}[xshift=6cm]

            \draw[line width=3pt, blue] (0:2) -- (180:2);
            \draw[line width=3pt, blue] (90:2) -- (-90:2);

            \draw[line width=1pt, dashed, ->] (5:2.15) arc (5:85:2.15);
            \draw[line width=1pt, dashed, ->] (95:2.15) arc (95:175:2.15);
            \draw[line width=1pt, dashed, ->] (185:2.15) arc (185:265:2.15);
            \draw[line width=1pt, dashed, ->] (275:2.15) arc (275:355:2.15);

            \node at (45:2.5) {$S_3$};
            \node at (135:2.5) {$S_2$};
            \node at (225:2.5) {$S_1$};
            \node at (315:2.5) {$S_4$};

\draw[line width=3pt, blue] (20+180:2) to[bend right=40] (110:2);

\draw[line width=3pt, red] (165:2) to[bend right=60] (130:2);

\draw[line width=3pt, blue] (-165+180:2) to[bend left=40] (-110+180:2);

\draw[line width=3pt, red] (-140+180:2) to[bend right=30] (-40:2);

            \node at (0:2.24) {$f$};
            \node at (89:2.27) {$f'$};

            \draw[line width=2.5pt] (0,0) circle (2);
            
            \node at (0,-2.6) {$\bB^{\{f,f'\}}$};
        \end{scope}
    \end{tikzpicture}
    \caption{
        The partial dual of a bouquet $\bB$ at an interlacing pair of orientable loops $f$ and $f'$.
        The blue chords represent orientable loops, and the red chords represent non-orientable loops.
    }
    \label{fig: elementary partial duality - orientable}
\end{figure}
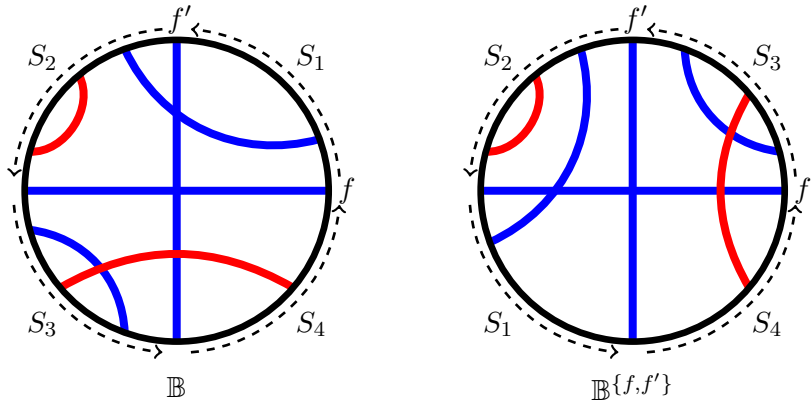

By the following proposition, in order to show a certain property on bouquets is closed under partial duality it suffices to check whether it is preserved under elementary partial duality.

\begin{proposition}[folklore]\label{prop: elementary partial duals}
    If two bouquets are partial duals of each other, then one can be obtained from the other by a sequence of elementary partial duals.
\end{proposition}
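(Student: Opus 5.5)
Let $\bB$ and $\bB'$ be bouquets on $E$ with $\bB' = \bB^X$. The plan is to induct on $|X|$. By Proposition~\ref{prop: partial dual and quasi-tree} with $Q=\emptyset$: since $\emptyset$ is a quasi-tree of the bouquet $\bB'$, the set $X = \emptyset \symdiff X$ is a quasi-tree of $\bB$. So it suffices to prove the following. For a bouquet $\bB$ and a nonempty quasi-tree $X$ of $\bB$, there is an elementary move $Y\subseteq X$ such that $Y$ is a quasi-tree of $\bB$. Here $Y=\{e\}$ with $e$ non-orientable, or $Y=\{f,f'\}$ with $f,f'$ orientable and interlaced. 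Given such a $Y$, Proposition~\ref{prop: partial dual and quasi-tree} shows that $X\symdiff Y = X\setminus Y$ is a quasi-tree of the bouquet $\bB^Y$. Proposition~\ref{prop: partial duality} gives $(\bB^Y)^{X\setminus Y} = \bB^X$. Since $|X\setminus Y|<|X|$, induction finishes the argument.

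To find $Y$, work in the spanning ribbon subgraph $\bB|_X$, i.e. the bouquet restricted to the loops in $X$; it has one boundary component. If $X$ contains a non-orientable loop $e$, take $Y=\{e\}$, which is always a quasi-tree as noted before the definition of elementary partial duality. Otherwise every loop of $X$ is orientable, and we must find two interlaced loops in $X$. Suppose instead that $X\neq\emptyset$ consists of pairwise non-interlaced orientable loops. Then the chord diagram of $X$ is planar, so $\bB|_X$ is a plane bouquet. Each orientable loop in such a plane bouquet increases the number of boundary components by one. Hence $\bB|_X$ has $|X|+1\ge 2$ boundary components, contradicting that $X$ is a quasi-tree.

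For the count, I would argue as follows: choose an innermost chord of $X$, one whose arc contains no other endpoints of $X$. Removing it merges two boundary components, and then I induct. A cleaner alternative is an Euler characteristic computation: the surface has $\chi = 1-|X|$, it is orientable of genus $0$ because its signed chord diagram is planar, so it has $2-\chi = |X|+1$ boundary components. So some pair $f,f'\in X$ interlaces, and $\{f,f'\}$ is a quasi-tree as noted before the definition of elementary partial duality.

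I expect the main obstacle to be the bookkeeping in the orientable case, namely justifying rigorously that a chord diagram with only sign-$0$, pairwise non-interlacing chords has more than one boundary component when nonempty. The innermost-chord induction handles this directly. The point is that traversing the boundary along the arc enclosed by an innermost orientable chord closes up into a separate component, using the fact that the loop is an untwisted annulus. Everything else is a formal consequence of Propositions~\ref{prop: partial duality} and~\ref{prop: partial dual and quasi-tree}.
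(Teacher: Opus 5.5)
Your proof is correct, but it handles the key step differently from the paper.

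The paper's route is through the $\Delta$-matroid $D(\bB)$. It applies the basis exchange axiom between $\emptyset$ and $Q$ to get a chain $\emptyset=B_0\subset\dots\subset B_k=Q$ of quasi-trees in which each step adds $1$ or $2$ elements. It then takes the chain maximal. If a $2$-element step contained a non-orientable loop of $\bB^{B_{i-1}}$, the chain could be refined. So each $2$-element step consists of orientable loops, and these must interlace.

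You instead prove directly, for a bouquet, that every nonempty quasi-tree $X$ contains a $1$- or $2$-element quasi-tree of the required type. Two substitutions make this work. Choosing a non-orientable loop whenever $X$ contains one replaces the paper's maximality argument. Your planarity and Euler characteristic count replaces the exchange axiom: pairwise non-interlacing orientable loops in $X\neq\emptyset$ give $|X|+1$ boundary components.

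The trade-off is this. Your argument is self-contained and does not use Theorem~\ref{thm: ribbon-graphic}, Bouchet's result that quasi-trees form a $\Delta$-matroid. It pays for that with a small topological lemma. The paper's argument needs only the $|X|=2$ case of that lemma, namely that two orientable loops form a quasi-tree only if they interlace.

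If you are willing to use the binary representation, your lemma also has a one-line algebraic proof. When the loops of $X\neq\emptyset$ are orientable and pairwise non-interlacing, $\bfM_2(\bB)[X]$ is the zero matrix. It is therefore singular, so $X$ is not a quasi-tree by Proposition~\ref{prop: binary interlacing matrix}.
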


Our proof relies on the relation between ribbon graphs and $\Delta$-matroids; see \S\ref{sec: delta-matroids}. 

\begin{proof}
    Let $\bB$ and $\bB'$ be bouquets such that $\bB' = \bB^Q$ for some quasi-tree $Q$ of $\bB$.
    Denote $D := D(\bB)$. Then $\emptyset$, $Q$ are bases of $D$ and $D * Q = D(\bB^Q)$.
    By the basis exchange axiom for $\Delta$-matroids, we obtain a chain of bases 
    \[
        \emptyset = B_0 \subset B_1 \subset B_2 \subset \dots \subset B_k = Q
    \]
    such that $|B_i\setminus B_{i-1}| = 1$ or $2$ for each $i=1,2,\dots,k$.
    We may assume the chain is maximal.

    Note that $\bB_i := \bB^{B_i}$ is a bouquet for each $i=0,1,2,\dots,k$, and by Proposition~\ref{prop: partial duality}, $\bB_i$ is the partial dual of $\bB_{i-1}$ at $B_i\setminus B_{i-1}$ for each $i=1,2,\dots,k$. 
    
    Suppose $|B_i\setminus B_{i-1}| = 1$. Then the unique element in the set is a non-orientable loop in $\bB_{i-1}$. 

    Suppose $|B_i\setminus B_{i-1}| = 2$. Then the two elements in the set are orientable loops in $\bB_{i-1}$, since otherwise we can expand the chain of bases, contradicting the maximality. Because $B_i\setminus B_{i-1}$ is a quasi-tree, the two orientable loops must interlace.
    
    In each case, taking the partial dual $\bB_i$ of $\bB_{i-1}$ at $B_i\setminus B_{i-1}$ is elementary.
\end{proof}

We define minors of ribbon graphs. 

\begin{definition}[Ribbon graph minors] A ribbon graph $\bH$ is a \emph{minor} of a ribbon graph $\bG$ if $\bH$ can be obtained from $\bG$ by a sequence of edge deletions, vertex deletions, and partial duals.
\end{definition}

The following result describes how edge deletions change the set of quasi-trees. Recall that an edge $e$ of a ribbon graph $\bG$ is called a \emph{bridge} if its deletion increases the number of connected components; or equivalently, every quasi-tree of $\bG$ contains $e$.

\begin{proposition}[\cite{CMNR2019a}]
    \label{prop: edge deletion and quasi-tree}
    Let $\bG$ be a ribbon graph and $e \in E(\bG)$.
    \begin{enumerate}
        \item $\cQ(\bG \setminus e) = \{Q : e\notin Q\in \cQ(\bG)\}$ if $e$ is not a bridge.
        \item $\cQ(\bG \setminus e) = \{Q \setminus \{e\} : e\in Q\in \cQ(\bG)\}$ if $e$ is a bridge.
    \end{enumerate}
\end{proposition}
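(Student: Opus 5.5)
The statement to prove is Proposition~\ref{prop: edge deletion and quasi-tree}, describing $\cQ(\bG\setminus e)$. Since quasi-trees of a disconnected ribbon graph are unions of quasi-trees of its components, I would first reduce to the case where $\bG$ is connected. The main tool will be partial duality together with Proposition~\ref{prop: partial dual and quasi-tree}, which reduces questions about quasi-trees to questions about bouquets. Write $\bG\setminus e$ for the ribbon graph obtained by removing the edge disc $e$.

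\textbf{Case 1: $e$ is not a bridge.} Then $\bG\setminus e$ is connected and has the same vertex set. For $Q\subseteq E(\bG)\setminus\{e\}$, the spanning ribbon subgraph $(V(\bG),Q)$ is literally the same surface whether we regard it inside $\bG$ or inside $\bG\setminus e$. Hence it has one boundary component in one setting exactly when it does in the other. This gives $\cQ(\bG\setminus e)=\{Q: e\notin Q\in\cQ(\bG)\}$ directly from the definition; no duality is needed.

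\textbf{Case 2: $e$ is a bridge.} Now $\bG\setminus e$ has two components $\bG_1,\bG_2$, with $e$ joining a vertex of $\bG_1$ to a vertex of $\bG_2$. Take $Q\subseteq E(\bG)\setminus\{e\}$ and write $Q=Q_1\cup Q_2$ with $Q_j\subseteq E(\bG_j)$.

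The key topological step is the following claim: the spanning subgraph $(V(\bG),Q\cup\{e\})$ has exactly one boundary component if and only if each $(V(\bG_j),Q_j)$ has exactly one boundary component.

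To prove the claim, note that attaching the band $e$ joins two boundary curves of $(V(\bG),Q)$. These two curves are distinct, because they lie in different components. Attaching a band between two distinct boundary components merges them into one, so the boundary count drops by exactly one.

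Since $(V(\bG),Q)$ is the disjoint union of $(V(\bG_1),Q_1)$ and $(V(\bG_2),Q_2)$, its number of boundary components is the sum of the two counts. This sum minus one equals $1$ exactly when both counts equal $1$, since each count is at least $1$. This proves the claim.

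The claim gives the bijection $Q\mapsto Q\cup\{e\}$ between $\cQ(\bG\setminus e)$ and $\{Q'\in\cQ(\bG): e\in Q'\}$. Finally, every quasi-tree of $\bG$ contains $e$: otherwise $(V(\bG),Q')$ would be disconnected and so would have at least two boundary components. This is consistent with the stated equivalence in the definition of a bridge.

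The main obstacle is making the band-attachment boundary count rigorous. I would argue locally: the ends of $e$ lie on two boundary arcs, and gluing the band replaces those two arcs by two new arcs running along the sides of $e$. Because the two arcs lie on different boundary circles, the result is a single circle. This holds regardless of orientability, since the twist of the band does not matter when the two circles are distinct.
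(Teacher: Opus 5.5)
Your proof is correct and complete. When $e$ is not a bridge, the spanning surface $(V(\bG),Q)$ with $e\notin Q$ is literally the same in $\bG$ and in $\bG\setminus e$. When $e$ is a bridge, the band $e$ joins two boundary circles lying in different components of $(V(\bG),Q)$, so the boundary count drops by exactly one, and it equals $1$ precisely when each side has exactly one boundary component.

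The paper itself gives no proof of this proposition; it simply cites \cite{CMNR2019a}. Your direct boundary-counting argument is the standard elementary one, so there is no competing route in the paper to compare against.

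Two small tidy-ups. The partial-duality tool you announce at the outset is never used in either case, so that sentence can be dropped. The closing observation that every quasi-tree of $\bG$ contains $e$ is not needed to establish the set equality in item~2.
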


Lastly, we review partial Petrials of ribbon graphs, which might change the number of quasi-trees and orientability.

\begin{definition}[Partial Petrial]
    The \emph{\textup{(}partial\textup{)} Petrial} of a ribbon graph $\bG$ at $X\subseteq E(\bG)$ is a ribbon graph $\bG^{\tau(X)}$ obtained from $\bG$ by giving a half-twist to each edge in $X$.
\end{definition}

We denote $\bG^{\tau(e)}$ if $X = \{e\}$.
Note that, if $\bG$ is a bouquet, then $\bG^{e}$ is also a bouquet and the orientability of $e$ is toggled.

It is clear from the definition that $(\bG^{\tau(e)}) \setminus f = (\bG \setminus f)^{\tau(e)}$ for any distinct edges $e,f\in E(\bG)$.
The partial Petrial and the partial duality commute with each other when applied to distinct edges.
\begin{proposition}[\cite{EM2012}]
    $(\bG^{\tau(e)})^f = (\bG^f)^{\tau(e)}$ for any distinct edges $e,f\in E(\bG)$.
\end{proposition}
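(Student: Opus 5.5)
The plan is to prove the identity in a combinatorial model where both operations are explicit local moves touching disjoint data. I would use Chmutov's \emph{arrow presentation} of ribbon graphs. A ribbon graph is encoded by a family of oriented-free circles, one per vertex boundary. On these circles sit pairs of labelled arrows, one pair per edge, each arrow marking where an end of the edge is glued. The directions of the two arrows of an edge record how the edge disc is attached, and this determines whether the edge is twisted.

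First I will recall the two operations in this model, since this makes the distinctness of $e$ and $f$ visibly relevant.
\begin{itemize}
\item The partial Petrial $\bG^{\tau(e)}$ reverses the direction of exactly one of the two arrows labelled $e$. This is the same as regluing one end of the disc $e$ by an orientation-reversing map of the attaching segment. All other arrows and circles are untouched.
\item The partial dual $\bG^{f}$ is obtained as follows. Draw the two line segments of the edge $f$ joining the heads and tails of the two $f$-arrows. Then trace the new circles formed by the old circles together with these segments, with the arcs under the $f$-arrows removed. Finally, place the $f$-arrows along the newly drawn segments.
\end{itemize}
This recipe is just the boundary of the spanning subgraph $(V(\bG),\{f\})$, as in the definition of partial duality. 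Under it, every arrow other than the two $f$-arrows is carried along unchanged: it keeps its position on the traced circle and its direction relative to the traversal.

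The key step is then a direct comparison of the two sides. The construction of $\bG^f$ depends only on the circles and the $f$-arrows. The arrows labelled $e$ are just passengers, so whether one $e$-arrow is reversed before or after the tracing makes no difference. Concretely:
\begin{itemize}
\item Both $(\bG^{\tau(e)})^f$ and $(\bG^f)^{\tau(e)}$ have the same circles, namely those traced from $\bG$ using $f$.
\item Both have the same $f$-arrows and the same arrows for every edge $g\neq e,f$.
\item Both have the same $e$-arrows: one of them reversed relative to $\bG^f$.
\end{itemize}
Since ribbon graphs on $E$ are determined up to isomorphism by their arrow presentations, the two sides coincide.

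The main obstacle is making precise that the tracing in $\bG^f$ transports the $e$-arrows faithfully, including their direction relative to the other arrows. This is where one of the $e$-arrows might lie on an arc that gets traversed in the opposite sense after dualizing at $f$. I would handle it by noting that Petrial reversal is a \emph{relative} statement: it means one $e$-arrow is reversed compared to what it would otherwise be. The tracing applies the same transformation to that arrow in both orders of operations, so this relative reversal is preserved. Equivalently, one can argue geometrically: the half-twist can be performed inside the interior of the disc $e$, away from $V(\bG)\cup f$. The partial dual at $f$ only modifies $V(\bG)$ by capping the boundary components of $V(\bG)\cup f$ with discs, and the interior of $e$ is disjoint from this region, so the two modifications commute up to a homeomorphism fixing every edge label.
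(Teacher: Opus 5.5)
Your argument is correct. In an arrow presentation, the partial dual at $f$ only deletes the $f$-arrows together with the arcs beneath them and adds two new $f$-labelled segments, while the partial Petrial at $e$ only reverses one $e$-arrow. Which $e$-arrow is reversed is immaterial, since reversing both $e$-arrows gives an equivalent presentation. So the two orders yield literally the same diagram, and your ``main obstacle'' is vacuous: the circles are unoriented and the $e$-arrows are never redrawn. The paper gives no proof of its own and simply quotes the statement from \cite{EM2012}; this locality observation is the standard justification behind that citation.
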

Remark that the partial Petrial and the partial duality do not commute when applied to the same edge; {\cite[Lemma~3.2]{EM2012}}.

\subsection{$\Delta$-matroids}\label{sec: delta-matroids}

A \emph{$\Delta$-matroid} is a set system $(E,\cB)$ with a finite set $E$ and a nonempty set $\cB$ of subsets, called \emph{bases}, of $E$ satisfying the \emph{basis exchange axiom} (for $\Delta$-matroids):
\begin{center}
    for any $B,B'\in \cB$ and any $x \in B\symdiff B'$, there is $y \in B\symdiff B'$ such that $B\symdiff\{x,y\} \in \cB$.
\end{center}
Then \emph{matroids} are exactly the $\Delta$-matroids all of whose bases have the same cardinality. A $\Delta$-matroid is \emph{even} if all the bases have the same parity.

We follow the basic terminology of~\cite[\S3]{Moffatt2019} for loops, coloops, twisting, and minors of $\Delta$-matroids. The twisting of a $\Delta$-matroid $D$ at a subset $X\subseteq E$ will be denoted by $D*X$.

In this subsection, we survey three important classes of $\Delta$-matroids: 
\S\ref{sec: representable delta-matroids} representable,
\S\ref{sec: ribbon-graphic delta-matroids} ribbon-graphic, and 
\S\ref{sec: strong delta-matroids} strong $\Delta$-matroids.
In particular, \S\ref{sec: strong delta-matroids} includes several new observations on strong $\Delta$-matroids, which are crucial for our proof of Theorem~\ref{thm-intro: matrix--quasi-tree for pseudo-orientable}.

\subsubsection{Representable $\Delta$-matroids}\label{sec: representable delta-matroids}

The representability of $\Delta$-matroids is defined through (skew-)symmetric matrices over a field, which generalize the representability of matroids.

A matrix $\bfA=(\bfA_{ij})$ is \emph{skew-symmetric} if $\bfA_{ij} = -\bfA_{ji}$ for all $i, j$ and $\bfA_{ii} = 0$ for all $i$.
We denote an identity matrix by $\bfI$. For a subset $X$ of the row indices of a matrix $\bfA$, we denote by $A[X]$ the principal submatrix of $\bfA$ indexed by $X$. We use the convention that $\det(\bfA[\emptyset])=1$.  

A square matrix with complex entries is \emph{principally unimodular} (in short, \emph{PU}) if all of its principal minors are $0$ or $\pm1$.

\begin{theorem}[\cite{Bouchet1988}]
    Let $\bfA$ be a symmetric or skew-symmetric matrix over a field $K$ with rows and columns indexed by $E$.
    Then, the set system $D(\bfA) = (E, \cB)$ with
    \[
        \cB := \{ I \subseteq E : \bfA[I] \text{ is nonsingular} \}
    \]
    is a $\Delta$-matroid.
\end{theorem}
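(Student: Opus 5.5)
Fix $B,B'\in\cB$, so $\bfA[B]$ and $\bfA[B']$ are nonsingular, and let $x\in B\symdiff B'$. I want $y\in B\symdiff B'$ with $\bfA[B\symdiff\{x,y\}]$ nonsingular. The plan is to reduce to the case $B=\emptyset$ by a pivot and then read the required $y$ off from a principal minor expansion.

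\emph{Step 1: pivot.} Write $\bfA=\begin{pmatrix}\bfA[B] & \bfA_{B,\bar B}\\ \bfA_{\bar B,B} & \bfA[\bar B]\end{pmatrix}$ with $\bar B=E\setminus B$. Define the principal pivot transform $\bfA*B$:
\[
(\bfA*B)[B]=\bfA[B]^{-1},\qquad
(\bfA*B)[\bar B]=\bfA[\bar B]-\bfA_{\bar B,B}\bfA[B]^{-1}\bfA_{B,\bar B}.
\]
The off-diagonal blocks are $\pm\bfA[B]^{-1}\bfA_{B,\bar B}$ and $\pm\bfA_{\bar B,B}\bfA[B]^{-1}$, with signs chosen so that $\bfA*B$ is again symmetric, respectively skew-symmetric. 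The inverse of a skew-symmetric matrix is skew-symmetric, so the skew case indeed stays skew.

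The key identity, a generalized Schur complement formula (Tucker), is
\[
\det\big((\bfA*B)[X]\big)=\pm\frac{\det\big(\bfA[X\symdiff B]\big)}{\det\big(\bfA[B]\big)}\qquad\text{for all }X\subseteq E.
\]
I would prove it by block elimination: write $X=(B\setminus Y)\cup Z$ with $Y\subseteq B$, $Z\subseteq\bar B$, and use Jacobi's complementary minor theorem on the $B$-block. It follows that $D(\bfA*B)=D(\bfA)*B$. Since twisting preserves the exchange axiom (checking it with $B\symdiff X$ in place of $B$), it suffices to treat the case $B=\emptyset$ and $B'=S$ with $\bfA[S]$ nonsingular.

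\emph{Step 2: the case $B=\emptyset$.} Let $x\in S$. If $\bfA_{xx}\ne0$, take $y=x$: then $\{x\}=\emptyset\symdiff\{x,y\}$ is a basis. Otherwise $\bfA_{xx}=0$, which always holds in the skew-symmetric case, since the diagonal is zero by definition. Expanding $\det\bfA[S]\neq0$ along row $x$ gives some $y\in S\setminus\{x\}$ with $\bfA_{xy}\ne0$. For this $y$,
\[
\det\bfA[\{x,y\}]=\bfA_{xx}\bfA_{yy}-\bfA_{xy}\bfA_{yx}=-\bfA_{xy}\bfA_{yx}=\pm\bfA_{xy}^2\ne0,
\]
because $\bfA_{yx}=\pm\bfA_{xy}$ by (skew-)symmetry. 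Hence $\{x,y\}\in\cB$. Finally, $\cB$ is nonempty because $\emptyset\in\cB$ by the convention $\det\bfA[\emptyset]=1$.

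\emph{Main obstacle.} The delicate step is the pivot identity in Step 1. One has to check that the minors transform correctly for every $X$, including sets $X$ that only partially meet $B$, and one has to track the signs so that skew-symmetry is preserved over fields of characteristic other than $2$. In characteristic $2$ no sign bookkeeping is needed. If this identity becomes messy, I would fall back on proving it only for $|B|\le2$ together with an induction along a chain of bases. However, that route needs care in order to avoid circularity with the exchange axiom itself.
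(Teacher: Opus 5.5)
The paper does not prove this statement; it quotes it from Bouchet. Your strategy is the standard argument: pivot at $B$ to reduce to $B=\emptyset$, then expand $\det\bfA[S]$ along row $x$. The reduction and Step~2 are correct.

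The step that fails as written is your symmetric pivot, in characteristic $\neq 2$. Suppose you keep $\bfA[B]^{-1}$ as the $B$-block and the Schur complement as the $\bar B$-block, and choose the off-diagonal signs to make the matrix symmetric. Then Tucker's identity fails, not even holding up to sign. Write $P=\bfA[B]$, $Q=\bfA_{B,\bar B}$, $S=\bfA[\bar B]$. Both symmetric sign choices give full determinant $\det(S-2Q^TP^{-1}Q)/\det P$ rather than $\det S/\det P$. For example, $\bfA=\begin{pmatrix}1&1\\1&2\end{pmatrix}$ with $B=\{1\}$ yields $\begin{pmatrix}1&\pm1\\ \pm1&1\end{pmatrix}$. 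This is singular, although $\bfA[\{1,2\}\symdiff B]=\bfA[\{2\}]$ is nonsingular. So $D(\bfA*B)=D(\bfA)*B$ fails for your matrix, and the reduction to $B=\emptyset$ is not justified.

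The fix is simple. Use the standard transform $N=\begin{pmatrix}P^{-1}&-P^{-1}Q\\ RP^{-1}&S-RP^{-1}Q\end{pmatrix}$ with $R=\bfA_{\bar B,B}$. For this matrix Tucker's identity $\det(N[X])=\det(\bfA[X\symdiff B])/\det(\bfA[B])$ holds exactly; it is the form the paper uses in the proof of Lemma~\ref{lem: detectability is minor-closed}.

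\begin{itemize}
\item In the skew-symmetric case, $N$ is already skew-symmetric, including the zero diagonal in characteristic $2$. This is because $P^{-1}$ is alternating, and hence so is $Q^TP^{-1}Q$.
\item In the symmetric case, $N$ is not symmetric. But its diagonal blocks are symmetric and $(RP^{-1})^T=-(-P^{-1}Q)$, so $N_{yx}=\pm N_{xy}$ for all $x,y$. That is the only property Step~2 uses.
\item If you want a genuinely symmetric pivot, negate the $B$-rows of $N$, so that $-P^{-1}$ is the $B$-block. This multiplies the principal minor indexed by $X$ by $(-1)^{|X\cap B|}$, so it preserves which principal submatrices are nonsingular.
\end{itemize}

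With either repair your proof is complete, and the fallback through $|B|\le 2$ is unnecessary.
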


Notice that if $\bfA$ is skew-symmetric, then $D(\bfA)$ is an even $\Delta$-matroid.
Geelen~{\cite[p.~27]{Geelen1996}} showed a partial converse; namely, $D(\bfA)$ is an even delta-matroid only if $\bfA$ is skew-symmetric or is a block matrix of the form 
$\begingroup
\setlength{\arraycolsep}{2pt}
\renewcommand{\arraystretch}{0.85}
\begin{pmatrix}
    0 & \bfC \\
    \bfC^T & 0 \\
\end{pmatrix}
\endgroup$
so that $D(\bfA) = D(\bfA')$ where $\bfA' = \begingroup
\setlength{\arraycolsep}{2pt}
\renewcommand{\arraystretch}{0.85}
\begin{pmatrix}
    0 & \bfC \\
    -\bfC^T & 0 \\
\end{pmatrix}
\endgroup$.

\begin{definition}\label{def: representable delta-matroid}
    Let $K$ be a field.
    A $\Delta$-matroid $D$ is \emph{representable over $K$} (or \emph{$K$-representable}) if $D = D(\bfA) \ast X$ 
for some symmetric or skew-symmetric matrix $\bfA$ over $K$ and some set $X$.
\end{definition}

A $\Delta$-matroid is \emph{binary} if it is representable over the field with two elements, and is \emph{regular} if it is even and representable over any field. In some literature, regular $\Delta$-matroids are defined as $\Delta$-matroids being represented by PU skew-symmetric matrices.
It is well known that these two notions of regularity are equivalent~{\cite[Thm.~4.13]{Geelen1996}}.

As we mentioned earlier, the representability of $\Delta$-matroids generalizes the representability of matroids.

\begin{proposition}[\cite{Bouchet1988}]
    Given a field $K$, a matroid is representable over $K$ in the usual sense if and only if it is representable over $K$ as a $\Delta$-matroid.
\end{proposition}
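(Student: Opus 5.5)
The plan is to reduce both directions to one classical fact about standard representations. Let $M$ be a matroid on $E$ with a basis $B$ and a standard representation $[\,\bfI_B \mid \bfC\,]$ over $K$, where the rows of $\bfC$ are indexed by $B$ and its columns by $E\setminus B$. The fact is that for $Y\subseteq E$, the set $B\symdiff Y$ is a basis of $M$ if and only if $|Y\cap B| = |Y\setminus B|$ and $\bfC[Y\cap B,\, Y\setminus B]$ is nonsingular. This follows by expanding the determinant of the columns indexed by $B\symdiff Y$ along the identity columns. I will use it in both directions.

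\emph{Forward direction.} Let $M$ be $K$-representable in the usual sense, and choose a basis $B$ with standard representation $[\,\bfI_B\mid \bfC\,]$. Define a skew-symmetric matrix $\bfA$ indexed by $E$ by setting $\bfA[B,E\setminus B]=\bfC$, $\bfA[E\setminus B,B]=-\bfC^T$, and all other entries $0$. A symmetric choice with $+\bfC^T$ works equally well. For $Y\subseteq E$, the principal submatrix $\bfA[Y]$ is block anti-diagonal with off-diagonal blocks $\bfC[Y\cap B, Y\setminus B]$ and its negative transpose.
\begin{itemize}
\item If $|Y\cap B|\ne |Y\setminus B|$, then $\bfA[Y]$ has rank at most $2\min(|Y\cap B|,|Y\setminus B|)<|Y|$, so it is singular.
\item Otherwise $\det \bfA[Y]=\pm\det(\bfC[Y\cap B,Y\setminus B])^2$.
\end{itemize}
Since $K$ is a field, this determinant is nonzero if and only if $\bfC[Y\cap B,Y\setminus B]$ is nonsingular. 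By the classical fact, the bases of $D(\bfA)$ are exactly the sets $Y$ with $B\symdiff Y$ a basis of $M$. Hence $M=D(\bfA)\ast B$.

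\emph{Converse.} Suppose $M=D(\bfA)\ast X$ with $\bfA$ symmetric or skew-symmetric over $K$. Since $\det \bfA[\emptyset]=1$, the empty set is a basis of $D(\bfA)$, so $X$ is a basis of $M$. Every basis of $M$ therefore has size $|X|$. I then force $\bfA$ into block anti-diagonal form relative to $X$ in two steps.
\begin{itemize}
\item For each $e\in E$, the set $X\symdiff\{e\}$ has the wrong size, so $\{e\}$ is not a basis of $D(\bfA)$. Hence $\bfA_{ee}=0$.
\item For $e,f$ both in $X$ or both outside $X$, the set $X\symdiff\{e,f\}$ has size $|X|\pm2$, so $\{e,f\}$ is not a basis of $D(\bfA)$. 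Since the diagonal is zero, $\det\bfA[\{e,f\}]=-\bfA_{ef}\bfA_{fe}=\pm \bfA_{ef}^2$, and this must vanish, so $\bfA_{ef}=0$.
\end{itemize}
Thus $\bfA$ is block anti-diagonal with $\bfC:=\bfA[X,E\setminus X]$ and $\bfA[E\setminus X,X]=\pm\bfC^T$. The determinant computation from the forward direction now shows that the bases of $D(\bfA)\ast X$ are exactly the sets $X\symdiff Y$ with $\bfC[Y\cap X,Y\setminus X]$ square and nonsingular. By the classical fact, these are the bases of the matroid represented by $[\,\bfI_X\mid\bfC\,]$, so $M$ is $K$-representable in the usual sense.

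\emph{Main obstacle.} The one step that needs care is the converse: extracting the block structure of $\bfA$ using only the equicardinality of bases. In particular, the $2\times 2$ minors must be handled uniformly in the symmetric and skew-symmetric cases, including characteristic $2$. The identity $\det\bfA[\{e,f\}]=\pm\bfA_{ef}^2$ once the diagonal is zero settles this in all cases. The rest is routine bookkeeping of the determinant expansion.
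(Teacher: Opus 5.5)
Your proof is correct. The paper gives no proof of its own and simply cites Bouchet. The remark right after the proposition (citing Geelen) says that every $\Delta$-matroid representation $D(\bfA)\ast X$ of a matroid has $\bfA$ of the block form $\begin{pmatrix} 0 & \bfC \\ \pm\bfC^T & 0\end{pmatrix}$ relative to $X$, and this is exactly what your converse derives from the $1\times 1$ and $2\times 2$ principal minors, so your argument is the standard one the paper relies on.
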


Moreover, we note that every $K$-representation of matroid, in the sense of $\Delta$-matroids, has the form $D(\bfA) \ast X$ where $X$ is a set and $\bfA$ is a matrix of the form 
$\begingroup
\setlength{\arraycolsep}{2pt}
\renewcommand{\arraystretch}{0.85}
\begin{pmatrix}
    0 & \bfC \\
    \pm\bfC^T & 0 \\
\end{pmatrix}
\endgroup$
with the first $|X|$ rows and columns indexed by $X$; see {\cite[p.~25]{Geelen1996}}

\subsubsection{Ribbon-graphic $\Delta$-matroids}\label{sec: ribbon-graphic delta-matroids}

Another natural class of $\Delta$-matroids arises from ribbon graphs.

\begin{theorem}[\cite{Bouchet1987b}]\label{thm: ribbon-graphic}
    Let $\bG$ be a ribbon graph.
    Then a set system $D(\bG) := (E(\bG),\cQ(\bG))$ is a $\Delta$-matroid.
    Moreover, $D(\bG)$ is even if and only if $\bG$ is orientable.
\end{theorem}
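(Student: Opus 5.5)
We will prove Theorem~\ref{thm: ribbon-graphic} by reducing to bouquets via partial duality, then checking the exchange axiom for bouquets through a combinatorial description of when a loop set is a quasi-tree.

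\textbf{Reduction to a connected bouquet.} Since quasi-trees of a disconnected ribbon graph are unions of quasi-trees of its components, $D(\bG)$ is the direct sum of the set systems of the components. Direct sums of $\Delta$-matroids are $\Delta$-matroids, and parity is additive, so we may assume $\bG$ is connected. Next, if $\cQ(\bG)\neq\emptyset$, pick $Q_0\in\cQ(\bG)$ and pass to the bouquet $\bB=\bG^{Q_0}$. By Proposition~\ref{prop: partial dual and quasi-tree}, $\cQ(\bB)=\{Q\symdiff Q_0 : Q\in\cQ(\bG)\}$. Twisting preserves the exchange axiom, and it preserves evenness because all bases change parity together. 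By Proposition~\ref{prop: partial duality preserves orientability}, $\bB$ is orientable iff $\bG$ is. Nonemptiness holds because a connected ribbon graph has a quasi-tree: a spanning tree is a plane tree with one boundary component. So it suffices to treat a bouquet $\bB$ with $\emptyset\in\cQ(\bB)$.

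\textbf{Linear-algebraic description for bouquets.} Encode $\bB$ by a symmetric matrix $\bfA$ over $\mathrm{GF}(2)$ indexed by the loops. Set $\bfA_{ee}=1$ iff $e$ is non-orientable, and for $e\neq f$ set $\bfA_{ef}=1$ iff $e,f$ interlace. The key claim, which is the classical result of Bouchet (via the Cohn--Lempel / Moran formula for the number of faces of a chord diagram), is
\[
  \#\{\text{boundary components of } (v,X)\} = 1 + \operatorname{corank}_{\mathrm{GF}(2)} \bfA[X].
\]
Consequently $X$ is a quasi-tree iff $\bfA[X]$ is nonsingular over $\mathrm{GF}(2)$. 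We would prove this by induction on $|X|$ using the elementary partial dualities. A single non-orientable loop $e$ (with $\bfA_{ee}=1$) or an interlacing orientable pair $f,f'$ gives a one-face subdiagram. Passing to $\bB^e$ or $\bB^{ff'}$ changes the diagram as in Figures~\ref{fig: elementary partial duality - nonorientable} and~\ref{fig: elementary partial duality - orientable}, and one checks that this corresponds exactly to a pivot of $\bfA$ on $e$ or on $\{f,f'\}$. If every loop in $X$ is orientable and no two loops in $X$ interlace, then $\bfA[X]=0$ and $(v,X)$ is planar with $|X|+1$ boundary components, which matches the formula. The remaining work is routine chord-diagram bookkeeping.

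\textbf{Conclusion.} Given the claim, $\cQ(\bB)=\{X : \bfA[X]\text{ nonsingular}\}=D(\bfA)$, which is a $\Delta$-matroid by Bouchet's theorem on symmetric matrices. For evenness: if $\bB$ is orientable, then $\bfA$ has zero diagonal, so over $\mathrm{GF}(2)$ it is skew-symmetric and every nonsingular principal submatrix has even size. Hence all bases are even. Conversely, if $\bB$ has a non-orientable loop $e$, then $\{e\}$ and $\emptyset$ are both bases, so $D(\bB)$ is not even.

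\textbf{Main obstacle.} The hard step is the face-count/corank identity, specifically verifying that each elementary partial dual acts as the corresponding principal pivot on $\bfA$, including how signs (twists) propagate to the other loops. We would first try to derive it directly from the pictures in the two figures by tracking which arcs get reversed.
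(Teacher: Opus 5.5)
Your proof is correct. There is no argument in the paper to compare it with, since the paper does not prove this statement and simply quotes it from Bouchet.

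Your route is to show directly, for a bouquet $\bB$ with binary interlacing matrix $\bfA$, that the number of boundary components of $(V(\bB),X)$ equals $1+\mathrm{corank}\,\bfA[X]$. You then read off the $\Delta$-matroid property from Bouchet's theorem on symmetric matrices. You read off the parity criterion from the fact that alternating matrices over $\bF_2$ have even rank. In effect you prove Proposition~\ref{prop: binary interlacing matrix}, and with it the binary half of Theorem~\ref{thm: ribbon graphic is binary}, \emph{first}, and deduce Theorem~\ref{thm: ribbon-graphic} from it. So three facts the paper cites separately come out of one combinatorial input: $\bB^e$ and $\bB^{\{f,f'\}}$ have interlacing matrices $\bfA*e$ and $\bfA*\{f,f'\}$.

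That input is genuinely routine. It is the segment-flipping and segment-swapping description of elementary partial duals that the paper itself uses in the proof of Lemma~\ref{lem: partial duality preserves pseudo-orientability}. Over $\bF_2$ the pivot on $\{f,f'\}$ leaves the diagonal unchanged, which matches the fact that orientability is preserved.

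When you write out the induction, write $c(\bB,X)$ for the number of boundary components of $(V(\bB),X)$, and state two facts explicitly:
\begin{itemize}
\item $c(\bB,X)=c(\bB^e,X\setminus\{e\})$, because both count the vertices of $\bB^X=(\bB^e)^{X\setminus\{e\}}$ (Proposition~\ref{prop: partial duality}).
\item $\mathrm{corank}\,\bfA[X]=\mathrm{corank}\,(\bfA*e)[X\setminus\{e\}]$ by the Schur complement, and likewise for $\{f,f'\}$.
\end{itemize}

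Your induction needs only the trichotomy on $X$: it contains a non-orientable loop, or an interlacing orientable pair, or neither. Do not replace this with an appeal to Proposition~\ref{prop: elementary partial duals}. The paper proves that proposition using the exchange axiom for $D(\bB)$, so citing it here would be circular.
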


\begin{definition}
    A $\Delta$-matroid is \emph{ribbon-graphic} if it arises as in Theorem~\ref{thm: ribbon-graphic}.
\end{definition}

The class of graphic matroids and the class of ribbon-graphic $\Delta$-matroids are incomparable, and their intersection is exactly the matroids associated to planar graphs~\cite{CMNR2019a}. The graphic matroids of $K_5$ and $K_{3,3}$ are not ribbon-graphic by~\cite{GO2009}.\footnote{In~\cite{GO2009}, ribbon-graphic $\Delta$-matroids are named \emph{Eulerian} $\Delta$-matroids.}
On the other hand, the $\Delta$-matroid $([4],\{\emptyset,12,13,14,23,24,34,1234\})$ is ribbon-graphic realized by an orientable bouquet with four edges interlacing pairwise, but it is not a matroid even up to twisting.

As a consequence of Bouchet's results~\cite{Bouchet1988,Bouchet1987c}, orientable ribbon-graphic $\Delta$-matroids are regular.
Remarkably, Booth et al.~\cite{BBGS2000} presented another proof using cohomology of punctured surfaces, and the idea was used in \cite{BDK2026} to define the signed circuits of ribbon graphs, which leads to a canonical action of the Jacobian group of an orientable ribbon graph on the quasi-trees. 

\begin{theorem}[\cite{Bouchet1988,Bouchet1987c}]\label{thm: ribbon graphic is binary}
    Every ribbon-graphic $\Delta$-matroid is binary.
    Moreover, every orientable ribbon-graphic $\Delta$-matroid is regular.
\end{theorem}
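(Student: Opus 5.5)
The plan is to reduce to bouquets with partial duality, represent a bouquet by its interlace matrix, and use principal pivoting as the matrix counterpart of elementary partial duality.

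\textbf{Reduction to bouquets.} Since twisting preserves representability by Definition~\ref{def: representable delta-matroid}, it suffices to treat bouquets. A disconnected ribbon graph gives a direct sum, whose representing matrix is block diagonal, so assume $\bG$ is connected. Pick a quasi-tree $Q$ of $\bG$. Then $\bB:=\bG^Q$ is a bouquet, and Proposition~\ref{prop: partial dual and quasi-tree} gives $D(\bG)=D(\bB)*Q$. By Proposition~\ref{prop: partial duality preserves orientability}, $\bB$ is orientable exactly when $\bG$ is.

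\textbf{Binary case.} Read off the signed chord diagram of $\bB$. Define a matrix $\bfA$ over $\mathrm{GF}(2)$ indexed by $E$:
\begin{itemize}
\item $\bfA_{ee}$ is the sign of $e$, namely $1$ for a non-orientable loop and $0$ otherwise;
\item for $e\neq f$, $\bfA_{ef}=1$ if and only if $e$ and $f$ interlace.
\end{itemize}
This matrix is symmetric. I will prove the stronger statement that, for every $X\subseteq E$, the number of boundary components of $(V(\bB),X)$ equals $1+\operatorname{corank}\bfA[X]$. In particular $X$ is a quasi-tree iff $\bfA[X]$ is nonsingular, so $D(\bB)=D(\bfA)$.

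The proof is by induction using elementary partial duals. Fix $X$.
\begin{itemize}
\item If $X$ contains a non-orientable loop $e$, then $\bB^e$ is a bouquet. The boundary components of $(V(\bB),X)$ correspond to those of $(V(\bB^e),X\symdiff\{e\})$, since $\bB^e$ restricted to $X\symdiff e$ is the partial dual at $e$ of $\bB$ restricted to $X$.
\item If $X$ contains two interlacing orientable loops $f,f'$, the same argument applies to $\bB^{ff'}$ and $X\symdiff\{f,f'\}$.
\end{itemize}
On the matrix side, I will verify from Figures~\ref{fig: elementary partial duality - nonorientable} and~\ref{fig: elementary partial duality - orientable} that the interlace matrix of $\bB^e$ (respectively $\bB^{ff'}$) is the principal pivot $\bfA*\{e\}$ (respectively $\bfA*\{f,f'\}$). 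This uses that the pivoting block is nonsingular over $\mathrm{GF}(2)$. The standard pivot identity $\operatorname{corank}\bfA[X]=\operatorname{corank}(\bfA*Y)[X\symdiff Y]$ for $Y\subseteq X$ then carries the claim across each step.

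Each step strictly decreases $|X|$, so we reach a base case where $X$ consists of pairwise non-interlacing orientable loops. There $\bfA[X]=0$, and a direct count gives $|X|+1$ boundary components. This proves that $D(\bG)$ is binary.

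\textbf{Regular case.} Now $\bB$ is orientable. Fix an orientation of the vertex disc and an orientation of each chord. For interlacing $e,f$, set $\bfA_{ef}=\pm1$ according to the crossing sign, with $\bfA_{fe}=-\bfA_{ef}$. All other entries, including the diagonal, are $0$. This gives an integral skew-symmetric matrix $\bfA$ that reduces mod $2$ to the binary representation above.

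I will show that $\bfA$ is principally unimodular and that $D(\bfA)=D(\bB)$. The pivot on the block $\{f,f'\}$ has $\det=\bfA_{ff'}^2=1$, so it preserves integrality and the absolute values of principal minors, $|\det\bfA[X]|=|\det(\bfA*Y)[X\symdiff Y]|$. I will check, again with signs, that this pivot is the oriented interlace matrix of $\bB^{ff'}$ up to re-signing rows and columns, which does not change $|\det|$ of principal minors. Running the same induction with all steps of the second kind, every principal minor equals $\pm1$ if $X$ is a quasi-tree and $0$ otherwise, since the base case matrix is $0$. Hence $\bfA$ is a PU skew-symmetric matrix representing $D(\bB)$. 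Such a matrix represents the same $\Delta$-matroid over every field, so $D(\bG)=D(\bfA)*Q$ is regular.

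\textbf{Main obstacle.} The hard step is the precise local check that principal pivoting coincides with the interlace matrix of the elementary partial dual. The $\mathrm{GF}(2)$ version amounts to reading the two figures: the arcs $S_i$ are reversed or permuted, and interlacement toggles for chords with ends in the appropriate arcs. The signed version needs careful bookkeeping of chord orientations after the arcs are permuted and reversed. I would first try to absorb every sign discrepancy into a diagonal $\pm1$ rescaling, which is harmless for determinants.
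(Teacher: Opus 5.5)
Your proposal is correct. There is no in-paper proof to match: the paper quotes this theorem from Bouchet, and mentions the cohomological proof of Booth et al. The two ingredients your induction establishes are also imported by the paper. These are $D(\bB)=D(\bfM_2(\bB))$ (Proposition~\ref{prop: binary interlacing matrix}) and the determinant formula for the oriented interlace matrix (Theorem~\ref{thm: matrix--quasi-tree theorem for orientable ribbon graphs}).

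Your route is self-contained and combinatorial. You reduce to a bouquet by partial duality, then induct on $|X|$, showing that each elementary partial dual acts on the interlace matrix as a principal pivot. That pivot transports both the $\mathrm{GF}(2)$ corank and the integer principal minors. Compared with citing, this gives explicit representing matrices, the PU property directly, and a stronger fact: the number of boundary components of $(V(\bB),X)$ is $1+\operatorname{corank}\,\bfM_2(\bB)[X]$ over $\mathrm{GF}(2)$. The cohomological approach is instead choice-free, with no quasi-tree and no chord orientations to fix. That is what the cited work uses for signed circuits.

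Two small repairs are needed.

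\emph{The boundary-count step.} The sentence ``$\bB^e$ restricted to $X\symdiff e$ is the partial dual at $e$ of $\bB$ restricted to $X$'' is not literally true, since $(\bB|X)^e=\bB^e|X$ still contains $e$. The clean reason is this:
\begin{itemize}
\item by definition, the boundary components of $(V(\bB),X)$ are the vertices of $\bB^X$;
\item by Proposition~\ref{prop: partial duality}, $\bB^X=(\bB^Y)^{X\setminus Y}$;
\item so the count equals the number of boundary components of $(V(\bB^Y),X\setminus Y)$.
\end{itemize}

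\emph{Evenness.} Regularity in the paper's sense also requires evenness. This follows because $\bfA$ is skew-symmetric and twisting preserves uniform parity.

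Your ``main obstacle'' does go through as you hoped. I checked it as follows.
\begin{itemize}
\item Normalize $\bfA_{ff'}=1$, and write $\bfA_{gh}=\varepsilon_g(h^+)-\varepsilon_g(h^-)$. Here $\varepsilon_g$ is the indicator of the open arc traversed from $g^+$ to $g^-$.
\item Run a check over the arc types of $g$. Swapping the two opposite arcs cut out by $f,f'$ changes $\varepsilon_g$ by $-\bfA_{gf'}\varepsilon_f+\bfA_{gf}\varepsilon_{f'}+c_g$, for a constant $c_g$.
\item This yields exactly the Schur-complement entries $\bfA_{gh}-\bfA_{gf'}\bfA_{fh}+\bfA_{gf}\bfA_{f'h}$ of $\bfA*\{f,f'\}$.
\item The rows and columns of $f,f'$ agree with $\bfA*\{f,f'\}$ after reversing the orientation of one of $f,f'$.
\end{itemize}
So a single diagonal re-signing suffices, and principal minors are preserved exactly.
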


Note that if $D$ is a binary $\Delta$-matroid with $\emptyset \in \cB(D)$, there is a unique binary symmetric matrix $A$ with $D=D(A)$.
Moreover, the binary representation of the ribbon-graphic $\Delta$-matroid associated with a bouquet can be constructed 
as follows.

\begin{proposition}[see the paragraph above Ex.~7.15 in~\cite{Moffatt2019}]
    \label{prop: binary interlacing matrix}
    Let $\bB$ be a bouquet and let $\bfM_2(\bB)$ be a binary matrix with rows and columns indexed by $E(\bB)$ and defined as
    \[
        \bfM_2(\bB)_{ef} = 
        \begin{cases}
            1   &   \text{if $e=f$ and the loop is non-orientable}, \\
            0   &   \text{if $e=f$ and the loop is orientable}, \\
            1   &   \text{if $e\ne f$ and the two loops interlace}, \\
            0   &   \text{otherwise}.
        \end{cases}
    \]
    Then $D(\bB) = D(\bfM_2(\bB))$.
\end{proposition}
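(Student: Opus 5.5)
Since $\emptyset$ is always a quasi-tree of a bouquet, both $D(\bB)$ and $D(\bfM_2(\bB))$ contain $\emptyset$ as a basis. A binary $\Delta$-matroid with $\emptyset$ as a basis is determined by its bases of size at most $2$: its representing symmetric matrix is read off from those bases. By Theorem~\ref{thm: ribbon graphic is binary}, $D(\bB)$ is binary. The plan is therefore to show that $D(\bB)=D(\bfN)$ for a unique binary symmetric $\bfN$, and then to identify $\bfN=\bfM_2(\bB)$ by examining singletons and pairs.

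\textbf{Step 1 (uniqueness).} Let $\bfN$ be binary symmetric with $D(\bfN)=D$. Then
\[
\bfN_{ee}=1 \iff \{e\}\in\cB(D).
\]
For $e\neq f$, the determinant of $\bfN[\{e,f\}]$ over $\mathbb{F}_2$ is
\[
\bfN_{ee}\bfN_{ff}+\bfN_{ef}^2 = \bfN_{ee}\bfN_{ff}+\bfN_{ef}.
\]
So $\bfN_{ef}$ is determined by whether $\{e\}$, $\{f\}$ and $\{e,f\}$ are bases. Hence a binary symmetric representation with $\emptyset$ as a basis is unique, as the paper notes.

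\textbf{Step 2 (existence).} By Theorem~\ref{thm: ribbon graphic is binary}, $D(\bB)=D(\bfA)*X$ for some binary symmetric or skew-symmetric $\bfA$ and some set $X$. Over $\mathbb{F}_2$, skew-symmetric matrices are symmetric. Since $\emptyset\in\cB$, the standard pivoting argument (twisting by a basis corresponds to a principal pivot) gives a symmetric $\bfN$ with $D(\bB)=D(\bfN)$.

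\textbf{Step 3 (small quasi-trees).} It remains to check the small quasi-trees of $\bB$ directly.
\begin{itemize}
\item A single loop $e$ gives the ribbon subgraph $v\cup e$. This is an annulus (two boundary components) if $e$ is orientable, and a M\"obius band (one boundary component) if $e$ is non-orientable. So $\{e\}$ is a quasi-tree exactly when $\bfM_2(\bB)_{ee}=1$.
\item For two loops $e,f$, count the boundary components of $v\cup e\cup f$ by tracing the boundary in the signed chord diagram, in four cases: both orientable, both non-orientable, or mixed, each combined with interlaced or not.
  \begin{itemize}
  \item Non-interlaced loops act independently. The number of faces is then $1$ only if both loops are non-orientable, which would make $\bfN_{ef}=0$.
  \item Interlaced, both orientable: this gives a punctured torus, which has one boundary component.
  \item Interlaced, one non-orientable: this gives two boundaries.
  \item Interlaced, both non-orientable: this gives two boundaries.
  \end{itemize}
\end{itemize}
Substituting into the formula of Step~1 shows $\bfN_{ef}=1$ exactly when $e$ and $f$ interlace, in every case. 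Hence $\bfN=\bfM_2(\bB)$.

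The main obstacle is the boundary-tracing case analysis for two loops, particularly confirming the mixed and the doubly non-orientable interlaced cases. I would verify these using the Euler characteristic together with the orientability of the resulting surface; for instance, two interlaced Möbius loops form a Klein bottle with a hole removed, or a projective plane with two holes. That check decides the boundary count, and the rest of the argument is routine.
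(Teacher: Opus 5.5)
Your reduction in Steps 1--2 is sound: binarity, plus uniqueness of the binary symmetric representation when $\emptyset$ is a basis, means everything hinges on quasi-trees of size at most two. The paper notes this uniqueness just before the statement, and otherwise only cites Moffatt for the proof.

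\textbf{The gap is in Step 3, in the interlaced mixed case.} If $e$ is non-orientable, $f$ is orientable, and they interlace, then $v\cup e\cup f$ has \emph{one} boundary component, not two. To see it, let the ends occur in cyclic order $e_1,f_1,e_2,f_2$, and let $A_1,A_2,A_3,A_4$ be the arcs of $\partial v$ between consecutive ends. Crossing an untwisted band keeps the direction in which you traverse $\partial v$, and crossing a twisted band reverses it. The trace is then the single closed walk $A_1, f, A_4, e, A_2^{-1}, f, A_3^{-1}, e$ through all four arcs, so the surface is a one-holed Klein bottle. Alternatively, $\{e\}$ is a quasi-tree, and passing to $\bB^{e}$ toggles the orientability of every loop interlacing $e$. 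So $\{f\}$ is a quasi-tree of $\bB^{e}$, and hence $\{e,f\}$ is a quasi-tree of $\bB$ by Proposition~\ref{prop: partial dual and quasi-tree}.

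With your count, Step 1 gives $\mathbf{N}_{ef}=\det\mathbf{N}[\{e,f\}]+\mathbf{N}_{ee}\mathbf{N}_{ff}=0+0=0$ for this interlacing pair. So your concluding sentence (``$\mathbf{N}_{ef}=1$ exactly when $e$ and $f$ interlace, in every case'') does not follow from your own table. If your count were right, the proposition would be false.

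\textbf{The check you propose would not catch this.} For any two loops, $v\cup e\cup f$ is a disk with two bands, so $\chi=-1$. When the surface is non-orientable, $\chi=2-k-b$ admits both $(k,b)=(2,1)$ and $(k,b)=(1,2)$. These are exactly the two options you left undecided (one-holed Klein bottle, two-holed projective plane), and Euler characteristic plus orientability cannot tell them apart. Even in the orientable case, $\chi=-1$ allows a pair of pants as well as a punctured torus.

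You have to trace boundaries. Doing so gives, for interlacing pairs:
\begin{itemize}
\item both orientable: one boundary;
\item mixed: one boundary;
\item both non-orientable: two boundaries, namely the walks $A_1,A_3^{-1}$ and $A_2,A_4^{-1}$, so the surface is a two-holed projective plane.
\end{itemize}
Together with your (correct) non-interlaced cases, this is the characterization the paper uses in the proof of Lemma~\ref{lem: adjustment of bouquets}: $\{e,f\}$ is a quasi-tree iff either $e,f$ interlace and at least one is orientable, or they do not interlace and both are non-orientable. Plugged into Step 1, it yields $\mathbf{N}_{ef}=1$ exactly for interlacing pairs, which completes your argument.
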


\begin{definition}
    We call $\bfM_2(\bB)$ the \emph{\textup{(}binary\textup{)} interlacing matrix} of $\bB$. 
\end{definition}

The notion of ribbon graph minors and the notion of $\Delta$-matroid minors are compatible in the following sense. 

\begin{proposition}[\cite{CMNR2019a}] \label{prop: ribbon graph minor and delta-matroid minor}
    Let $\bG$ be a ribbon graph and $e$ be an edge.
    \begin{enumerate}
        \item $D(\bG^e) = D(\bG)*\{e\}$.
        \item $D(\bG \setminus e) = D(\bG) \setminus e$.
\end{enumerate}
\end{proposition}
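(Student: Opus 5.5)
Both identities should follow by comparing basis sets, using the earlier descriptions of how quasi-trees change under partial duality and under edge deletion. On the $\Delta$-matroid side I use the standard definitions from \cite[\S3]{Moffatt2019}. The twist $D*\{e\}$ has bases $\{B\symdiff\{e\} : B\in\cB(D)\}$. The deletion $D\setminus e$ has bases $\{B : e\notin B\in \cB(D)\}$ when $e$ is not a coloop of $D$, and bases $\{B\setminus\{e\} : e\in B\in\cB(D)\}$ when $e$ is a coloop, meaning $e$ lies in every basis.

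For part~(1), I apply Proposition~\ref{prop: partial dual and quasi-tree} with $X=\{e\}$. It says that $Q$ is a quasi-tree of $\bG$ if and only if $Q\symdiff\{e\}$ is a quasi-tree of $\bG^{e}$. Since $Q\mapsto Q\symdiff\{e\}$ is an involution on subsets of $E(\bG)=E(\bG^e)$, this gives
\[
\cQ(\bG^e)=\{Q\symdiff\{e\} : Q\in\cQ(\bG)\}=\cB\bigl(D(\bG)*\{e\}\bigr),
\]
which is the claim.

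For part~(2), I split into two cases according to whether $e$ is a bridge of $\bG$. The key observation is the equivalence recalled just before Proposition~\ref{prop: edge deletion and quasi-tree}: $e$ is a bridge exactly when every quasi-tree contains $e$. This is precisely the statement that $e$ is a coloop of $D(\bG)$.
\begin{itemize}
\item If $e$ is not a bridge, then $e$ is not a coloop. Proposition~\ref{prop: edge deletion and quasi-tree}(1) gives $\cQ(\bG\setminus e)=\{Q: e\notin Q\in\cQ(\bG)\}$, which is the basis set of $D(\bG)\setminus e$.
\item If $e$ is a bridge, then $e$ is a coloop. Proposition~\ref{prop: edge deletion and quasi-tree}(2) gives $\cQ(\bG\setminus e)=\{Q\setminus\{e\}: e\in Q\in\cQ(\bG)\}$, which again is the basis set of $D(\bG)\setminus e$.
\end{itemize}
In both cases the ground sets agree, since both are $E(\bG)\setminus\{e\}$, so $D(\bG\setminus e)=D(\bG)\setminus e$.

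The only real point of care is in part~(2): the case split for ribbon-graph deletion is by bridges, while the case split for $\Delta$-matroid deletion is by coloops. One must invoke the bridge/quasi-tree equivalence to see that these two case distinctions coincide.
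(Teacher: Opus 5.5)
Your proof is correct. The paper does not prove this statement itself; it cites \cite{CMNR2019a}. Your argument is the natural unpacking of Propositions~\ref{prop: partial dual and quasi-tree} and~\ref{prop: edge deletion and quasi-tree}, which is the same reasoning the paper later uses for its lemma on quasi-tree generating polynomials under minors. You also handle the one subtle point correctly: the bridge/non-bridge split in the ribbon-graph statement matches the coloop/non-coloop split in the definition of $D\setminus e$.

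Part (1) is literally Proposition~\ref{prop: partial dual and quasi-tree} with $X=\{e\}$, and part (2) is Proposition~\ref{prop: edge deletion and quasi-tree} rewritten. So this is a reformulation of those earlier cited facts rather than an independent proof. That is legitimate, since the paper states both before this proposition.
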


Moffatt and Oh~\cite{MO2021} showed a ribbon-graphic analogue of Whitney's $2$-isomorphism theorem.

\begin{theorem}[\cite{MO2021}] \label{thm: 2-isomorphism}
    Let $\bG$ and $\bH$ be two ribbon graphs. Then $D(\bG) \cong D(\bH)$ if and only if $\bG$ can be obtained from $\bH$ by ribbon graph isomorphism, vertex joins, vertex cuts, and mutations.
\end{theorem}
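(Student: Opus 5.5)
The plan is to treat the two directions separately. The ``if'' direction should be a local check, and the ``only if'' direction a reduction to bouquets, where Proposition~\ref{prop: binary interlacing matrix} turns the problem into a statement about signed chord diagrams.

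\emph{The ``if'' direction.} It suffices to show that each operation preserves the set $\cQ(\cdot)$ of quasi-trees, viewed as a set system on the edge set.
\begin{itemize}
\item For isomorphisms this is immediate.
\item A vertex join merges two components at a vertex, and a vertex cut undoes this. Here I would check that a spanning subgraph of the joined graph has one boundary component per connected component exactly when its restrictions to the two parts do. This follows because joining at a vertex glues two boundary components into one.
\item A mutation flips a piece of the ribbon graph attached along a separating pair of vertex arcs. I would check that the number of boundary components of every spanning ribbon subgraph is unchanged, by tracing the boundary curves through the two attaching arcs.
\end{itemize}

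\emph{The ``only if'' direction, reductions.} Assume $D(\bG)\cong D(\bH)$; after relabeling, take $D(\bG)=D(\bH)$.
\begin{itemize}
\item \emph{Connectedness.} Connected components of the $\Delta$-matroid (its separators) are determined by $D$. Using vertex joins and cuts, I would first make both ribbon graphs connected, with each ribbon-graph block matching a $\Delta$-matroid connected component. This reduces to the case where $\bG$ and $\bH$ are connected and $D$ is connected, perhaps after first cutting into blocks.
\item \emph{Bouquets.} Choose a common quasi-tree $Q$. By Propositions~\ref{prop: partial dual and quasi-tree} and~\ref{prop: ribbon graph minor and delta-matroid minor}, $\bG^Q$ and $\bH^Q$ are bouquets with $D(\bG^Q)=D(\bG)*Q=D(\bH)*Q=D(\bH^Q)$.
\item \emph{Transport back.} This reduction requires a compatibility lemma: if bouquets $\bB,\bB'$ are related by joins, cuts and mutations, then so are $\bB^Q$ and $\bB'^Q$. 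I would prove it by showing that each operation commutes with partial duality at a single edge, up to a further sequence of such operations, and then induct on $|Q|$. Proposition~\ref{prop: elementary partial duals} lets me restrict attention to elementary partial duals.
\end{itemize}

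\emph{The bouquet case.} Since $\emptyset$ is a basis, the binary symmetric matrix representing $D$ is unique. So $\bfM_2(\bG^Q)=\bfM_2(\bH^Q)$: the two signed chord diagrams have the same interlace graph and the same signs. Then I would invoke the classical result of Bouchet and of Gabor--Supowit--Hsu, via Cunningham's split decomposition. It says that two chord diagrams with the same interlace graph differ by reversing, or ``flipping'', arcs of the circle that correspond to splits. These flips are precisely mutations of the bouquet, and the signs of the chords ride along unchanged.

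\emph{Main obstacle.} I expect two hard steps. The first is the chord diagram realization step: its proof goes through the split decomposition, and one must verify that each elementary flip is a ribbon-graph mutation even when non-orientable loops are present. The second is showing that mutations interact well with partial duality, so that the transport back from bouquets goes through. I would attack the latter first, through explicit pictures of elementary partial duals as in Figures~\ref{fig: elementary partial duality - nonorientable} and~\ref{fig: elementary partial duality - orientable}.
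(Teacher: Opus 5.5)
\textbf{Context.} The paper gives no proof of this statement; it quotes it from \cite{MO2021}. So I assess your outline on its own terms.

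\textbf{What works.} The skeleton is the right one. After joining components, pick a common quasi-tree $Q$. The bouquets $\bG^Q,\bH^Q$ then have the same $\Delta$-matroid, hence (as $\emptyset$ is a basis) the same binary interlacing matrix, i.e.\ the same labelled signed interlace graph. Share-mutations of chord diagrams are bouquet mutations: cut the vertex disk along two chords and flip the middle piece with its ribbons. Every loop meeting a share has both ends in it, so signs are unchanged.

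Two remarks on the bouquet step and the ``if'' direction:
\begin{itemize}
\item The chord-diagram result you need is Chmutov and Lando's theorem that diagrams with the same (labelled) intersection graph are related by mutations. It is built from Bouchet's and Gabor--Supowit--Hsu's uniqueness for prime circle graphs and the split decomposition. However, a split need not appear as a two-arc share in a given diagram. For example, $K_{1,3}$ realized as $c\,\ell_1\ell_3\ell_2\,c\,\ell_2\ell_3\ell_1$ with split $\{\ell_1,\ell_2\}$ has the $\ell_1,\ell_2$ ends spread over four arcs. So ``flip the arcs of a split'' is not literally an operation, and the theorem must be quoted as such.
\item For the ``if'' direction, your tracing argument works for a specific reason. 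Each side of a spanning subgraph induces a perfect matching on the four endpoints of the two cutting arcs. The regluings allowed in a mutation act on these points through a Klein four-group, which fixes every perfect matching. Hence boundary counts are unchanged.
\end{itemize}

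\textbf{The gap: the transport back from bouquets.} This step does not work as you planned it. Proposition~\ref{prop: elementary partial duals} concerns partial duals between bouquets. But $Q$ is generally not a quasi-tree of the intermediate bouquets $\bB_i$ in your mutation sequence. Indeed, $D(\bB_i)=D(\bG)*Q$ has $Q$ as a basis iff $\emptyset\in\cQ(\bG)$, i.e.\ only when $\bG$ is already a union of bouquets. Consequently:
\begin{itemize}
\item the graphs $\bB_i^{Q'}$ ($Q'\subseteq Q$) in your induction on $|Q|$ are typically not bouquets;
\item your single-edge partial duals are not elementary;
\item pictures like Figures~\ref{fig: elementary partial duality - nonorientable} and~\ref{fig: elementary partial duality - orientable} do not address what is needed.
\end{itemize}

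\textbf{How to close it.} You need the compatibility lemma for an arbitrary ribbon graph and an arbitrary edge. It follows from locality rather than case analysis. Write $K=K_1\cup_{c,c'}K_2$, cut along disjoint proper arcs $c,c'$ in vertex disks, and let $K'=K_1\cup_\sigma K_2$ be a regluing. If $e\in E(K_2)$, then $c,c'$ meet no edge ends, so they lie on the boundary of the spanning subgraph $(V(K_2),\{e\})$. Hence they lie on boundaries of new vertex disks of $K_2^e$, giving $K^e=K_1\cup_{c,c'}K_2^e$ and likewise $K'^e=K_1\cup_\sigma K_2^e$. The case $e\in E(K_1)$ is symmetric, and the same argument handles vertex joins and cuts. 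Thus each mutation $\bB_i\to\bB_{i+1}$ becomes a single mutation $\bB_i^Q\to\bB_{i+1}^Q$ with no extra joins or cuts, and the proof closes. (Your reduction to connected $D$ is unnecessary.)
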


We say that two ribbon graphs $\bG$ and $\bH$ are \emph{$2$-isomorphic} if $\bG$ can be obtained from $\bH$ by ribbon graph isomorphism, vertex joins, vertex cuts, and mutations; see~\cite{MO2021} for the definitions of these operations.

\subsubsection{Strong $\Delta$-matroids}\label{sec: strong delta-matroids}

A $\Delta$-matroid $(E,\cB)$ is \emph{strong} if it satisfies the \emph{strong basis exchange property}:
\begin{center}
    for any $B,B'\in \cB$ and any $x \in B\symdiff B'$, there is $y \in B\symdiff B'$ such that $B\symdiff\{x,y\}, B'\symdiff\{x,y\} \in \cB$.
\end{center}
It is well known that matroids and even $\Delta$-matroids satisfy the strong basis exchange property~\cite{Wenzel1993}, but general $\Delta$-matroids do not have this property.
Geelen and Murota~\cite{Murota2021} showed that strong $\Delta$-matroids are equivalent to even $\Delta$-matroids through the \emph{lift} of set systems, which we define now.

\begin{definition}\label{def: hat}
    For a finite set $E$, we define a map $\alpha_E$ assigning to a subset $I$ of $E$ a set 
    \[
        \alpha_E(I) :=
        \begin{cases}
            I & \text{if $|I|$ is even},\\
            I \cup \{\widehat{e}\} & \text{if $|I|$ is odd},\\
        \end{cases}
    \]
    where $\widehat{e}$ is a fixed element not in $E$.
    We identify $\widehat{e} = n+1$ when $E = [n]$.
    We often omit the subscript and write $\alpha_E$ as $\alpha$.
\end{definition}

\begin{definition}\label{def: lift}
    The \emph{lift} of a finite set system $S = (E,\cB)$ is $\widehat{S} := (E\cup\{\widehat{e}\}, \alpha(\cB))$ where $\alpha(\cB) := \{\alpha(I) : I\in \cB\}$.
\end{definition}

\begin{theorem}[{\cite[Thm.~3.1]{Murota2021}},{\cite[Remark~ 3.10]{CDFS2025}}]\label{thm: strong and even delta-matroids}
    Let $D=(E,\cB)$ be a set system.
Then $D$ is a strong $\Delta$-matroid if and only if $\widehat{D}$ is an even $\Delta$-matroid.
\end{theorem}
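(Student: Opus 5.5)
The plan is to prove both directions directly from the exchange axioms, using two facts about the map $\alpha$. First, $\alpha$ is injective, since $\alpha(I)\cap E=I$. Second, for $I,J\subseteq E$,
\[
\alpha(I)\symdiff\alpha(J)=
\begin{cases}
I\symdiff J & \text{if } |I|\equiv|J| \pmod 2,\\
(I\symdiff J)\cup\{\widehat{e}\} & \text{otherwise}.
\end{cases}
\]
From these we get the translation rules: for $x\ne y$ in $E$ we have $\alpha(I)\symdiff\{x,y\}=\alpha(I\symdiff\{x,y\})$, and for $x\in E$ we have $\alpha(I)\symdiff\{x,\widehat{e}\}=\alpha(I\symdiff\{x\})$. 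Evenness of $\widehat{D}$ is automatic, since every $\alpha(I)$ has even size. Throughout, the case $y=x$ in an exchange axiom is read as a single-element change $B\symdiff\{x\}$.

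\textbf{($\Leftarrow$).} Suppose $\widehat{D}$ is an even $\Delta$-matroid. By Wenzel's result quoted above, it satisfies the strong exchange property. Let $B,B'\in\cB$ and $x\in B\symdiff B'$; then $x\in\alpha(B)\symdiff\alpha(B')$. Strong exchange in $\widehat{D}$ gives $z\in \alpha(B)\symdiff\alpha(B')$ with $\alpha(B)\symdiff\{x,z\}$ and $\alpha(B')\symdiff\{x,z\}$ both bases of $\widehat D$.
\begin{itemize}
\item If $z\in E$, take $y=z$. By the translation rules, $B\symdiff\{x,y\}$ and $B'\symdiff\{x,y\}$ lie in $\cB$.
\item If $z=\widehat{e}$, take $y=x$. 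Then $B\symdiff\{x\}$ and $B'\symdiff\{x\}$ lie in $\cB$.
\end{itemize}
Hence $D$ satisfies the strong exchange property, which in particular implies the ordinary exchange axiom, so $D$ is a strong $\Delta$-matroid.

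\textbf{($\Rightarrow$).} Assume $D$ is strong. The key step is the following lemma.

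\emph{Lemma.} If $B,B'\in\cB$ have different parities, then there is $y\in B\symdiff B'$ with $B\symdiff\{y\}\in\cB$.

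I would prove it by induction on $|B\symdiff B'|$, which is odd and hence nonempty. Pick $x\in B\symdiff B'$ and apply strong exchange to get $y$.
\begin{itemize}
\item If $y=x$, we are done.
\item Otherwise $B'':=B'\symdiff\{x,y\}\in\cB$ by the strong property. $B''$ has the same parity as $B'$, and $B\symdiff B''=(B\symdiff B')\setminus\{x,y\}$ is strictly smaller. Induction gives $y'\in B\symdiff B''\subseteq B\symdiff B'$ with $B\symdiff\{y'\}\in\cB$.
\end{itemize}
This is exactly where the second half of strong exchange, that $B'\symdiff\{x,y\}$ is also a basis, is essential.

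Now verify the exchange axiom for $\widehat D$. Take bases $\alpha(B),\alpha(B')$ and $z\in\alpha(B)\symdiff\alpha(B')$.
\begin{itemize}
\item \emph{Case $z=\widehat{e}$.} Then $B,B'$ have different parities. The Lemma gives $y\in B\symdiff B'$ with $B\symdiff\{y\}\in\cB$. Hence $\alpha(B)\symdiff\{\widehat{e},y\}=\alpha(B\symdiff\{y\})$ is a basis of $\widehat D$.
\item \emph{Case $z=x\in E$.} Strong exchange gives $y$ with $B\symdiff\{x,y\}\in\cB$ and $B'\symdiff\{x,y\}\in\cB$.
\begin{itemize}
\item If $y\ne x$, then $\alpha(B)\symdiff\{x,y\}=\alpha(B\symdiff\{x,y\})$ is a basis, as required.
\item If $y=x$ and the parities of $B,B'$ differ, then $\widehat e\in\alpha(B)\symdiff\alpha(B')$ and $\alpha(B)\symdiff\{x,\widehat{e}\}=\alpha(B\symdiff\{x\})$ is a basis.
\item If $y=x$ and the parities agree, apply the Lemma to $B_1:=B\symdiff\{x\}$ and $B'$, which have different parities. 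It yields $y'\in B_1\symdiff B'=(B\symdiff B')\setminus\{x\}$ with $B\symdiff\{x,y'\}\in\cB$. Since $y'\ne x$, the translation rule shows $\alpha(B)\symdiff\{x,y'\}$ is a basis.
\end{itemize}
\end{itemize}

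The main obstacle is this last subcase, where strong exchange in $D$ only returns the single-element move $y=x$ while the parities of $B,B'$ agree. Then $\widehat e$ is not available in the symmetric difference, and a genuine two-element exchange inside $E$ must be found. The Lemma is designed precisely to handle it.
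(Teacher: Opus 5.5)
Your proof is correct, and it is a genuinely different route: the paper gives no proof of this statement. It imports the result from Murota, who proves a more general statement and attributes it to Geelen, and from Calvert--Dermenjian--Fink--Smith, where it is phrased in terms of type~B and type~D Coxeter matroids. Your argument instead verifies it directly from the exchange axioms.

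In the ($\Leftarrow$) direction you translate simultaneous exchange in $\widehat D$ back to $D$. Relying on Wenzel's theorem there is not really a shortcut. After twisting to even-sized bases, declaring a suitable element of the relevant symmetric difference to be $\widehat e$ shows that this direction itself implies Wenzel's simultaneous exchange for even $\Delta$-matroids. So some form of it cannot be avoided.

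In the ($\Rightarrow$) direction the real content is your Lemma: a basis of opposite parity forces a one-element move from $B$ inside $B\symdiff B'$. The bookkeeping in your last subcase is right. $B\symdiff\{x\}$ and $B'$ have opposite parity, and the Lemma returns $y'\in(B\symdiff B')\setminus\{x\}$, hence $y'\neq x$. As a check that the strong axiom is used exactly where you say, the Lemma fails for the non-strong $\Delta$-matroid $([3],\{\emptyset,1,2,3,123\})$ from the paper's remark, with $B=123$ and $B'=\emptyset$.

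Two trivial points deserve a line each:
\begin{itemize}
\item In ($\Leftarrow$), the case $z=x$ cannot occur, because $\alpha(B)\symdiff\{x\}$ has odd size and so is not in $\alpha(\cB)$. Your translation rule only covers the case $y\neq x$.
\item $\cB\neq\emptyset$ if and only if $\alpha(\cB)\neq\emptyset$, which handles the nonemptiness requirement in both directions.
\end{itemize}

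What the citation buys the paper is the more general and Coxeter-theoretic context at no cost in length. What your proof buys is an elementary verification that uses nothing beyond the exchange axioms and Wenzel's theorem, which the paper already quotes.
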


\begin{corollary}\label{cor: BD bijection} The map
  \begin{align*}
\{\text{strong $\Delta$-matroids on $E$}\}
&\to
\Big\{
\hspace{-1.5mm}
\begin{array}{c}
\text{even $\Delta$-matroids on } E\cup \{\widehat{e}\} \\[-2pt]
\text{with even-sized bases}
\end{array}
\hspace{-1.5mm}
\Big\} \\
D \hspace{2.0cm} &\mapsto \hspace{2.5cm} \widehat{D}
\end{align*}
is a bijection. 
\end{corollary}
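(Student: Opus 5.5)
The plan is to deduce the corollary from Theorem~\ref{thm: strong and even delta-matroids} together with an elementary analysis of the map $\alpha=\alpha_E$. Three things must be checked: the map is well defined into the stated codomain, it is injective, and it is surjective.

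\emph{Well-definedness.} Let $D=(E,\cB)$ be a strong $\Delta$-matroid. By Theorem~\ref{thm: strong and even delta-matroids}, $\widehat{D}=(E\cup\{\widehat{e}\},\alpha(\cB))$ is an even $\Delta$-matroid. By Definition~\ref{def: hat}, every set $\alpha(I)$ has even size: if $|I|$ is even then $\alpha(I)=I$, and if $|I|$ is odd then $|\alpha(I)|=|I|+1$. So all bases of $\widehat{D}$ have even size, and $\widehat{D}$ lies in the codomain.

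\emph{Injectivity.} The map $\alpha$ on subsets of $E$ is injective, because $\alpha(I)\setminus\{\widehat{e}\}=I$. Hence $\cB$ is recovered from $\alpha(\cB)$ as $\{J\setminus\{\widehat{e}\}: J\in\alpha(\cB)\}$, so distinct set systems on $E$ have distinct lifts.

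\emph{Surjectivity.} Let $D'=(E\cup\{\widehat{e}\},\cB')$ be an even $\Delta$-matroid all of whose bases have even size, and set $\cB:=\{J\setminus\{\widehat{e}\}: J\in\cB'\}$. Take any $J\in\cB'$ and put $I=J\setminus\{\widehat{e}\}$; we claim $\alpha(I)=J$.
\begin{itemize}
\item If $\widehat{e}\notin J$, then $|I|=|J|$ is even, so $\alpha(I)=I=J$.
\item If $\widehat{e}\in J$, then $|I|=|J|-1$ is odd, so $\alpha(I)=I\cup\{\widehat{e}\}=J$.
\end{itemize}
Thus $\alpha(\cB)=\cB'$, that is, the lift of $D:=(E,\cB)$ is $D'$. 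Since $D'$ is an even $\Delta$-matroid, the ``if'' direction of Theorem~\ref{thm: strong and even delta-matroids} gives that $D$ is a strong $\Delta$-matroid. (Nonemptiness of $\cB$ is inherited from $\cB'$.)

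There is no real obstacle here. All the substantive content sits in Theorem~\ref{thm: strong and even delta-matroids}, and the only point needing care is the parity bookkeeping in the surjectivity step, where the even-size hypothesis on bases is exactly what guarantees that $\alpha$ inverts the operation of deleting $\widehat{e}$.
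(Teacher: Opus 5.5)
Your proof is correct and takes the same route as the paper. Both arguments observe that $\alpha_E$ is a bijection from subsets of $E$ onto even-sized subsets of $E\cup\{\widehat{e}\}$, and then use Theorem~\ref{thm: strong and even delta-matroids} in both directions to restrict the resulting bijection of set systems; you simply write out the inverse map $J\mapsto J\setminus\{\widehat{e}\}$ and the parity check more explicitly than the paper does.
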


\begin{proof}
By Definition~\ref{def: hat}, $\alpha_E$ is a bijection between subsets of $E$ and even-sized subsets of $E\cup \{\widehat{e}\}$. This implies that $S\mapsto \widehat{S}$ in Definition~\ref{def: lift} is a bijection between set systems $(E,\mathcal{B})$ and the set systems $(E\cup\{\widehat{e}\},\mathcal{B}')$ where all subsets in $\mathcal{B}'$ are even-sized. Then by Theorem~\ref{thm: strong and even delta-matroids}, the desired bijection $D\mapsto \widehat{D}$ is a restriction of the bijection $S\mapsto \widehat{S}$.
\end{proof}

Now we study the above bijection in the representable case. The following lemma is implicit in the proof of ~\cite[Cor.~6.12]{CCLV2025}.

\begin{lemma}\label{lem: determinants of type B and D}
    Let $\bfA$ be an $n$-by-$n$ skew-symmetric matrix over a field $K$ and $\bfv\in K^n$. Denote 
    \[
        \bfA_{\bfv} = 
        \begin{pmatrix}
            \bfA & \bfv \\
            -\bfv^T & 0 \\
        \end{pmatrix}.
    \]
    Then for each $I \subseteq [n]$,
    \[
        \det((\bfA + \bfv \bfv^T)[I]) = \det({\bfA_\bfv}[\alpha(I)]).
    \]
\end{lemma}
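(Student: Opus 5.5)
Fix $I\subseteq[n]$ and write $\bfB:=\bfA[I]$ and $\bfw:=\bfv[I]$, so that $(\bfA+\bfv\bfv^T)[I]=\bfB+\bfw\bfw^T$. If $|I|$ is even, then $\alpha(I)=I$ and $\bfA_\bfv[I]=\bfB$, so the claim becomes $\det(\bfB+\bfw\bfw^T)=\det\bfB$. If $|I|$ is odd, then $\alpha(I)=I\cup\{\widehat e\}$, $\bfA_\bfv[\alpha(I)]=\bfB_{\bfw}$, and $\det\bfB=0$ because $\bfB$ is skew-symmetric of odd order. So in both cases the claim reduces to two polynomial identities in the entries of a skew-symmetric $\bfB$ of size $m$ and a vector $\bfw$. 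Both sides are polynomials with integer coefficients, so it suffices to prove them over $\bQ$ with generic (indeterminate) entries and then specialize to $K$. This avoids any trouble in characteristic $2$, where skew-symmetry is still defined with zero diagonal.

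The main tool is a bordered-determinant identity. Expanding $\det\begin{pmatrix}\bfB & \bfw\\ -\bfw^T & 1\end{pmatrix}$ by the Schur complement with respect to the $(2,2)$-entry $1$ gives $\det(\bfB+\bfw\bfw^T)$. On the other hand, the determinant is linear in the last column, which is the sum of $(\bfw;0)$ and $(0;1)$. This yields
\[
\det(\bfB+\bfw\bfw^T)=\det\bfB_{\bfw}+\det\bfB .
\]

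It then remains to show that $\det\bfB_\bfw=0$ when $m$ is even, and that $\det\bfB=0$ when $m$ is odd. Both hold because a skew-symmetric matrix of odd order has determinant zero. Here $\bfB_\bfw$ is itself skew-symmetric of order $m+1$. The even case then gives $\det(\bfB+\bfw\bfw^T)=\det\bfB$, and the odd case gives $\det(\bfB+\bfw\bfw^T)=\det\bfB_\bfw$, which is exactly the claim.

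The only delicate point is the vanishing of odd-order skew-symmetric determinants in characteristic $2$. This is why I would establish all the identities over $\bZ[\text{entries}]$ first, where $\det\bfB=\det(-\bfB^T)=(-1)^m\det\bfB$ forces $\det\bfB=0$ for odd $m$. Equivalently, one can use that the determinant of an even-order skew-symmetric matrix is the square of its Pfaffian, which is a universal integer polynomial. The identities then specialize to $K$ without change.
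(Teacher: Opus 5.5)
Your proof is correct and is essentially the paper's argument. Both establish the bordered identity $\det(\bfA+\bfv\bfv^T)=\det\bfA+\det\bfA_{\bfv}$ (you via a Schur complement and linearity in the last column, the paper via a row operation and expansion along the last row), apply it to $\bfA[I]$ and $\bfv[I]$, and finish with the vanishing of odd-order skew-symmetric determinants. Your generic-entries argument over $\bZ$ soundly justifies that last fact in characteristic $2$, which the paper simply asserts.
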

\begin{proof}
    Note that 
    \[
        \det(\bfA + \bfv \bfv^T) 
        = 
        \det
        \begin{pmatrix}
            \bfA + \bfv \bfv^T & \mathbf{0} \\
            -\bfv^T & 1 \\
        \end{pmatrix}
        =
        \det
        \begin{pmatrix}
            \bfA & \bfv \\
            -\bfv^T & 1 \\
        \end{pmatrix}
        =
        \det(\bfA) + \det(\bfA_{\bfv}),
    \]
    where the last equality follows from the cofactor expansion along the last row.
    By replacing $\bfA$ and $\bfv$ with $\bfA[I]$ and $\bfv[I]$, we obtain 
    \[
        \det((\bfA + \bfv \bfv^T)[I]) = \det(\bfA[I]) + \det(\bfA_{\bfv}[I\cup\{n+1\}]).
    \]
    Because every odd-sized skew-symmetric matrix is singular, we deduce the desired identity.
\end{proof}

\begin{proposition}\label{prop: skew-sym plus rank-one is strong}
        Let $\bfA$ be an $n$-by-$n$ skew-symmetric matrix over a field $K$ and $\bfv\in K^n$. Then the set system $D = ([n],\cB)$ is a strong $\Delta$-matroid, where
        \[
            \cB = 
            \{ I \subseteq [n] : (\bfA + \bfv \bfv^T) [I] \text{ is nonsingular} \}.
        \]
\end{proposition}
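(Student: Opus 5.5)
The plan is to reduce the statement to Theorem~\ref{thm: strong and even delta-matroids} via Lemma~\ref{lem: determinants of type B and D}. Let $\bfA_\bfv$ be the $(n+1)$-by-$(n+1)$ matrix from Lemma~\ref{lem: determinants of type B and D}, and identify $\widehat{e}=n+1$. Since $\bfA$ is skew-symmetric and the last row of $\bfA_\bfv$ is $-\bfv^T$ while the last column is $\bfv$, with zero in the corner, the matrix $\bfA_\bfv$ is itself skew-symmetric over $K$. By Bouchet's theorem, $D(\bfA_\bfv)$ is therefore a $\Delta$-matroid on $[n+1]$. It is even, because every odd-sized principal submatrix of a skew-symmetric matrix is singular, so all its bases have even size.

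Next, I will identify $\widehat{D}$ with $D(\bfA_\bfv)$. By Lemma~\ref{lem: determinants of type B and D}, for each $I\subseteq[n]$, the matrix $(\bfA+\bfv\bfv^T)[I]$ is nonsingular if and only if $\bfA_\bfv[\alpha(I)]$ is nonsingular. Hence $\alpha(\cB)$ consists exactly of those even-sized sets $J=\alpha(I)$ with $\bfA_\bfv[J]$ nonsingular. Every basis of $D(\bfA_\bfv)$ is even-sized, and $\alpha$ is a bijection from subsets of $[n]$ onto the even-sized subsets of $[n+1]$ (as noted in the proof of Corollary~\ref{cor: BD bijection}). It follows that $\alpha(\cB)=\cB(D(\bfA_\bfv))$, that is, $\widehat{D}=D(\bfA_\bfv)$.

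Finally, $\widehat{D}$ is an even $\Delta$-matroid; in particular it is nonempty, since $\emptyset$ is a basis because $\det(\bfA_\bfv[\emptyset])=1$. Theorem~\ref{thm: strong and even delta-matroids} then shows that $D$ is a strong $\Delta$-matroid. Note also that $\emptyset\in\cB$, so $D$ is a genuine nonempty set system, as Theorem~\ref{thm: strong and even delta-matroids} requires.

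The only step needing care is the identification $\widehat{D}=D(\bfA_\bfv)$. In particular, one must check that no odd-sized set is nonsingular for $\bfA_\bfv$, so that nothing lies outside the image of $\alpha$. This is exactly the skew-symmetry argument above, so I do not expect a genuine obstacle.
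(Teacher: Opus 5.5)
Your proposal is correct and matches the paper's proof: you use Lemma~\ref{lem: determinants of type B and D} to identify $\widehat{D}$ with $D(\bfA_\bfv)$, note that this is an even $\Delta$-matroid because $\bfA_\bfv$ is skew-symmetric, and conclude by Theorem~\ref{thm: strong and even delta-matroids}. You also spell out two details the paper leaves implicit, namely the bijectivity of $\alpha$ onto even-sized sets and the nonemptiness of $\widehat{D}$.
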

\begin{proof}
    By Lemma~\ref{lem: determinants of type B and D}, $\widehat{D}$ is represented by a skew-symmetric matrix $\bfA_{\bfv}$, and thus $\widehat{D}$ is an even $\Delta$-matroid.
    By Theorem~\ref{thm: strong and even delta-matroids}, $D$ is a strong $\Delta$-matroid.
\end{proof}

\begin{lemma}\label{lem: sym is skew-sym plus rank-one in char two}
    Every symmetric matrix $\bfM$ over a finite field of characteristic two can be uniquely written as $\bfA + \bfv \bfv^T$ for a skew-symmetric matrix $\bfA$ and a vector $\bfv$.    
\end{lemma}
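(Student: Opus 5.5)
In characteristic two a skew-symmetric matrix is exactly a symmetric matrix with zero diagonal (the condition $\bfA_{ij}=-\bfA_{ji}$ reads $\bfA_{ij}=\bfA_{ji}$, and the definition additionally forces $\bfA_{ii}=0$). I will therefore construct $\bfA$ and $\bfv$ directly from $\bfM$, and then show that the diagonal of $\bfM$ determines $\bfv$ and hence $\bfA$. Let $K$ be the finite field of characteristic two, so $|K|=2^k$. The key fact I use is that the Frobenius map $x\mapsto x^2$ is a field homomorphism; it is injective, hence bijective on the finite field $K$. So every element of $K$ has a unique square root.

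\emph{Existence.} Let $\bfM=(m_{ij})$ be symmetric and $n$-by-$n$. Define $\bfv\in K^n$ by $v_i:=\sqrt{m_{ii}}$, the unique square root, and set $\bfA:=\bfM-\bfv\bfv^T$. Then $\bfA$ is symmetric, because both $\bfM$ and $\bfv\bfv^T$ are. Its diagonal entries are $m_{ii}-v_i^2=0$. In characteristic two this means $\bfA$ is skew-symmetric: $\bfA_{ij}=\bfA_{ji}=-\bfA_{ji}$ and $\bfA_{ii}=0$. Hence $\bfM=\bfA+\bfv\bfv^T$.

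\emph{Uniqueness.} Suppose $\bfM=\bfA+\bfv\bfv^T=\bfA'+\bfv'\bfv'^T$ with $\bfA,\bfA'$ skew-symmetric. Comparing diagonal entries, and using $\bfA_{ii}=\bfA'_{ii}=0$, gives $v_i^2=m_{ii}=v_i'^2$ for each $i$. Injectivity of Frobenius (explicitly, $v_i^2-v_i'^2=(v_i-v_i')^2$ in characteristic two) then yields $v_i=v_i'$. So $\bfv=\bfv'$, and consequently $\bfA=\bfM-\bfv\bfv^T=\bfA'$.

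There is no real obstacle here. The only point that needs care is surjectivity of squaring, which is where finiteness (more generally, perfectness) of the field is used. Over a non-perfect field of characteristic two existence could fail, while uniqueness would still hold.
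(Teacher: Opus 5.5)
Your proof is correct and follows the paper's argument: the diagonal of $\bfM$ forces $\bfv(i)^2=\bfM_{ii}$, and then $\bfA:=\bfM-\bfv\bfv^T$ is the required skew-symmetric matrix. You also spell out two points the paper leaves implicit. The first is why square roots are unique, namely injectivity of Frobenius, which is what actually gives uniqueness. The second is why a symmetric matrix with zero diagonal is skew-symmetric in characteristic two.
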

\begin{proof}
    Note that every element in the field has a square root. Since the diagonal entries $\bfA$ are zero, the diagonal entries of $\bfv \bfv^T$ must agree with those of $\bfM$, which implies that $\bfv$ satisfies $\bfv(i)^2 = \bfM_{ii}$ for each $i$. Then $\bfA := \bfM - \bfv \bfv^T$ is the skew-symmetric matrix with the desired property. 
\end{proof}

For a symmetric matrix $\bfM = (a_{ij})_{1\le i,j \le n}$ over a finite field of characteristic two, we define the skew-symmetric matrix
\[\widehat{\bfM}:=\bfA_\bfv,\]
where $\bfA$ and $\bfv$ are as in Lemma~\ref{lem: sym is skew-sym plus rank-one in char two}. In terms of the entries, the formula is 
\[
    \widehat{\bfM}_{ij} = 
    \begin{cases}
        0                               & \text{if $i=j$}, \\
        a_{ij} + \sqrt{a_{ii}a_{jj}}    & \text{if $i,j\in[n]$ and $i\ne j$}, \\
        \sqrt{a_{ii}}                     & \text{if $i\in[n]$ and $j=n+1$} , \\
        \sqrt{a_{jj}}                     & \text{if $j\in[n]$ and $i=n+1$} . \\
    \end{cases}
\]

\begin{proposition}\label{prop: sym to skew-sym}
    $\widehat{D(\bfM)} = D(\widehat{\bfM})$ for
    a symmetric matrix $\bfM$ over a finite field of characteristic two.
\end{proposition}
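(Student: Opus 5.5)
By Lemma~\ref{lem: sym is skew-sym plus rank-one in char two}, we write $\bfM = \bfA + \bfv\bfv^T$ with $\bfA$ skew-symmetric over the given finite field $K$ of characteristic two, so that $\widehat{\bfM} = \bfA_{\bfv}$ by definition. Then Lemma~\ref{lem: determinants of type B and D} applies verbatim, since its proof is a determinant identity valid over any field. It gives, for every $I\subseteq[n]$,
\[
    \det(\bfM[I]) = \det(\widehat{\bfM}[\alpha(I)]).
\]
Consequently, $I$ is a basis of $D(\bfM)$ if and only if $\alpha(I)$ is a basis of $D(\widehat{\bfM})$. This shows $\alpha(\cB(D(\bfM))) = \cB(D(\widehat{\bfM}))\cap\{\text{even-sized subsets of }[n+1]\}$.

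It remains to check that $D(\widehat{\bfM})$ has no odd-sized bases. Every subset $J\subseteq[n+1]$ of odd size gives a principal submatrix $\widehat{\bfM}[J]$ that is skew-symmetric of odd order, hence singular. The one place to be careful is characteristic two: there skew-symmetric coincides with symmetric, and singularity of odd-order matrices relies on the paper's definition requiring zero diagonal. With zero diagonal, the determinant of an odd-order alternating matrix vanishes over any commutative ring, because it is the specialization of the integer polynomial identity $\det = 0$ for generic odd alternating matrices. The same fact is already used in the proof of Lemma~\ref{lem: determinants of type B and D}, so we may cite it.

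Finally, every even-sized subset $J\subseteq[n+1]$ equals $\alpha(I)$ for $I = J\setminus\{n+1\}$, since $\alpha$ is a bijection onto even-sized subsets (as noted in the proof of Corollary~\ref{cor: BD bijection}). Combining this with the previous two paragraphs yields $\cB(D(\widehat{\bfM})) = \alpha(\cB(D(\bfM)))$, that is, $\widehat{D(\bfM)} = D(\widehat{\bfM})$. The only potential obstacle is the characteristic-two subtlety about odd alternating matrices, handled as above.
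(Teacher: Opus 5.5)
Your proof is correct and is the paper's approach: the paper simply says the proposition is a direct consequence of Lemma~\ref{lem: determinants of type B and D}. You additionally make explicit the bookkeeping the paper leaves implicit, namely that odd-sized principal submatrices of $\widehat{\bfM}$ are alternating with zero diagonal and hence singular even in characteristic two, and that $\alpha$ is a bijection onto the even-sized subsets of $[n+1]$.
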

\begin{proof}
    It is a direct consequnce of Lemma~\ref{lem: determinants of type B and D}.
\end{proof}

\begin{proposition}\label{prop: representability and strong}
    Every $\Delta$-matroid representable over a finite field of characteristic two is strong.
\end{proposition}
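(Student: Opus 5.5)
By Definition~\ref{def: representable delta-matroid}, a $\Delta$-matroid $D$ that is representable over a finite field $K$ of characteristic two has the form $D = D(\bfM) * X$. Here $\bfM$ is symmetric or skew-symmetric over $K$ and $X$ is a subset of the ground set. In characteristic two, skew-symmetric matrices are symmetric matrices with zero diagonal, so we may always take $\bfM$ to be symmetric. The plan therefore has two steps: first show that $D(\bfM)$ is strong, and then show that the strong basis exchange property is preserved under twisting.

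For the first step we use Lemma~\ref{lem: sym is skew-sym plus rank-one in char two} to write $\bfM = \bfA + \bfv\bfv^T$, where $\bfA$ is skew-symmetric and $\bfv \in K^n$. Then $D(\bfM)$ is exactly the set system in Proposition~\ref{prop: skew-sym plus rank-one is strong}, so it is a strong $\Delta$-matroid. Equivalently, we can argue through Proposition~\ref{prop: sym to skew-sym}: the lift $\widehat{D(\bfM)} = D(\widehat{\bfM})$ is represented by a skew-symmetric matrix and is therefore even, and Theorem~\ref{thm: strong and even delta-matroids} then gives strongness.

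For the second step, let $D' = D * X$, and take bases $B\symdiff X$ and $B'\symdiff X$ of $D'$, where $B, B'$ are bases of $D$. Their symmetric difference is $(B\symdiff X)\symdiff(B'\symdiff X) = B\symdiff B'$. Fix $x$ in this set. Strongness of $D$ gives some $y \in B\symdiff B'$ with both $B\symdiff\{x,y\}$ and $B'\symdiff\{x,y\}$ bases of $D$. Twisting by $X$ commutes with taking symmetric difference with $\{x,y\}$, so $(B\symdiff X)\symdiff\{x,y\}$ and $(B'\symdiff X)\symdiff\{x,y\}$ are bases of $D'$. Hence $D'$ is strong.

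Neither step is a real obstacle. The only point that needs care is the case where the representing matrix is skew-symmetric rather than symmetric. Over characteristic two this causes no difficulty: such a matrix is symmetric with zero diagonal, which corresponds to $\bfv = 0$ in the decomposition of Lemma~\ref{lem: sym is skew-sym plus rank-one in char two}.
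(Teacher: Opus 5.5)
Your proof is correct and follows the paper's argument. You reduce to $D(\bfM)$ with $\bfM$ symmetric using the fact that twisting preserves strongness, then combine Lemma~\ref{lem: sym is skew-sym plus rank-one in char two} with Proposition~\ref{prop: skew-sym plus rank-one is strong}. Your explicit check of the twisting step, and your remark that a skew-symmetric matrix in characteristic two is just a symmetric one with zero diagonal, fill in details the paper leaves implicit.
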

\begin{proof}
    As strongness is preserved under twisting, we may assume that the $\Delta$-matroid $D = D(\bfM)$ for some symmetric matrix $\bfM$.
    Then $D$ is strong by Lemma~\ref{lem: sym is skew-sym plus rank-one in char two} and Proposition~\ref{prop: skew-sym plus rank-one is strong}.
\end{proof}

\begin{remark}
The smallest non-strong $\Delta$-matroid $([3],\{\emptyset, 1, 2, 3, 123\})$ is representable over any field of characteristic not two by the $3$-by-$3$ symmetric matrix whose all diagonal entries are $1$ and all off-diagonal entries are $-1$.
\end{remark}

Because every ribbon-graphic $\Delta$-matroid is binary by Theorem~\ref{thm: ribbon graphic is binary}, we obtain the following corollary.

\begin{corollary}\label{cor: ribbon is strong}
    Every ribbon-graphic $\Delta$-matroid is strong.
    \qed
\end{corollary}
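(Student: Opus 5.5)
The statement is Corollary~\ref{cor: ribbon is strong}, and the plan is to chain three earlier results. Let $\bG$ be a ribbon graph. By Theorem~\ref{thm: ribbon graphic is binary}, $D(\bG)$ is binary. By Definition~\ref{def: representable delta-matroid}, this means $D(\bG) = D(\bfM) \ast X$ for some symmetric or skew-symmetric matrix $\bfM$ over $\mathbb{F}_2$ and some $X \subseteq E(\bG)$. Over $\mathbb{F}_2$ a skew-symmetric matrix, in the paper's convention with zero diagonal, is in particular symmetric, so we may take $\bfM$ symmetric in either case.

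Since $\mathbb{F}_2$ is a finite field of characteristic two, Proposition~\ref{prop: representability and strong} applies directly and shows that $D(\bG)$ is strong. Unpacking that proposition:
\begin{itemize}
\item Lemma~\ref{lem: sym is skew-sym plus rank-one in char two} writes $\bfM = \bfA + \bfv\bfv^T$.
\item Proposition~\ref{prop: skew-sym plus rank-one is strong}, via Lemma~\ref{lem: determinants of type B and D} and Theorem~\ref{thm: strong and even delta-matroids}, gives that $D(\bfM)$ is strong.
\item Strongness is preserved under twisting, so $D(\bfM)\ast X$ is strong.
\end{itemize}

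The only point needing care is this last invariance under twisting, which Proposition~\ref{prop: representability and strong} uses implicitly. It is immediate because $(B\symdiff X)\symdiff(B'\symdiff X) = B\symdiff B'$, so the strong exchange condition for $D$ transfers verbatim to $D\ast X$. There is no real obstacle here; the corollary is a direct composition of the cited results.
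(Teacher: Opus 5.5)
Your proof is correct and follows the paper's own route: $D(\bG)$ is binary by Theorem~\ref{thm: ribbon graphic is binary}, and is therefore strong by Proposition~\ref{prop: representability and strong} applied over $\mathbb{F}_2$. Your two supporting remarks also fill in details the paper leaves implicit: over $\mathbb{F}_2$ the skew-symmetric case reduces to the symmetric one, and strongness is invariant under twisting because $(B\symdiff X)\symdiff(B'\symdiff X)=B\symdiff B'$.
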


The main purpose of this paper is to understand a geometric operation that converts certain ribbon graphs $\bG$ to $\widehat{\bG}$ so that $\widehat{D(\bG)} = D(\widehat{\bG})$, and to characterize the ribbon graphs that allow such an operation.
We will see these definitions and results in \S\ref{sec: pseudo-orientable ribbon graphs}.

\begin{remark}    
    Let $\bfM = (a_{ij})_{1\le i,j\le n}$ be a symmetric matrix over a (possibly infinite) field of characteristic two, and let $\widetilde{\bfM}$ be the $(n+1)\times(n+1)$ skew-symmetric matrix defined as
    \[
        \widetilde{\bfM}_{ij}
        :=
        \begin{cases}
            0 & \text{if $i=j$}, \\
            a_{ij}^2 + a_{ii}a_{jj} & \text{if $i,j\in[n]$ and $i\ne j$}, \\
            a_{ii} & \text{if $i\in[n]$ and $j=n+1$} \\
            a_{jj} & \text{if $j\in[n]$ and $i=n+1$}.
        \end{cases}
    \]
    Then van Geemen and Marrani~\cite{vGM2019} showed  
    \[
        \det(\bfM[I]) = \pf(\widetilde{\bfM}[\alpha(I)])
    \]
    for any $I\subseteq [n]$.
    Their result together with Theorem~\ref{thm: strong and even delta-matroids} implies a stronger version of Proposition~\ref{prop: representability and strong}, that is, every $\Delta$-matroid representable over a field of characteristic two is strong.

    Over a finite field of characteristic two, $\widetilde{\bfM}$ is the Hadamard square of $\widehat{\bfM}$, i.e., $\widetilde{\bfM}_{ij} = \widehat{\bfM}_{ij}^2$ for all $i,j$, and for any skew-symmetric matrix $\bfC$, we have
    \[
        \pf(\bfC^{\circ 2}) = \sum_{\sigma \in \mathfrak{S}_n} \prod_{i} \bfC_{i\sigma(i)}^2 = \left( \sum_{\sigma \in \mathfrak{S}_n} \prod_{i} \bfC_{i\sigma(i)} \right)^2
        = \pf(\bfC)^2 = \det(\bfC).
    \]
    Hence, Lemma~\ref{lem: determinants of type B and D} implies van Geemen and Marrani's result for finite fields of characteristic two.
\end{remark}

\section{Pseudo-orientable ribbon graphs}\label{sec: pseudo-orientable ribbon graphs}

In \S\ref{sec: define pseudo-orientability}, we define \emph{pseudo-orientable ribbon graphs} $\bG$ and the \emph{adjustment} $\widehat{\bG}$. Moreover, we show that the pseudo-orientability is closed under taking minors (Proposition~\ref{prop: partial duality preserves pseudo-orientability}).
In \S\ref{sec: certificate}, we address a subtlety in the definition of pseudo-orientability, though readers may skip this subsection without loss of continuity. In \S\ref{sec: prove thm1}, we prove Theorem~\ref{thm-intro: pseudo-orientable and orientable}, which asserts that pseudo-orientable ribbon graphs are the only class of ribbon graphs that satisfy the property $\widehat{D(\bG)} = D(\widehat{\bG})$ up to the $2$-isomorphism of ribbon graphs introduced in~\cite{MO2021}. 
In \S\ref{sec: matrix--quasi-tree}, we prove a Matrix--Quasi-tree Theorem for pseudo-orientable ribbon graphs (Theorem~\ref{thm-intro: matrix--quasi-tree for pseudo-orientable}).

\subsection{Pseudo-orientable ribbon graphs and their adjustments}\label{sec: define pseudo-orientability}
We first define pseudo-orientability for bouquets.

\begin{definition}\label{def: pseudo-orientable bouquet}
    A bouquet is \emph{pseudo-orientable} if the boundary of the vertex admits two closed segments $S_1$ and $S_2$ such that 
    \begin{itemize}
        \item $S_1\cap S_2$ contains exactly two points,
        \item the two ends of each orientable loop lie in the interior of one of $S_1$ and $S_2$, and 
        \item the two ends of each non-orientable loop lie in the interior of $S_1$ and the interior of $S_2$, respectively.
    \end{itemize}
We call $(S_1,S_2)$ a \emph{certificate}.
    The \emph{adjustment} $\alpha(\bB,S_1,S_2)$ of $\bB$ at $(S_1,S_2)$ is the orientable bouquet obtained in the following way:
    \begin{enumerate} \item Cut the bouquet along $S_2$ and then reglue it with a half-twist. Note that all loops are orientable in the new bouquet.
        \item Add a new orientable loop, denoted by $\widehat{e}$, connecting the two points in $S_1\cap S_2$.
    \end{enumerate}
\end{definition}

See Figure~\ref{fig: pseudo-orientable bouquet} for an example. Also note that every bouquet with at most one non-orientable loop is pseudo-orientable.

We often denote the adjustment by $\widehat{\bB}$ if the certificate is clear from or immaterial to the context.

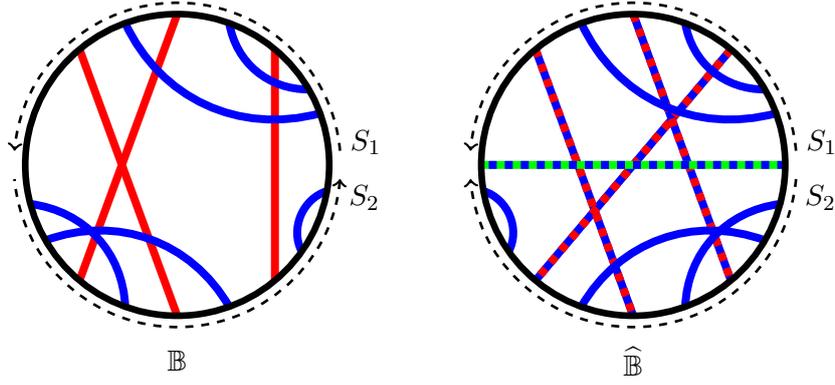
\begin{figure}[h!]
    \centering
    \begin{tikzpicture}
\begin{scope}
            \draw[line width=3pt, red] (50:2) -- (-50:2);
            \draw[line width=3pt, red] (90:2) -- (-130:2);
            \draw[line width=3pt, red] (130:2) -- (-90:2);
            
            \draw[line width=3pt, blue] (30:2) to[bend left=40] (70:2);
            \draw[line width=3pt, blue] (20:2) to[bend left=40] (110:2);

            \draw[line width=3pt, blue] (-165:2) to[bend left=40] (-110:2);
            \draw[line width=3pt, blue] (-150:2) to[bend left=40] (-70:2);
            \draw[line width=3pt, blue] (-10:2) to[bend right=50] (-35:2);

            \draw[line width=2.5pt] (0,0) circle (2);
            
            \draw[line width=1pt, dashed, ->] (5:2.15) arc (5:175:2.15);
            \draw[line width=1pt, dashed, <-] (-5:2.15) arc (-5:-175:2.15);
            \node at (7:2.5) {$S_1$};
            \node at (-10:2.5) {$S_2$};

            \node at (0,-2.6) {$\bB$};
        \end{scope} 

        \begin{scope}[xshift=6cm]
            \draw[line width=3pt, blue] (50:2) -- (-130:2);
            \draw[line width=3pt, blue] (90:2) -- (-50:2);
            \draw[line width=3pt, blue] (130:2) -- (-90:2);
            \draw[line width=3pt, dashed, red] (50:2) -- (-130:2);
            \draw[line width=3pt, dashed, red] (90:2) -- (-50:2);
            \draw[line width=3pt, dashed, red] (130:2) -- (-90:2);
            
            \draw[line width=3pt, blue] (30:2) to[bend left=40] (70:2);
            \draw[line width=3pt, blue] (20:2) to[bend left=40] (110:2);

            \draw[line width=3pt, blue] (-15:2) to[bend right=40] (-70:2);
            \draw[line width=3pt, blue] (-30:2) to[bend right=40] (-110:2);
            \draw[line width=3pt, blue] (-170:2) to[bend left=50] (-145:2);

            \draw[line width=3pt, green] (0:2) -- (180:2);
            \draw[line width=3pt, dashed, blue] (0:2) -- (180:2);

            \draw[line width=1pt, dashed, ->] (5:2.15) arc (5:175:2.15);
            \draw[line width=1pt, dashed, ->] (-5:2.15) arc (-5:-175:2.15);
            \node at (7:2.5) {$S_1$};
            \node at (-10:2.5) {$S_2$};

            \draw[line width=2.5pt] (0,0) circle (2);
            
            \node at (0,-2.6) {$\widehat{\bB}$};
        \end{scope} 
    \end{tikzpicture}
    \caption{The left figure is a pseudo-orientable bouquet $\bB$ whose orientable loops are colored in blue and non-orientable loops are colored in red. A certificate $(S_1,S_2)$ is indicated by dashed arrows. The right figure is the adjustment $\widehat{\bB}$ of $\bB$ at $(S_1,S_2)$, which is obtained by flipping the bottom segment $S_2$. The new loop $\widehat{e}$ is depicted as the blue-green dashed line. All loops in $\widehat{\bB}$ are orientable.
    }
    \label{fig: pseudo-orientable bouquet}
\end{figure}

\begin{remark}
    We say that two certificates $(S_1,S_2)$ and $(S_1',S_2')$ of a pseudo-orientable bouquet are \emph{equivalent} if the loop ends intersecting with $S_1$ are equal to those of $S_1'$ or those of $S_2'$.
    If two certificates are equivalent, their adjustments are identical.
\end{remark}

The inverse of adjusting pseudo-orientable bouquets can be described as a sequence of three basic operations---Petrial dual, partial dual, and edge deletion---applied to a ribbon graph at the same edge. To be precise, we have the following two results, which can be proved by the definition of $\alpha(\bB,S_1,S_2)$.

\begin{lemma}\label{lem: adjustment1}
Let $\bB$ be a pseudo-orientable bouquet with a certificate $(S_1,S_2)$. 
    Then $\alpha(\bB,S_1,S_2)$ is an orientable bouquet, and
\[
        \bB = ( \alpha(\bB,S_1,S_2)^{\tau({e})} )^{{e}} \setminus {e},
    \]
    where $e= \widehat{e}$ is the new loop added in the adjustment $\alpha(\bB,S_1,S_2)$.
\end{lemma}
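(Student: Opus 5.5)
The plan is to verify the identity by working directly with signed chord diagrams, tracking the boundary of the vertex as a circle cut into the two arcs $S_1$ and $S_2$ that meet at two points $p,q$.

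\textbf{Orientability of $\widehat{\bB}$.} In the step of Definition~\ref{def: pseudo-orientable bouquet}, cutting along $S_2$ and regluing with a half-twist reverses the orientation of the portion of the vertex boundary along $S_2$. Consider an orientable loop with both ends in the interior of $S_1$ or both in the interior of $S_2$: either both ends are reversed or neither is, so it stays orientable. A non-orientable loop has exactly one end on $S_2$, so exactly one end is reversed, and it becomes orientable. The new loop $\widehat{e}$ joins $p$ and $q$ and is orientable by construction. Hence $\alpha(\bB,S_1,S_2)$ is an orientable bouquet. Its chord diagram is the circle $S_1\cup \overline{S_2}$, where $\overline{S_2}$ is $S_2$ traversed in reverse, with the chord $\widehat{e}=pq$ added.

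\textbf{Undoing the construction.} Write $\widehat{\bB}=\alpha(\bB,S_1,S_2)$ and $e=\widehat{e}$.
\begin{enumerate}
\item Take the partial Petrial $\widehat{\bB}^{\tau(e)}$. This is still a bouquet, and $e$ is now a non-orientable loop joining $p$ and $q$.
\item Take the partial dual at $e$. A single non-orientable loop is a quasi-tree, so this is an elementary partial duality, and I will read off its effect from Figure~\ref{fig: elementary partial duality - nonorientable}. The partial dual at a non-orientable loop $e$ with ends $p,q$ keeps one of the two arcs determined by $p,q$, reverses the other, and toggles the orientability of every loop that has exactly one end on the reversed arc. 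The arcs cut out by $p,q$ are exactly $S_1$ and $\overline{S_2}$. The picture shows that the arc $S_2$ is re-traversed in the opposite direction, so the boundary word becomes $S_1 S_2$ again.
\item Check orientabilities after step 2. Loops with both ends on one arc keep their orientability, which is orientable. Loops with one end on each arc, which are precisely the original non-orientable loops, become non-orientable. The loop $e$ itself remains a non-orientable loop attached at the junction points.
\item Delete $e$. This recovers the original cyclic order of ends and the original signs, which is exactly $\bB$.
\end{enumerate}

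\textbf{Main obstacle.} The delicate step is justifying the description of the partial dual at a non-orientable loop: exactly one arc reversed, and signs toggled precisely on loops crossing between the arcs. I would justify this by computing the unique boundary component of $v\cup e$ for a M\"obius band. That boundary traverses one side arc forwards, crosses the twisted band, traverses the other arc backwards, and crosses back. Attaching a disk along this boundary gives the new vertex, and a loop with ends on different arcs acquires a half-twist relative to the new vertex.

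As a check, I would confirm the whole statement on the binary interlacing matrix of Proposition~\ref{prop: binary interlacing matrix}. Twisting at $e$ with $\bfM_2(e,e)=1$ gives a pivot. This pivot adds $\bfv\bfv^T$ to the interlacement matrix, where $\bfv$ is the column of $e$, namely the indicator of the loops crossing between $S_1$ and $S_2$. The result matches Lemma~\ref{lem: sym is skew-sym plus rank-one in char two}.
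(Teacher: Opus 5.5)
Your proposal is correct and is essentially the direct verification from the definition of $\alpha(\bB,S_1,S_2)$ that the paper has in mind; the paper gives no further detail. You use the same description of the elementary partial dual at a non-orientable loop as Figure~\ref{fig: elementary partial duality - nonorientable}: the Petrial makes $\widehat{e}$ non-orientable, dualizing at it re-reverses $S_2$ and toggles exactly the loops crossing between $S_1$ and $S_2$ (the original non-orientable loops), and deleting $\widehat{e}$ returns $\bB$. Your binary-matrix computation is a valid consistency check, but it cannot replace the geometric argument, since $\bfM_2$ does not determine the bouquet (cf.\ Figure~\ref{fig: pseudo-orientable and non-pseudo-orientable bouquets}).
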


\begin{lemma}\label{lem: adjustment2}
     Let $\bB$ be an orientable bouquet and $e$ be one of its loops. 
    Then $(\bB ^{\tau({e})} )^{{e}} \setminus {e}$ is a pseudo-orientable bouquet and
\[
        \alpha((\bB ^{\tau({e})} )^{{e}} \setminus {e},S_1,S_2) = \bB,
    \]
    where we let the certificate $(S_1,S_2)$ be such that $e$ connects the two points in $S_1\cap S_2$, and when we apply $\alpha$, the new edge is labeled by $e$. 
\end{lemma}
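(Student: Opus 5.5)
The plan is to work directly with signed chord diagrams and to undo the construction in the proof of Lemma~\ref{lem: adjustment1}. Let $\bB$ be orientable and $e$ one of its loops. Since $e$ is orientable, its two ends split the boundary circle of the vertex into two closed arcs $T_1$ and $T_2$ with $T_1\cap T_2$ equal to the two ends of $e$.

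\textbf{Step 1: first two operations.} Applying $\tau(e)$ makes $e$ non-orientable, so $e$ alone is a quasi-tree. Hence $\bB' := (\bB^{\tau(e)})^e$ is again a bouquet, and we can read it off Figure~\ref{fig: elementary partial duality - nonorientable}: the partial dual at a non-orientable loop $e$ keeps one side (say $T_1$) and reverses the other side $T_2$ before regluing along $e$.

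\textbf{Step 2: effect on the other loops.} After this reversal, a loop $f\neq e$ keeps its orientability exactly when both its ends lie on the same side, i.e.\ it does not interlace $e$ in $\bB$. Its orientability is toggled when its ends lie on different sides, i.e.\ it interlaces $e$. Since $\bB$ is orientable, the loops of $\bB'\setminus e$ with both ends in one arc are orientable, and those with ends in different arcs are non-orientable.

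\textbf{Step 3: the certificate.} Let $S_1,S_2$ be the arcs of the vertex boundary of $\bB'\setminus e$ corresponding to $T_1,T_2$. They meet in exactly the two points where $e$ was attached. By Step~2, $(S_1,S_2)$ satisfies Definition~\ref{def: pseudo-orientable bouquet}, so $\bB'\setminus e$ is pseudo-orientable. This also matches the convention that $e$ connects the two points of $S_1\cap S_2$.

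\textbf{Step 4: applying the adjustment.} Next compute $\alpha(\bB'\setminus e,S_1,S_2)$. We cut along $S_2$ and reglue with a half-twist, which reverses $S_2$ back to its orientation in $\bB$. This untwists exactly the loops with one end in each arc, so all loops become orientable with their original interlacement pattern. We then add a new orientable loop labeled $e$ between the two points of $S_1\cap S_2$. These are the original ends of $e$ in $\bB$, and $e$ interlaces exactly the loops with ends on both sides, as in $\bB$.

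\textbf{Step 5: identification.} Since a bouquet is determined up to edge-labeled isomorphism by its signed chord diagram, the result equals $\bB$.

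The main obstacle is Step~1: justifying precisely that the partial dual at a non-orientable loop reverses exactly one of the two arcs determined by its ends. Also, ``reversing $S_2$'' in $\bB'$ and ``cutting along $S_2$ with a half-twist'' in the adjustment must be inverse operations with compatible orientations. I would handle this by tracing the unique boundary component of $v\cup e$ in $\bB^{\tau(e)}$, which is a M\"obius band. It traverses $T_1$ in one direction and $T_2$ in the opposite direction relative to the original circle. Everything else is bookkeeping on chord endpoints.
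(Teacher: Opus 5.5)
Your proposal is correct and follows essentially the route the paper intends. The paper only remarks that the lemma follows from the definition of $\alpha(\bB,S_1,S_2)$, and your chord-diagram computation is exactly that verification: the partial dual at the now non-orientable loop $e$ reverses one side of $e$ and toggles precisely the loops crossing $e$ (as in Figure~\ref{fig: elementary partial duality - nonorientable}), this is the same involution as the half-twist regluing along $S_2$ in the adjustment, and your boundary-tracing argument soundly justifies the one-side reversal that the paper takes from the figure.
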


\begin{proposition}\label{prop: reverse of adjustment}  
Every orientable bouquet with at least one edge is an adjustment of a pseudo-orientable bouquet.        
\end{proposition}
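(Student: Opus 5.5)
This is an immediate consequence of Lemma~\ref{lem: adjustment2}. Let $\bB$ be an orientable bouquet with at least one edge, and pick any loop $e\in E(\bB)$; such a loop exists by hypothesis. Set
\[
\bB' := (\bB^{\tau(e)})^{e}\setminus e.
\]
By Lemma~\ref{lem: adjustment2}, $\bB'$ is a pseudo-orientable bouquet. Take the certificate $(S_1,S_2)$ whose two intersection points are the positions where the ends of $e$ were attached. Then $\alpha(\bB',S_1,S_2)=\bB$, with the new edge labeled $e$. Hence $\bB$ is an adjustment of $\bB'$, which proves the claim.

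The only point to check is that $\bB'$ is well defined and is again a bouquet. Since $\bB$ is orientable, $e$ is an orientable loop. After the partial Petrial, $e$ becomes a non-orientable loop of the bouquet $\bB^{\tau(e)}$. A non-orientable loop is a quasi-tree, so $(\bB^{\tau(e)})^e$ is a bouquet by the discussion preceding elementary partial duality. Deleting the edge $e$ leaves a bouquet on $E(\bB)\setminus\{e\}$, possibly with no edges.

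If one also wanted to verify Lemma~\ref{lem: adjustment2} itself, the main obstacle would be tracking the cyclic order of loop ends under these operations, which is visible in Figure~\ref{fig: elementary partial duality - nonorientable}. The partial dual at the non-orientable loop $e$ reverses one of the two arcs $S_2$ cut out by the ends of $e$. This reversal toggles the orientability of exactly those loops with one end in each arc. Consequently, in $\bB'$ the non-orientable loops are precisely the loops of $\bB$ that interlace $e$, and these have one end in each of $S_1$ and $S_2$. The orientable loops of $\bB'$ have both ends in one arc. So $(S_1,S_2)$ is a valid certificate.

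Adjusting at this certificate flips $S_2$ back and re-inserts an orientable loop joining the two points of $S_1\cap S_2$, which recovers $\bB$. Since that lemma is already stated, the proposition needs only the one-line argument above.
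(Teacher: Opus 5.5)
Your proposal is correct and matches the paper's approach. The paper gives no separate proof and treats the proposition as an immediate consequence of Lemma~\ref{lem: adjustment2}, applied to any loop $e$ of $\bB$ exactly as you do. Your sketch of why that lemma holds is also sound: the partial dual at the now non-orientable $e$ reverses one arc and toggles precisely the loops crossing $e$, which makes the two arcs a certificate.
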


For a general ribbon graph, pseudo-orientability is defined as follows. 
\begin{definition}\label{def: pseudo-orientable ribbon graph}
    A connected ribbon graph $\bG$ is \emph{pseudo-orientable} if it is a partial dual of a pseudo-orientable bouquet, i.e., $\bG^X$ is a pseudo-orientable bouquet for some quasi-tree $X$ of $\bG$.
    We say a ribbon graph is \emph{pseudo-orientable} if one of the components is pseudo-orientable and all the other components are orientable. 
\end{definition}

\begin{remark}\label{rem: two definitions of pseudo-orientable}
    We will show in Section~\ref{sec: certificate} that if a bouquet is pseudo-orientable in the sense of Def.~\ref{def: pseudo-orientable ribbon graph}, then it is still pseudo-orientable in the sense of Def.~\ref{def: pseudo-orientable bouquet}. Hence these two definitions are compatible. 
\end{remark}

We have a simple criterion for pseudo-orientability of ribbon graphs.

\begin{lemma}\label{lem: simple criterion for pseudo-orientability}
    A ribbon graph $\bG$ is pseudo-orientable if and only if $\bG = (\bH^{\tau(e)})^e \setminus e$ for some orientable ribbon graph $\bH$ and some edge $e$ of $\bH$.
\end{lemma}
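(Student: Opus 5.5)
We prove the two directions separately, moving between bouquets and general ribbon graphs via Lemmas~\ref{lem: adjustment1} and~\ref{lem: adjustment2}. The tool for this transfer is that partial duality at edges other than $e$ commutes with the Petrial at $e$, with partial duality at $e$, and with deletion of $e$ (the propositions of \cite{Chmutov2009} and \cite{EM2012} recalled above, together with Proposition~\ref{prop: partial duality}).

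\emph{($\Rightarrow$)} Suppose $\bG$ is pseudo-orientable. Write $\bG = \bG_0 \sqcup \bG_1$, where $\bG_0$ is the connected pseudo-orientable component and $\bG_1$ is orientable. Choose a quasi-tree $X$ of $\bG_0$ with $\bB := \bG_0^X$ a pseudo-orientable bouquet, and fix a certificate $(S_1,S_2)$. Put $\bB' := \alpha(\bB,S_1,S_2)$ and let $e$ be its new loop; then $\bB'$ is orientable and Lemma~\ref{lem: adjustment1} gives $\bB = (\bB'^{\tau(e)})^e\setminus e$. Since $e\notin X$, all the operations at $e$ commute with partial duality at $X$, so
\[
\bG_0 = \bB^X = \bigl((\bB'^{X})^{\tau(e)}\bigr)^e\setminus e .
\]
Set $\bH_0 := \bB'^X$, which is orientable by Proposition~\ref{prop: partial duality preserves orientability}. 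Then $\bH := \bH_0\sqcup \bG_1$ is orientable, and since the operations at $e$ do not touch $\bG_1$, we get $\bG = (\bH^{\tau(e)})^e\setminus e$.

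\emph{($\Leftarrow$)} Suppose $\bG = (\bH^{\tau(e)})^e\setminus e$ with $\bH$ orientable, and let $\bH_0$ be the component of $\bH$ containing $e$. All other components of $\bH$ are untouched by the operations and remain orientable. Choose a quasi-tree $Y$ of $\bH_0$. If $e\notin Y$, replace $Y$ by a quasi-tree avoiding $e$. This is possible unless $e$ is a bridge, i.e.\ a coloop of $D(\bH_0)$. In the bridge case, $(\bH^{\tau(e)})^e$ is just a vertex join/split situation, and I would check it directly: $e$ is a bridge, so $\bH_0^{\tau(e)}$ is the same ribbon graph up to isomorphism, and partial duality at a bridge contracts it. The result $\bH_0/e$ is orientable, hence pseudo-orientable, since every bouquet with no non-orientable loop is pseudo-orientable.

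Otherwise, $\bB' := \bH_0^Y$ is an orientable bouquet containing the loop $e$. By commutation,
\[
\bigl((\bH_0^{\tau(e)})^e\setminus e\bigr)^Y = (\bB'^{\tau(e)})^e\setminus e .
\]
By Lemma~\ref{lem: adjustment2}, the right-hand side is a pseudo-orientable bouquet. Thus the component of $\bG$ coming from $\bH_0$ is a partial dual of a pseudo-orientable bouquet, after applying Proposition~\ref{prop: partial duality} to write it as that bouquet dualized at $Y$.

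One further subtlety is that deleting $e$ may disconnect the component: $\bH_0^{\tau(e)e}\setminus e$ could have several components. I would argue this does not happen unless $e$ is a loop-like coloop, because the image is a partial dual of a bouquet, which is connected. Partial duality preserves the number of components, so the image is connected.

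The main obstacle is the commutation bookkeeping, specifically justifying that $Y$ (a quasi-tree of $\bH_0$) can be taken to avoid $e$, and handling the degenerate bridge/coloop case cleanly. I expect the remaining steps to be formal.
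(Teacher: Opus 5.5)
Your argument is correct and is essentially the paper's proof. For ($\Rightarrow$) you adjust $\bG^X$ and dualize back at $X$, using that the operations at $e\notin X$ commute with partial duality at $X$. For ($\Leftarrow$) you dualize at a quasi-tree avoiding $e$ and apply Lemma~\ref{lem: adjustment2}, and in the remaining bridge case everything stays orientable.

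One correction in the bridge case (besides the typo $e\notin Y$, which should read $e\in Y$). Twisting a bridge does not in general give an isomorphic ribbon graph, since it amounts to mirroring one side, so $(\bH_0^{\tau(e)})^e\setminus e$ need not equal $\bH_0^e\setminus e$. It does preserve orientability, however, and that is all you need, exactly as the paper uses it.
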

\begin{proof}
    Without loss of generality, we may assume that $\bG$ is connected.

    Suppose that $\bG$ is pseudo-orientable, i.e., $\bG = \bB^X$ for some pseudo-orientable bouquet $\bB$.
    Then there are an adjustment $\widehat{\bB}$ of $\bB$ and an edge $e$ of $\widehat{\bB}$ such that $\bB = (\widehat{\bB}^{\tau(e)})^e \setminus e$. Let $\bH := \widehat{\bB}^X$ by Lemma~\ref{lem: adjustment1}. 
    Because $X\cap \{e\} = \emptyset$, we have
    \[
        (\bH^{\tau(e)})^e \setminus e = ((\widehat{\bB}^X)^{\tau(e)})^e \setminus e = ((\widehat{\bB}^{\tau(e)})^e \setminus e)^X = \bB^X = \bG.
    \]

    To prove the converse, suppose that $\bG = (\bH^{\tau(e)})^e \setminus e$ for some orientable ribbon graph $\bH$ and some edge $e$ of $\bH$. 
    If there is a quasi-tree $X$ of $\bH$ such that $e\notin X$ and $\bH^X$ is a bouquet, then $\bG^X = ((\bH^{\tau(e)})^e \setminus e)^X = ((\bH^X)^{\tau(e)})^e \setminus e$ is a pseudo-orientable bouquet by Lemma~\ref{lem: adjustment2}, and thus $\bG$ is a pseudo-orientable ribbon graph.
    Thus, we may assume that $e$ is a bridge of $\bH$.
    Then $e$ is also a bridge of $\bH^{\tau(e)}$, and $\bH^{\tau(e)}$ is orientable. Therefore, $\bG$ is also orientable, so it is a partial dual of an orientable bouquet. As every orientable bouquet is pseudo-orientable, we conclude that $\bG$ is a pseudo-orientable ribbon graph.
\end{proof}

As a consequence, we deduce the following:

\begin{proposition}\label{prop: partial duality preserves pseudo-orientability}
    The class of pseudo-orientable ribbon graphs is minor-closed.
\end{proposition}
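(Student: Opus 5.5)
The plan is to use the characterization of Lemma~\ref{lem: simple criterion for pseudo-orientability}: $\bG$ is pseudo-orientable if and only if $\bG = (\bH^{\tau(e)})^e \setminus e$ for some orientable ribbon graph $\bH$ and edge $e\in E(\bH)$. Minors are generated by three operations: partial duality at an edge $f$, deletion of an edge $f$, and deletion of a vertex. It therefore suffices to show that each single operation preserves pseudo-orientability. In every case we transport the operation to $\bH$, and then use that orientable ribbon graphs are closed under it.

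\emph{Partial duality.} Let $f\ne e$ be an edge of $\bG$. Partial duality at $f$ commutes with the Petrial at $e$ by the result of~\cite{EM2012}. It commutes with partial duality at $e$ by Proposition~\ref{prop: partial duality}. It commutes with deletion of $e$ because $\bG^X\setminus Y=(\bG\setminus Y)^X$. Hence
\[
\bG^f = ((\bH^f)^{\tau(e)})^e\setminus e .
\]
Here $\bH^f$ is orientable by Proposition~\ref{prop: partial duality preserves orientability}, so the criterion applies and $\bG^f$ is pseudo-orientable.

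\emph{Edge deletion.} For $f\ne e$, the Petrial at $e$ commutes with deletion of $f$ (noted in the paper), and partial duality at $e$ commutes with deletion of $f$. So
\[
\bG\setminus f=((\bH\setminus f)^{\tau(e)})^e\setminus e ,
\]
and $\bH\setminus f$ is orientable, being a subsurface of an orientable surface.

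\emph{Vertex deletion.} The step needing most care is vertex deletion together with disconnected graphs, since the criterion treats one distinguished component. A vertex of $\bG$ can only be deleted after its incident edges are removed, so it is an isolated vertex and forms its own component. By Definition~\ref{def: pseudo-orientable ribbon graph} I would argue componentwise. Deleting an orientable component leaves a graph whose non-orientable part still lies in at most one component, and that component is pseudo-orientable. Deleting the pseudo-orientable component leaves an orientable graph, and an orientable graph is pseudo-orientable because every orientable bouquet is.

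The same componentwise viewpoint handles the first two operations in the disconnected case. Alternatively, it suffices to notice that an isolated vertex of $\bG$ corresponds to an isolated vertex of $\bH$ (or of $\bH^{\tau(e)}$ and its partial dual at $e$), and that deleting it from $\bH$ keeps $\bH$ orientable. I expect the main obstacle to be this bookkeeping of vertices and components. The edge operations are purely formal consequences of the commutation rules.
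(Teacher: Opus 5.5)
Your proposal is correct and is essentially the paper's proof. You write $\bG=(\bH^{\tau(e)})^e\setminus e$ via Lemma~\ref{lem: simple criterion for pseudo-orientability} and commute the partial duals and deletions onto the orientable $\bH$; the paper does this in one line for $\bG^X\setminus Y$, and your componentwise handling of isolated-vertex deletion covers a case the paper leaves implicit. Drop the ``alternatively'' remark, which is false as stated: if $\bH$ is a single edge $e$ joining two vertices, then $(\bH^{\tau(e)})^e\setminus e$ is an isolated vertex arising from no isolated vertex of $\bH$, $\bH^{\tau(e)}$ or $(\bH^{\tau(e)})^e$; your componentwise argument does not need that remark.
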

\begin{proof}
    Let $\bG$ be a pseudo-orientable ribbon graph and let $X,Y$ be disjoint subsets of $E(\bG)$. We claim that $\bG^X \setminus Y$ is pseudo-orientable.

    By Lemma~\ref{lem: simple criterion for pseudo-orientability}, there is an orientable ribbon graph $\bH$ and an edge $e$ of $\bH$ such that $\bG = (\bH^{\tau(e)})^e \setminus e$. 
    We have
    \[
        \bG^X \setminus Y = ((\bH^{\tau(e)})^e \setminus e)^X \setminus Y = ((\bH^X\setminus Y)^{\tau(e)})^e \setminus e.
    \]
    Since $\bH^X\setminus Y$ is orientable, we conclude that $\bG^X \setminus Y$ is pseudo-orientable by Lemma~\ref{lem: simple criterion for pseudo-orientability}.
\end{proof}

\begin{definition}
    Let $\bG$ be a connected pseudo-orientable ribbon graph and $X$ be a quasi-tree of $\bG$ such that $\bG^X$ is a pseudo-orientable bouquet.
    We define the \emph{adjustment} of $\bG$ at $X$ as 
    \[
        \widehat{\bG} :=
        \begin{cases}
            ( \widehat{\bG^X} )^X      & \text{if $|X|$ is even}, \\
            ( \widehat{\bG^X} )^{X\cup\{\widehat{e}\}}
                                        & \text{otherwise}.
        \end{cases}
    \]
    For a general pseudo-orientable ribbon graph, its \emph{adjustment} is defined by taking an adjustment of the pseudo-orientable component and leaving the other components unchanged.
\end{definition}

The definition of $\widehat{\bG}$ depends on the choices of the quasi-tree $X$ and the certificate for $\bG^X$.
However, since we only care about the associated $\Delta$-matroid $D(\widehat{\bG})$, these choices do not matter; see Section~\ref{sec: prove thm1}.

It is clear that an adjustment $\widehat{\bG}$ is an orientable ribbon graph. We remark that not every orientable ribbon graph is an adjustment of a pseudo-orientable ribbon graph, since the parity of a quasi-tree of an adjustment is even by definition.
We finally note the following counterpart of Proposition~\ref{prop: reverse of adjustment}.

\begin{proposition}
    Every orientable ribbon graph with at least one edge and only even-sized quasi-trees is an adjustment of a pseudo-orientable ribbon graph.
\end{proposition}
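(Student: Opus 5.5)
The plan is to reverse the adjustment using Lemma~\ref{lem: adjustment2}, after passing to a bouquet by partial duality. I would first treat the case where $\bH$ is connected.

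Fix any quasi-tree $Y$ of $\bH$ and any edge $e$ of $\bH$; by hypothesis $|Y|$ is even. Set $\bB' := \bH^Y$. Since $Y$ is a quasi-tree, $\bB'$ is a bouquet, and it is orientable by Proposition~\ref{prop: partial duality preserves orientability}. Define
\[
\bB := (\bB'^{\tau(e)})^e\setminus e .
\]
By Lemma~\ref{lem: adjustment2}, $\bB$ is a pseudo-orientable bouquet, and with the certificate $(S_1,S_2)$ given there (so that $e$ joins the two points of $S_1\cap S_2$) we get $\alpha(\bB,S_1,S_2)=\bB'$, with the new loop labeled $\widehat{e}=e$. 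Now put $X := Y\setminus\{e\}$ and $\bG := \bB^X$, which makes sense because $X\subseteq E(\bB)$.

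Next I check that $\bG$ meets the definition of adjustment with this $X$.
\begin{itemize}
\item By Proposition~\ref{prop: partial duality}, $\bG^X=\bB$ is a pseudo-orientable bouquet.
\item Since $\emptyset$ is a quasi-tree of $\bB$, Proposition~\ref{prop: partial dual and quasi-tree} shows that $X$ is a quasi-tree of $\bG$.
\item Hence $\bG$ is a connected pseudo-orientable ribbon graph in the sense of Definition~\ref{def: pseudo-orientable ribbon graph}.
\end{itemize}
Because $|Y|$ is even, $|X|$ is even exactly when $e\notin Y$. The two cases of the definition then give:
\begin{itemize}
\item if $e\notin Y$, then $X=Y$ and $\widehat{\bG}=(\widehat{\bG^X})^X=\bB'^{Y}=\bH$;
\item if $e\in Y$, then $|X|$ is odd, $X\cup\{\widehat e\}=Y$, and $\widehat{\bG}=(\bB')^{Y}=\bH$.
\end{itemize}
Both equalities use Proposition~\ref{prop: partial duality}. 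So $\bH$ is an adjustment of $\bG$.

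For disconnected $\bH$, I would apply this construction to one component $\bH_0$ that has an edge and keep the other components, which are orientable, as they are. The resulting $\bG$ is pseudo-orientable in the sense of Definition~\ref{def: pseudo-orientable ribbon graph}, and its adjustment is $\bH$.

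I expect the main obstacle to be this reduction, not the bouquet computation. The connected argument needs the quasi-trees of $\bH_0$ to have even size, because the adjustment of a component always has even-sized quasi-trees. Each component of an orientable graph has quasi-trees of constant parity, so I would choose an edged component of even parity. If every edged component has odd parity, I would first use vertex joins to merge an even number of odd components into one connected even-parity component. This changes $\bH$ only up to the $2$-isomorphism of Theorem~\ref{thm: 2-isomorphism}. Whether that is acceptable, or whether the statement is meant for connected $\bH$, is the point I would need to check against the intended reading.
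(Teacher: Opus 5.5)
Your argument for connected $\bH$ is correct. It is a mild but useful variant of the paper's proof.

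The paper's route is as follows. It picks an edge $e$ outside some quasi-tree $X$ of $\bH$, which is possible unless $\bH$ is a tree. It then sets $\bG:=(\bH^{\tau(e)})^e\setminus e$ directly and leaves the check to the reader. Since $e\notin X$, the operations at $e$ commute with duality at $X$. So $\bG^X=((\bH^X)^{\tau(e)})^e\setminus e$, and only the even branch of the adjustment definition is used. Trees, where no such $e$ exists, are handled separately by contracting a leaf edge.

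You instead pass to the bouquet $\bH^Y$ first, apply Lemma~\ref{lem: adjustment2} there with an arbitrary edge $e$, and dualize back along $Y\setminus\{e\}$. When $e\notin Y$, this is the paper's construction. When $e\in Y$, the odd branch $(\widehat{\bG^X})^{X\cup\{\widehat e\}}$ takes over. The evenness of $|Y|$ is exactly what makes both branches return $\bH$. So you get the tree/bridge case for free, and you make explicit the verifications the paper calls easy.

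Your worry about disconnected $\bH$ is justified. It exposes a defect in the statement rather than in your argument, and the paper's opening ``we may assume that $\bH$ is connected'' is not a valid reduction. An adjustment changes only one component $\bG_0$. By Proposition~\ref{prop: adjustment of ribbon graphs}, $\widehat{\bG_0}$ contains $\widehat e$ and has only even-sized quasi-trees. Hence every adjustment has a component with an edge whose quasi-trees are all even.

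Now take the disjoint union of two single non-loop edges $e_1,e_2$. It is orientable and its unique quasi-tree is $\{e_1,e_2\}$. Yet each component has only the odd quasi-tree $\{e_i\}$, so it is not an adjustment of any pseudo-orientable ribbon graph.

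Thus the proposition holds exactly when some component of $\bH$ with an edge has even-sized quasi-trees, in particular when $\bH$ is connected. Your proof establishes precisely that. The vertex-join workaround does not save the literal statement, because it replaces $\bH$ by a different, merely $2$-isomorphic, ribbon graph. You should state the restriction instead.
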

\begin{proof}
    Let $\bH$ be an orientable ribbon graph satisfying the assumption. We may assume that $\bH$ is connected. 
    If $\bH$ is a tree, then it is readily seen that $\bH$ is an adjustment of a tree obtained by contracting a leaf edge of $\bH$. Thus, we may assume that $\bH$ is not a tree.
    Then $\bH$ has a quasi-tree $X$ that is a proper subset of $E(\bH)$. Let $e$ be an edge in $E(\bH)\setminus X$. 
    One can easily check that $\bG := (\bH^{\tau(e)})^e \setminus e$ is a pseudo-orientable ribbon graph which has $\bH$ as its adjustment.
\end{proof}

\subsection{Pseudo-orientability of bouquets}\label{sec: certificate}
In this subsection, we show that the two definitions of pseudo-orientability for bouquets (Def.~\ref{def: pseudo-orientable bouquet} and Def.~\ref{def: pseudo-orientable ribbon graph}) are equivalent (cf. Remark~\ref{rem: two definitions of pseudo-orientable}). While this equivalence is not strictly necessary for the remainder of the paper---provided we specify which definition is used in each instance---establishing it settles a natural question and unifies the two notions.

It suffices to prove the following lemma.

\begin{lemma}\label{lem: partial duality preserves pseudo-orientability}
    Let $\bB$ be a bouquet with a certificate of pseudo-orientability.
    Then any bouquet that is a partial dual of $\bB$ has a certificate.
\end{lemma}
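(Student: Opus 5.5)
By Proposition~\ref{prop: elementary partial duals}, any bouquet that is a partial dual of $\bB$ is reached from $\bB$ by a finite sequence of elementary partial duals. So it suffices to show that a single elementary partial dual of a bouquet with a certificate again has a certificate. We then induct along the sequence. There are two cases: the partial dual $\bB^e$ at a non-orientable loop $e$, and the partial dual $\bB^{ff'}$ at an interlacing pair $f,f'$ of orientable loops.

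\emph{Case 1: $e$ non-orientable.} By the certificate condition, $e$ has one end in the interior of $S_1$ and the other in the interior of $S_2$. We may slide the two points of $S_1\cap S_2$ along the circle without crossing any loop end, so we replace the certificate by an equivalent one. Choose it so that the ends of $e$ are adjacent to the two intersection points. Concretely, split the circle at the ends of $e$ into arcs $T_1,T_2$, and record for each arc which of its loop ends belong to $S_1$ and which to $S_2$. The partial dual $\bB^e$, as in Figure~\ref{fig: elementary partial duality - nonorientable}, reverses one of the two arcs cut out by $e$ and keeps $e$ non-orientable. It also toggles orientability exactly for the loops with one end on each arc, and toggles interlacement between pairs of such loops.

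Since the certificate is only a bipartition of the circle into two intervals, I will try the following candidate. Take $S_1'$ to consist of the $S_1$-part of $T_1$ together with the reversed $S_2$-part of $T_2$, and let $S_2'$ be the complement. Equivalently, I expect the new certificate to be read off from Figure~\ref{fig: elementary partial duality - nonorientable}, where $S_1,S_2$ are drawn with $e$ joining their endpoints and the picture shows the arrows of $S_2$ reversed. Then I check each loop type against the three conditions.
\begin{itemize}
\item An orientable loop inside one $S_i$ and one arc stays orientable and stays inside one $S_i'$.
\item An orientable loop crossing $e$, with both ends in the same $S_i$ but on different arcs, becomes non-orientable, and must end up split between $S_1'$ and $S_2'$.
\item A non-orientable loop, with one end in each $S_i$, either does not cross $e$ and stays non-orientable and split, or crosses $e$, becomes orientable, and must land in a single $S_i'$.
\end{itemize}
The simplest choice of the new cut points is to put them at the ends of $e$, i.e.\ take $S_1'=T_1$ and $S_2'=T_2$ up to the reversal. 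If this is right, the check reduces to the parity bookkeeping above.

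\emph{Case 2: $f,f'$ orientable and interlacing.} Their four ends cut the circle into $S^{(1)},\dots,S^{(4)}$. As in Figure~\ref{fig: elementary partial duality - orientable}, $\bB^{ff'}$ is obtained by swapping the arcs $S^{(1)}$ and $S^{(3)}$, and orientabilities are unchanged. The two cut points of the certificate lie in these arcs, and each of $f,f'$ lies inside one $S_i$. Since $f$ and $f'$ interlace, a short case analysis shows which arcs contain the cut points. I then transport the cut points with their arcs under the swap and verify that every loop still satisfies the conditions. Some subcases may require first moving a cut point to an equivalent position.

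The main obstacle is Case~1: getting the correct new cut points, and checking the loops that cross $e$. For a sanity check, if the direct construction is messy, I would fall back on the matrix description of Proposition~\ref{prop: binary interlacing matrix}. There a certificate amounts to a cyclic word in which orientable chords are ``internal'' and non-orientable chords are ``crossing'', and the pivot formula for $\bfM_2(\bB)*\{e\}$ can be used to verify the conditions.
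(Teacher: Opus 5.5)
\paragraph{Verdict.} Your reduction to elementary partial duals matches the paper, and so does your outline of the interlacing orientable pair. The gap is the non-orientable case, which is the substance of the lemma: you never carry out the check there, and you end up favouring a certificate that is wrong. (The orientable case is clean once you note that two interlacing orientable loops must lie in the same $S_i$. Then three of the four arcs lie inside that $S_i$ and the fourth contains both cut points; swapping the two arcs adjacent to it lets you keep $S_1'=S_1$.)

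\paragraph{Where the non-orientable case goes wrong.}
\begin{itemize}
\item The normalization ``slide the cut points so that the ends of $e$ are adjacent to them'' is not available. Sliding without crossing loop ends only moves a cut point within its gap between consecutive loop ends.
\item The ``simplest choice'' $S_1'=T_1$, $S_2'=T_2$, with the cut points at the ends of $e$, is not a certificate. First, $e$ is still non-orientable in $\bB^e$, so its ends must lie in the interiors of different parts. Second, a non-orientable loop with both ends on one side of $e$ stays non-orientable on that side. Third, a non-orientable loop crossing $e$ becomes orientable but still crosses $e$.
\item The bouquet with two non-interlacing non-orientable loops $e,g$ defeats both steps. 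In any certificate, the cut point on $g$'s side of $e$ must separate the two ends of $g$.
\item This choice is not equivalent to your first candidate. The $S_1,S_2$ drawn in Figure~\ref{fig: elementary partial duality - nonorientable} are just the two sides of $e$, not a certificate.
\end{itemize}

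\paragraph{The correct certificate.} Your first candidate is the right one, and it is exactly the paper's. Write the cyclic order of loop ends as $p\,A_1\,c\,A_2\,q\,B_1\,d\,B_2$. Here $p\in S_1$ and $q\in S_2$ are the ends of $e$, $c,d$ are the cut points, $A_1,B_2\subseteq S_1$ and $A_2,B_1\subseteq S_2$. Reversing the side $B_1\,d\,B_2$ gives $\bB^e$ with cyclic order $p\,A_1\,c\,A_2\,q\,\overline{B_2}\,d\,\overline{B_1}$, where a bar denotes reversal. Keep the cut points at $c$ and $d$, so $S_1'=\overline{B_1}\,p\,A_1$ and $S_2'=A_2\,q\,\overline{B_2}$. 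The check is then short:
\begin{itemize}
\item Loops with both ends in one block are orientable, do not cross $e$, and stay in one part.
\item Orientable loops joining $A_1$ to $B_2$, or $A_2$ to $B_1$, become non-orientable and are split.
\item Non-orientable loops joining $A_1$ to $B_1$ (resp.\ $A_2$ to $B_2$) become orientable and lie in $S_1'$ (resp.\ $S_2'$).
\item Non-orientable loops joining $A_1$ to $A_2$, or $B_1$ to $B_2$, stay non-orientable and are split.
\item The loop $e$ keeps $p$ in the interior of $S_1'$ and $q$ in the interior of $S_2'$.
\end{itemize}

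\paragraph{The matrix fallback.} It cannot replace this positional bookkeeping, because the existence of a certificate is not determined by $\bfM_2(\bB)$. The two bouquets in Figure~\ref{fig: pseudo-orientable and non-pseudo-orientable bouquets} have the same binary interlacing matrix, but only one of them is pseudo-orientable.
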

\begin{proof}
    Let $\bB$ be a pseudo-orientable bouquet with a certificate $(S_1,S_2)$.
    By Proposition~\ref{prop: elementary partial duals}, it suffices to show that the pseudo-orientability is preserved under elementary partial duality.

    We first show that the partial dual $\bB^{\{e,f\}}$ is pseudo-orientable whenever $e$ and $f$ are an interlacing pair of orientable loops in $\bB$.
    By symmetry, we may assume that the ends of $e$ and $f$ are in $S_2$.
    By cutting the boundary of the vertex at the four ends of $e$ and $f$, we obtain four segments $T_1,T_2,T_3,T_4$ that partition the boundary. We label them as in Figure~\ref{fig: partial duality preserves pseudo-orientability - orientable} so that $T_2, T_3, T_4$ are contained in $S_2$.
    Then $\bB^{\{e,f\}}$ can be obtained by swapping the segments $T_2$ and $T_4$.
    Let $S_1' := S_1$ and $S_2'$ be the segments of the boundary of the vertex of $\bB^{\{e,f\}}$ obtained from $S_2$ by swapping $T_2$ and $T_4$.
    
    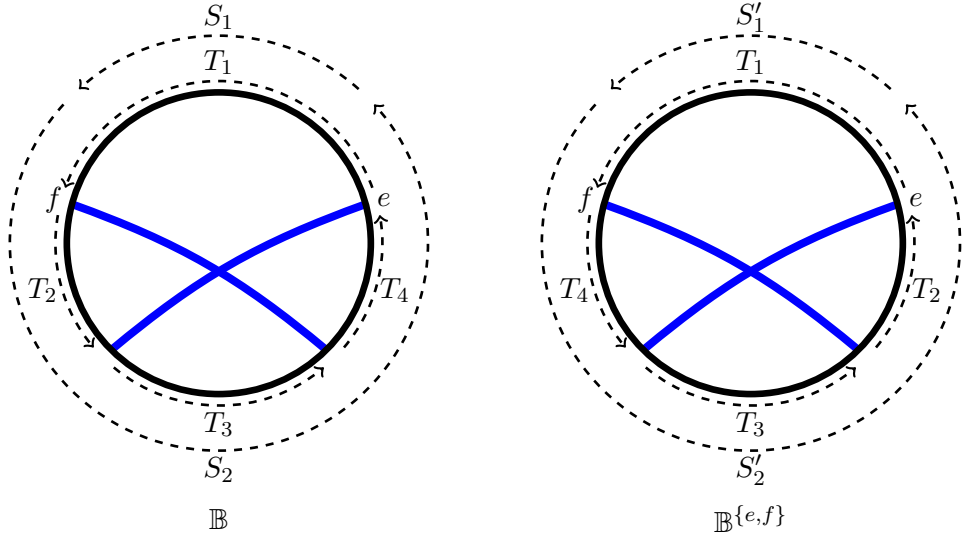
\begin{figure}[h!]
        \centering
        \begin{tikzpicture}
            \begin{scope}
                \draw[line width=1pt, dashed, ->] (50-2:2.75) arc (50-2:130+2:2.75);
                \draw[line width=1pt, dashed, ->] (140-2:2.75) arc (140-2:400+2:2.75);

                \node at (90:3) {$S_1$};
                \node at (270:3) {$S_2$};

                \draw[line width=3pt, blue] (15:2) to[bend right=10] (-135:2);
                \draw[line width=3pt, blue] (165:2) to[bend left=10] (-45:2);

                \draw[line width=1pt, dashed, ->] (20:2.15) arc (20:160:2.15);
                \draw[line width=1pt, dashed, ->] (170:2.15) arc (170:220:2.15);
                \draw[line width=1pt, dashed, ->] (230:2.15) arc (230:310:2.15);
                \draw[line width=1pt, dashed, ->] (320:2.15) arc (320:370:2.15);

                \node at (90:2.40) {$T_1$};
                \node at (195:2.40) {$T_2$};
                \node at (-90:2.40) {$T_3$};
                \node at (-15:2.40) {$T_4$};

                \node at (15:2.25) {$e$};
                \node at (165:2.25) {$f$};

                \draw[line width=2.5pt] (0,0) circle (2);

                \node at (0,-3.65) {$\bB$};
            \end{scope} 
            \begin{scope}[xshift=7cm]
                \draw[line width=1pt, dashed, ->] (50-2:2.75) arc (50-2:130+2:2.75);
                \draw[line width=1pt, dashed, ->] (140-2:2.75) arc (140-2:400+2:2.75);

                \node at (90:3) {$S_1'$};
                \node at (270:3) {$S_2'$};

                \draw[line width=3pt, blue] (15:2) to[bend right=10] (-135:2);
                \draw[line width=3pt, blue] (165:2) to[bend left=10] (-45:2);

                \draw[line width=1pt, dashed, ->] (20:2.15) arc (20:160:2.15);
                \draw[line width=1pt, dashed, ->] (170:2.15) arc (170:220:2.15);
                \draw[line width=1pt, dashed, ->] (230:2.15) arc (230:310:2.15);
                \draw[line width=1pt, dashed, ->] (320:2.15) arc (320:370:2.15);

                \node at (90:2.40) {$T_1$};
                \node at (195:2.40) {$T_4$};
                \node at (-90:2.40) {$T_3$};
                \node at (-15:2.40) {$T_2$};

                \node at (15:2.25) {$e$};
                \node at (165:2.25) {$f$};

                \draw[line width=2.5pt] (0,0) circle (2);

                \node at (0,-3.65) {$\bB^{\{e,f\}}$};
            \end{scope}
        \end{tikzpicture}
        \caption{
            The first case in the proof of Lemma~\ref{lem: partial duality preserves pseudo-orientability}.
        }   
        \label{fig: partial duality preserves pseudo-orientability - orientable}     
    \end{figure}

    \begin{claim}\label{claim: partial duality preserves pseudo-orientability - orientable}
        The pair $(S_1',S_2')$ certifies that $\bB^{\{e,f\}}$ is pseudo-orientable.
    \end{claim}
    \begin{proof}
        Let $g$ be a loop in $\bB$.
        Note that the orientability of $g$ is preserved under the elementary partial dual with respect to $e,f$.

        Suppose that $g$ is an orientable loop in $\bB$.
        If the ends of $g$ are in $S_1$, then the corresponding edge $g$ in $\bB^{\{e,f\}}$ has its ends in $S_1'$.
        So, we may assume that the ends of $g$ are in $S_2$.
        In this case, it is also easy to see that the ends of $g$ in $\bB^{\{e,f\}}$ are in $S_2'$.
        
        Suppose that $g$ is a non-orientable loop in $\bB$.
        Then, for $\bB$, one end of $g$ is in $S_1$ and the other end is in $S_2$.
        Thus, for $\bB^{\{e,f\}}$, one end of $g$ is in $S_1'$ and the other end is in~$S_2'$.
    \end{proof}

    It remains to show that the partial dual $\bB^{e}$ is pseudo-orientable whenever $e$ is a non-orientable loop in $\bB$.
    By cutting the boundary of the vertex at the ends of $e$ and the ends of $S_1$, we obtain the partition $(T_1,T_2,T_3,T_4)$ of the boundary as depicted in Figure~\ref{fig: partial duality preserves pseudo-orientability - non-orientable}.
    Then $\bB^{\{e\}}$ can be obtained by flipping the segment $T_1\cup T_4$.
    Let $S_1'$ be the segment in $\bB^{\{e\}}$ defined as the union of $T_2$ and $T_4$, and let $S_2'$ be the segment in $\bB^{\{e\}}$ defined as the union of $T_1$ and $T_3$.

    \begin{figure}[h!]
        \centering
        \begin{tikzpicture}
            \begin{scope}
                \draw[line width=1pt, dashed, ->] (0+3:2.75) arc (0+3:180-3:2.75);
                \draw[line width=1pt, dashed, ->] (180+3:2.75) arc (180+3:360-3:2.75);

                \node at (90:3) {$S_1$};
                \node at (270:3) {$S_2$};

                \draw[line width=3pt, red] (90:2) -- (270:2);

                \draw[line width=1pt, dashed, ->] (5:2.15) arc (5:85:2.15);
                \draw[line width=1pt, dashed, ->] (95:2.15) arc (95:175:2.15);
                \draw[line width=1pt, dashed, ->] (185:2.15) arc (185:265:2.15);
                \draw[line width=1pt, dashed, ->] (275:2.15) arc (275:355:2.15);

                \node at (45:2.45) {$T_1$};
                \node at (135:2.45) {$T_2$};
                \node at (225:2.45) {$T_3$};
                \node at (315:2.45) {$T_4$};

                \node at (90:2.25) {$e$};

                \draw[line width=2.5pt] (0,0) circle (2);

                \node at (0,-3.65) {$\bB$};
            \end{scope} 
            \begin{scope}[xshift=7cm]
                \draw[line width=1pt, dashed, ->] (0+3:2.75) arc (0+3:180-3:2.75);
                \draw[line width=1pt, dashed, ->] (180+3:2.75) arc (180+3:360-3:2.75);

                \node at (90:3) {$S_1'$};
                \node at (270:3) {$S_2'$};

                \draw[line width=3pt, red] (90:2) -- (270:2);

                \draw[line width=1pt, dashed, <-] (5:2.15) arc (5:85:2.15);
                \draw[line width=1pt, dashed, ->] (95:2.15) arc (95:175:2.15);
                \draw[line width=1pt, dashed, ->] (185:2.15) arc (185:265:2.15);
                \draw[line width=1pt, dashed, <-] (275:2.15) arc (275:355:2.15);

                \node at (45:2.45) {$T_4$};
                \node at (135:2.45) {$T_2$};
                \node at (225:2.45) {$T_3$};
                \node at (315:2.45) {$T_1$};

                \node at (90:2.25) {$e$};

                \draw[line width=2.5pt] (0,0) circle (2);

                \node at (0,-3.65) {$\bB^{\{e\}}$};
            \end{scope}
        \end{tikzpicture}
        \caption{
            The second case in the proof of Lemma~\ref{lem: partial duality preserves pseudo-orientability}.
        }
        \label{fig: partial duality preserves pseudo-orientability - non-orientable}     
    \end{figure}
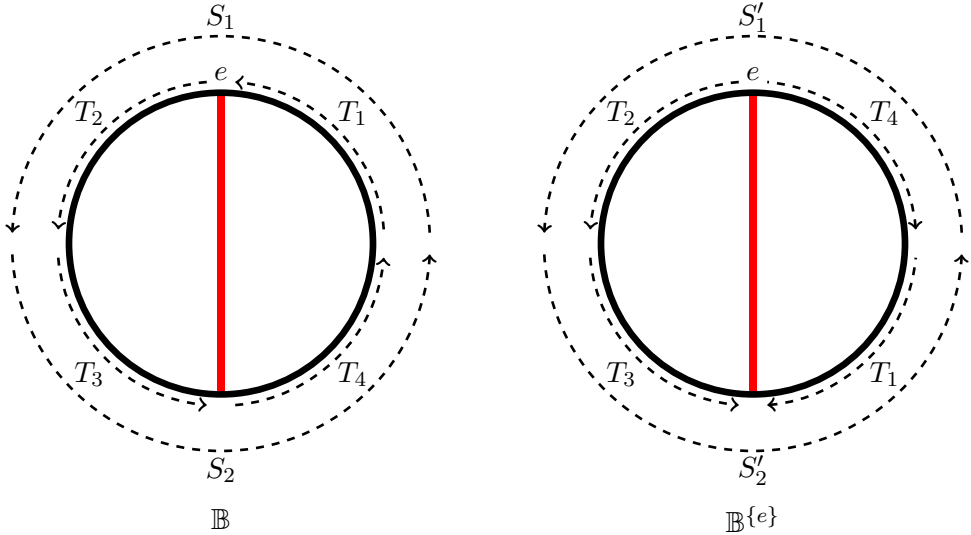

    \begin{claim}\label{claim: partial duality preserves pseudo-orientability - non-orientable}
        The pair $(S_1',S_2')$ certifies that $\bB^{\{e\}}$ is pseudo-orientable.
    \end{claim}
    \begin{proof}
        Let $g$ be an orientable loop in $\bB$.
        Then, the ends of $g$ are in the same part of the bipartition $(S_1,S_2)$.
        By symmetry, we may assume that the ends of $g$ are in $S_1$.
        If the ends of $g$ are in $T_1$, then the corresponding edge $g$ in $\bB^{\{e\}}$ is orientable and has its ends in $S_2'$.
        If the ends of $g$ are in $T_2$, then $g$ in $\bB^{\{e\}}$ is orientable and has its ends in $S_1'$. The last case is that one end of $g$ is in $T_1$ and the other end is in $T_2$.
        Then, $g$ is non-orientable in $\bB^{\{e\}}$ and has one end in $S_1'$ and the other end in $S_2'$.

        Next, let $g$ be a non-orientable loop in $\bB$.
        Then, one end of $g$ is in $S_1$ and the other end is in $S_2$.
        If the ends of $g$ in $\bB$ are in $T_1\cup T_4$ (so, one end is in $T_1$ and the other end is in $T_4$), then $g$ in $\bB^{\{e\}}$ is non-orientable and has one end in $S_1'$ and the other end in $S_2'$.
        In the case that the ends of $g$ are in $T_2 \cup T_3$, the same conclusion holds.
        The last case is that the ends of $g$ are in either $T_1\cup T_3$ or $T_2\cup T_4$.
        Without loss of generality, we may assume that they are in $T_1\cup T_3$ (so, one end is in $T_1$ and the other end is in $T_3$).
        Then $g$ in $\bB^{\{e\}}$ is orientable, and both ends are in $S_2'$.
    \end{proof}

    By Claims~\ref{claim: partial duality preserves pseudo-orientability - orientable} and~\ref{claim: partial duality preserves pseudo-orientability - non-orientable}, the elementary partial duality preserves pseudo-orientability.
\end{proof}

We remark that one can determine whether a given ribbon graph is pseudo-orientable in polynomial time and, moreover, find a certificate in the affirmative case.
Let $\bG$ be an input ribbon graph. We may assume that $\bG$ is connected. 
We first find a spanning quasi-tree $Q$ of $\bG$. It can be done efficiently, as every spanning tree of the underlying graph is a quasi-tree.
Next, we check whether there is a certificate $(S_1,S_2)$ for the bouquet $\bG^Q$.
Denote $n:=|E(\bG)|$. Then the boundary of $\bG^Q$ is partitioned into $2n$ segments by the ends of edges.
Thus, there are $\binom{2n}{2}+2n$ candidates for a certificate $(S_1, S_2)$ up to equivalence. One can check in polynomial time whether each candidate is a certificate.
If there is no certificate, then $\bG$ is not pseudo-orientable by Lemma~\ref{lem: partial duality preserves pseudo-orientability}.
Otherwise, we obtain a certificate and $\bG$ is pseudo-orientable.

\subsection{Delta-matroids associated with pseudo-orientable ribbon graphs}\label{sec: prove thm1}

We prove Theorem~\ref{thm-intro: pseudo-orientable and orientable}. 
We first show that lifts of set systems (Def.~\ref{def: lift}) and adjustments of ribbon graphs (Def.~\ref{def: pseudo-orientable ribbon graph}) are compatible
in the following sense. 
\begin{proposition}\label{prop: adjustment of ribbon graphs}
    If $\bG$ is a pseudo-orientable ribbon graph, then $\widehat{D(\bG)} = D(\widehat{\bG})$.
\end{proposition}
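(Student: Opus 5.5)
The plan is to first settle the bouquet case by computing binary interlacing matrices, then pass to a general connected pseudo-orientable ribbon graph by partial duality, and finally deal with the disconnected case.

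\textbf{Bouquet case.} Let $\bB$ be a bouquet with certificate $(S_1,S_2)$, let $\widehat{\bB}=\alpha(\bB,S_1,S_2)$, and set $\bfM:=\bfM_2(\bB)$. By Proposition~\ref{prop: binary interlacing matrix}, $D(\bB)=D(\bfM)$ and $D(\widehat{\bB})=D(\bfM_2(\widehat{\bB}))$. Over $\mathbb{F}_2$ we have $\sqrt{x}=x$, so the decomposition of Lemma~\ref{lem: sym is skew-sym plus rank-one in char two} has $\bfv_i=\bfM_{ii}$, which is $1$ exactly for the non-orientable loops. Likewise $\widehat{\bfM}_{ij}=\bfM_{ij}+\bfM_{ii}\bfM_{jj}$ for distinct $i,j\in E$, and $\widehat{\bfM}_{i\widehat e}=\bfM_{ii}$. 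I will check directly that $\bfM_2(\widehat{\bB})=\widehat{\bfM}$, in three parts.
\begin{enumerate}
\item All loops of $\widehat{\bB}$ are orientable, so the diagonal of $\bfM_2(\widehat{\bB})$ vanishes.
\item The new loop $\widehat e$ joins the two points of $S_1\cap S_2$. A loop interlaces it if and only if it has one end in each of $S_1$ and $S_2$. By the certificate condition, these are exactly the non-orientable loops of $\bB$, which gives the column $\bfv$.
\item Reversing the segment $S_2$ reverses the cyclic order of the ends lying in $S_2$ and leaves the rest unchanged. So for two loops $f,g$, the interlacement status flips exactly when both have one end in $S_1$ and one in $S_2$. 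If a loop has both ends in one segment, its relative position with respect to any other chord is unchanged. Hence the new interlacement is $\bfM_{fg}+\bfv_f\bfv_g$.
\end{enumerate}
Proposition~\ref{prop: sym to skew-sym} then gives $\widehat{D(\bB)}=D(\widehat{\bfM})=D(\widehat{\bB})$. The main care is in part~3, where one checks the four ways the ends of $f$ and $g$ can be distributed between $S_1$ and $S_2$.

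\textbf{Connected case.} Next I prove a purely set-theoretic lemma: for a set system $D$ on $E$ and $X\subseteq E$,
\[
\widehat{D*X}=\widehat{D}*X \ \text{ if } |X| \text{ is even},\qquad \widehat{D*X}=\widehat{D}*(X\cup\{\widehat e\}) \ \text{ if } |X| \text{ is odd}.
\]
This holds because $|I\symdiff X|\equiv |I|+|X| \pmod 2$. Consequently $\alpha(I\symdiff X)$ equals $\alpha(I)\symdiff X$ when $|X|$ is even, and equals $\alpha(I)\symdiff X\symdiff\{\widehat e\}$ when $|X|$ is odd.

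Now let $\bG$ be connected with $\bB=\bG^X$ a pseudo-orientable bouquet. Then $\bG=\bB^X$, so Proposition~\ref{prop: ribbon graph minor and delta-matroid minor} gives $D(\bG)=D(\bB)*X$. If $|X|$ is even, then $D(\widehat{\bG})=D(\widehat{\bB}^X)=D(\widehat{\bB})*X=\widehat{D(\bB)}*X=\widehat{D(\bG)}$, using the bouquet case and the lemma. The odd case is identical, with $X\cup\{\widehat e\}$ in place of $X$; this matches the two cases in the definition of the adjustment.

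\textbf{Disconnected case.} This is where I expect the only real subtlety. Write $\bG=\bG_1\sqcup\bG_2$, with $\bG_1$ the pseudo-orientable component and $\bG_2$ the union of the remaining orientable components. Then $D(\bG)=D(\bG_1)\oplus D(\bG_2)$ and $D(\widehat{\bG})=\widehat{D(\bG_1)}\oplus D(\bG_2)$.

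Since $\bG_2$ is orientable, $D(\bG_2)$ is even, so all its bases have a common parity $p$. If $p=0$, then $\alpha(I_1\cup I_2)=\alpha(I_1)\cup I_2$ for every base $I_1\cup I_2$, and the two $\Delta$-matroids agree. If $p=1$, the two sides differ by a twist at $\widehat e$. In that case I would replace the adjustment of $\bG_1$ by its partial dual at $\widehat e$, which means using $X\cup\{\widehat e\}$ in place of $X$ in the parity rule. Equivalently, the parity in the definition should be taken over a quasi-tree of all of $\bG$, not only of $\bG_1$.

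So the first thing to do is check which reading of ``leaving the other components unchanged'' makes the statement literally true, and then argue as above.
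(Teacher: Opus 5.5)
Your bouquet and connected-case arguments are essentially the paper's proof. The paper also checks $\bfM_2(\widehat{\bB})=\widehat{\bfM_2(\bB)}$ entrywise: a loop interlaces $\widehat e$ iff it is non-orientable in $\bB$, and two old loops interlace in $\widehat{\bB}$ iff $\bfM_{ef}+\bfM_{ee}\bfM_{ff}=1$. It then invokes Proposition~\ref{prop: sym to skew-sym}, and treats a connected $\bG$ with exactly your bookkeeping, $\alpha(Q\symdiff X)=\alpha(Q)\symdiff X$ or $\alpha(Q)\symdiff X\symdiff\{\widehat e\}$ according to the parity of $|X|$.

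Your worry about the disconnected case is justified, and the paper does not handle it; it just says ``without loss of generality $\bG$ is connected.'' Under the literal definition the statement fails whenever the orientable components have odd-size quasi-trees. For example, a bouquet with one non-orientable loop $a$ together with a disjoint bridge $b$ gives $\widehat{D(\bG)}=\{\{b,\widehat e\},\{a,b\}\}$ but $D(\widehat{\bG})=\{\{b\},\{a,b,\widehat e\}\}$. Your repair is the correct one: in that case take the partial dual at $\widehat e$, i.e.\ read the parity off a quasi-tree of all of $\bG$.
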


\begin{lemma}\label{lem: adjustment of bouquets}
    If $\bB$ is a pseudo-orientable bouquet, then $\bfM_2(\widehat{\bB}) = \widehat{\bfM_2(\bB)}$ and $D(\widehat{\bB})=\widehat{D(\bB)} $.
\end{lemma}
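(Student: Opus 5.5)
The plan is to prove the matrix identity $\bfM_2(\widehat{\bB}) = \widehat{\bfM_2(\bB)}$ by a direct entry-by-entry comparison. The $\Delta$-matroid statement then follows formally. Indeed, $D(\widehat{\bB}) = D(\bfM_2(\widehat{\bB}))$ by Proposition~\ref{prop: binary interlacing matrix}, since $\widehat{\bB}$ is a bouquet. Moreover, $\widehat{D(\bB)} = \widehat{D(\bfM_2(\bB))} = D(\widehat{\bfM_2(\bB)})$ by Proposition~\ref{prop: binary interlacing matrix} together with Proposition~\ref{prop: sym to skew-sym}, which applies over $\bF_2$.

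Over $\bF_2$ we have $\sqrt{a}=a$. Write $\bfM := \bfM_2(\bB) = (a_{ij})$. By the formula preceding Proposition~\ref{prop: sym to skew-sym}, the off-diagonal entries are $\widehat{\bfM}_{ij} = a_{ij} + a_{ii}a_{jj}$ for $i\ne j$ in $E(\bB)$, and $\widehat{\bfM}_{i\widehat{e}} = a_{ii}$; all diagonal entries are $0$. All loops of $\widehat{\bB}$ are orientable, so $\bfM_2(\widehat{\bB})$ also has zero diagonal. It therefore remains to verify two geometric facts about the adjustment $\alpha(\bB,S_1,S_2)$.

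\textbf{Fact (a).} A loop $i\in E(\bB)$ interlaces $\widehat{e}$ in $\widehat{\bB}$ if and only if $i$ is non-orientable in $\bB$. The chord $\widehat{e}$ joins the two points of $S_1\cap S_2$, so it separates the circle into the arcs $S_1$ and $S_2$; flipping $S_2$ keeps each end inside its arc. Hence $i$ crosses $\widehat{e}$ exactly when its ends lie in different arcs, which by the certificate conditions happens exactly when $i$ is non-orientable.

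\textbf{Fact (b).} For distinct $i,j\in E(\bB)$, the loops $i$ and $j$ interlace in $\widehat{\bB}$ if and only if $a_{ij}+a_{ii}a_{jj}=1$. We split into cases according to where the ends lie.
\begin{itemize}
\item If both loops have both ends in the same arc, flipping that arc (or not flipping it) reverses or preserves the cyclic order of the four ends, so interlacement is unchanged. Here $a_{ii}a_{jj}=0$, as required.
\item If the two loops lie in different arcs, they neither interlace before nor after, and again $a_{ii}a_{jj}=0$.
\item If $i$ is orientable in one arc and $j$ is non-orientable, then interlacement is determined by whether exactly one end of $j$ lies between the ends of $i$ within that arc. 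Flipping $S_2$ does not change this, and $a_{ii}a_{jj}=0$.
\item If both $i$ and $j$ are non-orientable, each has one end in $S_1$ and one end in $S_2$. Flipping $S_2$ reverses the order of their ends in $S_2$ while fixing the order in $S_1$. Hence interlacement toggles, which matches the correction term $a_{ii}a_{jj}=1$.
\end{itemize}

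The main obstacle is making the "flip reverses order within $S_2$" argument rigorous, and in particular checking that the half-twist reglueing really does untwist every non-orientable loop while introducing no twist into the others. This is handled by the observation already recorded in Definition~\ref{def: pseudo-orientable bouquet}, namely that after step~1 all loops are orientable. I would cite that observation and then reason purely with the cyclic order of the chord ends on the boundary circle.
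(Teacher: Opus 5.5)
Your proposal is correct and follows the paper's proof. Both compare off-diagonal entries one by one: the $\widehat{e}$-row via the certificate's arc condition, and the remaining entries via the interlacement toggle for pairs of non-orientable loops. Both then conclude via $D(\widehat{\bB}) = D(\bfM_2(\widehat{\bB})) = D(\widehat{\bfM_2(\bB)}) = \widehat{D(\bfM_2(\bB))} = \widehat{D(\bB)}$ using Proposition~\ref{prop: sym to skew-sym}. Your case analysis for Fact~(b) is more explicit than the paper's, which simply asserts the equivalences.
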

\begin{proof}
    The binary interlacing matrix $\bfM_2(\bB)$ is a binary representation of $D(\bB)$, which is defined as follows (Prop.~\ref{prop: binary interlacing matrix}):
    \[
        \bfM_2(\bB)_{ef} = 
        \begin{cases}
            1   &   \text{if $e=f$ and the loop is non-orientable}, \\
            0   &   \text{if $e=f$ and the loop is orientable}, \\
            1   &   \text{if $e\ne f$ and the two loops interlace}, \\
            0   &   \text{otherwise}.
        \end{cases}
    \]
    We denote the new edge in $\widehat{\bB}$ by $\widehat{e}$.
    Recall that $\widehat{\bB}$ is orientable.
    Then by the same construction, we have a binary representation $\bfM_2(\widehat{\bB})$ of $D(\widehat{\bB})$:
    \[
        \bfM_2(\widehat{\bB})_{ef} =
        \begin{cases}
            0   &   \text{if $e=f$}, \\
            1   &   \text{if $e\ne f$ and the two loops interlace}, \\
            0   &   \text{otherwise}.
        \end{cases}
    \]

    To show $\bfM_2(\widehat{\bB}) = \widehat{\bfM_2(\bB)}$, it suffices to verify equality of their off-diagonal entries. 

    Suppose $\widehat{e}\ne f$.
    By definition, the following four are equivalent:
    \begin{itemize}
        \item $\bfM_2(\widehat{\bB})_{\widehat{e}f} = 1$;
        \item $\widehat{e}$ and $f$ interlace in $\widehat{\bB}$;
        \item $f$ is non-orientable in $\bB$; and 
        \item $\bfM_2(\bB)_{ff} = 1$.
    \end{itemize}
    This implies $\bfM_2(\widehat{\bB})_{\widehat{e}f} = \bfM_2(\bB)_{ff}$. By symmetry, $\bfM_2(\widehat{\bB})_{f\widehat{e}} = \bfM_2(\bB)_{ff}$.

    Next, suppose that $e\ne f$ and neither $e$ nor $f$ is $\widehat{e}$.
    Then the following are equivalent:
    \begin{itemize}
        \item $\bfM_2(\widehat{\bB})_{ef} = 1$;
        \item $e$ and $f$ interlace in $\widehat{B}$;
        \item at least one of $e$ and $f$ is orientable and $e,f$ interlace, or both $e$ and $f$ are non-orientable and $e,f$ do not interlace; and 
        \item $\det(\bfM_2(\bB)[\{e,f\}]) = \bfM_2(\bB)_{ef} + \bfM_2(\bB)_{ee} \bfM_2(\bB)_{ff} = 1$.
    \end{itemize}
    Hence $\bfM_2(\widehat{\bB})_{ef} = \bfM_2(\bB)_{ef} + \bfM_2(\bB)_{ee} \bfM_2(\bB)_{ff}$.

    Therefore, $\bfM_2(\widehat{\bB}) = \widehat{\bfM_2(\bB)}$, which implies that 
    \[
        D(\widehat{\bB}) = D(\bfM_2(\widehat{\bB})) = D(\widehat{\bfM_2(\bB)}) = \widehat{D(\bfM_2(\bB))} = \widehat{D(\bB)},
    \]
    where the second last equality follows from Proposition~\ref{prop: sym to skew-sym}. 
\end{proof}

\begin{proof}[\bf Proof of Proposition~\ref{prop: adjustment of ribbon graphs}]
    Without loss of generality, we may assume that $\bG$ is connected. Let $X$ be a quasi-tree of $\bG$. Suppose that $|X|$ is even.
Then $\alpha(Q\symdiff X) = \alpha(Q)\symdiff X$ for each $Q \subseteq E(\bG)$.
    Moreover, the following are equivalent:
    \begin{itemize}
        \item $Q$ is a quasi-tree of $\bG$;
        \item $Q\symdiff X$ is a quasi-tree of $\bG^X$;
        \item $\alpha(Q\symdiff X)$ is a quasi-tree of $\widehat{\bG^X}$ (by Lemma~\ref{lem: adjustment of bouquets});
        \item $\alpha(Q) = \alpha(Q\symdiff X) \symdiff X$ is a quasi-tree of $\widehat{\bG} = (\widehat{\bG^X})^X.$
    \end{itemize}
    Thus, $\widehat{D(\bG)} = D(\widehat{\bG})$.

    Suppose that $|X|$ is odd. In this case, $\alpha(Q\symdiff X) = \alpha(Q) \symdiff X \symdiff \{\widehat{e}\}$. So, the above equivalences still hold except for the last one, which can be replaced with:
    \begin{itemize}
        \item $\alpha(Q) = \alpha(Q\symdiff X) \symdiff (X \cup \{\widehat{e}\})$ is a quasi-tree of $\widehat{\bG} = (\widehat{\bG^X})^{X \cup \{\widehat{e}\}}.$
        \qedhere
    \end{itemize}
\end{proof}

Now we are ready to prove our first main theorem. 

\begin{proof}[\bf Proof of Theorem~\ref{thm-intro: pseudo-orientable and orientable}]

    If $\bG$ is pseudo-orientable up to the $2$-isomorphism of ribbon graphs, then by Theorem~\ref{thm: 2-isomorphism} and Proposition~\ref{prop: adjustment of ribbon graphs}, the even $\Delta$-matroid $\widehat{D(\bG)}$ is ribbon-graphic. 
    
    Conversely, assume that $\bG$ has the property that $\widehat{D(\bG)}=D(\bH)$ for some orientable ribbon graph $\bH$. Without loss of generality, we may assume that $\bH$ is a disjoint union of bouquets. By Proposition~\ref{prop: reverse of adjustment}, there exists a pseudo-orientable ribbon graph $\bG'$ such that $\bH=\widehat{\bG'}$, and hence $\widehat{D(\bG)}=\widehat{D(\bG')}$. By Corollary~\ref{cor: BD bijection}, we obtain $D(\bG)=D(\bG')$. Then by Theorem~\ref{thm: 2-isomorphism}, $\bG$ is $2$-isomorphic to the pseudo-orientable ribbon graph $\bG'$. 
\end{proof}

We finally remark that a pseudo-orientable ribbon graph and a non-pseudo-orientable ribbon graph may induce the same $\Delta$-matroid.

\begin{example}
    There is a pair of pseudo-orientable and non-pseudo-orientable bouquets that induce the same $\Delta$-matroid.
    See Figure~\ref{fig: pseudo-orientable and non-pseudo-orientable bouquets}.
\end{example}

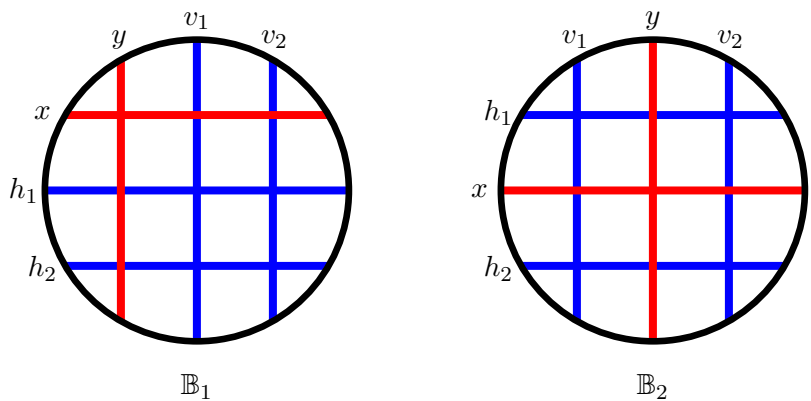
\begin{figure}[h!]
    \centering
    \begin{tikzpicture}
        \begin{scope}
\draw[line width=3pt, blue] (-30:2) -- (-150:2);
            \draw[line width=3pt, blue] (0:2) -- (180:2);

\draw[line width=3pt, blue] (60:2) -- (-60:2);
            \draw[line width=3pt, blue] (90:2) -- (-90:2);

\draw[line width=3pt, red] (30:2) -- (150:2);
            \draw[line width=3pt, red] (120:2) -- (-120:2);

            \node at (117:2.25) {$y$};
            \node at (90:2.25) {$v_1$};
            \node at (63:2.25) {$v_2$};

            \node at (153:2.28) {$x$};
            \node at (180:2.28) {$h_1$};
            \node at (207:2.28) {$h_2$};

            \draw[line width=2.5pt] (0,0) circle (2);

            \node at (0,-2.6) {$\bB_1$};
        \end{scope}
        \begin{scope}[xshift=6cm]
\draw[line width=3pt, blue] (30:2) -- (150:2);
            \draw[line width=3pt, blue] (-30:2) -- (-150:2);

\draw[line width=3pt, blue] (60:2) -- (-60:2);
            \draw[line width=3pt, blue] (120:2) -- (-120:2);

\draw[line width=3pt, red] (0:2) -- (180:2);
            \draw[line width=3pt, red] (90:2) -- (-90:2);

            \node at (117:2.25) {$v_1$};
            \node at (90:2.25) {$y$};
            \node at (63:2.25) {$v_2$};

            \node at (153:2.28) {$h_1$};
            \node at (180:2.28) {$x$};
            \node at (207:2.28) {$h_2$};

            \draw[line width=2.5pt] (0,0) circle (2);

            \node at (0,-2.6) {$\bB_2$};
        \end{scope}
    \end{tikzpicture}
    \caption{An pseudo-orientable bouquet $\bB_1$ (left) and a non-pseudo-orientable bouquet $\bB_2$ (right) with the edge set $\{x,y,v_1,v_2,h_1,h_2\}$.
    In both cases, the orientable loops $v_1,v_2,h_1,h_2$ are colored blue, and the non-orientable loops $x,y$ are colored red.
    Because the binary interlacing matrices are the same, $D(\bB_1) = D(\bB_2)$.
    }
    \label{fig: pseudo-orientable and non-pseudo-orientable bouquets}
\end{figure}

\subsection{Matrix--Quasi-tree Theorem for pseudo-orientable ribbon graphs}\label{sec: matrix--quasi-tree}

We prove the Matrix--Quasi-tree Theorem for pseudo-orientable ribbon graphs (Thm.~\ref{thm-intro: matrix--quasi-tree for pseudo-orientable}) by reducing it to the orientable case (Thm.~\ref{thm: matrix--quasi-tree theorem for orientable ribbon graphs}). 
\begin{definition} \label{def: real interlacing matrix}
    Let $\bB$ be an orientable bouquet.
    Assign an arbitrary orientation to each edge of $\bB$, and denote its tail (resp. head) by $e^-$ (resp. $e^+$).
    Choose one of the two orientations of the boundary of the unique vertex in $\bB$. Then we obtain a cyclic ordering of $2|E|$ elements $e^+,e^-$ with $e\in E$.

    The \emph{\textup{(}real\textup{)} interlacing matrix} $\bfM_\pm(\bB)$ is an $E(\bB)$-by-$E(\bB)$ skew-symmetric matrix with real entries, defined by
    \[
        \bfM_\pm(\bB)_{ef} := 
        \begin{cases}
            1   & \text{if the cyclic ordering is $\dots e^+ \dots f^+ \dots e^- \dots f^- \dots$}, \\
            -1  & \text{if the cyclic ordering is $\dots e^+ \dots f^- \dots e^- \dots f^+ \dots$}, \\
            0   & \text{otherwise, i.e., $e,f$ do not interlace or $e=f$}.
        \end{cases}
    \]
    
    Here we use the notation $\bfM_\pm$ since we view the matrix as a representation of the even $\Delta$-matroid $D(\bB)$ over the regular partial field $\bF_1^\pm$, while the binary interlacing matrix $\bfM_2(\bB)$ defined in Proposition~\ref{prop: binary interlacing matrix} is a representation of $D(\bB)$ over the finite field $\bF_2$.

\end{definition}

\begin{theorem}[\cite{MMN2025b}] \label{thm: matrix--quasi-tree theorem for orientable ribbon graphs}
    For an orientable bouquet $\bB$, 
    \[
        \det(\bfM_\pm(\bB)[X]) =
        \begin{cases}
            1   &   \text{$X$ is a quasi-tree of $\bB$}, \\
            0   &   \text{otherwise}.\\
        \end{cases}
    \]
    In particular, 
    \[
        \det(\bfI+\bfM_\pm(\bB)) = \text{the number of quasi-trees of $\bB$}.
    \]

\end{theorem}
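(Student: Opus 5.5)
The plan is to combine the binary representation of $D(\bB)$ with a pivoting induction. Since $\bB$ is orientable, the real interlacing matrix $\bfM_\pm(\bB)$ reduces modulo $2$ to the binary interlacing matrix $\bfM_2(\bB)$ of Proposition~\ref{prop: binary interlacing matrix}. Hence for every $X\subseteq E(\bB)$,
\[
\det(\bfM_\pm(\bB)[X])\equiv \det(\bfM_2(\bB)[X]) \pmod 2,
\]
and the right-hand side is $1$ exactly when $X$ is a quasi-tree of $\bB$. Moreover $\bfM_\pm(\bB)[X]$ is skew-symmetric, so its determinant is a square $\pf(\bfM_\pm(\bB)[X])^2\ge 0$. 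The theorem therefore reduces to showing that every principal minor of $\bfM_\pm(\bB)$ lies in $\{0,1\}$, i.e.\ that $\bfM_\pm(\bB)$ is PU. The final identity then follows from the expansion $\det(\bfI+\bfM)=\sum_X\det(\bfM[X])$.

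First, $\bfM_\pm(\bB)[X]=\bfM_\pm(\bB\setminus(E\setminus X))$, since deleting loops does not change the cyclic order of the remaining ends. It therefore suffices to prove $\det(\bfM_\pm(\bB))\in\{0,1\}$ for every orientable bouquet $\bB$. I will do this by induction on $|E(\bB)|$, with the empty case trivial. If $\bfM:=\bfM_\pm(\bB)$ is singular there is nothing to prove. Otherwise, for a fixed loop $e$ the row of $e$ is nonzero, so there is an $f$ with $\bfM_{ef}=\pm1$, meaning $e$ and $f$ interlace. Then $\det\bfM[\{e,f\}]=1$, and the Schur complement formula gives
\[
\det \bfM=\det \bfM[\{e,f\}]\cdot\det(\bfM'[E\setminus\{e,f\}]) = \det(\bfM'[E\setminus\{e,f\}]),
\]
where $\bfM'$ is the pivot of $\bfM$ on $\{e,f\}$. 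Its entries are $\bfM'_{gh}=\bfM_{gh}-\bfM_{ge}\bfM_{fh}/\bfM_{fe}-\bfM_{gf}\bfM_{eh}/\bfM_{ef}$ (up to the standard sign conventions).

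The key step is to show that $\bfM'[E\setminus\{e,f\}]$ equals $\bfM_\pm(\bB^{\{e,f\}}\setminus\{e,f\})$ up to a signing $\bfD\bfM\bfD$ with $\bfD$ diagonal with entries $\pm1$. Such a signing does not change principal minors. Since $\bB^{\{e,f\}}\setminus\{e,f\}$ is again an orientable bouquet with fewer loops, the induction hypothesis then gives $\det\bfM\in\{0,1\}$. To prove this identification, I will use the geometric description from Figure~\ref{fig: partial duality preserves pseudo-orientability - orientable}: the ends of $e,f$ cut the boundary into $T_1,T_2,T_3,T_4$, and $\bB^{\{e,f\}}$ is obtained by swapping $T_2$ and $T_4$. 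Each remaining loop $g$ is classified by the pair of segments containing its ends. The entries $\bfM_{ge}$ and $\bfM_{gf}$ are determined by this pair: $g$ interlaces $e$ iff its ends are separated by $e$, and similarly for $f$. I then check, case by case over the pairs of segment-types of $g$ and $h$, that the interlacing sign of $g,h$ after the swap equals $\bfM'_{gh}$ up to a factor $\epsilon_g\epsilon_h$. Here $\epsilon_g=-1$ exactly when the swap reverses the relative order of the head and tail of $g$, for instance when $g$ has one end in $T_2$ and the other in $T_4$.

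I expect this sign bookkeeping to be the main obstacle. Over $\bF_2$ the analogous statement is immediate, since $\bfM_2$ already represents $D(\bB)$ and pivoting matches twisting. Over $\bR$, one must verify that the correction term $\bfM_{ge}\bfM_{fh}-\bfM_{gf}\bfM_{eh}$ never produces $\pm2$ and always produces the correct sign. I would organize this check by fixing orientations $e^+\in T_1\cap T_4$-side and $f^+$ accordingly, and tabulating the at most ten unordered type-pairs for $(g,h)$. If a direct check becomes unwieldy, I would instead argue that the values of $\bfM'$ lie in $\{0,\pm1\}$ and have the correct support mod $2$. Then, for each type of $g$, I would choose $\epsilon_g$ so as to fix the sign of a single reference entry, which forces all other entries by the $4$-point interlacing relations.
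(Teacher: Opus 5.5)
The paper gives no proof of this statement. It is quoted from Merino--Moffatt--Noble, with Bouchet's unimodularity results and the cohomological argument of Booth et al.\ as background, so there is no internal proof to compare with. Your route is a legitimate independent one, in the spirit of Bouchet's pivoting arguments for circle graphs. The following steps are all correct: parity from $\bfM_2(\bB)$, nonnegativity from $\det=\pf^2$, principal submatrices as deletions, the Schur-complement induction on an interlacing pair, and the final expansion of $\det(\bfI+\bfM)$. However, the entire content of the theorem is the claim that the Schur complement $\bfM'$ is a signing of $\bfM_\pm(\bB^{\{e,f\}}\setminus\{e,f\})$. You only announce that claim, and what you say about it is partly wrong, so the proof has a gap exactly there.

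Orient $e,f$ so that the boundary reads $e^+,T_1,f^+,T_2,e^-,T_3,f^-,T_4$. Suppose every other loop keeps its head and tail, and the new boundary is read as $T_1,T_4,T_3,T_2$ with each segment traversed forward (the orientation induced by $\partial(v\cup e\cup f)$). Then $\bfM'=\bfM_\pm(\bB^{\{e,f\}}\setminus\{e,f\})$ \emph{exactly}, with no signing needed.

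\begin{itemize}
\item \textbf{Your sign rule.} Setting $\epsilon_g=-1$ when the head/tail order of $g$ is reversed is right only if chords are re-oriented left-to-right in a linearization of each diagram. Read literally, it fails. Take $g^+\in T_2$, $g^-\in T_4$; take $h^+\in T_1$, $h^-\in T_3$; and take $k$ with both ends in $T_1$ on either side of $h^+$. Then $\bfM_{gh}=-1$ and the correction $\bfM_{ge}\bfM_{fh}-\bfM_{gf}\bfM_{eh}$ equals $2$, so $\bfM'_{gh}=1$, which is the new interlace sign. Likewise $\bfM'_{kh}=\bfM_{kh}$ is the new interlace sign. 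Since $g,h,k$ are connected in the new interlace graph, a valid signing must be constant on them. Yours is not, even after a global sign change.
\item \textbf{The correction term.} The same example shows the correction term itself does take the value $\pm2$. Only the sum $\bfM_{gh}+\text{correction}$ is constrained, so that is what must be checked.
\item \textbf{Your fallback.} A skew-symmetric $\{0,\pm1\}$ matrix with the right support mod $2$ is not determined up to $\mathbf{D}(\cdot)\mathbf{D}$, because switching classes are classified by signs of cycles. For a $4$-cycle support, the Pfaffian $a_{12}a_{34}+a_{14}a_{23}$ is $0$ or $\pm2$ depending on the class. So fixing one reference entry per type forces nothing.
\end{itemize}

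To close the gap, put $\Lambda(x,y)=[x<y]$ in the linear order above, extended bilinearly, and set $\partial g:=g^+-g^-$. Then $\bfM_{gh}=\Lambda(\partial g,\partial h)$, which is independent of the cut since $\partial g,\partial h$ have zero coefficient sum. Let $\phi(T_1)=(1,0)$, $\phi(T_2)=(1,1)$, $\phi(T_3)=(0,1)$, $\phi(T_4)=(0,0)$, and $(p_1,p_2)\times(q_1,q_2)=p_1q_2-p_2q_1$.

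\begin{itemize}
\item One has $(\bfM_{eg},\bfM_{fg})=\phi(\partial g)$, so $\bfM'_{gh}-\bfM_{gh}=-\phi(\partial g)\times\phi(\partial h)$.
\item Reordering to $T_1,T_4,T_3,T_2$ changes $\Lambda(x,y)$ by an amount $\delta(s,t)$ that depends only on the segments $s,t$ containing $x,y$, with $\delta(s,s)=0$.
\item A direct check gives $\delta(s,t)+\phi(s)\times\phi(t)=b(t)-b(s)$, where $b(s)=[s\in\{2,3\}]$. This pairs to zero against the zero-sum vectors $\partial g,\partial h$, which proves the exact identity. Positions within a segment never enter.
\end{itemize}

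Alternatively, there is a topological argument. Write $\Sigma(\cdot)$ for the underlying surface. $\bfM_\pm$ is the intersection form of the chord cycles, and $\Sigma(\bB)\cong\Sigma(\bB^{\{e,f\}}\setminus\{e,f\})\# T^2$, with $\gamma_e,\gamma_f$ spanning the torus summand. The Schur complement is then the form on the orthogonal complement of that summand.
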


\begin{remark}
    A different choice of an orientation of the boundary of $\bB$ in Def.~\ref{def: real interlacing matrix} only changes the matrix $\bfM_\pm(\bB)$ by scaling by $-1$.
    Similarly, a different choice of orientations for some edges changes the matrix by multiplying the corresponding rows and columns by $-1$.
    Thus, the determinant and, moreover, the Smith normal form of $\bfI + \bfM_\pm(\bB)$ are independent of such choices.
\end{remark}

For pseudo-orientable bouquets, we need to adjust the interlacing matrix by taking into account the non-orientable loops. The following definition is formulated so that the claim holds in the proof of Theorem~\ref{thm: matrix--quasi-tree theorem for pseudo-orientable ribbon graphs}. 

\begin{definition}\label{def: adjusted interlacing matrix}
    Let $\bB$ be a pseudo-orientable bouquet with a certificate $(S_1,S_2)$.
    Assign an arbitrary orientation to each orientable loop of $\bB$, and orient each non-orientable loop of $\bB$ so that its head is in $S_1$ and its tail is in $S_2$.
    For each edge $e$, denote its tail and head by $e^-$ and $e^+$, respectively.
    Choose one of the two orientations of the boundary of the unique vertex in $\bB$. This choice induces a cyclic ordering of $2|E|$ elements $e^+,e^-$ with $e\in E$.

    We define the \emph{adjusted interlacing matrix} $\bfM(\bB,S_1,S_2)$ as follows. Let $e,f\in E(\bB)$. If at least one of them is orientable, then
    \[
        \bfM(\bB,S_1,S_2)_{ef} :=
         \begin{cases}
            1   & \text{if the cyclic ordering is $\dots e^+ \dots f^+ \dots e^- \dots f^- \dots$}, \\
            -1  & \text{if the cyclic ordering is $\dots e^+ \dots f^- \dots e^- \dots f^+ \dots$}, \\
            0   & \text{otherwise, i.e., $e,f$ do not interlace or $e=f$}.
        \end{cases}
    \]
    If neither $e$ nor $f$ is orientable, we write $e<f$ if $e^+$ appears before $f^+$ in the linear order induced by the orientation of the segment $S_1$.
    Then, we define
    \[
        \bfM(\bB,S_1,S_2)_{ef} := 
        \begin{cases}
            1   & \text{if $e=f$ or they interlace}, \\
            2  & \text{if $e<f$ and they do not interlace}, \\
            0   & \text{if $e>f$ and they do not interlace}. \\
        \end{cases}
    \]
    For instance, see the left illustration in Figure~\ref{fig: different SNF} and the corresponding matrix in Example~\ref{ex: different SNF}.
\end{definition}

\begin{remark}
    The choices of the orientation in the definition above are not substantial. If we consider arbitrary orientations for non-orientable loops in Definition~\ref{def: adjusted interlacing matrix}, then the interlacing adjusted matrix $\bfM(\bB,S_1,S_2)$ can be defined by replacing the last case, non-orientable $e$ and $f$, as follows:
    We write $e<f$ if the end of $e$ in $S_1$ appears before the end of $f$ in $S_1$ in the linear ordering induced by the orientation of the segment $S_1$.
    We say that the orientations of $e,f$ \emph{agree} if $e^+$ and $f^+$ are in the same segment $S_1$ or $S_2$.
    \[
        \bfM(\bB,S_1,S_2)_{ef} = 
        \begin{cases}
            1   & \text{if $e=f$ or they interlace}, \\
            2  & \text{if $e<f$, they do not interlace, and the orientations of $e,f$ agree}, \\
            -2  & \text{if $e<f$, they do not interlace, and the orientations of $e,f$ do not agree}, \\
            0   & \text{if $e>f$ and they do not interlace}. \\
        \end{cases}
    \]
\end{remark}

Now we prove 
Theorem~\ref{thm-intro: matrix--quasi-tree for pseudo-orientable} for pseudo-orientable bouquets.

\begin{theorem}[Matrix--Quasi-tree Theorem for pseudo-orientable bouquets]
    \label{thm: matrix--quasi-tree theorem for pseudo-orientable ribbon graphs}
    For any pseudo-orientable ribbon bouquet $\bB$ with a certificate $(S_1,S_2)$,
    \[
        \det(\bfM(\bB,S_1,S_2)[X]) =
        \begin{cases}
            1   &   \text{$X$ is a quasi-tree of $\bB$}, \\
            0   &   \text{otherwise}.\\
        \end{cases}
    \]
    In particular,
    \[
        \det(\bfI+\bfM(\bB,S_1,S_2)) = \text{the number of quasi-trees of $\bB$}.
    \]
\end{theorem}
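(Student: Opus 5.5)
The plan is to reduce to the orientable case, Theorem~\ref{thm: matrix--quasi-tree theorem for orientable ribbon graphs}, applied to the adjustment $\widehat{\bB}=\alpha(\bB,S_1,S_2)$. Lemma~\ref{lem: determinants of type B and D} does the bookkeeping: it suffices to write $\bfM:=\bfM(\bB,S_1,S_2)$ as $\bfM=\bfA+\bfv\bfv^T$ with $\bfA$ real skew-symmetric and $\bfv$ real, such that
\[
  \bfA_{\bfv}=\begin{pmatrix}\bfA&\bfv\\-\bfv^T&0\end{pmatrix}
\]
coincides with $\bfM_\pm(\widehat{\bB})$ for suitable orientations of the loops and of the boundary of $\widehat{\bB}$. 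Granting this, Lemma~\ref{lem: determinants of type B and D} gives
\[
  \det(\bfM[X])=\det(\bfM_\pm(\widehat{\bB})[\alpha(X)]),
\]
which by the orientable theorem is $1$ if $\alpha(X)$ is a quasi-tree of $\widehat{\bB}$ and $0$ otherwise. By Lemma~\ref{lem: adjustment of bouquets}, $D(\widehat{\bB})=\widehat{D(\bB)}$, so $\alpha(X)$ is a quasi-tree of $\widehat{\bB}$ exactly when $X$ is a quasi-tree of $\bB$. The formula for $\det(\bfI+\bfM)$ then follows from the principal-minor expansion $\det(\bfI+\bfM)=\sum_X\det(\bfM[X])$.

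The main step is to choose orientations so that this decomposition holds. Orient $\widehat e$ from one point of $S_1\cap S_2$ to the other. The natural choice is $\bfv_f=\pm1$ for non-orientable $f$ and $\bfv_f=0$ for orientable $f$, since in $\widehat{\bB}$ the loop $\widehat e$ interlaces exactly the formerly non-orientable loops. With the paper's convention (head of each non-orientable loop in $S_1$, tail in $S_2$), all these loops cross $\widehat e$ in the same direction once $S_2$ is flipped. We can therefore arrange $\bfM_\pm(\widehat{\bB})_{f\widehat e}=1$ for every non-orientable $f$, possibly after negating $\widehat e$, which gives $\bfv=\mathbf 1$ on the non-orientable loops. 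Then $\bfv\bfv^T$ has entries $1$ exactly on pairs of non-orientable loops (including the diagonal) and $0$ elsewhere. It remains to check $\bfA_{ef}=\bfM_\pm(\widehat{\bB})_{ef}$ for $e,f\neq\widehat e$.

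\emph{Pairs with at least one orientable loop.} Here $\bfv\bfv^T$ contributes nothing, so we need the sign of interlacing in $\widehat{\bB}$ to equal the sign in $\bB$. Flipping $S_2$ reverses the order of ends inside $S_2$. An orientable loop lies entirely in $S_1$ or entirely in $S_2$, and one checks that the cyclic pattern $e^+f^+e^-f^-$ is preserved. This check needs care: one must use the induced orientation on $\widehat{\bB}$'s boundary, where $S_1$ is traversed forward and $S_2$ in reversed order. Flipping a segment is the same as reversing the reading order of that segment, and I expect the sign to be preserved in all subcases of where the four ends lie.

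\emph{Two non-orientable loops.} Here we need $\bfM_{ef}-1=\bfM_\pm(\widehat{\bB})_{ef}$, and for $e=f$ this reads $1-1=0$. If $e,f$ interlace in $\bB$, they do not interlace in $\widehat{\bB}$, and indeed $\bfM_{ef}-1=0$. If they do not interlace in $\bB$, they interlace in $\widehat{\bB}$, and we need $\bfM_\pm(\widehat{\bB})_{ef}=+1$ when $e<f$ and $-1$ when $e>f$. This should follow from the fact that heads lie in $S_1$ in the order dictated by $<$, while the tails, after the flip of $S_2$, appear in the pattern $e^+f^+e^-f^-$ precisely when $e<f$.

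This sign verification, especially the non-orientable/non-orientable case and the consistency of orientation conventions across the flip, is the main obstacle. I would handle it by drawing $\widehat{\bB}$ with $S_1$ as the upper arc and the flipped $S_2$ as the lower arc, as in Figure~\ref{fig: pseudo-orientable bouquet}, and reading off the cyclic orders directly. If a global sign comes out wrong, it can be absorbed by reversing the boundary orientation or the orientation of $\widehat e$, since by the remark after Theorem~\ref{thm: matrix--quasi-tree theorem for orientable ribbon graphs} such changes do not affect principal minors.
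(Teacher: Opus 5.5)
\textbf{Where the proposal breaks.} Your reduction is the paper's. You realize $\bfM(\bB,S_1,S_2)$ as $\bfA+\bfv\bfv^T$ with $\bfA_{\bfv}=\bfM_\pm(\widehat{\bB})$, then combine Lemma~\ref{lem: determinants of type B and D}, Lemma~\ref{lem: adjustment of bouquets} and Theorem~\ref{thm: matrix--quasi-tree theorem for orientable ribbon graphs}. Your choice of $\bfv$ and your check for two non-orientable loops are correct.

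The gap is the step you flagged, and it is not a global sign that can be absorbed. Flipping $S_2$ reverses the cyclic order of every end lying in $S_2$. So if $e$ is orientable with both ends in $S_2$ and interlaces a non-orientable $f$ (whose tail then lies between the ends of $e$), the pattern $e^+f^-e^-f^+$ in $\bB$ becomes $e^+f^+e^-f^-$ in $\widehat{\bB}$. The paper repairs this by reversing the orientation of every orientable loop inside $S_2$ when orienting $\widehat{\bB}$, and you need that step too.

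Even that fails for two interlacing orientable loops $e,f$ both inside $S_2$. The flip reverses their sign, and reorienting both multiplies the entry by $(-1)^2=1$. The paper's claim that $\bfM(\bB,S_1,S_2)_{ef}=\bfM_\pm(\widehat{\bB})_{ef}$ whenever one of $e,f$ is orientable is therefore false in exactly this case. The two interlacing lower blue loops in Figure~\ref{fig: pseudo-orientable bouquet with orientations} have entry $+1$ in $\bB$ and $-1$ in $\widehat{\bB}$.

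\textbf{The statement itself fails as written.} Orient the boundary counterclockwise and let $S_1,S_2$ be the upper and lower half-circles. Place the loops as follows:
\begin{itemize}
\item non-orientable $x$ at $60^\circ$ (head) and $330^\circ$;
\item non-orientable $y$ at $120^\circ$ (head) and $210^\circ$;
\item orientable $a$ at $240^\circ$ (head) and $340^\circ$;
\item orientable $b$ at $200^\circ$ (head) and $250^\circ$.
\end{itemize}
The interlacement graph on $\{x,a,b,y\}$ is the path $x\,a\,b\,y$ with $x,y$ non-orientable. Its binary interlacing matrix is singular, so $\{x,a,b,y\}$ is not a quasi-tree. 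Yet in the order $x,a,b,y$,
\[
\bfM(\bB,S_1,S_2)[\{x,a,b,y\}]=\begin{pmatrix}1&1&0&2\\-1&0&-1&0\\0&1&0&-1\\0&0&1&1\end{pmatrix}
\]
has determinant $4$. This does not depend on the orientations chosen for $a$, $b$ or the boundary. Replacing the $(a,b)$ and $(b,a)$ entries by the signs dictated by $\widehat{\bB}$ gives determinant $0$, as it should.

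\textbf{The fix.} No choice of orientations completes the argument for the matrix as defined; the definition must be amended. For two orientable loops both lying in $S_2$, take the interlacing sign with respect to the reversed orientation of $S_2$. Equivalently, define the matrix directly as $\bfA+\bfv\bfv^T$ read off from $\bfM_\pm(\widehat{\bB})$. With that amendment, plus the reorientation of the orientable loops in $S_2$, your outline goes through unchanged.
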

\begin{proof}
    Let $\widehat{\bB} := \bfM(\bB,S_1,S_2)$.
    We arbitrarily orient the new edge $\widehat{e}$.
For each orientable loop $e$ of $\widehat{\bB}$ whose both ends are in $S_2$, we swap the labels $e^+$ and $e^-$ in addition to the fact that $\widehat{\bB}$ is obtained by flipping $S_2$ (Definition~\ref{def: pseudo-orientable bouquet}).
    For the remaining loops in $\widehat{\bB}$, we keep the same orientations as in $\bB$.
    See Figure~\ref{fig: pseudo-orientable bouquet with orientations} for an example.
    Then we obtain the interlacing matrix $\bfM_\pm(\widehat{\bB})$ of $\widehat{\bB}$ with respect to the above orientations.
    Denote 
    \[
        \bfM_\pm(\widehat{\bB}) = 
        \begin{pmatrix}
            \bfA & \bfv \\
            -\bfv^T & 0 \\
        \end{pmatrix},
    \]
    where the last row (or column) is indexed by $\widehat{e}$.
    
    \begin{claim*}
        $\bfM(\bB,S_1,S_2) = \bfA + \bfv \bfv^T$.
    \end{claim*}
    \begin{proof}
        Let $e,f \in E(\bB)$.
        Suppose that $e$ or $f$ is an orientable loop. Then $\bfM(\bB,S_1,S_2)_{ef} = \bfM_\pm(\widehat{\bB})_{ef}$ and $\bfv_e \bfv_f = 0$, so $\bfM(\bB,S_1,S_2)_{ef} = \bfA_{ef} + \bfv_e \bfv_f$.
        Therefore, we may assume that $e$ and $f$ are non-orientable.
        Note that $\bfv_e \bfv_f = 1$.

        If $e=f$ or they interlace, then $\bfA_{ef}=0$ so $\bfM(\bB,S_1,S_2)_{ef} = 1 = \bfA_{ef} + \bfv_e \bfv_f$.

        If $e<f$ and they do not interlace, then $\bfA_{ef}=1$ so $\bfM(\bB,S_1,S_2)_{ef} = 2 = \bfA_{ef} + \bfv_e \bfv_f$.

        If $e>f$ and they do not interlace, then $\bfA_{ef}=-1$ so $\bfM(\bB,S_1,S_2)_{ef} = 0 = \bfA_{ef} + \bfv_e \bfv_f$.
    \end{proof}

    By Lemma~\ref{lem: adjustment of bouquets}, $X$ is a quasi-tree of $\bB$ if and only if $\alpha(X)$ is a quasi-tree of $\widehat{\bB}$. By Theorem~\ref{thm: matrix--quasi-tree theorem for orientable ribbon graphs}, the latter is equivalent to $\det(\bfM_\pm(\widehat{\bB})[\alpha(X)])=1$, and otherwise the determinant is zero. Then we get the desired result by the above claim and Lemma~\ref{lem: determinants of type B and D}. 
\end{proof}

\begin{figure}[h!]
    \centering
    \begin{tikzpicture}
\begin{scope}
            \draw[line width=3pt, red, <-] (50:2) -- (-50:2);
            \draw[line width=3pt, red, <-] (90:2) -- (-130:2);
            \draw[line width=3pt, red, <-] (130:2) -- (-90:2);
            
            \draw[line width=3pt, blue, <-] (30:2) to[bend left=40] (70:2);
            \draw[line width=3pt, blue, ->] (20:2) to[bend left=40] (110:2);

            \draw[line width=3pt, blue, <-] (-165:2) to[bend left=40] (-110:2);
            \draw[line width=3pt, blue, <-] (-150:2) to[bend left=40] (-70:2);
            \draw[line width=3pt, blue, <-] (-10:2) to[bend right=50] (-35:2);

            \draw[line width=2.5pt] (0,0) circle (2);
            
            \draw[line width=1pt, dashed, ->] (5:2.15) arc (5:175:2.15);
            \draw[line width=1pt, dashed, <-] (-5:2.15) arc (-5:-175:2.15);
            \node at (7:2.5) {$S_1$};
            \node at (-10:2.5) {$S_2$};

            \node at (0,-2.6) {$\bB$};
        \end{scope} 

        \begin{scope}[xshift=6cm]
            \draw[line width=3pt, blue, <-] (50:2) -- (-130:2);
            \draw[line width=3pt, blue, <-] (90:2) -- (-50:2);
            \draw[line width=3pt, blue, <-] (130:2) -- (-90:2);

            \draw[line width=3pt, blue, <-] (30:2) to[bend left=40] (70:2);
            \draw[line width=3pt, blue, ->] (20:2) to[bend left=40] (110:2);

            \draw[line width=3pt, blue, ->] (-15:2) to[bend right=40] (-70:2);
            \draw[line width=3pt, blue, ->] (-30:2) to[bend right=40] (-110:2);
            \draw[line width=3pt, blue, ->] (-170:2) to[bend left=50] (-145:2);

\draw[line width=3pt, blue, ->] (0:2) -- (180:2);

            \draw[line width=1pt, dashed, ->] (5:2.15) arc (5:175:2.15);
            \draw[line width=1pt, dashed, ->] (-5:2.15) arc (-5:-175:2.15);
            \node at (7:2.5) {$S_1$};
            \node at (-10:2.5) {$S_2$};

            \draw[line width=2.5pt] (0,0) circle (2);
            
            \node at (0,-2.6) {$\widehat{\bB}$};
        \end{scope} 
    \end{tikzpicture}
    \caption{The left figure shows a pseudo-orientable bouquet $\bB$ with orientations assigned to the loops. The right figure illustrates the corresponding orientations of the loops in its adjustment $\widehat{\bB}$, as described in the proof of Theorem~\ref{thm: matrix--quasi-tree theorem for pseudo-orientable ribbon graphs}. The orientable loops are colored blue, and the non-orientable loops are colored red.
    }
    \label{fig: pseudo-orientable bouquet with orientations}
\end{figure}
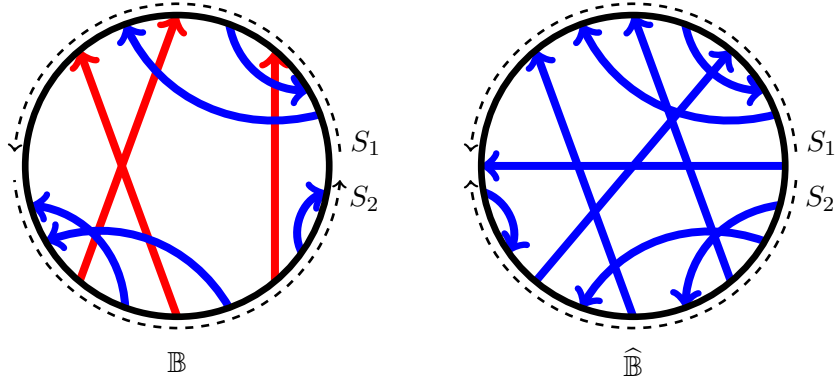

\begin{proof}[\bf Proof of Theorem~\ref{thm-intro: matrix--quasi-tree for pseudo-orientable}]
    Without loss of generality, we may assume that $\bG$ is connected.
    Then $\bG^Q$ is a pseudo-orientable bouquet, so there is a certificate, say $(S_1,S_2)$.
    Then $\bfM = \bfM(\bG^Q,S_1,S_2)$ is the desired matrix by Theorem~\ref{thm: matrix--quasi-tree theorem for pseudo-orientable ribbon graphs} and Proposition~\ref{prop: partial dual and quasi-tree}.
\end{proof}

By Theorem~\ref{thm: matrix--quasi-tree theorem for pseudo-orientable ribbon graphs}, the determinant of $\bfI + {\bfM}(\bB,S_1,S_2)$ is independent of the choice of the certificate $(S_1,S_2)$. However, we remark that its Smith normal form depends on the certificate.

\begin{example}\label{ex: different SNF}
    Let $\bB$ be the pseudo-orientable bouquet in Figure~\ref{fig: different SNF}, which has two non-equivalent certificates $(S_1,S_2)$ and $(T_1,T_2)$.
\begin{figure}[h!]
    \centering
    \begin{tikzpicture}
        \begin{scope}

                \draw[line width=3pt, red] (0+30:2) -- (-90-30:2);
                \draw[line width=3pt, red] (0+60:2) -- (-90-60:2);

                \node at (30:2.3) {$2^+$};
                \node at (-90-30:2.3) {$2^-$};
                \node at (60:2.3) {$3^+$};
                \node at (-90-60:2.3) {$3^-$};

                \draw[line width=3pt, red] (0-30:2) -- (90+30:2);
                \draw[line width=3pt, red] (0-60:2) -- (90+60:2);

                \node at (90+30:2.3) {$5^+$};
                \node at (-30:2.3) {$5^-$};
                \node at (90+60:2.3) {$6^+$};
                \node at (-60:2.3) {$6^-$};

                \draw[line width=3pt, blue] (45:2) to[bend right=75] (15:2);

                \node at (15:2.3) {$1^+$};
                \node at (45:2.3) {$1^-$};

                \draw[line width=3pt, blue] (90+45:2) to[bend right=75] (90+15:2);

                \node at (90+15:2.3) {$4^+$};
                \node at (90+45:2.3) {$4^-$};

                \draw[line width=1pt, dashed, ->] (0+3:2.75) arc (0+3:180-3:2.75);
                \draw[line width=1pt, dashed, ->] (180+3:2.75) arc (180+3:360-3:2.75);

                \node at (90:3) {$S_1$};
                \node at (270:3) {$S_2$};

                \draw[line width=2.5pt] (0,0) circle (2);

\end{scope} 
            \begin{scope}[xshift=7.5cm]

                \draw[line width=3pt, red] (0+30:2) -- (-90-30:2);
                \draw[line width=3pt, red] (0+60:2) -- (-90-60:2);

                \node at (30:2.3) {$2^+$};
                \node at (-90-30:2.3) {$2^-$};
                \node at (60:2.3) {$3^+$};
                \node at (-90-60:2.3) {$3^-$};

                \draw[line width=3pt, red] (0-30:2) -- (90+30:2);
                \draw[line width=3pt, red] (0-60:2) -- (90+60:2);

                \node at (90+30:2.3) {$5^+$};
                \node at (-30:2.3) {$5^-$};
                \node at (90+60:2.3) {$6^+$};
                \node at (-60:2.3) {$6^-$};

                \draw[line width=3pt, blue] (45:2) to[bend right=75] (15:2);

                \node at (15:2.3) {$1^+$};
                \node at (45:2.3) {$1^-$};

                \draw[line width=3pt, blue] (90+45:2) to[bend right=75] (90+15:2);

                \node at (90+15:2.3) {$4^+$};
                \node at (90+45:2.3) {$4^-$};

                \draw[line width=1pt, dashed, ->] (-90+3:2.75) arc (-90+3:90-3:2.75);
                \draw[line width=1pt, dashed, ->] (90+3:2.75) arc (90+3:270-3:2.75);

                \node at (0:3.03) {$T_1$};
                \node at (180:3.03) {$T_2$};

                \draw[line width=2.5pt] (0,0) circle (2);

\end{scope} 
    \end{tikzpicture}
    \caption{A pseudo-orientable bouquet with two certificates $(S_1,S_2)$, left, and $(T_1,T_2)$, right. The red lines indicate non-orientable loops and the blue lines indicate orientable loops.}
    \label{fig: different SNF}
\end{figure}
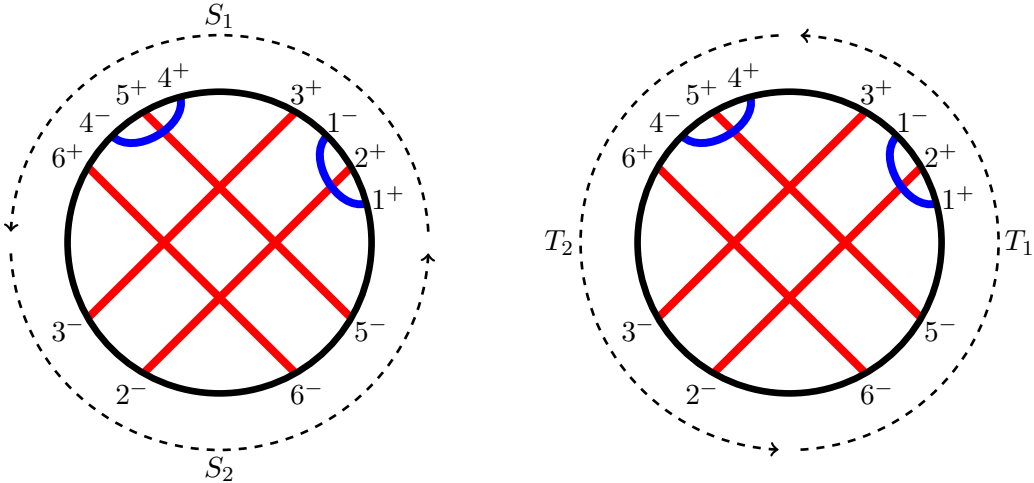
The adjusted interlacing matrices $\bfM(\bB,S_1,S_2)$ and $\bfM(\bB,T_1,T_2)$ are
\[
\left(
    \begin{array}{ccc|ccc}
         0 & 1 & 0 & 0 & 0 & 0 \\
        -1 & 1 & 2 & 0 & 0 & 0 \\
         0 & 0 & 1 & 0 & 0 & 0 \\ \hline
         0 & 0 & 0 & 0 & 1 & 0 \\
         0 & 0 & 0 &-1 & 1 & 2 \\
         0 & 0 & 0 & 0 & 0 & 1 \\
    \end{array}
\right)
\quad \text{and} \quad
\left(
    \begin{array}{ccc|ccc}
         0 & 1 & 0 & 0 & 0 & 0 \\
        -1 & 1 & 2 & 0 & 0 & 0 \\
         0 & 0 & 1 & 0 & 0 & 0 \\ \hline
         0 & 0 & 0 & 0 & 1 & 0 \\
         0 & 0 & 0 &-1 & 1 & 0 \\
         0 & 0 & 0 & 0 & 2 & 1 \\
    \end{array}
\right),
\]
respectively, where the rows (and columns) are indexed by the edges $1,2,3,4,5,6$ in order.
Both matrices have determinant $27$.
The Smith normal form of $\bfI + \bfM(\bB,S_1,S_2)$ is $\mathrm{diag}(1,1,1,1,3,9)$, whereas that of $\bfI + \bfM(\bB,T_1,T_2)$ is $\mathrm{diag}(1,1,1,1,1,27)$.
\end{example}

\section{Stability and log-concavity}\label{sec: stability and log-concavity}

In this section, we study the Hurwitz stability of quasi-tree generating polynomials and log-concavity for regular $\Delta$-matroids and pseudo-orientable ribbon graphs. In particular, we prove Theorems~\ref{thm-intro: stability} and~\ref{thm-intro: log-concavity}.

\subsection{Hurwitz stability of quasi-tree generating polynomials}

We first recall the definition of Hurwitz stability and its basic properties.

\begin{definition}
    A multivariate polynomial $f(z_1,\dots,z_n) \in \bC[z_1,\dots,z_n]$ is \emph{Hurwitz stable} if $f(z_1,\dots,z_n) \ne 0$ whenever $\mathrm{Re}(z_i) > 0$ for all $i$.
\end{definition}

\begin{proposition}[\cite{Wagner2011}]\label{prop: operations preserving Hurwitz stability}
    Suppose $f(z_1,\dots,z_n)$ is Hurwitz stable. Then
    \begin{enumerate}[label=\rm(\roman*)]
        \item\label{item: op-specialization} $f(a,z_2,\dots,z_n)$ is Hurwitz stable for any $a$ with $\mathrm{Re}(a) \ge 0$.
        \item\label{item: op-diagonalization} $f(z_1,z_1,z_3,\dots,z_n)$ is Hurwitz stable.
        \item\label{item: op-inversion} $z_i^d f(z_1,\dots,z_i^{-1},\dots,z_n)$ is Hurwitz stable, where $d$ is the degree of $f$ in $z_i$.

        \item\label{item: op-derivative} $\partial_{z_i}f(z_1,\dots,z_n)$ is Hurwitz stable.
    \end{enumerate}
\end{proposition}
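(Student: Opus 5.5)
The statement is the collection of standard closure properties of Hurwitz stability, cited from \cite{Wagner2011}. I would prove each item directly from the definition, using Hurwitz's theorem on limits of nonvanishing holomorphic functions where a boundary value is involved. The open right half-plane $H=\{z:\mathrm{Re}(z)>0\}$ is the region throughout.

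For \ref{item: op-diagonalization}, the map $(z_1,z_3,\dots,z_n)\mapsto(z_1,z_1,z_3,\dots,z_n)$ sends $H^{n-1}$ into $H^n$. Hence $f(z_1,z_1,z_3,\dots)$ has no zeros there. For \ref{item: op-inversion}, note that $z\mapsto z^{-1}$ maps $H$ bijectively onto $H$. For $z_i\in H$ we have $z_i^d\neq 0$, so $z_i^d f(\dots,z_i^{-1},\dots)$ vanishes exactly when $f$ vanishes at a point of $H^n$, which never happens. One small point: when $z_i\in H$ the expression is a genuine polynomial, because $d$ is the degree in $z_i$.

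For \ref{item: op-specialization}, the case $\mathrm{Re}(a)>0$ is immediate. For $\mathrm{Re}(a)=0$, take $a_k=a+1/k\in H$. Each $g_k(z_2,\dots,z_n)=f(a_k,z_2,\dots,z_n)$ is nonvanishing on the connected domain $H^{n-1}$. The $g_k$ converge locally uniformly to $g=f(a,z_2,\dots,z_n)$. By the multivariate Hurwitz theorem, $g$ is therefore either nonvanishing on $H^{n-1}$ or identically zero, and in both cases it counts as stable under the usual convention that $0$ is stable. The paper's later uses presumably only specialize in ways that keep the polynomial nonzero.

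Item \ref{item: op-derivative} is the main step. I would reduce it to one variable by fixing $z_j\in H$ for $j\ne i$. Then $p(t)=f(\dots,t,\dots)$ is a univariate polynomial with all its roots in the closed left half-plane. Write $p(t)=c\prod_k(t-r_k)$ with $\mathrm{Re}(r_k)\le 0$. The Gauss--Lucas theorem says the roots of $p'$ lie in the convex hull of the $r_k$, which is contained in $\{\mathrm{Re}\le 0\}$. So $\partial_{z_i}f$ does not vanish at any point with $z_i\in H$, for every such choice of the other variables.

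The degenerate case needs separate care: $p$ may be constant, in which case $p'\equiv0$. If that happens on an open set of parameters, then $\partial_{z_i}f\equiv 0$, which is again acceptable under the convention. Otherwise Hurwitz's theorem handles the limiting behaviour, as in \ref{item: op-specialization}. I expect this convention and degeneracy bookkeeping to be the only subtle part; the rest of the argument is direct.
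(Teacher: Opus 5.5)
The paper gives no proof of this proposition; it only cites \cite{Wagner2011}. Your arguments for (i)--(iii) are fine. You are also right that the zero polynomial has to be admitted as ``stable'' for (i) and (iv) to hold. Under the paper's literal definition, $f=z_1$ specialized at $a=0$, or $\partial_{z_1}$ applied to $f=z_2$, gives the zero polynomial, which would not count as stable.

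The gap is the degenerate case of (iv). Call $w_{-i}\in H^{n-1}$ bad if $t\mapsto f(w_{-i},t)$ is constant.

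Approximating a bad $w_{-i}$ by good parameters and applying Hurwitz's theorem ``as in (i)'' only shows that $t\mapsto\partial_{z_i}f(w_{-i},t)$ is either nonvanishing on $H$ or identically zero. At a bad point it \emph{is} identically zero, so nothing is contradicted.

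Nor does it help that $\partial_{z_i}f\not\equiv0$ is nonvanishing over a dense open set of parameters. A polynomial can be nonzero off a hypersurface of the form $\{c(z_{-i})=0\}$ and vanish on it.

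What you need is that bad parameters do not exist when $d:=\deg_{z_i}f\ge1$. Write $f=\sum_{k=0}^{d}c_k(z_{-i})\,z_i^k$. The leading coefficient satisfies $c_d=\bigl(z_i^d f(\dots,z_i^{-1},\dots)\bigr)\big|_{z_i=0}$. So by (iii) followed by (i) with $a=0$, $c_d$ is stable or identically zero, and it is not identically zero because $d=\deg_{z_i}f$. Hence $c_d$ has no zeros on $H^{n-1}$. Equivalently, apply Hurwitz to $t^{-d}f(z_{-i},t)\to c_d(z_{-i})$ as real $t\to+\infty$.

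Consequently, for every $z_{-i}\in H^{n-1}$ the slice $p$ has degree exactly $d\ge1$, and Gauss--Lucas applies at every parameter. So $\partial_{z_i}f$ has no zeros on $H^n$. The only degenerate case left is $d=0$, where $\partial_{z_i}f\equiv0$.
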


We need the following result from \cite[Lemma~4.1]{Branden2007} to prove Lemma~\ref{lem: skew-symmetric rank-1 perturbation Hurwitz stable}. 
\begin{lemma}[\cite{Branden2007}]
\label{lem: real stable}
Let $A_i$ be complex positive semidefinite $n\times n$ matrices and let $B$ be
complex Hermitian. Then
\[f(z_1,\dots,z_m)=\det(z_1A_1+\cdots+z_mA_m+B)\]
is \emph{stable}. In other words, $f\neq 0$ whenever all the complex numbers $z_i$ are in the upper half-plane.
\end{lemma}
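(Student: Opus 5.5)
The plan is to prove the statement first under a nondegeneracy assumption, using a direct quadratic-form argument, and then remove the assumption by perturbation and a limiting argument (Hurwitz's theorem on zeros of limits of holomorphic functions). Throughout, we use the standard convention that the zero polynomial is regarded as stable. This convention is necessary: for instance, if all $A_i=0$ and $B$ is singular, then $f\equiv 0$.

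\emph{Step 1: the case where $A_1+\cdots+A_m$ is positive definite.} Fix $z_1,\dots,z_m$ with $\mathrm{Im}(z_i)>0$. Suppose for contradiction that $M:=z_1A_1+\cdots+z_mA_m+B$ is singular, and pick a nonzero $v\in\bC^n$ with $Mv=0$. Then
\[
0=v^*Mv=\sum_i z_i\,(v^*A_iv)+v^*Bv.
\]
Each $v^*A_iv$ is a nonnegative real, since $A_i$ is positive semidefinite, and $v^*Bv$ is real, since $B$ is Hermitian. Taking imaginary parts gives
\[
\sum_i \mathrm{Im}(z_i)\,v^*A_iv=0.
\]
Every term is nonnegative, so $v^*A_iv=0$ for all $i$, and hence $v^*(A_1+\cdots+A_m)v=0$. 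This contradicts positive definiteness. So $f$ has no zeros in the product of open upper half-planes.

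\emph{Step 2: perturbation.} For $\varepsilon>0$, set $A_i^{\varepsilon}:=A_i+\varepsilon \bfI$. Each $A_i^{\varepsilon}$ is positive definite, so their sum is too, and by Step 1 the polynomial
\[
f_\varepsilon(z):=\det\Big(\sum_i z_iA_i^{\varepsilon}+B\Big)
\]
is nonvanishing on the open polydomain $H^m$, where $H$ is the upper half-plane. The coefficients of $f_\varepsilon$ are polynomial in $\varepsilon$. Hence $f_\varepsilon\to f$ uniformly on compact subsets of $H^m$ as $\varepsilon\to 0^+$.

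\emph{Step 3: passing to the limit.} This is the only point needing care. We invoke the multivariate Hurwitz theorem: a locally uniform limit of nonvanishing holomorphic functions on a connected open set is either nonvanishing or identically zero. If one prefers to reduce to the one-variable statement, argue as follows. Suppose $f(w)=0$ for some $w\in H^m$ but $f\not\equiv 0$. Choose a complex line $\zeta\mapsto w+\zeta u$, with $u$ having positive imaginary parts, such that $g(\zeta)=f(w+\zeta u)$ is not identically zero near $0$. The functions $g_\varepsilon(\zeta)=f_\varepsilon(w+\zeta u)$ are zero-free on a small disc around $0$ contained in the preimage of $H^m$, and they converge uniformly to $g$ there. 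The one-variable Hurwitz theorem then forces $g(0)\neq 0$, a contradiction. Therefore $f$ is either identically zero or nonvanishing on $H^m$, i.e.\ $f$ is stable.

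I expect the main obstacle to be only this degenerate case, handled by the limit in Step 3. Step 1 is a one-line computation.
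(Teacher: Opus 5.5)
Your proof is correct. The paper gives no argument of its own for this lemma; it is simply quoted from Br\"and\'en. Your route is the standard self-contained proof of this classical fact: first get nonsingularity of $z_1A_1+\cdots+z_mA_m+B$ from $\mathrm{Im}(v^*Mv)=\sum_i \mathrm{Im}(z_i)\,v^*A_iv>0$ when $\sum_i A_i$ is positive definite, then perturb to $A_i+\varepsilon\bfI$ and pass to the limit with Hurwitz's theorem.

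Your caveat about $f\equiv 0$ is genuinely needed. Read literally (``$f\neq 0$ whenever \dots''), the statement fails when all $A_i=0$ and $B$ is singular; the correct form is ``stable or identically zero.'' In the paper's only application this is harmless, because the coefficient of $z_1\cdots z_n$ in $\det(\mathbf{i}\bfA+z_0\bfv\bfv^T+\mathrm{diag}(z_1,\dots,z_n))$ is $1$.

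In that application the matrices multiplying $z_1,\dots,z_n$ are the diagonal matrix units, which sum to $\bfI$. So your Step~1 alone already gives exactly what the paper needs, with $z_0=\mathbf{i}$ lying in the open upper half-plane. The perturbation and limiting argument are only required for the lemma in full generality.

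In Step~3 you should say why a suitable direction $u$ exists. Write $f(w+\zeta u)=\sum_k \zeta^k h_k(u)$ with each $h_k$ homogeneous. Since $f\not\equiv 0$, some $h_k\not\equiv 0$, and a nonzero polynomial cannot vanish on the nonempty open set of $u$ with positive imaginary parts.
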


\begin{lemma}\label{lem: skew-symmetric rank-1 perturbation Hurwitz stable}
    Let $\bfA$ be an $n$-by-$n$ skew-symmetric matrix with real entries, and let $\bfv \in \bR^n$.
    Then
    \[
        \sum_{I\subseteq [n]} \det \big( (\bfA + \bfv\bfv^T)[I] \big) z^I
    \]
    is Hurwitz stable, where $z^I := \prod_{i\in I} z_i$.
\end{lemma}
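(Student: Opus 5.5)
Write $\bfM := \bfA + \bfv\bfv^T$ and $Z := \mathrm{diag}(z_1,\dots,z_n)$. By the standard principal-minor expansion,
\[
    \sum_{I\subseteq[n]} \det(\bfM[I])\, z^{[n]\setminus I} = \det(Z + \bfM).
\]
So the first step is to show that $g(z) := \det(Z+\bfM)$ is Hurwitz stable, and then to transfer this to the polynomial in the statement.

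\textbf{Stability of $g$.} Put $z_j = -i w_j$. If $\mathrm{Re}(z_j)>0$ for all $j$, then every $w_j$ lies in the upper half-plane, and
\[
    Z+\bfM = -i\bigl(W + i\bfA + i\bfv\bfv^T\bigr), \qquad W = \mathrm{diag}(w_j).
\]
Since $\bfA$ is real skew-symmetric, $i\bfA$ is Hermitian. The term $i\bfv\bfv^T$, however, is not Hermitian, so Lemma~\ref{lem: real stable} cannot be applied directly. I would introduce an auxiliary variable $t$ and consider
\[
    h(w_1,\dots,w_n,t) = \det\bigl(\textstyle\sum_j w_j E_{jj} + t\,\bfv\bfv^T + i\bfA\bigr).
\]
Here $E_{jj}$ and $\bfv\bfv^T$ are positive semidefinite and $i\bfA$ is Hermitian, so $h$ is stable by Lemma~\ref{lem: real stable}. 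Specializing $t=i$, which lies in the upper half-plane, recovers $\det(W + i\bfA + i\bfv\bfv^T)$, and this is nonzero whenever all $w_j$ are in the upper half-plane, since every argument of $h$ is then in the open upper half-plane. Hence $g$ does not vanish when all $\mathrm{Re}(z_j)>0$, i.e.\ $g$ is Hurwitz stable.

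An alternative route I would keep as a backup avoids the specialization. One can argue directly that $\bfM$ has positive semidefinite Hermitian part $\bfv\bfv^T$. For $\mathrm{Re}(Z)\succ 0$, the matrix $Z+\bfM$ then has positive definite Hermitian part. If $(Z+\bfM)x=0$ with $x\neq 0$, then $\mathrm{Re}\, x^*(Z+\bfM)x>0$, a contradiction. This is perhaps the cleanest argument, and I would likely present it as the main proof.

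\textbf{Transfer to the stated polynomial.} By Proposition~\ref{prop: operations preserving Hurwitz stability}\ref{item: op-inversion} applied in each variable (each has degree at most $1$), $z^{[n]}\, g(z_1^{-1},\dots,z_n^{-1})$ is Hurwitz stable. This polynomial equals $\sum_I \det(\bfM[I])\, z^I$, which is exactly the polynomial in the statement.

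The only subtle point is that inversion requires the degree in each variable to be exactly $1$. The coefficient of $z_j$ in $g$ is $\det$ of the complementary principal submatrix, and the coefficient of $z^{[n]}$ is $1$, so each $z_j$ has degree $1$ and the inversion is legitimate. Alternatively, one can check directly that $z\mapsto 1/z$ preserves the open right half-plane, and that $z^{[n]}\neq 0$ there.
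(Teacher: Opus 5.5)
Your proposal is correct. Your first route is essentially the paper's proof. You rotate the right half-plane to the upper half-plane, note that $\mathbf{i}\bfA$ is Hermitian, and apply Lemma~\ref{lem: real stable} with an auxiliary variable on the positive semidefinite matrix $\bfv\bfv^T$. You then specialize that variable to $\mathbf{i}$ and invert all variables using Proposition~\ref{prop: operations preserving Hurwitz stability}.

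Your backup argument is a genuinely different and more elementary route. The Hermitian part of $\bfA+\bfv\bfv^T$ is $\bfv\bfv^T\succeq 0$. So $Z+\bfA+\bfv\bfv^T$ has positive definite Hermitian part whenever all $\mathrm{Re}(z_j)>0$, and is therefore nonsingular. This avoids the Borcea--Br\"and\'en lemma and the auxiliary-variable specialization altogether; in effect it is the one-line proof of that lemma for this special case. It also gives more: $\sum_I \det(\bfM[I])z^I$ is Hurwitz stable for every complex matrix $\bfM$ with $\bfM+\bfM^*\succeq 0$. The paper's route instead fits into the stable-polynomial toolkit it uses elsewhere in that section.

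Your treatment of the inversion step is also fine. Each variable has degree exactly one because the top coefficient is $\det(\bfM[\emptyset])=1$; alternatively, $z\mapsto 1/z$ preserves the open right half-plane.
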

\begin{proof}

By Proposition~\ref{prop: operations preserving Hurwitz stability}\ref{item: op-inversion}, it suffices to show the polynomial
    \[
        \det \big( \bfA + \bfv\bfv^T + \mathrm{diag}(z_1,\dots,z_n) \big) 
=
        \sum_{I\subseteq [n]} \det \big( (\bfA + \bfv\bfv^T)[I] \big) z^{[n]\setminus I}
    \]
    is Hurwitz stable, or equivalently, $\det \big( {\bf i}\bfA + {\bf i}\bfv\bfv^T + \mathrm{diag}(z_1,\dots,z_n) \big)$ is stable, where ${\bf i}$ is the imaginary unit. Since $\bf{i}\bfA$ is complex Hermitian, the above lemma implies that $\det \big( {\bf i}\bfA + z_0\bfv\bfv^T + \mathrm{diag}(z_1,\dots,z_n) \big)$ is stable. Certainly, we can take $z_0={\bf i}$. 
    
   \end{proof}

The \emph{quasi-tree generating polynomial} of a ribbon graph $\bG$ is 
\[
    p_{\bG}(x_e : e\in E(\bG)) := \sum_{Q} x^{Q},
\]
where the sum is over all quasi-trees $Q$ of $\bG$ and $x^Q := \prod_{e\in Q} x_e$.

We prove Theorem~\ref{thm-intro: stability}, that is, the quasi-tree generating polynomial of any pseudo-orientable ribbon graph is Hurwitz stable. It generalizes the same result of Merino, Moffatt, and Noble~\cite{MMN2025b} for orientable ribbon graphs.

\begin{proof}[\bf Proof of Theorem~\ref{thm-intro: stability}]
    Denote $E = E(\bG)$.
    For any quasi-tree $Q$ of $\bG$, we have \[ p_{\bG}(z_i : i\in E) = z^Q \cdot p_{\bG^Q}(z_i,z_j^{-1} : i\in E\setminus Q, \, j\in Q ). \]
    Hence, by Proposition~\ref{prop: operations preserving Hurwitz stability}, we may assume that $\bG$ is a bouquet.

    By Theorem~\ref{thm: matrix--quasi-tree theorem for pseudo-orientable ribbon graphs}, we have 
    \[
        p_\bG = \sum_{Q} \det(\bfM(\bG,S_1,S_2)[Q]) z^Q,
    \]
    where $(S_1,S_2)$ is a certificate of pseudo-orientability for $\bG$. By Lemma~\ref{lem: skew-symmetric rank-1 perturbation Hurwitz stable}, $p_\bG$ is Hurwitz stable.  
    
\end{proof}

In \S\ref{sec: non-detectability of Cn}, we will show that there are infinitely many non-pseudo-orientable ribbon graphs whose quasi-tree generating polynomials are not Hurwitz stable.

\subsection{Log-concavity on quasi-trees of pseudo-orientable ribbon graphs}

Stanley~\cite{Stanley1981} proved that, for a regular matroid, the sequence counting bases with prescribed intersection sizes with a fixed set is log-concave. This implies Mason's conjecture (the Adiprasito--Huh--Katz Theorem~\cite{AHK2018}) for a certain class of matroids.
We extend Stanley's result to regular $\Delta$-matroids. Our proof resembles that of Yan~\cite{Yan2023} generalizing Stanley's result to all matroids by making use of Lorentzian polynomials~\cite{BH2020}.
As a consequence, we obtain a log-concavity result for pseudo-orientable ribbon graphs (Thm.~\ref{thm-intro: log-concavity}).

\begin{definition}
    Let $(a_i)_{i\in \bZ}$ be a sequence of nonnegative real numbers with finite support. 
    Let $\ell := \mathrm{min}_{i}\{a_i \ne 0\}$ and $u := \mathrm{max}_{i}\{a_i \ne 0\}$.
    The sequence has \emph{no internal zero} if $a_i \ne 0$ whenever $\ell \le i \le u$.
    It is \emph{log-concave} if $a_i^2 \ge a_{i-1} a_{i+1}$ for each $i$, and is \emph{ultra-log-concave} if the sequence $\left( a_i / \binom{u-\ell}{i-\ell} \right)_{i\in\bZ}$ is log-concave.
\end{definition}

The \emph{basis generating polynomial} of a $\Delta$-matroid $D = (E,\cB)$ is 
\[
    g_D(x_e : e \in E) := \sum_{B\in \cB} x^B.
\]

\begin{lemma}\label{lem: Hurwitz stability of regular even delta-matroids}
    The basis generating polynomial of any regular $\Delta$-matroid is Hurwitz stable.
\end{lemma}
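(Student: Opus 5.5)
Since the paper treats regular $\Delta$-matroids as those represented by PU skew-symmetric matrices up to twisting \cite[Thm.~4.13]{Geelen1996}, I would first reduce to the untwisted case. If $D = D(\bfA)\ast X$ with $\bfA$ a real PU skew-symmetric matrix, then the bases of $D$ are exactly the sets $B\symdiff X$ with $B\in\cB(D(\bfA))$. This gives
\[
    g_D(z) = z^X \, g_{D(\bfA)}(z_i, z_j^{-1} : i\in E\setminus X,\ j\in X),
\]
exactly as in the proof of Theorem~\ref{thm-intro: stability}. Hurwitz stability is preserved by the inversion operation in Proposition~\ref{prop: operations preserving Hurwitz stability}\ref{item: op-inversion}, applied one variable at a time; $g_{D(\bfA)}$ is multiaffine, so the degree in each variable is at most $1$, and a missing power of $z_j$ is only a monomial factor, which is nonvanishing on the open right half-plane. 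Hence it suffices to treat $D = D(\bfA)$.

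For $D = D(\bfA)$ with $\bfA$ PU and skew-symmetric, every principal minor $\det(\bfA[I])$ lies in $\{0,1,-1\}$. A skew-symmetric minor equals a Pfaffian squared, so it is nonnegative, and the nonzero ones are exactly $1$. Therefore
\[
    g_D(z) = \sum_{I\subseteq E} \det(\bfA[I])\, z^I .
\]
This is the polynomial in Lemma~\ref{lem: skew-symmetric rank-1 perturbation Hurwitz stable} with $\bfv = 0$, so it is Hurwitz stable. Alternatively, one can argue directly: $\det(\bfA+\mathrm{diag}(z))$ is stable after multiplying by ${\bf i}$ (Lemma~\ref{lem: real stable}), and then one applies inversion in every variable.

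The only subtle step is pinning down the definition of regularity. The matrix must be real, which is automatic: a PU matrix over the regular partial field has entries in $\{0,\pm1\}$, and principal $2\times 2$ minors force this. The twisting set must also be handled correctly. I expect no real obstacle here, and I would also note that the evenness of $D$ plays no role in the argument.
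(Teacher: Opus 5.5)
Your proof is correct and follows the same route as the paper: write $D = D(\bfA)\ast X$ with $\bfA$ a PU skew-symmetric matrix and invoke Lemma~\ref{lem: skew-symmetric rank-1 perturbation Hurwitz stable} with $\bfv = 0$. The paper states this in one line, and your proposal makes explicit the two steps it leaves implicit: absorbing the twist by variable inversion (as in the proof of Theorem~\ref{thm-intro: stability}), and observing that the nonzero principal minors are exactly $1$ because they are squares of Pfaffians.
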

\begin{proof}
    Since $D$ is regular, there is a PU skew-symmetric matrix $\bfA$ and a subset $X$ of $[n]$ such that $D = D(\bfA) * X$.
    Then, the basis generating polynomial $g_{D}$ is Hurwitz stable by Lemma~\ref{lem: skew-symmetric rank-1 perturbation Hurwitz stable}.
\end{proof}

\begin{theorem}\label{thm: Stanley for regular delta-matroids}
    Let $D$ be a regular $\Delta$-matroid on the set $E$.
    Let $R,S_1,\dots,S_t$ be subsets of $E$, and let $a_1,\ldots,a_t$ be nonnegative integers. 
    For an integer $i$, let $c_i$ be the number of tuples $(R', S_1', \dots, S_t')$ such that 
    \begin{enumerate}[label=\rm(\roman*)]
        \item $R' \in \binom{R}{i}$ and $S_j' \in \binom{S_j}{a_j}$ for all $j$, and
\item $R' \sqcup S_1' \sqcup \dots \sqcup S_t'$ is a partition of a basis of $D$.
    \end{enumerate}
    Then, one of the sequences $(c_{2i})_{i\in\bZ}$ and $(c_{2i+1})_{i\in\bZ}$ is identically zero, and the other is ultra-log-concave with no internal zeros.
\end{theorem}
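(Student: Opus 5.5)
We follow Yan's approach: encode $c_i$ as a coefficient extraction from the basis generating polynomial, then push Hurwitz stability through the operations of Proposition~\ref{prop: operations preserving Hurwitz stability}. By Lemma~\ref{lem: Hurwitz stability of regular even delta-matroids}, $g_D$ is Hurwitz stable. Condition (ii) requires the chosen sets $R',S_1',\dots,S_t'$ to be pairwise disjoint. So we first pass to a $\Delta$-matroid on a disjoint union of copies of $R,S_1,\dots,S_t$.

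\textbf{Step 1: reduce to a partition of the ground set.} For each $e\in E$, let $m_e$ be the number of sets among $R,S_1,\dots,S_t$ containing $e$. Elements with $m_e=0$ must lie outside the basis, so we delete them. Otherwise, replace $e$ by $m_e$ parallel copies $e^{(R)},e^{(S_j)}$, one for each set containing it. Define $D'$ on the new ground set by declaring $B'$ a basis iff it contains at most one copy of each element and its projection is a basis of $D$. Then
\[g_{D'}=g_D\big(\textstyle\sum_{\text{copies}} x_{e^{(\cdot)}}\big),\]
which is multiaffine.

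We do not need $D'$ to be a regular $\Delta$-matroid; only stability matters. Substituting a sum of variables with positive real parts into a Hurwitz stable polynomial keeps it Hurwitz stable, because such a sum again has positive real part. Hence $g_{D'}$ is Hurwitz stable.

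\textbf{Step 2: specialize and diagonalize.} Set $y$ for all copies indexed by $R$ and $w_j$ for all copies indexed by $S_j$, using Proposition~\ref{prop: operations preserving Hurwitz stability}\ref{item: op-diagonalization}. This gives the Hurwitz stable polynomial
\[F(y,w_1,\dots,w_t)=\sum_{i,b_1,\dots,b_t} c_{i;b}\, y^i w_1^{b_1}\cdots w_t^{b_t},\]
where $c_{i;(a_1,\dots,a_t)}=c_i$.

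\textbf{Step 3: extract the coefficient of $w^{a}$.} Take $\partial_{w_j}^{a_j}$ (item \ref{item: op-derivative}) and then set $w_j=0$ (item \ref{item: op-specialization}). The result is $\prod_j a_j!\cdot\sum_i c_i y^i=:p(y)$, which is a univariate Hurwitz stable polynomial with nonnegative coefficients.

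\textbf{Step 4: parity and log-concavity.} This is the main obstacle. Hurwitz stability alone does not give real-rootedness. Instead we use the parity. Since $D$ is regular, it is even up to twisting, so all bases have the same parity. As $\sum_j a_j$ is fixed, every nonzero $c_i$ has $i$ of one fixed parity. This is the source of the vanishing of one of the subsequences.

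Write $p(y)=y^\epsilon q(y^2)$. We invoke the classical Hermite--Biehler fact: if $p(y)=y^\epsilon q(y^2)$ has nonnegative coefficients and no zeros in the open right half-plane, then $q$ has only real nonpositive zeros. To see this, let $\zeta$ be a root of $q$ not lying in $(-\infty,0]$. Then some square root $\sqrt\zeta$ has positive real part and is a root of $p$, contradicting Hurwitz stability. Since $q$ has nonnegative coefficients, its roots are real and $\le 0$.

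By Newton's inequalities, the coefficient sequence $(c_{2i+\epsilon})_i$ of $q$ is then ultra-log-concave with no internal zeros. Here Newton's inequalities are applied to the normalization by $\binom{\deg}{k}$ over the support interval; factoring out the lowest-degree power of the variable handles this normalization.

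The points needing care are: the replication in Step 1 preserving stability, since it is a substitution rather than an operation listed in Proposition~\ref{prop: operations preserving Hurwitz stability}; and the fact that twisting may flip the parity of the bases but never mixes parities.
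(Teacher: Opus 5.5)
Your proposal is correct and follows essentially the same route as the paper. Steps 1--2 together amount to the paper's substitution $x_e\mapsto\sum_{j:\,e\in S_j}y_j$ (with $S_0=R$). Step 3 is the same derivative-then-specialize extraction, and Step 4 is the same parity-plus-Newton argument; your square-root argument simply replaces the paper's observation that $h_2(z)=h_2(-z)$ forces the roots onto the imaginary axis.

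One small loose end concerns elements with $m_e=0$. For these the substitution is $x_e=0$, which your positive-real-part argument does not cover. It needs item (i) of Proposition~\ref{prop: operations preserving Hurwitz stability}, allowing the specialization to be identically zero, in which case all $c_i=0$ and the statement is trivial.
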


\begin{proof} Denote $S_0 = R$ and introduce $t+1$ variables $y_0,y_1,\dots,y_t$. We define a polynomial $h_1(y_0,y_1,\dots,y_t)$ from $g_D(x_i : i\in E)$ by substituting each $x_i$ for $\sum_{j: i\in S_j} y_j$. 
    Then $c_i$ is the coefficient of the monomial $y_0^i y_1^{a_1} \dots y_t^{a_t}$ in $h_1$.
    We define a univariate polynomial
    \[
        h_2(z) := 
        \frac{1}{a_1! \cdots a_\ell!} 
        \left(
            \partial_{y_1}^{a_1} \cdots \partial_{y_t}^{a_t} g
        \right)
        (z,0,\dots,0).
    \]
    Then $h_2$ is Hurwitz stable by Proposition~\ref{prop: operations preserving Hurwitz stability}. 
    Note that $c_i$ is the coefficient of $z^{i}$ in $h_2$.

    As $D$ is an even $\Delta$-matroid, all bases have the same parity. 
    
    When every base has even size, $(c_{2i+1})_{i\in\bZ}$ vanishes, and hence $h_2(z) = h_3(z^2)$ for some polynomial $h_3$.
    As $h_2$ is Hurwitz stable and $h_2(z)=h_2(-z)$, the roots of $h_2$ are pure imaginary, so the roots of $h_3$ are negative reals.
    The coefficient of $z^i$ in $h_3$ is $d_i := c_{2i}$.
These coefficients $d_i$ are ultra-log-concave with no internal zeros by Newton's inequality. 

    When every base has odd size, $(c_{2i})_{i\in\bZ}$ vanishes, and hence $h_2(z) = zh_3(z^2)$ for some polynomial $h_3$. By a similar argument, $(c_{2i+1})_{i\in\bZ}$ is ultra-log-concave with no internal zeros.    
\end{proof}

\begin{remark}
Let $M$ be a regular matroid and let $P,Q,S_1,\dots,S_t$ be a partition of the ground set of $M$.
    Let $a_1,\dots,a_t$ be nonnegative integers with $\sum_j a_j \le r(M)$.
    Then the number of bases $B$ of $M$ such that $|B\cap S_j|=a_j$ for all $j$ and $|B\cap P| = i$ (so $|B\cap Q| = r(M) - i - \sum_j a_j$) equals $c_{k+2i}$ in Theorem~\ref{thm: Stanley for regular delta-matroids} applied to $D = M \symdiff Q$ and $R=P\cup Q$, where $k := |Q| - r(M) + \sum_j a_j$.
    Thus, we deduce {\cite[Cor.~2.4]{Stanley1981}} from Theorem~\ref{thm: Stanley for regular delta-matroids}.
\end{remark}

As a consequence of Theorem~\ref{thm: Stanley for regular delta-matroids}, we obtain the following:

\begin{corollary}\label{cor: log-concavity of regular delta-matroids}
    Let $D$ be a regular $\Delta$-matroid and let $\epsilon \in \{0,1\}$ be the parity of bases in $D$.
    If $a_i$ is the number of bases of size $2i+\epsilon$, then the sequence $(a_i)$ is ultra-log-concave.
\end{corollary}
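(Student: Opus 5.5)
This follows directly from Theorem~\ref{thm: Stanley for regular delta-matroids}. We specialize it to the case $t=0$ and $R=E$, so there are no sets $S_j$ and no prescribed sizes $a_j$. Condition~(ii) then just says that $R'$ is itself a basis of $D$, and $R'\in\binom{E}{i}$ says it has size $i$. Hence $c_i$ is exactly the number of bases of $D$ of size $i$.

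Since $D$ is regular, it is in particular even, so every basis has the parity $\epsilon$. Therefore the subsequence $(c_{2i+(1-\epsilon)})_i$ vanishes identically. Theorem~\ref{thm: Stanley for regular delta-matroids} then says that the other subsequence $(c_{2i+\epsilon})_i = (a_i)_i$ is ultra-log-concave with no internal zeros. This is the claim, and it even gives the stronger statement that $(a_i)$ has no internal zeros.

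There is one small point to check. The proof of Theorem~\ref{thm: Stanley for regular delta-matroids} forms $h_2$ by differentiating with respect to $y_1,\dots,y_t$. When $t=0$ no derivatives are taken, so $h_2(z)=g_D(z,\dots,z)$. This polynomial is Hurwitz stable by Lemma~\ref{lem: Hurwitz stability of regular even delta-matroids} together with the diagonalization property in Proposition~\ref{prop: operations preserving Hurwitz stability}. So the degenerate case $t=0$ is covered, and the argument contains no real obstacle.
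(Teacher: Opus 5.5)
Your proposal is correct and matches the paper's proof, which likewise sets $R=E$ and $t=0$ in Theorem~\ref{thm: Stanley for regular delta-matroids}. Your check of the degenerate case $t=0$, where $h_2(z)=g_D(z,\dots,z)$ is Hurwitz stable by Lemma~\ref{lem: Hurwitz stability of regular even delta-matroids} and diagonalization, is a reasonable extra detail that the paper leaves implicit.
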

\begin{proof}
    Set $R = E$ and $t=0$ in Theorem~\ref{thm: Stanley for regular delta-matroids}.
\end{proof}

\begin{corollary}\label{cor: log-concavity of quasi-trees of pseudo-orientable ribbon graphs}
    Let $\bG$ be a pseudo-orientable ribbon graph and let $a_i$ (resp. $b_i$) be the number of quasi-trees of size $2i-1$ or $2i$ (resp. $2i$ or $2i+1$).
    Then, the sequences $(a_i)$ and $(b_i)$ are ultra-log-concave.
\end{corollary}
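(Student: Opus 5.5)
We pass to the adjustment and use the lift. By Proposition~\ref{prop: adjustment of ribbon graphs}, $D(\widehat{\bG}) = \widehat{D(\bG)}$. Here $\widehat{\bG}$ is an orientable ribbon graph, so $D(\widehat{\bG})$ is a regular $\Delta$-matroid by Theorem~\ref{thm: ribbon graphic is binary}. All of its bases $\alpha(Q)$ have even size. The map $\alpha$ sends a quasi-tree $Q$ of $\bG$ with $|Q|\in\{2i-1,2i\}$ to a basis of $\widehat{D(\bG)}$ of size exactly $2i$. Conversely, every basis of size $2i$ comes from exactly one quasi-tree of size $2i-1$ (if it contains $\widehat{e}$) or of size $2i$ (if it does not). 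Hence $a_i$ equals the number of bases of $D(\widehat{\bG})$ of size $2i$. Corollary~\ref{cor: log-concavity of regular delta-matroids} with $\epsilon=0$ then gives ultra-log-concavity of $(a_i)$.

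For $(b_i)$ we use a twisted lift. Consider $D(\widehat{\bG})*\{\widehat{e}\}$, which is still regular, since twisting preserves regularity by Definition~\ref{def: representable delta-matroid}. Its bases are $\alpha(Q)\symdiff\{\widehat{e}\}$.
\begin{itemize}
\item If $|Q|$ is even, this basis is $Q\cup\{\widehat{e}\}$, of size $|Q|+1$.
\item If $|Q|$ is odd, this basis is $Q$ itself.
\end{itemize}
So all bases have odd size, and a quasi-tree of size $2i$ or $2i+1$ corresponds bijectively to a basis of size $2i+1$. Thus $b_i$ is the number of bases of size $2i+1$. Corollary~\ref{cor: log-concavity of regular delta-matroids} with $\epsilon=1$ finishes this case.

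Alternatively, both cases follow at once from Theorem~\ref{thm: Stanley for regular delta-matroids}. Take $t=0$ and $R=E(\bG)$ inside the ground set $E\cup\{\widehat{e}\}$, using the twisting to fix the parity.

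The one point needing care is whether the fully general version in Theorem~\ref{thm-intro: log-concavity}, with a set $Q$, is wanted. That version follows by first applying the partial dual $\bG^Q$, which is pseudo-orientable by Proposition~\ref{prop: partial duality preserves pseudo-orientability}, and then using Proposition~\ref{prop: partial dual and quasi-tree}. For the corollary as stated, the main thing to check is the bijection between quasi-tree sizes and basis sizes under $\alpha$, which is immediate from Definition~\ref{def: hat}.
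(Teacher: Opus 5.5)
Your argument is correct and is essentially the paper's proof: pass to $D(\widehat{\bG})$ (resp.\ $D(\widehat{\bG})*\{\widehat{e}\}$), use $\alpha$ to identify $a_i$ (resp.\ $b_i$) with the number of bases of size $2i$ (resp.\ $2i+1$), and apply Corollary~\ref{cor: log-concavity of regular delta-matroids}. One slip in your ``alternatively'' remark: with $t=0$ in Theorem~\ref{thm: Stanley for regular delta-matroids} you must take $R$ to be the whole ground set $E(\bG)\cup\{\widehat{e}\}$, since $R=E(\bG)$ only counts bases avoiding $\widehat{e}$, and hence only quasi-trees of size exactly $2i$ (resp.\ $2i+1$) rather than $a_i$ or $b_i$.
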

\begin{proof}
    Set $D = D(\widehat{\bG})$ (resp. $D = D(\widehat{\bG}) * \{\widehat{e}\}$), where $\widehat{e}$ is the new edge in~$\widehat{\bG}$.
    Then $D$ is regular by Theorem~\ref{thm: ribbon graphic is binary}.
    By Prop.~\ref{prop: adjustment of ribbon graphs}, the number of quasi-trees of size $2i-1$ or $2i$ (resp. $2i$ or $2i+1$) in $\bG$ equals the number of bases of size $2i$ (resp. $2i+1$) in $D$.
    Thus, the result follows from Corollary~\ref{cor: log-concavity of regular delta-matroids}.
\end{proof}

\begin{proof}[\bf Proof of Theorem~\ref{thm-intro: log-concavity}]
    It follows from Corollary~\ref{cor: log-concavity of quasi-trees of pseudo-orientable ribbon graphs} applied to $\bG^Q$.
\end{proof}

\section{Non-pseudo-orientable ribbon graphs}\label{sec: non-pseudo-orientable ribbon graphs}

We provide an infinite family of non-pseudo-orientable ribbon graphs violating the Matrix--Quasi-tree theorem and Hurwitz stability of quasi-tree generating polynomials.

Let $\bC_n$ be a bouquet consisting of $n$ non-orientable loops, labeled by $1,2,\dots,n$, such that two distinct loops $i$ and $j$ interlace if and only if $|i-j| \equiv 1 \pmod{n}$.
See Figure~\ref{fig: non-pseudo-orientable bouquet - non-Hurwitz} for examples when $n=5,6$.
It is easy to see that $\bC_n$ is not pseudo-orientable if and only if $n\ge 5$.

\begin{figure}[h!]
    \centering
    \begin{tikzpicture}
        \begin{scope}
            \foreach \i in {0,1,2,3,4} {
                \draw[line width=3pt, red] (72*\i+18-15:2) to[bend left=30] (72*\i+90+15:2);
            }
            \draw[line width=2.5pt] (0,0) circle (2);  
            \node at (0,-2.6) {$\bC_5$};  
        \end{scope}
        \begin{scope}[xshift=6cm]
            \foreach \i in {0,1,2,3,4,5} {
                \draw[line width=3pt, red] (60*\i-12:2) to[bend left=35] (60*\i+60+12:2);
            }
            \draw[line width=2.5pt] (0,0) circle (2);    
            \node at (0,-2.6) {$\bC_6$};
        \end{scope}
    \end{tikzpicture}
    \caption{
        $\bC_n$ with $n=5,6$.
        These bouquets are non-pseudo-orientable, and their quasi-tree generating polynomials are not Hurwitz stable.
        }
    \label{fig: non-pseudo-orientable bouquet - non-Hurwitz}
\end{figure}
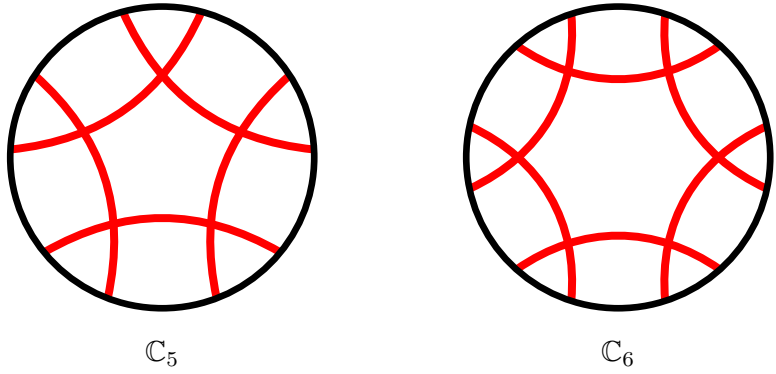

\subsection{The quasi-trees of $\bC_n$ are not detectable}\label{sec: non-detectability of Cn}

We say a real square matrix $\bfM$ \emph{detects} a bouquet $\bB$ if 
\[
    \det(\bfM[X]) =
    \begin{cases}
        1   &   \text{$X$ is a quasi-tree of $\bB$}, \\
        0   &   \text{otherwise}.\\
    \end{cases}
\]

\begin{proposition}\label{prop: no matrix detects Cn}
    For $n\ge 5$, there is no real square matrix $\bfM$ that detects $\bC_n$.
\end{proposition}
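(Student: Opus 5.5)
The plan is to extract polynomial constraints on the entries of a hypothetical detecting matrix $\bfM$ from its principal minors of size $1$, $2$, $3$ and $4$, normalize the matrix, and then reach a contradiction by going around the cycle. The input for every step is the list of quasi-trees of $\bC_n$. By Proposition~\ref{prop: binary interlacing matrix}, $X$ is a quasi-tree exactly when $\det(\bfM_2(\bC_n)[X])=1$ over $\bF_2$. Here $\bfM_2(\bC_n)$ has all diagonal entries equal to $1$ and off-diagonal support equal to the $n$-cycle $1,2,\dots,n$. So for every small $X$ I can decide membership in $\cQ(\bC_n)$ by a parity computation.

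Suppose $\bfM$ detects $\bC_n$ with $n\ge5$.
\begin{itemize}
\item Singletons are quasi-trees, so $\bfM_{ii}=1$ for all $i$.
\item For a pair, $\det\bigl(\begin{smallmatrix}1&a\\a&1\end{smallmatrix}\bigr)=1+a$ over $\bF_2$. Hence adjacent pairs (cyclically) are not quasi-trees and non-adjacent pairs are. Over $\bR$ this gives $\bfM_{ij}\bfM_{ji}=1$ for adjacent $i,j$ and $\bfM_{ij}\bfM_{ji}=0$ otherwise.
\end{itemize}
Next I normalize. Conjugating by a nonsingular diagonal matrix $\bfM\mapsto \mathbf{D}\bfM\mathbf{D}^{-1}$ preserves all principal minors. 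Using it, I can make $\bfM_{i,i+1}=\bfM_{i+1,i}=1$ for $i=1,\dots,n-1$. The remaining cycle edge then becomes $\bfM_{n1}=t$ and $\bfM_{1n}=t^{-1}$ for some unknown $t\neq0$. This $t$ is the one holonomy-type invariant the conjugation cannot remove.

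\emph{Triples of consecutive loops.} Take $X=\{i,i+1,i+2\}$. Since $n\ge5$, $i$ and $i+2$ are not adjacent, and the $\bF_2$ determinant equals $1$, so $X$ is a quasi-tree. Expanding the real $3\times3$ determinant with the constraints above gives
\[
\det \bfM[X] = \bfM_{i,i+2}+\bfM_{i+2,i}-1 \quad\text{(up to the normalized off-diagonal $1$'s)}.
\]
Setting this equal to $1$, together with $\bfM_{i,i+2}\bfM_{i+2,i}=0$, forces exactly one of $\bfM_{i,i+2}$ and $\bfM_{i+2,i}$ to equal $2$ and the other to equal $0$. (For the wrap-around triples through the edge $\{n,1\}$, powers of $t$ enter, and I will track them.) This matches the pattern $2/0$ seen in the adjusted interlacing matrix of Definition~\ref{def: adjusted interlacing matrix}. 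It suggests encoding each such triple by a ``direction'' $\sigma_i\in\{\pm\}$.

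\emph{Windows of four, and then around the cycle.} I will consider $X=\{i,i+1,i+2,i+3\}$. Over $\bF_2$ this is a path-plus-identity matrix, and its determinant decides membership. Depending on $n$, I will also use quadruples containing the non-adjacent pair $\{i,i+3\}$, whose entries satisfy $\bfM_{i,i+3}\bfM_{i+3,i}=0$. The aim is to show that the real determinant condition forces a local compatibility rule, namely $\sigma_i=\sigma_{i+1}$ together with specific values of $\bfM_{i,i+3}$ and $\bfM_{i+3,i}$. Propagating this rule around the whole cycle, the wrap-around windows then require incompatible values: either an equation like $t=2$ and $t=1/2$ simultaneously, or a consistent orientation around the cycle. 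The latter would amount to a certificate $(S_1,S_2)$, and $\bC_n$ has none for $n\ge5$. If the size-$4$ windows do not suffice, I will additionally use the full set or sets such as $\{1,3,5\}$ (for $n=5$, where $1$ and $5$ are adjacent) to pin down $t$.

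I expect the main obstacle to be the case analysis for the $4\times4$ minors and for the wrap-around windows. The entries at distance $\ge3$ are only constrained through products, so I must choose enough subsets to close the system. The smallest cases $n=5,6$ may need separate handling, since there distance-$3$ pairs coincide with adjacent or distance-$2$ pairs.
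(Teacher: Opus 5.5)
Your local analysis is correct: the diagonal and pair conditions, the diagonal-conjugation normalization, and the $2/0$ dichotomy on consecutive triples all hold. The $4\times4$ windows also behave as you hope. If the two triples inside $\{i,\dots,i+3\}$ point in opposite directions, the non-quasi-trees $\{i,i+1,i+3\}$ and $\{i,i+2,i+3\}$ force $\bfM_{i,i+3}=\bfM_{i+3,i}=0$, and then $\det\bfM[\{i,\dots,i+3\}]=-1$. If they agree (say forward), you get $\bfM_{i+3,i}=0\neq\bfM_{i,i+3}$.

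This already settles the small cases:
\begin{itemize}
\item For $n=5$, $\{i,i+3\}$ is the distance-two pair of the triple $\{i+3,i+4,i\}$. The forced $\bfM_{i+3,i}=0$ reverses that triple's direction, contradicting the propagation.
\item For $n=6$, the windows $\{1,2,3,4\}$ and $\{4,5,6,1\}$ force $\bfM_{41}=0$ and $\bfM_{41}\neq 0$ respectively.
\end{itemize}

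The gap is the endgame for $n\ge 7$. There the rule $\sigma_i=\sigma_{i+1}$ propagates around the cycle with no tension. Every constraint coming from subsets of your windows is satisfied, for every value of $t$, by the cyclically consistent forward pattern: forward entries at distance two and three equal to $2$ after normalizing along the window, backward entries $0$. So the alternative ``$t=2$ and $t=1/2$'' never arises, and you always end in the consistent-orientation case.

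Your claim that this case ``would amount to a certificate'' is unsupported, and no such general principle holds. Take the non-pseudo-orientable bouquet of Proposition~\ref{prop: non-pseudo Hurwitz} and write the real PU skew-symmetric representation of $\widehat{D}$ there as $\bfA_{\bfv}$. Its principal minors lie in $\{0,1\}$, so by Lemma~\ref{lem: determinants of type B and D} the matrix $\bfA+\bfv\bfv^T$ detects that bouquet, even though it has no certificate.

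A direct proof would therefore have to use sets winding around the cycle. For example, for $n=7$ one gets $\det\bfM[\{1,3,5\}]=1+8t$. For large $n$, such sets involve entries at every cyclic distance, which requires an induction you have not set up.

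The idea you are missing is the paper's reduction to $n=5$ and $n=6$. It has two ingredients:
\begin{itemize}
\item Detectability is minor-closed (Lemma~\ref{lem: detectability is minor-closed}). Deletion is immediate. For a quasi-tree $X$, the principal pivot $\bfM*X$ satisfies $\det((\bfM*X)[Y])=\det(\bfM[X\symdiff Y])/\det(\bfM[X])$, so it detects $\bB^X$.
\item $\bC_{n-2}\cong((\bC_n)^1)^{\{2,n\}}\setminus\{2,n\}$ (Lemma~\ref{lem: Cn minor}).
\end{itemize}
Combining these removes all the global bookkeeping. The base cases $n=5,6$ are then exactly what your local computations, or the paper's use of $\{1,3,5\}$, already settle.
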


\begin{lemma}\label{lem: detectability is minor-closed}
    If a bouquet $\bB$ is detectable by a real square matrix, then so is any minor of $\bB$.
\end{lemma}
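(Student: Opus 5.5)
Since a minor of a bouquet is obtained by a sequence of edge deletions and partial duals, and the result must again be a bouquet (vertex deletion is irrelevant for a connected one-vertex graph), it suffices to treat two single operations: deleting one loop $e$, and taking a partial dual at a quasi-tree $Q$. Throughout, let $\bfM$ detect $\bB$, with rows and columns indexed by $E(\bB)$. Note that $\emptyset$ is a quasi-tree of a bouquet, consistent with $\det(\bfM[\emptyset])=1$.

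\emph{Deletion.} If $e$ is not a bridge, then by Proposition~\ref{prop: edge deletion and quasi-tree} the quasi-trees of $\bB\setminus e$ are exactly the quasi-trees of $\bB$ avoiding $e$. Hence the principal submatrix $\bfM[E\setminus\{e\}]$ detects $\bB\setminus e$, since its principal minors are those of $\bfM$ indexed by sets avoiding $e$. In a bouquet every quasi-tree-exchange argument works from $\emptyset$, and $\emptyset$ avoids $e$, so the bridge case does not occur; I will still remark that a bridge would be handled by contraction, i.e.\ by the pivot below.

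\emph{Partial dual.} Let $Q$ be a quasi-tree of $\bB$, so $\bB^Q$ is a bouquet. By Proposition~\ref{prop: partial dual and quasi-tree}, $X$ is a quasi-tree of $\bB^Q$ if and only if $X\symdiff Q$ is a quasi-tree of $\bB$. Since $\det(\bfM[Q])=1$, I will use the principal pivot transform $\bfM' := \bfM * Q$. Writing $\bfM=\begin{pmatrix}\bfM_{QQ}&\bfM_{QR}\\ \bfM_{RQ}&\bfM_{RR}\end{pmatrix}$ with $R=E\setminus Q$, set
\[
\bfM' = \begin{pmatrix}\bfM_{QQ}^{-1} & -\bfM_{QQ}^{-1}\bfM_{QR}\\ \bfM_{RQ}\bfM_{QQ}^{-1} & \bfM_{RR}-\bfM_{RQ}\bfM_{QQ}^{-1}\bfM_{QR}\end{pmatrix}.
\]
The standard principal pivot identity (Tucker's) gives $\det(\bfM'[X]) = \det(\bfM[X\symdiff Q])/\det(\bfM[Q])$ for every $X\subseteq E$. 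With $\det(\bfM[Q])=1$ this reads $\det(\bfM'[X])=\det(\bfM[X\symdiff Q])$, which is $1$ or $0$ exactly according to whether $X\symdiff Q$ is a quasi-tree of $\bB$, i.e.\ whether $X$ is a quasi-tree of $\bB^Q$. So $\bfM'$ is a real matrix detecting $\bB^Q$.

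Iterating these two steps covers an arbitrary sequence of operations. The only nontrivial ingredient is the pivot determinant identity, which I will either cite or verify quickly. To verify it, take the Schur complement formula for $X\supseteq Q$-type blocks and reduce the general case by noting that $\bfM'$ restricted to $X$ is a pivot of $\bfM[X\symdiff Q \cup (X\cap Q)\ldots]$; more simply, the identity is the standard statement that pivoting preserves the principal minors up to the symmetric difference. I expect this bookkeeping to be the main, though routine, obstacle.
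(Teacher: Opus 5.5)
Your proposal is correct and follows the paper's proof. Deletion is handled by the principal submatrix $\bfM[E\setminus\{e\}]$, and partial duality at a quasi-tree $Q$ by the principal pivot transform $\bfM*Q$ with Tucker's identity $\det((\bfM*Q)[X])=\det(\bfM[X\symdiff Q])/\det(\bfM[Q])$ and $\det(\bfM[Q])=1$. Your closing attempt to re-derive Tucker's identity is garbled and unnecessary; simply cite it, as the paper does.
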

\begin{proof}
    Clearly, edge deletion preserves detectability.
    Thus, it suffices to show that partial duality preserves detectability.

    Let $\bfM$ be a real square matrix that detects $\bB$, and let $X$ be a quasi-tree of $\bB$.
    Denote 
    \[
        \bfM = 
        \begin{pmatrix}
            \bfM_1 & \bfM_2 \\
            \bfM_3 & \bfM_4 \\
        \end{pmatrix}
        \quad \text{and} \quad
        \bfM * X =
        \begin{pmatrix}
            \bfM_1^{-1} & -\bfM_1^{-1} \bfM_2 \\
            \bfM_3 \bfM_1^{-1} & \bfM_4 - \bfM_3 \bfM_1^{-1} \bfM_2 \\
        \end{pmatrix},
    \]
    where $\bfM_1$ is the principal submatrix of $\bfM$ indexed by $X$. 
    Then for any $Y \subseteq E(\bB)$,
    \[
        \det((\bfM*X)[Y]) = \det(\bfM[X\symdiff Y]) / \det(\bfM[X])
    \]
    by Tucker's principal pivot transform; see~\cite[Prop.~1]{BH2011}.
    Therefore, $\bfM*X$ detects $\bB^X$.
\end{proof}

\begin{lemma}\label{lem: Cn minor}
    For $n\ge 5$, $\bC_{n-2}$ is a minor of $\bC_{n}$. 
\end{lemma}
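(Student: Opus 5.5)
We plan to work at the level of binary interlacing matrices. By Proposition~\ref{prop: binary interlacing matrix} and Proposition~\ref{prop: ribbon graph minor and delta-matroid minor}, a partial dual of a bouquet at a quasi-tree $X$ acts on $\bfM_2$ as the principal pivot at $X$. Deletion of a loop deletes its row and column. Since pivots keep us among bouquets, at each step we will read off the new bouquet's orientabilities and interlacements from the pivoted matrix. We will also check geometrically, using Figures~\ref{fig: elementary partial duality - nonorientable} and~\ref{fig: elementary partial duality - orientable}, that the resulting bouquet really is $\bC_{n-2}$. This check is needed because $\bfM_2$ alone need not determine the bouquet, as Figure~\ref{fig: pseudo-orientable and non-pseudo-orientable bouquets} shows.

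Write $\bfM=\bfM_2(\bC_n)$, which has all diagonal entries $1$ and $\bfM_{ij}=1$ exactly when $|i-j|\equiv 1 \pmod n$.

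\emph{Step 1.} Take the elementary partial dual at the non-orientable loop $1$. Over $\bF_2$ the pivot replaces $\bfM_{jk}$ by $\bfM_{jk}+\bfM_{j1}\bfM_{1k}$ for $j,k\ne 1$. The only neighbours of $1$ are $2$ and $n$. Hence loops $2$ and $n$ become orientable, and they now interlace each other (this uses $n\ge 4$, so that $2$ and $n$ were not adjacent). All other entries are unchanged. Then delete loop $1$.

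\emph{Step 2.} Take the elementary partial dual at the interlacing orientable pair $\{2,n\}$, and afterwards delete $2$ and $n$. For $j,k\in\{3,\dots,n-1\}$ the pivot adds $\bfM_{j2}\bfM_{nk}+\bfM_{jn}\bfM_{2k}$. This term is nonzero only for $\{j,k\}=\{3,n-1\}$, and there it equals $1$; the diagonal entries stay $1$. Consequently, when $n\ge 6$, loops $3$ and $n-1$ become interlaced. We are then left with $n-2$ non-orientable loops $3,\dots,n-1$ forming the cyclic interlacement pattern, which after relabelling is $\bC_{n-2}$.

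The main obstacle is the case $n=5$. There, $3$ and $n-1=4$ already interlace, so the correction term cancels that interlacement. The procedure above then yields two non-interlacing loops rather than $\bC_3$, which consists of three mutually interlacing non-orientable loops. For this case we would search directly among pivots at the small quasi-trees of $\bC_5$ followed by two deletions. A natural first attempt is to pivot at $1$ and then at another non-orientable loop such as $3$, and check whether some three remaining loops are non-orientable and pairwise interlacing. Finally, in every case we would confirm with the pictures of the elementary partial duals that the resulting bouquet, and not merely its interlacing matrix, is $\bC_{n-2}$.
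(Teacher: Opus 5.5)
Your pivot computations in Steps~1 and~2 are correct, but the conclusion is off by one. Loop $1$ is deleted after Step~1 and loops $2,n$ after Step~2, so the survivors $3,\dots,n-1$ are only $n-3$ loops. By your own Step~2 they form a cyclic pattern of length $n-3$. So, granting the geometric check, the construction yields $\bC_{n-3}$, not $\bC_{n-2}$. For $n=6$ you get three pairwise interlacing non-orientable loops, and the ``two loops'' you found for $n=5$ is the same miscount showing up.

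The paper's proof differs from yours exactly in keeping loop $1$: it asserts $\bC_{n-2}\cong((\bC_n)^1)^{\{2,n\}}\setminus\{2,n\}$ on $\{1,3,\dots,n-1\}$. That does not repair things. Apply your Step~2 formula to the row of $1$ in $\bfM_2(\bC_n^1)$: the diagonal entry is $1+1+1=1$, and $1$ interlaces exactly $3$ and $n-1$. Your computation that $3$ and $n-1$ become interlaced (for $n\ge 6$) is unaffected. So one gets the $(n-2)$-cycle $1,3,4,\dots,n-1$ plus the chord $\{3,n-1\}$, and for $n=5$ the path $3,1,4$. The chords $3$ and $n-1$ visibly cross in the last panel of Figure~\ref{fig: Cn minor}.

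In fact no choice of operations can succeed, because the statement is false as written. Every minor of $\bC_n$ with $n-2$ edges has the form $(\bC_n^{B}\setminus\{a,b\})^{Z}$ with $B\subseteq\{a,b\}$, and partial duality preserves the number of quasi-trees. Take $n=5$ and use the quasi-tree list in the proof of Lemma~\ref{lem: no matrix detects C5}. Up to symmetry $\{a,b\}=\{1,2\}$ or $\{1,3\}$, with four choices of $B$, and these give $6,5,5,6$ and $6,5,5,6$ quasi-trees respectively. By contrast, $\bC_3$ has only $4$: $\emptyset$ and the singletons, since $\bfM_2(\bC_3)$ is the all-ones matrix. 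So your planned search for $n=5$ must fail.

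For $n=6$ and $n=7$ the same count leaves only the candidate $\bC_n\setminus\{1,3\}$, an isolated loop plus a path. It has $12=|\cQ(\bC_4)|$ resp.\ $22=|\cQ(\bC_5)|$ quasi-trees, but it is excluded because twisting preserves disconnectedness, while $D(\bC_{n-2})$ is connected.

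What your computation does establish is that $\bC_{n-3}$ is a minor of $\bC_n$ for $n\ge 6$. For example, the quasi-trees of $\bC_6$ containing $\{1,2,6\}$ are exactly $\{1,2,6\}\cup Y$ with $Y\in\{\emptyset,\{3\},\{4\},\{5\}\}$. Detectability and the quasi-tree polynomial depend only on the $\Delta$-matroid. So this corrected statement would serve the later induction, provided $\bC_7$ is added as a third base case alongside $\bC_5$ and $\bC_6$.
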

\begin{proof}
    $\bC_{n-2}$ is isomorphic to $((\bC_{n})^1)^{\{2,n\}} \setminus \{2,n\}$; see Figure~\ref{fig: Cn minor}.
\end{proof}

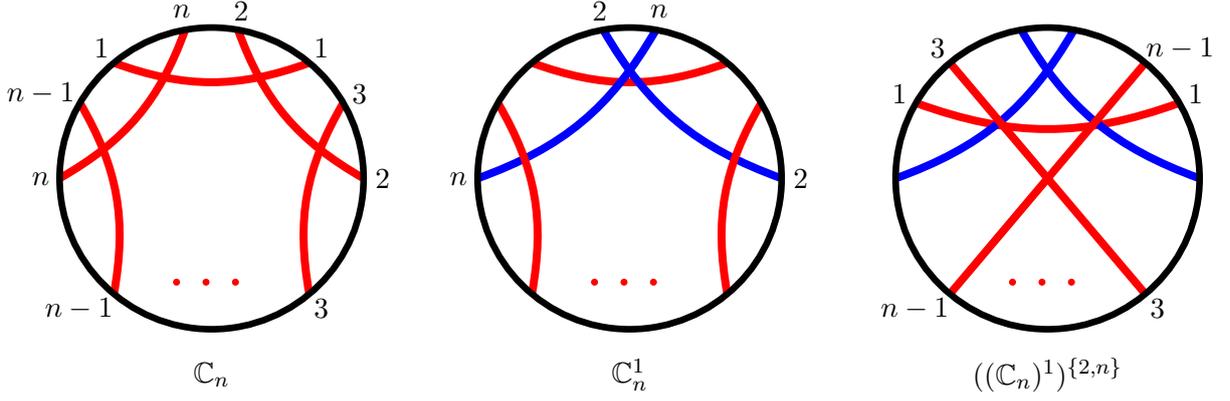
\begin{figure}[h!]
    \centering
    \begin{tikzpicture}
        \begin{scope}
\draw[line width=3pt, red] (90-50*0-40:2) to[bend left=20] (90-50*0+40:2);
            \node at (90-50*0-40:2.25) {$1$};
            \node at (90-50*0+40:2.25) {$1$};
            
\draw[line width=3pt, red] (90-50*1-40:2) to[bend left=20] (90-50*1+40:2);
            \node at (90-50*1-40:2.25) {$2$};
            \node at (90-50*1+40:2.25) {$2$};
\draw[line width=3pt, red] (90+50*1-40:2) to[bend left=20] (90+50*1+40:2);
            \node at (90+50*1-40:2.25) {$n$};
            \node at (90+50*1+40:2.25) {$n$};

\draw[line width=3pt, red] (90-50*2-40:2) to[bend left=20] (90-50*2+40:2);
            \node at (90-50*2-40:2.25) {$3$};
            \node at (90-50*2+40:2.25) {$3$};
\draw[line width=3pt, red] (90+50*2-40:2) to[bend left=20] (90+50*2+40:2);
            \begin{scope}[xshift=-0.3cm]
                \node at (90+50*2-40:2.25) {$n-1$};
                \node at (90+50*2+40:2.25) {$n-1$};
            \end{scope}
    
            \node at (0,-1.4) {\Huge\color{red}$\cdots$};
    
            \draw[line width=2.5pt] (0,0) circle (2);
    
            \node at (0,-2.6) {$\bC_n$};
        \end{scope}
        \begin{scope}[xshift=5.5cm]
\draw[line width=3pt, red] (90-50*0-40:2) to[bend left=20] (90-50*0+40:2);
            
\draw[line width=3pt, blue] (90-50*1-40:2) to[bend left=20] (90+50*1-40:2);
            \node at (90-50*1-40:2.25) {$2$};
            \node at (90+50*1-40:2.25) {$2$};
\draw[line width=3pt, blue] (90-50*1+40:2) to[bend left=20] (90+50*1+40:2);
            \node at (90+50*1+40:2.25) {$n$};
            \node at (90-50*1+40:2.25) {$n$};

\draw[line width=3pt, red] (90-50*2-40:2) to[bend left=20] (90-50*2+40:2);
\draw[line width=3pt, red] (90+50*2-40:2) to[bend left=20] (90+50*2+40:2);
    
            \node at (0,-1.4) {\Huge\color{red}$\cdots$};
    
            \draw[line width=2.5pt] (0,0) circle (2);
    
            \node at (0,-2.6) {$\bC_n^1$};
        \end{scope}
        \begin{scope}[xshift=11cm]
\draw[line width=3pt, red] (90-50*0-40-20:2) to[bend left=20] (90-50*0+40+20:2);
            \node at (90-50*0-40-20:2.25) {$1$};
            \node at (90-50*0+40+20:2.25) {$1$};
            
\draw[line width=3pt, blue] (90-50*1-40:2) to[bend left=20] (90+50*1-40:2);
\draw[line width=3pt, blue] (90-50*1+40:2) to[bend left=20] (90+50*1+40:2);
            
\draw[line width=3pt, red] (90-50*2-40:2) to[bend left=0] (90-50*0+40:2);
            \node at (90-50*2-40:2.25) {$3$};
            \node at (90-50*0+40:2.25) {$3$};
\draw[line width=3pt, red] (90+50*0-40:2) to[bend left=0] (90+50*2+40:2);
            \begin{scope}[xshift=0.3cm]
                \node at (90+50*0-40:2.25) {$n-1$};                
            \end{scope}
            \begin{scope}[xshift=-0.3cm]
                \node at (90+50*2+40:2.25) {$n-1$};
            \end{scope}
    
            \node at (0,-1.4) {\Huge\color{red}$\cdots$};
    
            \draw[line width=2.5pt] (0,0) circle (2);
    
            \node at (0,-2.6) {$((\bC_n)^1)^{\{2,n\}}$};
        \end{scope}
    \end{tikzpicture}
    \caption{The proof of Lemma~\ref{lem: Cn minor}. The red lines represent non-orientable loops, and the blue lines represent orientable loops.}
    \label{fig: Cn minor}
\end{figure}

\begin{lemma}\label{lem: no matrix detects C5}
    There is no real square matrix $\bfM$ that detects $\bC_5$.
\end{lemma}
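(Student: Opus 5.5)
The plan is to assume that a real matrix $\bfM$ detects $\bC_5$ and derive enough polynomial constraints on its entries from small principal minors to reach a contradiction. The list of quasi-trees comes from the binary interlacing matrix $\bfM_2(\bC_5)$ (Proposition~\ref{prop: binary interlacing matrix}), which has all diagonal entries $1$ and off-diagonal entry $1$ exactly for cyclically adjacent indices. Index arithmetic is mod $5$ throughout.

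\textbf{Quasi-trees of $\bC_5$.} Computing minors of $\bfM_2(\bC_5)$ over $\bF_2$ gives the following list.
\begin{itemize}
\item Every singleton is a quasi-tree, since every loop is non-orientable.
\item A pair $\{i,j\}$ is a quasi-tree iff $i,j$ are non-adjacent.
\item A consecutive triple $\{i,i+1,i+2\}$ is a quasi-tree.
\item A triple of the form $\{i,i+1,i+3\}$ is not a quasi-tree.
\end{itemize}
I will also compute the size-$4$ sets and the full set $[5]$, which I expect to be needed at the end.

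\textbf{Normalizing $\bfM$.} Detection gives the following constraints on the real entries.
\begin{itemize}
\item $\det\bfM[\{i\}]=1$, so $\bfM_{ii}=1$.
\item For adjacent $i,j$, $\det\bfM[\{i,j\}]=0$, so $\bfM_{ij}\bfM_{ji}=1$.
\item For non-adjacent $i,j$, $\det\bfM[\{i,j\}]=1$, so $\bfM_{ij}\bfM_{ji}=0$.
\end{itemize}
Principal minors are invariant under diagonal similarity $\bfM\mapsto \mathbf{D}\bfM\mathbf{D}^{-1}$ and under transposition. Using a diagonal similarity, I will make $\bfM_{i,i+1}=\bfM_{i+1,i}=1$ for $i=1,2,3,4$. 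This leaves one free parameter: $\bfM_{51}=c$ and $\bfM_{15}=c^{-1}$ with $c\neq 0$. Each non-adjacent pair $\{i,i+2\}$ then has at most one nonzero entry among $\bfM_{i,i+2}$ and $\bfM_{i+2,i}$.

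\textbf{Triples.} For a consecutive triple, expanding the $3\times 3$ determinant gives
\[
\det\bfM[\{1,2,3\}]=-1+\bfM_{13}+\bfM_{31}=1 .
\]
Together with $\bfM_{13}\bfM_{31}=0$, exactly one of $\bfM_{13},\bfM_{31}$ equals $2$ and the other is $0$. The analogous statements hold for the triples $\{2,3,4\}$ and $\{3,4,5\}$. For the triples $\{4,5,1\}$ and $\{5,1,2\}$, the normalization leaves factors of $c$ in the expansion, and the corresponding nonzero entry becomes $2c^{\pm1}$.

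So $\bfM$ is determined by $c$ together with a choice of orientation on each of the five non-adjacent pairs; these pairs form the complementary $5$-cycle. The non-quasi-trees $\{i,i+1,i+3\}$ then require the two "cyclic" terms of each such $3\times3$ determinant to cancel. This constrains the orientations and the value of $c$, and in particular should force relations such as $c=\pm1$ or exclude certain orientation patterns.

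\textbf{Main obstacle.} The hard step is the final case analysis. The dihedral symmetry of $\bC_5$, combined with transposition, reduces the $2^5$ orientation patterns to a few classes. For each surviving class, I will test the $4\times4$ minors and the full $5\times5$ determinant against the required values from the first step; I expect these to become polynomial equations in $c$ with no real solution. If the triple constraints alone do not already kill most cases, I will first try the full determinant $\det\bfM$, since its required value is either $0$ or $1$ while its expansion forces a specific value in terms of $c$.
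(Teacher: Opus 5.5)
\textbf{Gap: the contradiction itself is missing.} Your setup is correct and is essentially the paper's:
\begin{itemize}
\item $\bfM_{ii}=1$;
\item $\bfM_{ij}\bfM_{ji}=1$ for cyclically adjacent $i,j$;
\item exactly one nonzero entry in each non-adjacent pair, forced by the consecutive triples;
\item $\det\bfM[\{i,i+1,i+3\}]$ equals the sum of its two cyclic terms.
\end{itemize}
But the proposal stops exactly where the proof has to happen. You defer the contradiction to an unexecuted case analysis over orientation classes, $4\times 4$ minors and the full determinant, with only an \emph{expectation} that polynomial equations in $c$ have no real root. As written, nothing has been shown to be inconsistent. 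The direction you point in is also a detour: the parameter $c$ and all minors of size larger than $3$ play no role, and the triple conditions do not ``force relations such as $c=\pm1$''.

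\textbf{The missing observation.} The two cyclic terms of $\det\bfM[\{i,i+1,i+3\}]$ can never both be nonzero. The term $\bfM_{i,i+1}\bfM_{i+1,i+3}\bfM_{i+3,i}$ needs $\bfM_{i+3,i}\neq 0$, whereas $\bfM_{i,i+3}\bfM_{i+3,i+1}\bfM_{i+1,i}$ needs $\bfM_{i,i+3}\neq 0$. Only one entry of the pair $\{i,i+3\}$ is nonzero, so ``cancellation'' means both terms vanish.

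Orient each non-adjacent pair $p\to q$ when $\bfM_{pq}\neq 0$. Then the vanishing says precisely that at the vertex $i+3$ of the complementary pentagon, whose neighbours are $i$ and $i+1$, the two edges point both into or both out of $i+3$. As $i$ ranges over $[5]$, every vertex of the pentagon must be a source or a sink. This is impossible on a cycle of odd length, since sources and sinks would have to alternate around it. So no orientation pattern survives, independently of $c$.

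\textbf{Relation to the paper.} This is the paper's argument in different clothing. From $\bfM_{13}\neq 0$, the vanishing minors on $\{1,3,4\}$, $\{1,2,4\}$, $\{2,4,5\}$ and $\{2,3,5\}$ successively force $\bfM_{14},\bfM_{24},\bfM_{25},\bfM_{35}\neq 0$. Then $\det\bfM[\{1,3,5\}]\neq 0$, although $\{1,3,5\}$ is not a quasi-tree. Inserting this step completes your proof using only minors of size at most $3$, and the diagonal normalization becomes unnecessary.
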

\begin{proof}
    The quasi-trees of $\bC_5$ are $\emptyset$, $[5]$, and all subsets of $[5]$ of the following forms:
    \begin{enumerate} 
        \item all $\{i\}$ with $i\in[5]$,
        \item all $\{i,i+2\}$ with $i\in[5]$,
        \item all $\{i,i+1,i+2\}$ with $i\in[5]$, and
        \item all $\{i,i+1,i+2,i+3\}$ with $i\in[5]$, 
    \end{enumerate}
    where the addition is modulo $5$.

    Suppose that there is a matrix $\bfM = (a_{ij})_{1\le i,j \le 5}$ that detects $\bC_5$.
    Then $a_{ii} = 1$, $a_{i,i+1} a_{i+1,i} = 1$, and $a_{i,i+2} a_{i+2,i} = 0$ for all $i$, because the above analysis of quasi-trees of size $1$ and $2$.
    Because $\det(\bfM[\{i,i+1,i+2\}]) = 1$, we have that $a_{i,i+2}$ or $a_{i+2,i}$ is nonzero for each $i$.
    
    Taking the transpose if necessary, we may assume that $a_{1,3}$ is nonzero. Then, the conditions $\det(\bfM[\{1,3,4\}]) = \det(\bfM[\{1,2,4\}]) = 0$ imply that $a_{1,4}$ and $a_{2,4}$ are nonzero.
    Similarly, as $a_{2,4}$ is nonzero and $\det(\bfM[\{2,4,5\}]) = \det(\bfM[\{2,3,5\}]) = 0$, we have that $a_{2,5}$ and $a_{3,5}$ are nonzero.
    Thus, the matrix $\bfM$ has the following form:
    \[
        \begin{pmatrix}
            1 & a & * & * & e \\
            1/a & 1 & b & * & * \\
            0 & 1/b & 1 & c & * \\
            0 & 0 & 1/c & 1 & d \\
            1/e & 0 & 0 & 1/d & 1 \\
        \end{pmatrix}
    \]
    where $a,b,c,d,e$ are nonzero real numbers and each $*$ denotes some nonzero real number.
    Then, we deduce $\det(\bfM[\{1,3,5\}]) \ne 0$, which contradicts that $\{1,3,5\}$ is not a quasi-tree of $\bC_5$.
\end{proof}

\begin{lemma}\label{lem: no matrix detects C6}
    There is no real square matrix $\bfM$ that detects $\bC_6$.
\end{lemma}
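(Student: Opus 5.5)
The plan is to argue exactly as in the proof of Lemma~\ref{lem: no matrix detects C5}: read off the quasi-trees of $\bC_6$ of size at most $3$, turn each into a polynomial condition on the entries of $\bfM=(a_{ij})$, propagate nonvanishing of entries around the $6$-cycle, and reach a contradiction at one principal minor. Indices are taken modulo $6$.

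First I will list the small quasi-trees using the binary interlacing matrix $\bfM_2(\bC_6)$ of Proposition~\ref{prop: binary interlacing matrix}. It has all diagonal entries $1$ and off-diagonal $1$'s exactly for cyclically adjacent pairs. Computing $2\times 2$ and $3\times 3$ determinants over $\bF_2$ gives the following.
\begin{itemize}
\item Every singleton is a quasi-tree.
\item A pair $\{i,j\}$ is a quasi-tree if and only if $i,j$ are not adjacent, i.e.\ $j-i\in\{2,3,4\}$.
\item The triples that are quasi-trees are exactly the consecutive ones $\{i,i+1,i+2\}$ and the two alternating ones $\{1,3,5\}$, $\{2,4,6\}$.
\item The triples $\{i,i+1,i+3\}$ and $\{i,i+2,i+3\}$ are not quasi-trees.
\end{itemize}

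Next I translate these into conditions on $\bfM$. From singletons and pairs: $a_{ii}=1$ and $a_{i,i+1}a_{i+1,i}=1$. Also $a_{ij}a_{ji}=0$ whenever $j-i\in\{2,3\}$, since then $\det(\bfM[\{i,j\}])=1-a_{ij}a_{ji}=1$. Expanding $\det(\bfM[\{i,i+1,i+2\}])=1$ gives
\[
1-1-1+a_{i,i+1}a_{i+1,i+2}a_{i+2,i}+a_{i,i+2}a_{i+2,i+1}a_{i+1,i}=1 .
\]
So at least one of $a_{i,i+2},a_{i+2,i}$ is nonzero, and by the product condition exactly one is. Hence each distance-two pair carries a well-defined direction. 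After transposing if necessary, I may assume $a_{13}\neq 0$, i.e.\ the direction $1\to 3$.

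The main step is propagation. I will use the vanishing minors $\det(\bfM[\{i,i+2,i+3\}])=\det(\bfM[\{i,i+1,i+3\}])=0$, together with the distance-three conditions $a_{i,i+3}a_{i+3,i}=0$. The aim is to show that the direction $i\to i+2$ forces $i+1\to i+3$ (and $i\to i+3$ nonzero), as happened in the $\bC_5$ proof. Expanding these $3\times 3$ minors, the $2$-cycle terms cancel, and the remaining terms should force the asserted nonzero entries. Iterating gives $i\to i+2$ for every $i$.

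Then $\det(\bfM[\{1,3,5\}])=1$ forces a contradiction. Since all distance-two $2$-cycle products vanish, this determinant equals $1+a_{13}a_{35}a_{51}+a_{15}a_{53}a_{31}$. The second product is zero because $a_{31}=0$. With the cyclic orientation $1\to3\to5\to1$, however, the first product is nonzero, so the determinant is not $1$.

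I expect the propagation step to be the main obstacle. Unlike $\bC_5$, the distance-three entries $a_{i,i+3}$ occur in the expansions, so I must check case by case that the directions are forced consistently and cannot, say, alternate between the two triangles. If the propagation only yields a consistent direction on each triangle $\{1,3,5\}$ and $\{2,4,6\}$ separately, I would instead use a $4$-element quasi-tree as the final contradiction. Candidates are $\{i,i+1,i+2,i+3\}$, or a non-quasi-tree such as $\{1,2,4,5\}$, whose minors over $\bF_2$ I would compute first.
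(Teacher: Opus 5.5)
Your preliminary reductions are correct: the list of small quasi-trees, $a_{ii}=1$, $a_{i,i+1}a_{i+1,i}=1$, $a_{ij}a_{ji}=0$ at distances $2$ and $3$, and exactly one nonzero entry on each distance-two pair. The final contradiction at $\{1,3,5\}$ is also right once every distance-two pair is directed $i\to i+2$. The genuine gap is the propagation step: the vanishing $3\times 3$ minors do not force $i+1\to i+3$. Using $a_{31}=0$ and the product conditions,
\[
\det(\bfM[\{1,3,4\}])=a_{13}a_{34}a_{41},\qquad \det(\bfM[\{1,2,4\}])=a_{12}a_{24}a_{41}+a_{14}a_{21}a_{42}.
\]
So you only learn $a_{41}=0$ and $a_{14}a_{42}=0$. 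Both hold with $a_{14}=a_{41}=0$ and the \emph{opposite} direction $4\to 2$. For instance,
\[
\bfM[\{1,2,3,4\}]=\begin{pmatrix}1&1&2&0\\1&1&1&0\\0&1&1&1\\0&2&1&1\end{pmatrix}
\]
has every principal minor of size at most $3$ equal to the value ($0$ or $1$) prescribed by the quasi-trees of $\bC_6$.

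The analogy with $\bC_5$ breaks exactly here. In $\bC_5$ the pair $\{1,4\}$ is a distance-two pair, so one of $a_{14},a_{41}$ must be nonzero. In $\bC_6$ it is a distance-three pair, and both entries may vanish. In fact, if all distance-three entries are $0$, every condition coming from sets of size at most $3$ collapses to ``each triangle $\{1,3,5\}$, $\{2,4,6\}$ is acyclically directed''. That is satisfiable, so no argument using only sets of size $\le 3$ can succeed. For the same reason, the claim $a_{i,i+3}\neq 0$ is not available at that stage.

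The missing ingredient is the $4$-element quasi-tree $\{i,i+1,i+2,i+3\}$, used \emph{inside} the propagation rather than as a final contradiction. In the bad case above ($a_{42}\neq 0$, hence $a_{14}=a_{41}=0$), one computes $\det(\bfM[\{1,2,3,4\}])=-1\neq 1$. This forces $a_{24}\neq 0$ and $a_{42}=0$, which is exactly what the paper does. Iterating around the cycle gives all directions $i\to i+2$, and your computation at $\{1,3,5\}$ then finishes; the paper obtains $1+8x_1\cdots x_6$ there.

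Your fallback of a single $4$-element set as the final contradiction is not enough either. Each consecutive $4$-set only equates the directions of $\{i,i+2\}$ and $\{i+1,i+3\}$. You need it for several consecutive $i$ before the triangle $\{1,3,5\}$ is forced to be cyclic.
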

\begin{proof}
    The quasi-trees of $\bC_6$ are $\emptyset$ and all subsets of $[6]$ of the following forms:
    \begin{enumerate}
        \item all $\{i\}$ with $i\in[6]$,
        \item all $\{i,i+2\}$ with $i\in[6]$, and $\{i,i+3\}$ with $i=1,2,3$,
        \item all $\{i,i+1,i+2\}$ with $i\in[6]$, and $\{1,3,5\}$ and $\{2,4,6\}$,
        \item all four-element subsets of $[6]$ except $[6] \setminus \{i,i+3\}$ with $i=1,2,3$.
    \end{enumerate}
    
    Suppose that there is a matrix $\bfM = (a_{ij})_{1\le i,j\le 6}$ that detects $\bC_6$.
    Then $a_{ii} = 1$, $a_{i,i+1} a_{i+1,i} = 1$, and $a_{i,i+2} a_{i+2,i} = 0$ for all $i$, because the above analysis of quasi-trees of size $1$ and $2$.
    Denote $x_i := a_{i,i+1}$.
    Because $\det(\bfM[\{i,i+1,i+2\}]) = 1$, we have that 
    \begin{itemize}
        \item $a_{i,i+2} = 2 x_i x_{i+1}$ and $a_{i+2,i} = 0$, or
        \item $a_{i,i+2} = 0$ and $a_{i+2,i} = 2 / (x_i x_{i+1})$.
    \end{itemize}

    Taking the transpose if necessary, we may assume that $a_{1,3}$ is nonzero. We now look at the principal submatrix
    \[
        \bfM[\{1,2,3,4\}] =
        \begin{pmatrix}
            1 & x_1 & 2x_1x_2 & a_{1,4} \\
            1/x_1 & 1 & x_2 & a_{2,4} \\
            0 & 1/x_2 & 1 & x_3 \\
            a_{4,1} & a_{4,2} & 1/x_3 & 1 \\
        \end{pmatrix}.
    \]
    Denote $u := a_{1,4}$ and $v := a_{4,1}$.
    Then $0 = \det(\bfM[\{1,3,4\}]) = v (2x_1x_2x_3 - u) = 0$.
    Similarly,
    \[
        0 = \det(\bfM[\{1,2,4\}]) =
        \begin{cases}
            v (2x_1x_2x_3 - u)   &   \text{if $a_{2,4} = x_2x_3$ and $a_{4,2} = 0$}, \\
            u (2/(x_1x_2x_3) - v)   &   \text{if $a_{2,4} = 0$ and $a_{4,2} = 2/(x_2x_3)$}.
        \end{cases}
    \]
    In the latter case, one can check that $\det(\bfM[\{1,2,3,4\}]) = -1$. Therefore, we have $a_{2,4} = x_2x_3$ and $a_{4,2} = 0$.

    By the same argument, we have that $a_{3,5}$, $a_{4,6}$, $a_{5,1}$, and $a_{6,2}$ are nonzero, and $a_{5,3} = a_{6,4} = a_{1,5} = a_{2,6} = 0$. Hence, $\bfM$ has the following form:
    \[
        \begin{pmatrix}
            1 & x_1 & 2x_1x_2 & a_{1,4} & 0 & 1/x_6 \\
            1/x_1 & 1 & x_2 & 2x_2x_3 & a_{2,5} & 0 \\
            0 & 1/x_2 & 1 & x_3 & 2x_3x_4 & a_{6,3}  \\
            a_{4,1} & 0 & 1/x_3 & 1 & x_4 & 2x_4x_5 \\
            2x_5x_6 & a_{5,2} & 0 & 1/x_4 & 1 & x_5 \\
            x_6 & 2x_6x_1 & a_{6,3} & 0 & 1/x_5 & 1 \\
        \end{pmatrix}
    \]
    Then,
    \[
        1 = \det(\bfM[\{1,3,5\}]) = 1 + 8x_1x_2x_3x_4x_5x_6,
    \]
    which contradicts that all $x_i$'s are nonzero.
\end{proof}

\begin{proof}[\bf Proof of Proposition~\ref{prop: no matrix detects Cn}]
    It follows from Lemmas~\ref{lem: detectability is minor-closed}-\ref{lem: no matrix detects C6}.
\end{proof}

\subsection{The quasi-tree generating polynomial of $\bC_n$ is not Hurwitz stable}

\begin{proposition}\label{prop: Cn not Hurwitz}
    The quasi-tree generating polynomial of $\bC_n$ with $n\ge 5$ is not Hurwitz stable.
\end{proposition}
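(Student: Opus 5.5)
The plan is to reduce to the two base cases $\bC_5$ and $\bC_6$, as in the proof of Proposition~\ref{prop: no matrix detects Cn}, and then exhibit an explicit zero in the open right half-plane for each base case.

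\textbf{Step 1: Hurwitz stability is minor-closed for bouquets.} I first show that Hurwitz stability of quasi-tree generating polynomials passes to minors.
\begin{itemize}
\item \emph{Partial duality.} For a quasi-tree $Q$ we have the identity used in the proof of Theorem~\ref{thm-intro: stability}:
\[
p_{\bG}(z) = z^Q\, p_{\bG^Q}(z_i, z_j^{-1} : i\notin Q,\ j\in Q).
\]
By Proposition~\ref{prop: operations preserving Hurwitz stability}\ref{item: op-inversion}, stability of $p_{\bG}$ implies stability of $p_{\bG^Q}$.
\item \emph{Deletion of a non-bridge edge $e$.} By Proposition~\ref{prop: edge deletion and quasi-tree}, $p_{\bG\setminus e}$ is $p_{\bG}$ with $x_e=0$. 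This is stable by Proposition~\ref{prop: operations preserving Hurwitz stability}\ref{item: op-specialization} with $a=0$.
\item \emph{Deletion of a bridge $e$.} Here $p_{\bG}=x_e\,p_{\bG\setminus e}$, and setting $x_e=1$ again preserves stability.
\end{itemize}
Together with Lemma~\ref{lem: Cn minor}, induction on $n$ shows it suffices to prove that $p_{\bC_5}$ and $p_{\bC_6}$ are not Hurwitz stable.

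\textbf{Step 2: the case $\bC_5$.} I diagonalize all variables to a single $z$, which preserves stability by Proposition~\ref{prop: operations preserving Hurwitz stability}\ref{item: op-diagonalization}. The list of quasi-trees in the proof of Lemma~\ref{lem: no matrix detects C5} gives
\[
1+5z+5z^2+5z^3+5z^4+z^5=(z+1)(z^4+4z^3+z^2+4z+1).
\]
The quartic factor is palindromic, so I substitute $w=z+1/z$. This gives $w^2+4w-1=0$, with root $w=\sqrt5-2\in(0,2)$. That root yields $z$ on the unit circle with $\mathrm{Re}(z)=w/2>0$. This is a zero in the open right half-plane, so $p_{\bC_5}$ is not Hurwitz stable.

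\textbf{Step 3: the case $\bC_6$, the main obstacle.} From the list in Lemma~\ref{lem: no matrix detects C6}, the diagonal specialization is
\[
1+6z+9z^2+8z^3+12z^4.
\]
Its Hurwitz determinant $a_3a_2-a_4a_1=72-72$ vanishes. So this polynomial has a pair of roots on the imaginary axis but apparently none strictly in the right half-plane, and the diagonal specialization does not decide the question. I would therefore use a two-parameter specialization, for instance $z_i=s$ for odd $i$ and $z_i=t$ for even $i$, or $z_i=z$ with one variable perturbed to $z_1=z+\varepsilon$. The aim is a perturbation argument: move the imaginary-axis root $z_0$ by first-order analysis, $dz/d\varepsilon=-\partial_\varepsilon p/\partial_z p$ at $z_0$, and check that for some sign of $\varepsilon$ the root enters the open right half-plane. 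Since a Hurwitz stable polynomial stays stable under such specializations with $\mathrm{Re}$-positive parameters, this gives the contradiction. Computing this derivative explicitly, or finding a concrete point with positive real parts, is the step I expect to require the most care.
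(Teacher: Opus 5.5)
Your Steps 1 and 2 are correct and follow the paper's route: reduce to $\bC_5$ and $\bC_6$ via Lemma~\ref{lem: Cn minor} and the behaviour of $p_{\bG}$ under minors, then diagonalize. Your palindromic substitution for $\bC_5$ is a clean by-hand replacement for the paper's computer check. Your deletion rule ($x_e=0$ for a non-bridge, $x_e=1$ for a bridge) is also the right one.

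The gap is Step 3. The $\bC_6$ case is never actually proved, and since your induction sends every even $n\ge 6$ down to $\bC_6$, that whole family is left open. Moreover, your reason for abandoning the diagonal specialization is a miscalculation. The quantity $a_3a_2-a_4a_1=0$ you computed is only the second Hurwitz determinant $\Delta_2$. For a quartic, two roots with $\lambda+\mu=0$ (in particular a pair $\pm i\omega$) occur exactly when $\Delta_3=0$, by Orlando's formula. Here
\[
\Delta_3=a_3a_2a_1-a_4a_1^2-a_3^2a_0=432-432-64=-64\neq 0 .
\]
So there is no root on the imaginary axis. Directly: $\mathrm{Im}\,p(i\omega)=6\omega-8\omega^3$ vanishes only for $\omega=0$ or $\omega^2=3/4$, and there $\mathrm{Re}\,p(i\omega)=12\omega^4-9\omega^2+1=1$. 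With imaginary-axis roots excluded, $\Delta_3<0$ forces a root with strictly positive real part. Your perturbation scheme is built around a root $z_0$ that does not exist, and in any case it is not carried out.

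The diagonal specialization does settle the question, as in the paper's Lemma~\ref{lem: C5 C5 not Hurwitz}. Indeed
\[
12z^4+8z^3+9z^2+6z+1=(2z+1)(6z^3+z^2+4z+1).
\]
The cubic $f(z)=6z^3+z^2+4z+1$ is strictly increasing, since $f'(z)=18z^2+2z+4>0$. Also $f(-1/6)=1/3>0$, so its unique real root $r$ satisfies $r<-1/6$. The three roots sum to $-1/6$, so the two non-real roots have real part $(-1/6-r)/2>0$; numerically they are about $0.038\pm 0.827i$. Replacing your Step 3 with this computation completes the proof.
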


\begin{lemma}\label{lem: C5 C5 not Hurwitz}
    The quasi-tree generating polynomials of $\bC_5$ and $\bC_6$ are not Hurwitz stable.
\end{lemma}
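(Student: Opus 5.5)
The plan is to exhibit, for each of $\bC_5$ and $\bC_6$, an explicit point with all coordinates in the open right half-plane at which $p_{\bC_n}$ vanishes. Searching in many variables is unnecessary; I would first reduce to a univariate specialization, using Proposition~\ref{prop: operations preserving Hurwitz stability}\ref{item: op-diagonalization} in contrapositive form. If $p_{\bC_n}$ were Hurwitz stable, then so would be the diagonal specialization $q_n(z) := p_{\bC_n}(z,z,\dots,z)$. It therefore suffices to show that the univariate polynomial $q_n$ has a root with positive real part.

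Using the list of quasi-trees from the proofs of Lemmas~\ref{lem: no matrix detects C5} and~\ref{lem: no matrix detects C6}, I would count quasi-trees by size.
\begin{itemize}
\item For $\bC_5$ there are $1,5,5,5,5,1$ quasi-trees of sizes $0,\dots,5$, so
\[
q_5(z) = 1+5z+5z^2+5z^3+5z^4+z^5 .
\]
\item For $\bC_6$ the counts are $1,6,9,8,12$ for sizes $0,\dots,4$. Sizes $5$ and $6$ need to be determined, which I would do via the binary interlacing matrix $\bfM_2(\bC_6)$, using that the quasi-trees are the $X$ with $\bfM_2(\bC_6)[X]$ nonsingular over $\bF_2$.
\end{itemize}

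Next, apply the Routh--Hurwitz criterion, or just check a Hurwitz determinant sign, to $q_n$. For $q_5$ the Hurwitz matrix uses coefficients $a_0=1,a_1=5,a_2=5,a_3=5,a_4=5,a_5=1$ (in reversed order of degree). The second Hurwitz minor is $a_4a_3-a_5a_2 = 25-5 > 0$. The third minor is what I would compute; I expect some minor to be negative, which would show a root in the open right half-plane. Since $q_5$ is palindromic, I can alternatively substitute $w=z+1/z$. This gives $z^{-5/2}$ times a product structure, reducing to the cubic and quadratic factors of $(1+z)(z^4+4z^3+z^2+4z+1)$. The quartic becomes $w^2+4w-1$ with $w=-2\pm\sqrt5$. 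The value $w=\sqrt5-2\in(0,2)$ gives $z$ on the unit circle with $\mathrm{Re}\,z=w/2>0$. That root is not in the open half-plane strictly though, it lies on the boundary but with positive real part, so it is fine: $\mathrm{Re}(z)=(\sqrt5-2)/2>0$. This gives an explicit root for $\bC_5$.

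For $\bC_6$ I would do the same, hoping the palindromic symmetry persists; partial duality with respect to $E$ gives $p$ reciprocal behavior when $E$ is a quasi-tree, though $[6]$ may not be one. If it is not palindromic, I would simply apply Routh--Hurwitz to the degree-$\le 6$ polynomial directly.

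The main obstacle is getting the exact size counts for $\bC_6$ (sizes $5$ and $6$) correct and then locating a right-half-plane root. If the diagonal specialization happens to be stable, the fallback is to specialize only some variables, for instance to set alternating variables to $z$ and to $0$, or to use Proposition~\ref{prop: operations preserving Hurwitz stability}\ref{item: op-specialization} to kill variables and reduce to a smaller polynomial with an evident bad root.
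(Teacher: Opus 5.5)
Your reduction is the same as the paper's: pass to the diagonal specialization via Proposition~\ref{prop: operations preserving Hurwitz stability}\ref{item: op-diagonalization}, then exhibit a root with positive real part. For $\bC_5$ you improve on the paper, which only cites a computer check. The factorization $(1+z)(z^4+4z^3+z^2+4z+1)$ and the substitution $w=z+1/z$ give $w^2+4w-1=0$. Then $w=\sqrt5-2\in(0,2)$ yields a root with $|z|=1$ and $\mathrm{Re}\,z=(\sqrt5-2)/2>0$. Your remark that this root ``lies on the boundary'' is confused: it lies on the unit circle and strictly inside the open right half-plane, which is all that is needed.

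The $\bC_6$ half, however, is never carried out. You leave the counts for sizes $5$ and $6$ open, you hope for a palindromic structure that does not exist, and you exhibit no root.

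\textbf{Missing counts.} Both counts are $0$.
\begin{itemize}
\item For $X=[6]\setminus\{i\}$, the matrix $\bfM_2(\bC_6)[X]$ is the identity plus the adjacency matrix of a $5$-vertex path. Its determinant is already $0$ over $\bZ$; $(1,-1,0,1,-1)$ is in the kernel.
\item $\bfM_2(\bC_6)$ itself is the identity plus the adjacency matrix of the $6$-cycle, and $(1,-1,0,1,-1,0)$ is in its kernel.
\end{itemize}
The list of quasi-trees in the proof of Lemma~\ref{lem: no matrix detects C6} already implies this. Hence $q_6(z)=12z^4+8z^3+9z^2+6z+1$, which is not palindromic.

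\textbf{Routh--Hurwitz caveat.} The paper's notion of Hurwitz stability allows roots on the imaginary axis, so a Hurwitz determinant that merely fails to be positive proves nothing. You need a strictly negative one. Indeed, if all roots of $q$ were in the closed left half-plane, then $q(z+\epsilon)$ would be strictly stable, and letting $\epsilon\to 0$ forces every Hurwitz determinant of $q$ to be $\ge 0$. For $q_6$ one has $\Delta_2=8\cdot 9-12\cdot 6=0$ exactly. The decisive determinant is $\Delta_3=8\cdot 9\cdot 6-8^2\cdot 1-12\cdot 6^2=-64<0$.

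\textbf{A more transparent route.} Factor $q_6(z)=(2z+1)(6z^3+z^2+4z+1)$. The cubic has positive derivative on $\bR$, and its unique real root $r$ lies in $(-\tfrac14,-\tfrac15)$. Its two non-real roots therefore have real part $\tfrac12(-\tfrac16-r)>\tfrac1{60}>0$; numerically they are about $0.038\pm 0.83\,\mathrm{i}$. With this filled in, your argument is complete and needs no computer.
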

\begin{proof}
    The univariate quasi-tree generating polynomials of $\bC_5$ and $\bC_6$ are
    \begin{equation*}
        x^5 + 5x^4 + 5x^3 + 5x^2 + 5x + 1
        \quad\text{ and }\quad
        12x^4 + 8x^3 + 9x^2 + 6x + 1,
    \end{equation*}
    respectively; see the proofs of Lemmas~\ref{lem: no matrix detects C5} and~\ref{lem: no matrix detects C6}. 
Using a computer, one verifies that each of the two polynomials has a root with positive real part. Therefore, neither of them is Hurwitz stable.
    Consequently, the (multivariate) quasi-tree generating polynomials of $\bC_5$ and $\bC_6$ are not Hurwitz stable.
\end{proof}

We remark that the sequence of the numbers of $i$-sized quasi-trees of $\bC_6$ is not unimodal.

\begin{lemma} \label{lem: quasi-tree generating polynomial and minor}
    Let $\bG$ be a ribbon graph and $e$ be an edge.
    \begin{enumerate}
        \item $p_{\bG^e}(x_e, x_f : f\in E(\bG) \setminus \{e\}) = 
        x_e \cdot p_{\bG}(x_e^{-1}, x_f : f\in E(\bG) \setminus \{e\})$.
        \item $p_{\bG\setminus e}$ is a specialization of $p_{\bG}$ at $x_e = 0$ if $e$ is a coloop of $\bG$
        \item $p_{\bG\setminus e}$ is a specialization of $p_{\bG}$ at $x_e = 1$ if $e$ is not a coloop of $\bG$.
    \end{enumerate}
\end{lemma}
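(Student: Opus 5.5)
The plan is to reduce all three items to the description of quasi-trees under duality and deletion, namely Proposition~\ref{prop: partial dual and quasi-tree} and Proposition~\ref{prop: edge deletion and quasi-tree}. Each item then becomes a bookkeeping identity between monomials.

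For item (1), I apply Proposition~\ref{prop: partial dual and quasi-tree} with $X=\{e\}$. This says that $Q\mapsto Q\symdiff\{e\}$ is a bijection $\cQ(\bG)\to\cQ(\bG^e)$. If $e\notin Q$, the monomial $x^Q$ becomes $x_e x^Q$. If $e\in Q$, it becomes $x^{Q}/x_e$. In both cases the new monomial equals $x_e\cdot x^Q|_{x_e\mapsto x_e^{-1}}$. Summing over $Q$ gives the identity in item (1).

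For items (2) and (3), I first split
\[
p_\bG = P_0 + x_e P_1,
\]
where $P_0$ collects the quasi-trees avoiding $e$, $P_1=\sum_{e\in Q\in\cQ(\bG)} x^{Q\setminus\{e\}}$, and neither $P_0$ nor $P_1$ involves $x_e$. By Proposition~\ref{prop: edge deletion and quasi-tree}, if $e$ is a coloop (bridge) then $P_0=0$ and $P_1=p_{\bG\setminus e}$; if $e$ is not a coloop then $P_0=p_{\bG\setminus e}$.

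It remains to read off the specialization in each case with the values exactly as the statement prints them, and here I hit the main obstacle. In the coloop case I have $p_\bG=x_e\,p_{\bG\setminus e}$. The statement asks for $x_e=0$ in item (2), and that value recovers $p_{\bG\setminus e}$ only from the polynomial normalized by item (1): $p_{\bG^e}=p_{\bG\setminus e}$ has no $x_e$, so the substitution $x_e=0$ returns $p_{\bG\setminus e}$. Substituting $x_e=0$ into $p_\bG$ itself gives $0$, so I would state this normalized reading explicitly.

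For item (3), the value $x_e=1$ applied to $p_\bG$ yields $P_0+P_1$, and this is not $p_{\bG\setminus e}$ unless $P_1=0$. The substitution that does recover $p_{\bG\setminus e}=P_0$ from $p_\bG$ is $x_e=0$. The fallback of applying item (1) first does not help: it gives $p_{\bG^e}=x_eP_0+P_1$, whose value at $x_e=1$ is again $P_0+P_1$. So in the non-coloop case I do not see how to justify $x_e=1$ as printed. Moreover, in the coloop case the naive substitution $x_e=1$ into $p_\bG$ already gives $p_{\bG\setminus e}$. I would therefore double-check the intended definition of ``coloop'' and of ``specialization'' before writing the final version; as far as I can see, the decomposition above supports the values $1$ and $0$ in items (2) and (3) respectively, rather than the pairing printed in the statement.

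What the later argument actually needs is only that $p_{\bG\setminus e}$ is obtained from $p_\bG$ by operations in Proposition~\ref{prop: operations preserving Hurwitz stability}, and that holds in either case.
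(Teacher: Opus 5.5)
Your argument is the paper's own proof, which just cites Propositions~\ref{prop: partial dual and quasi-tree} and~\ref{prop: edge deletion and quasi-tree}. Your diagnosis is also right: with ``coloop'' meaning an edge lying in every quasi-tree (a bridge), one has $p_\bG=x_e\,p_{\bG\setminus e}$, so the recovering value is $x_e=1$, while in the non-coloop case $p_{\bG\setminus e}=p_\bG|_{x_e=0}$; items (2) and (3) as printed therefore have the values $0$ and $1$ swapped. Two one-edge examples confirm this: a single bridge gives $p_\bG=x_e$, and a single non-orientable loop gives $p_\bG=1+x_e$. You should simply state the corrected lemma rather than the ``normalized'' reading of (2) via $\bG^e$. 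The swap is harmless for the lemma's only use, Proposition~\ref{prop: Cn not Hurwitz}, since both $0$ and $1$ are admissible in Proposition~\ref{prop: operations preserving Hurwitz stability}\ref{item: op-specialization}.
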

\begin{proof}
    It is straightforward from the definition of quasi-tree generating polynomials and Propositions~\ref{prop: partial dual and quasi-tree} and~\ref{prop: edge deletion and quasi-tree}. 
\end{proof}

\begin{proof}[\bf Proof of Proposition~\ref{prop: Cn not Hurwitz}]
    It follows from Proposition~\ref{prop: operations preserving Hurwitz stability} and Lemmas~\ref{lem: C5 C5 not Hurwitz}, \ref{lem: Cn minor}, and \ref{lem: quasi-tree generating polynomial and minor}.
\end{proof}

Finally, we remark that there is a non-pseudo-orientable ribbon graph whose quasi-tree generating polynomial is Hurwitz stable.

\begin{proposition}\label{prop: non-pseudo Hurwitz}
    Let $\bG$ be a bouquet depicted in Figure~\ref{fig: non-pseudo-orientable bouquet - Hurwitz}.
    Then $\bG$ is not pseudo-orientable and $p_{\bG}$ is Hurwitz stable.
\end{proposition}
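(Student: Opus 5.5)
We do not see Figure~\ref{fig: non-pseudo-orientable bouquet - Hurwitz}. The natural candidate is the bouquet $\bB_2$ of Figure~\ref{fig: pseudo-orientable and non-pseudo-orientable bouquets}: it is already asserted to be non-pseudo-orientable, and $D(\bB_2) = D(\bB_1)$ with $\bB_1$ pseudo-orientable. The plan is written for that situation and has two independent parts.

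\emph{Stability.} The quasi-tree generating polynomial depends only on the $\Delta$-matroid, since $p_{\bG} = g_{D(\bG)}$. If $D(\bG) = D(\bG')$ for some pseudo-orientable bouquet $\bG'$ (here $\bG' = \bB_1$), then $p_{\bG} = p_{\bG'}$. Hence $p_{\bG}$ is Hurwitz stable by Theorem~\ref{thm-intro: stability}.

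To verify $D(\bG) = D(\bG')$, it suffices by Proposition~\ref{prop: binary interlacing matrix} to check that the binary interlacing matrices coincide. That is, the two bouquets have the same non-orientable loops and the same interlacing pairs, which can be read off directly from the chord diagrams. We must also confirm that $\bG'$ is pseudo-orientable by exhibiting a certificate $(S_1,S_2)$ as in Definition~\ref{def: pseudo-orientable bouquet}. For $\bB_1$, take a cut through the chord endpoints that separates $x$ from $y$ suitably. Check that each orientable loop ($v_1,v_2,h_1,h_2$) has both ends on one side, and that $x,y$ each cross the cut.

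\emph{Non-pseudo-orientability.} Here Definition~\ref{def: pseudo-orientable ribbon graph} requires some partial dual $\bG^X$ to be a bouquet with a certificate. By Lemma~\ref{lem: partial duality preserves pseudo-orientability}, the existence of a certificate is invariant among bouquets that are partial duals of one another. So it suffices to show that $\bG$ itself admits no pair $(S_1,S_2)$.

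A certificate amounts to choosing two cut points on the vertex boundary, splitting the $2n$ chord endpoints into two arcs. Every non-orientable chord must cross the cut and every orientable chord must not. In $\bB_2$, the non-orientable chords $x$ (horizontal) and $y$ (vertical) cross each other. Their four endpoints therefore alternate on the circle, so the two cut points must lie in opposite quadrants determined by $x$ and $y$. There are essentially two such choices up to symmetry. In each, one of the orientable chords $v_1,v_2,h_1,h_2$ (drawn near the axes, crossing $x$ or $y$) is forced to have endpoints on both arcs, which is a contradiction.

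I expect this finite case check to be the main obstacle. It requires carefully tracking the endpoint positions of the four orientable chords. The remedy is to enumerate the $\binom{12}{2}+12$ candidate cuts up to equivalence, as in the polynomial-time remark, and rule each out by the quadrant argument.
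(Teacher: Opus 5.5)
\textbf{There is a genuine gap: you analysed the wrong bouquet.} The bouquet in Figure~\ref{fig: non-pseudo-orientable bouquet - Hurwitz} is not $\bB_2$ from Figure~\ref{fig: pseudo-orientable and non-pseudo-orientable bouquets}. It has seven loops. The loops $1,3,6$ are non-orientable and pairwise interlacing, and its interlace graph is the $7$-cycle $1,2,\dots,7$ plus the triangle on $\{1,3,6\}$.

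For this bouquet your stability argument cannot be repaired. The paper notes that $\widehat{D(\bG)} = D(\widehat{\bfM_2(\bG)})$ is not ribbon-graphic \cite[Thm.~1.2]{GO2009}. Suppose $D(\bG) \cong D(\bG')$ for some pseudo-orientable $\bG'$. Then Proposition~\ref{prop: adjustment of ribbon graphs} gives $\widehat{D(\bG)} \cong D(\widehat{\bG'})$, which is ribbon-graphic, a contradiction. So no pseudo-orientable ribbon graph has a $\Delta$-matroid isomorphic to $D(\bG)$, and Theorem~\ref{thm-intro: stability} cannot be transferred. That is the point of the example: its stability is not explained by pseudo-orientability in any form. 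For $\bB_2$ your argument is fine, but it proves a different and easier statement.

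\textbf{The missing idea} is to show directly that the lift $\widehat{D} := D(\widehat{\bfM_2(\bG)})$ is regular, bypassing ribbon graphs altogether.
\begin{enumerate}
\item The support of $\widehat{\bfM_2(\bG)}$ is the $7$-cycle together with a new vertex $8=\widehat{e}$ joined to $1,3,6$. The triangle cancels because $1+1\cdot 1=0$ over $\mathbb{F}_2$.
\item Orient this graph and check by direct computation that the real skew-symmetric $\{0,\pm1\}$ adjacency matrix is principally unimodular; this is what the paper does. Its principal minors are squares of Pfaffians, so they lie in $\{0,1\}$. They also agree mod $2$ with those of $\widehat{\bfM_2(\bG)}$, so this matrix represents $\widehat{D}$.
\item Then $g_{\widehat{D}}$ is Hurwitz stable by Lemma~\ref{lem: Hurwitz stability of regular even delta-matroids}.
\item Setting $x_8=1$ in $g_{\widehat{D}}$ gives $p_{\bG}=g_{D(\bG)}$, and this specialization preserves stability by Proposition~\ref{prop: operations preserving Hurwitz stability}.
\end{enumerate}
Equivalently, you need a real skew-symmetric $\bfA$ and a real vector $\bfv$ with $\bfA+\bfv\bfv^T$ detecting $\bG$, and then Lemma~\ref{lem: skew-symmetric rank-1 perturbation Hurwitz stable} applies. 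The real work is finding such a signing by hand, since no adjusted interlacing matrix is available here.

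\textbf{Your non-pseudo-orientability method does carry over.} By Lemma~\ref{lem: partial duality preserves pseudo-orientability}, it suffices to rule out a certificate on $\bG$ itself. The three non-orientable chords pairwise cross, so their six ends alternate around the circle. One arc of a certificate must therefore contain three cyclically consecutive ends among these six, which gives three choices up to swapping the arcs. In each choice, every placement of the cut points separates the two ends of some orientable loop.

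Alternatively, non-pseudo-orientability follows at once from $\widehat{D}$ not being ribbon-graphic, by the same application of Proposition~\ref{prop: adjustment of ribbon graphs} as above.
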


\begin{figure}[h!]
    \centering
    \begin{tikzpicture}
        \foreach \i in {0,1,2,...,13} {
            \coordinate (\i) at (360/14 * \i : 2);
        }
        
        \draw[line width=3pt, red] (1) to[bend left=10] (6);
        \draw[line width=3pt, red] (2) -- (9);
        \draw[line width=3pt, red] (5) -- (12);

        \draw[line width=3pt, blue] (0) to[bend left=20] (3);
        \draw[line width=3pt, blue] (4) to[bend left=20] (7);
        \draw[line width=3pt, blue] (8) to[bend left=20] (11);
        \draw[line width=3pt, blue] (10) to[bend left=20] (13);

        \node at (360/14 * 1:2.25) {$1$};
        \node at (360/14 * 0:2.25) {$2$};
        \node at (360/14 * -2:2.25) {$3$};
        \node at (360/14 * -3:2.25) {$4$};
        \node at (360/14 * -4:2.25) {$5$};
        \node at (360/14 * -5:2.25) {$6$};
        \node at (360/14 * -7:2.25) {$7$};

        \draw[line width=2.5pt] (0,0) circle (2);    
    \end{tikzpicture}
    \caption{A non-pseudo-orientable bouquet whose quasi-tree generating polynomial is Hurwitz stable. The red loops are non-orientable, and the blue loops are orientable.}
    \label{fig: non-pseudo-orientable bouquet - Hurwitz}
\end{figure}
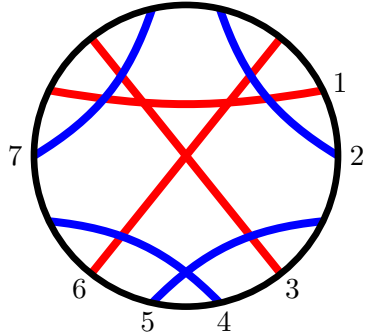

\begin{proof}
    It is easily seen that $\bG$ is not pseudo-orientable.

    We claim that $p_{\bG}$ is Hurwitz stable.
    Denote $D:=D(\bG)$.
    It is straightforward to check that the interlacing matrix $\bfM_2(\bG)$ equals the binary adjacency matrix of the left graph in Figure~\ref{fig: non-pseudo-orientable bouquet - Hurwitz2} plus the diagonal matrix $\mathrm{diag}(1,0,1,0,0,1,0)$.
    Also, $\widehat{\bfM_2(\bG)}$ equals the binary adjacency matrix of the underlying graph of the right digraph in Figure~\ref{fig: non-pseudo-orientable bouquet - Hurwitz2}.
The binary $\Delta$-matroid $\widehat{D} := D(\widehat{\bfM_2(\bG)})$ is not ribbon-graphic by {\cite[Thm.~1.2]{GO2009}}. Then one can check by direct computation that the real adjacency matrix of the right digraph in Figure~\ref{fig: non-pseudo-orientable bouquet - Hurwitz2} is principally unimodular and is a representation of $\widehat{D}$. Thus, $\widehat{D}$ is regular.
    By Lemma~\ref{lem: Hurwitz stability of regular even delta-matroids}, $g_{\widehat{D}}$ is Hurwitz stable. 
    Note that $p_{\bG} = g_{D}$, and $g_{D}$ is a specialization of $g_{\widehat{D}}$ at $x_8 = 1$.
    Therefore, $p_{\bG}$ is Hurwitz stable by Proposition~\ref{prop: operations preserving Hurwitz stability}.
\end{proof}

\begin{figure}[h!]
    \centering
    \begin{tikzpicture}
        \begin{scope}
            \foreach \i in {1,2,3,...,7} {
                \node at (90 + 360/7 - 360/7 * \i : 1.75) {$\i$};
                \coordinate (\i) at (90 + 360/7 - 360/7 * \i : 1.5);
}

            \draw[line width=2pt]
                (1)--(2)--(3)--(4)--(5)--(6)--(7)--cycle
                                (1)--(3)
                                (3)--(6)
                                (6)--(1);
        \end{scope}

        \begin{scope}[xshift=5cm]
            \foreach \i in {1,2,3,...,7} {
                \node at (90 + 360/7 - 360/7 * \i : 1.75) {$\i$};
                \coordinate (\i) at (90 + 360/7 - 360/7 * \i : 1.5);
                \draw[line width=2pt, stealth-] (\i) -- (90 - 360/7 * \i : 1.5);
            }

            \coordinate (8) at (0,0);
            \node at (0,-0.25) {$8$};

            \draw[line width=2pt, -stealth] (1)--(8);
            \draw[line width=2pt, stealth-] (3)--(8); 
            \draw[line width=2pt, stealth-] (6)--(8);
        \end{scope}
    \end{tikzpicture}
    \caption{The binary adjacency matrix of the left graph is a binary representation of $D$ in the proof of Proposition~\ref{prop: non-pseudo Hurwitz}. The real adjacency matrix of the right digraph is a regular representation of $\widehat{D}$ in the same proof.}
    \label{fig: non-pseudo-orientable bouquet - Hurwitz2}
\end{figure}
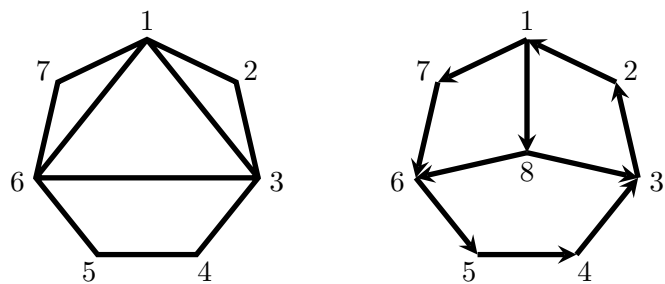

\subsection{Failure of ultra-log-concavity}\label{sec: failure of ultra-log-concavity}

The ultra-log-concavity property does not hold for general ribbon graphs and hence strong $\Delta$-matroids. Still consider the ribbon graphs $\bC_5$ and $\bC_6$.
Let $q_i$ denote the number of quasi-trees of size $2i$ or $2i+1$ of $\bC_5$. Then $q_0 = 6$, $q_1 = 10$, $q_2 = 6$, and $q_j=0$ for $j\ge 3$; see the proof of Lemma~\ref{lem: no matrix detects C5}.
Let $r_i$ denote the number of quasi-trees of size $2i$ or $2i+1$ of $\bC_6$. Then $r_0 = 7$, $r_1 = 17$, $r_2 = 12$, and $r_j=0$ for $j\ge 3$; see the proof of Lemma~\ref{lem: no matrix detects C6}.
Then, we have
\[
    q_1^2 = 100 < 144 = 4 q_0 q_2
    \quad\text{and}\quad
    r_1^2 = 289 < 336 = 4 r_0 r_2,
\]
i.e., the sequences $(q_0,q_1,q_2,\dots)$ and $(r_0,r_1,r_2,\dots)$ are not ultra-log-concave.

We also note that there is an infinite family of even $\Delta$-matroids that fail to satisfy ultra-log-concavity.
\begin{example}
    For $n \ge 5$, let $D$ be a set system $([n],\cB)$ where $\cB$ is the set of all subsets of $[n]$ of size $1$, $3$, or $5$.
    It is readily seen that $D$ is an even $\Delta$-matroid. Denote $b_i$ the number of bases of size $2i+1$ of $D$. Then $b_0 = n$, $b_1 = \binom{n}{3}$, and $b_2 = \binom{n}{5}$. The ratio $\frac{b_1^2}{b_0 b_2}$ converges to $\frac{5!}{(3!)^2} = \frac{120}{36} = \frac{10}{3} < 4$ as $n \to \infty$. Therefore, the sequence $(b_0,b_1,b_2)$ is not ultra-log-concave for sufficiently large $n$.
\end{example}

Finally, we ask the following question.

\begin{question}
    Let $D$ be an even $\Delta$-matroid, and let $b_i$ denote the number of bases of size $2i$ or $2i+1$ of $D$. Is the sequence $(b_0,b_1,b_2,\dots)$ log-concave?
\end{question}

\section*{Acknowledgements}
The authors thank Matt Baker and Mark Ellingham for helpful discussions. In particular, Mark Ellingham suggested Lemma~\ref{lem: simple criterion for pseudo-orientability} and simplified the proof of Proposition~\ref{prop: partial duality preserves pseudo-orientability}. The authors also thank V{\'a}clav Rozho{\v{n}} and Robert \v{S}{\'a}mal for sharing a counterexample to the ultra-log-concavity question posed in an earlier version of this paper.

\bibliographystyle{plainurl}

\appendix

\end{document}